\newcommand{\sigmapr}{\sigma_\text{\tiny\rm pr}}
\newcommand{\sigmares}{\sigma_\text{\tiny\rm res}}
\newcommand{\deltaAS}{\overline{\delta}}
\newcommand{\lambdaAS}{\overline{\lambda}}
\newcommand{\sAS}{\overline{s}}
\newcommand{\BAS}{B_\text{\tiny \rm AS}}
\newcommand{\calA}{{\mathcal{A}}}
\newcommand{\calAS}{{\mathcal{AS}}}
\newcommand{\calB}{{\mathcal{B}}}
\newcommand{\calW}{{\mathcal{W}}}
\newcommand{\calV}{{\mathcal{V}}}
\newcommand{\calC}{\mathcal{C}}
\newcommand{\calN}{\mathcal{N}}
\newcommand{\calO}{\mathcal{O}}
\newcommand{\twcalO}{\mathcal{O}_\text{\tiny\rm tw}}
\newcommand{\calE}{\mathcal{E}}
\newcommand{\calJ}{\mathcal{J}}
\newcommand{\calS}{\mathcal{S}}
\newcommand{\calU}{\mathcal{U}}
\newcommand{\scrA}{\mathscr{A}}
\newcommand{\scrC}{\mathscr{C}}
\newcommand{\scrQ}{\mathscr{Q}}
\newcommand{\scrX}{\mathscr{X}}
\newcommand{\normscrX}{\overline{\mathscr{X}}}
\newcommand{\scrCAS}{\mathscr{C}_\text{\tiny \rm AS}}
\newcommand{\scraCAS}{\mathscr{C}_\text{\tiny \rm aAS}}
\newcommand{\scrcCAS}{\mathscr{C}_\text{\tiny \rm $\lambda$AS}}
\newcommand{\CAS}{C_\text{\tiny \rm AS}}
\newcommand{\HAS}{H_\text{\tiny \rm AS}}
\newcommand{\sfG}{\mathsf{G}}
\newcommand{\sfQ}{\mathsf{Q}}
\newcommand{\sfX}{\mathsf{X}}
\newcommand{\frM}{\mathfrak{M}}
\newcommand{\A}{\mathbb{A}}
\newcommand{\C}{\mathbb{C}}
\newcommand{\R}{\mathbb{R}}
\newcommand{\W}{\mathbb{W}}
\newcommand{\N}{\mathbb{N}}
\newcommand{\Z}{\mathbb{Z}}
\newcommand{\Hom}{\operatorname{Hom}}
\newcommand{\End}{\operatorname{End}}
\newcommand{\Aut}{\operatorname{Aut}}
\newcommand{\g}{\mathfrak{g}}
\newcommand{\h}{\mathfrak{h}}
\newcommand{\gl}{\mathfrak{gl}}
\newcommand{\sh}{\operatorname{Sh}}
\newcommand{\sgn}{\operatorname{sgn}}
\newcommand{\ind}{\operatorname{ind}}
\newcommand{\Tr}{\operatorname{Tr}}
\newcommand{\tr}{\operatorname{tr}}
\newcommand{\Tot}{\operatorname{Tot}}
\newcommand{\pr}{\operatorname{pr}}
\newcommand{\id}{\operatorname{id}}
\newcommand{\spin}{\mathfrak{sp}}
\newcommand{\ev}{\operatorname{ev}}
\newcommand{\Ch}{\operatorname{Ch}}
\newcommand{\Chloc}{\operatorname{Ch}^\text{\rm\tiny loc}}
\newcommand{\normC}{\overline{C}}
\newcommand{\normB}{\overline{B}}
\newcommand{\normcalB}{\overline{\mathcal B}}
\newcommand{\normscrC}{\overline{\mathscr C}}
\newcommand{\Weyl}{\mathbb W^\text{\rm \tiny poly}}
\newcommand{\VWeyl}{\mathbb W^{\text{\rm \tiny poly},V}}
\newcommand{\fWeyl}{\mathbb W}
\newcommand{\pWeyl}{\mathbb W^+}
\newcommand{\Sym}{\operatorname{Sym}}
\newcommand{\ASym}{\operatorname{ASym}}
\newcommand{\JSym}{\operatorname{JSym}}
\newcommand{\PDO}{\Psi\!\operatorname{DO}}
\newcommand{\APDO}{\operatorname{A}\!\!\Psi\!\operatorname{DO}}
\newcommand{\Op}{\operatorname{Op}}
\newcommand{\Gammacpt}{\Gamma_\text{\rm\tiny cpt}}
\newcommand{\Exp}{\operatorname{Exp}}
\newcommand{\supp}{\operatorname{supp}}
\newcommand{\cstar}{\star_\text{\tiny\rm c}}
\numberwithin{equation}{section}
\theoremstyle{plain}
        \newtheorem{theorem}{Theorem}[section]
        \newtheorem{lemma}[theorem]{Lemma}
        \newtheorem{proposition}[theorem]{Proposition}
        \newtheorem{corollary}[theorem]{Corollary}
\theoremstyle{definition}
        \newtheorem{definition}[theorem]{Definition}
        \newtheorem{remark}[theorem]{Remark}
        \newtheorem{example}[theorem]{Example}
\title[Cyclic cocycles on deformation quantizations]{Cyclic cocycles on
       deformation quantizations and higher index theorems}
\author{M.J.~Pflaum, H. Posthuma,~\textrm{and} X.~Tang}
\begin{document}
\begin{abstract}
We construct a nontrivial cyclic cocycle on the Weyl algebra of a
symplectic vector space. Using this cyclic cocycle we construct an
explicit, local, quasi-isomorphism from the complex of differential
forms on a symplectic manifold to the complex of cyclic cochains of
any formal deformation quantization thereof. We give a new proof of Nest-Tsygan's algebraic
higher index theorem by computing the pairing between such cyclic
cocycles and the $K$-theory of the formal deformation quantization.
Furthermore, we extend this approach to derive an algebraic higher
index theorem on a symplectic orbifold. As an application, we obtain
the analytic higher index theorem of Connes--Moscovici and its
extension to orbifolds.
\end{abstract}
\address{\newline
Markus J. Pflaum, {\tt markus.pflaum@colorado.edu}\newline
         \indent {\rm Department of Mathematics, University of Colorado,
         Boulder, USA}\newline
        Hessel Posthuma, {\tt posthuma@math.uu.nl}\newline
         \indent {\rm Mathematical Institute, Utrecht University, Utrecht,
         The Netherlands} \newline
        Xiang Tang, {\tt xtang@math.wustl.edu}   \newline
         \indent {\rm  Department of Mathematics, Washington University,
         St.~Louis, USA}}
\maketitle

\tableofcontents
\section{Introduction}
Let $D$ be an elliptic differential operator on a compact manifold
$M$. As is well-known ellipticity implies that $D$ is a Fredholm
operator and the Atiyah-Singer index theorem \cite{AtiSin} expresses
the index of $D$ as a topological formula involving the Chern
character of the symbol $\sigma(D)$ and the Todd class of the
manifold $M$. In \cite{conmos}, {\sc Connes--Moscovici} proved a far
reaching generalization of the Atiyah--Singer index theorem, the
so-called higher index theorem. In subsequent work \cite{moswu},
{\sc Moscovici--Wu} provided an abstract setting to construct higher
indices. The essential idea hereby is as follows.

Let $\PDO^{-\infty}(M)$ be the algebra of smoothing
pseudodifferential operators on $M$. The operator $D$ defines an
element $e_D$ in the $K_0$-group of the algebra of smoothing
pseudodifferential operators $\PDO^{-\infty}(M)$,
and its image under the Chern-Connes character an element $\Ch(e_D)$
in the cyclic homology of $\PDO^{-\infty}(M)$. Since smoothing
operators act by trace class operators, the operator trace gives
rise to a cyclic cocycle $\tr$ on $\PDO^{-\infty}(M)$ of degree $0$.
Pairing this cocycle with the cycle $\Ch(e_D)$ one recovers the
analytic index of $D$ as $\ind (D)=\langle \tr ,\Ch(e_D)\rangle$.
As has been explained in \cite[\S 2]{conmos}, the local information
contained in $D$ respectively its symbol $\sigma (D)$ is not fully
captured by this index pairing.
To remedy this, {\sc Connes--Moscovici} constructed a localized index
which in the literature and also in this work is called the higher index.
According to \cite{moswu}, one can understand the higher index of $D$
as a pairing $\ind_{[f]}(D)= \langle [f] , \Chloc (e_D) \rangle$,
where $f$ is a given Alexander-Spanier cocycle on $M$
(on which one localizes the index), and $\Chloc (e_D)$ is an Alexander-Spanier
homology class associated $e_D$ which here is regarded as a difference of
projections in $\PDO^{-\infty}(M)$.
The higher index theorem in \cite{conmos} compute the localized index - which
no longer is integral - in terms of topological data generalizing the
Atiyah--Singer index theorem.

In this paper, we prove an algebraic generalization of the higher
index theorem to symplectic manifolds. Applying our theorem
to cotangent bundles, we recover the theorem of
{\sc Connes--Moscovici}. Furthermore, we extend our theorem to general
symplectic orbifolds and obtain an analog of the higher index
theorem on orbifolds generalizing {\sc Kawasaki}'s orbifold index theorem
\cite{Kaw} and also {\sc Marcolli--Mathai}'s higher index theorem for good
orbifolds \cite{MarMat}.\\

Our approach to an algebraic higher index theorem for symplectic
manifolds is inspired by the work \cite{ffs}. There, {\sc
Feigin--Felder--Shoikhet} proved an algebraic index theorem for
symplectic manifolds based on a formula for a Hochschild
$2n$-cocycle $\tau_{2n}$ on the Weyl algebra $\W_{2n}$ over
$\R^{2n}$ with its canonical symplectic structure. In this paper, we
construct an extension of the Hochschild cocycle $\tau_{2n}$ to a
sequence of cochains $(\tau_0,\tau_2,\ldots,\tau_{2n})$ which forms
a cocycle in the total cyclic bicomplex $\left( \Tot^{2n} \normcalB C^\bullet
(\Weyl_{2n}),b+B\right)$. Using this $(b+B)$-cocycle and Fedosov's
construction of a deformation quantization
$(\calA^{((\hbar))}_{\text{\tiny \rm cpt}}, \star)$ on a symplectic
manifold, we construct a quasi-isomorphism $\sfQ$ from the cyclic
de Rham complex to the $b+B$ total complex of
$(\calA^{((\hbar))}_{\text{\tiny \rm cpt}}, \star)$,
\[
\sfQ: \big( \Tot^\bullet \calB \Omega^\bullet (M)((\hbar)) , d \big)
\to
  \big( \Tot^\bullet \calB \normC^\bullet
  \big(\calA^{((\hbar))}_\text{\tiny \rm cpt}\big), b+B \big).
\]
If one views $(\calA^{((\hbar))}_{\text{\tiny \rm cpt}}, \star)$ as
the generalization of the algebra of  pseudodifferential
operators, one can try to compute the pairing between a cyclic
cocycle on $\calA^{((\hbar))}_{\text{\tiny \rm cpt}}$ with the
Chern-Connes character of an element in
$K_0(\calA^{((\hbar))}_{\text{\tiny \rm cpt}})$. Fedosov proved in
\cite{fe} that $K_0(\calA^{((\hbar))}_{\text{\tiny \rm cpt}})$ can
be represented by pairs of projectors $(P_1, P_2)$ on
$\calA^{((\hbar))}_{\text{\tiny \rm cpt}}$ with $P_1-P_2$ compactly
supported modulo stabilization.  Using methods from Lie algebra
cohomology, we obtain the following formula for this pairing,
\[
  \left\langle \sfQ(\alpha), P_1-P_2\right\rangle=
  \sum_{l=0}^{k}\frac{1}{(2\pi \sqrt{-1})^{l}}\int_M \alpha_{2l}
  \wedge \hat{A}(M)\Ch(V_1-V_2) \exp \Big( -\frac{\Omega}{2\pi \sqrt{-1}\hbar}\Big),
\]
where $\alpha=(\alpha_0, \cdots, \alpha_{2k})\in \Tot^{2k} \calB
\Omega^\bullet (M)((\hbar))$ is a sequence of closed differential
forms on $M$, and $V_1$ and $V_2$ are vector bundles on $M$
determined by the $0$-th order terms of $P_1$ and $P_2$, and
$\Omega\in \omega+\hbar H^2(M)[[\hbar]]$ is the characteristic class
of the deformation quantization $(\calA^{((\hbar))}_{\text{\tiny \rm
cpt}}, \star)$.

That the right hand side of the above algebraic higher index formula
coincides with  the algebraic localized index was originally proved
by {\sc Nest-Tsygan} in \cite{nets96}, \cite{bnt}, and {\sc Aastrup}
in \cite{aas} by a different approach. Using \v{C}ech-methods, {\sc
Nest-Tsygan} computed the Chern-Connes character of an element in
$K_0(\calA^{((\hbar))})$ by constructing a morphism from the cyclic
homology of $\calA^{((\hbar))}$ to the cohomology of $M$. Our
construction is exactly in the opposite direction and lifted to the
(co)chain level: By means of the formula for the $(b+B)$-cocycles
$(\tau_0, \ldots, \tau_{2n})$ we are able to construct an explicit
quasi-isomorphism $\sfQ$ from the sheaf complex  of differential
forms to the sheaf complex of cyclic cochains of
$\calA^{((\hbar))}$. This allows us to write down explicit
expressions for cyclic cocycles on $\calA^{((\hbar))}$. With this
new construction, we give a more transparent proof of the above
index theorem using differential forms and Lie algebra cohomology,
which is closer to {\sc Connes--Moscovici}'s original approach.

Let us  mention that the $b+B$ cycle $(\tau_0, \cdots,\tau_{2n})$ has been
discovered independently by {\sc Willwacher}
\cite{Wil}. He used this cocycle to compute a higher Riemann-Roch
formula.\\

By a similar idea as above, we extend the algebraic index theorem of
\cite{ppt} for orbifolds to the above higher version.
We represent an orbifold by a proper \'etale groupoid, and
consider $\calA^{((\hbar))}\rtimes \sfG$ as a deformation quantization
of a symplectic orbifold $M=(\sfG_0/\sfG, \omega)$, as it has been
constructed by the third author in \cite{ta}. Using Fedosov's idea
\cite{fe:g-index}, we generalize the above $b+B$ cocycle $(\tau_0,
\cdots, \tau_{2n})$ on the Weyl algebra $\W_{2n}$ to a
$\gamma$-twisted $b+B$ cocycle with $\gamma$ a linear symplectic
isomorphism on $V$ of finite order. Analogously to the manifold case,
we use the $\gamma$-twisted cocycle and Fedosov's connection to
define a S-quasi-isomorphism $Q$ from the cyclic de Rham
differential complex on the corresponding inertia orbifold
$\widetilde{M}$ to the $b+B$ total complex of the algebra
$\calA^{((\hbar))}\rtimes \sfG$. For $\alpha=(\alpha_{2k}, \cdots,
\alpha_0)\in \Tot^{2k}\calB \Omega^\bullet(\widetilde{M}) ((\hbar))$,
and $P_1, P_2$ two projectors in the matrix algebra over
$\calA^{((\hbar))}\rtimes \sfG$ with $P_1-P_2$ compactly supported,
we obtain the following formula as Thm.~\ref{thm:orb-alg-index}.
\[
  \left\langle Q(\alpha), P_1-P_2\right\rangle=
  \sum_{j=0}^{k}\int_{\widetilde{M}} \frac{1}{(2\pi \sqrt{-1})^jm}\frac{\alpha_{2j} \wedge
  \hat{A}(\widetilde{M}) \, \Ch_\theta(\iota^*V_1-\iota^*V_2) \,
  \exp(-\frac{\iota^*\Omega}{2\pi \sqrt{-1}\hbar})}{\Ch_\theta(\lambda_{-1}N)},
\]
where $V_1$ and $V_2$ are the orbifold vector bundles on $M$
determined by the $0$-th order terms of $P_1$ and $P_2$,
$\Omega$ is the characteristic class of $(\calA^{((\hbar))}\rtimes
\sfG, \star)$,  $\iota$ is the canonical map from $\widetilde{M}$ to
$M$, and $m$ is defined in terms of the order of the local isotopy groups.\\

As an application of our algebraic formulas, we derive higher analytic
index theorems for elliptic operators using an asymptotic symbol calculus.

To this end we first consider a cotangent bundle of a manifold $Q$. It was
shown by the first author \cite{P1} that the asymptotic symbol
calculus on pseudodifferential operators on $Q$ naturally defines a
deformation quantization
$(\calA^{((\hbar))}_{\text{\tiny \rm cpt}},\star_{\text{op}})$ of $T^*Q$ and the
operator trace induces a canonical trace on
$(\calA^{((\hbar))}_{\text{\tiny \rm cpt}},\star_{\text{op}})$. To derive
{\sc Connes--Moscovici}'s higher index from the higher algebraic index
theorem, we prove that the algebraic pairing
$\left\langle Q(\alpha), P_1-P_2\right\rangle$
coincides asymptotically with the pairing $\langle \scrX {[f]} ,
\Ch(e_D) \rangle$ defined in \cite{conmos}. More precisely, we prove
that the cyclic cocycles $Q(\alpha)$ and $\scrX {[f]}$ on
$\calA^{((\hbar))}_{\text{\tiny \rm cpt}}(T^*Q)$ are cohomologous, if
the Alexander--Spanier cocycle $f$ and the closed form $\alpha$
induce the same cohomology class on $Q$. We prove the claimed
relation by using sheaf theoretic methods and by applying inherent
properties of the calculus of asymptotic pseudodifferential
operators. Let us mention that a sketch of how to derive the
analytic higher index theorem from the algebraic one has already
been outlined in \cite{nets96}. Here, we take a  different approach
by elaborating more on the nature of Alexander--Spanier cohomology
and its use for constructing cyclic cocycles on a deformation
quantization in general. In particular, this enables us to directly
compare the algebraic higher index with the definition of the
localized index by {\sc Connes--Moscovici}.

Secondly, we consider the cotangent bundle of an orbifold $Q$. The
way we address this problem is similar to the above manifold case.
To define a higher index for an elliptic operator $D$ on $Q$, we
need to define a localized index of an elliptic operator $D$ on $Q$.
This leads us to introduce a new notion of orbifold
Alexander-Spanier cohomology, whose cohomology is equal to the
cohomology of the corresponding inertia orbifold $\widetilde{Q}$.
Next, we introduce a notion of localized $K$-theory of an orbifold,
and show that there is a well defined pairing between localized
$K$-theory and orbifold Alexander-Spanier cohomology of $Q$. With
these natural definitions and constructions, we follow the same
ideas as in the manifold case to prove a higher index theorem on a
reduced orbifold. We would like to remark that our definition of
orbifold Alexander--Spanier cohomology is new and different from the
standard definition of Alexander-Spanier cohomology of a topological
space. In particular, we have view an orbifold as a stack more than
just a topological space. For this reason, our higher index theorem
on orbifolds
detects the topological information of an orbifold as a stack. \\

Recall that {\sc Connes--Moscovici} \cite{conmos} used their higher
index theorem to prove a covering index theorem, which was used to
prove the Novikov conjecture in the case of hyperbolic groups. We
would like to view this paper as a seed for the study of covering
index theorems (cf.~\cite{MarMat}) for orbifolds and the equivariant
Novikov conjecture
\cite{roswei}. We plan to study these questions in the future.\\

This paper is organized as follows. In Section \ref{Sec:CycWeyl}, we
introduce and prove that $(\tau_0, \cdots, \tau_{2n})$ defines a $b+B$
cocycle on the Weyl algebra $\W_{2n}$. In Section \ref{Sec:CyCoSyMan},
we use a Fedosov connection to construct a quasi-isomorphism from the
sheaf complex of differential forms to the sheaf complex of cyclic cochains on the
algebra of the deformation quantization $(\calA^{((\hbar))}_{\text{\tiny
\rm cpt}}, \star)$ of a symplectic manifold corresponding to the Fedosov connection.
Then, in Section \ref{Sec:AlgIndThm}, we use Lie
algebra Chern-Weil theory technique to prove a algebraic higher
index theorem. Afterwards, in Section \ref{Sec:GenOrbi}, we extend the
constructions from Sections \ref{Sec:CycWeyl}-\ref{Sec:AlgIndThm} to
orbifolds and obtain a higher algebraic index theorem for orbifolds. In
Sections \ref{sec:HAITM} and \ref{sec:HAITO}, we discuss how to apply the
higher algebraic index theorem to prove {\sc Connes} and {\sc Moscovici}'s
higher index theorem on manifolds and its generalization to orbifolds.\\

\noindent{\bf {Acknowledgments}:} The authors would like to thank M.
Crainic, G. Felder, A. Gorokhovsky, and H.~Moscovici for helpful
discussions. H. Posthuma and X. Tang would like to thank the
Department of Mathematics, University of Colorado for hosting their
visits at CU Boulder, where part of the work has been completed. The
research of H.~Posthuma is supported by NWO, and X.~Tang is
partially supported by NSF grant 0703775.


%
%
\section{Cyclic cohomology of the Weyl algebra}
\label{Sec:CycWeyl}
\subsection{The Weyl algebra}
\label{weyl_algebra}
Let $(V,\omega)$ be a finite dimensional symplectic vector space. In
canonical coordinates $(p_1,\ldots p_n,q_1,\ldots q_n)$ the
symplectic form simply reads $\omega=\sum_idp_i\wedge dq_i$. The
polynomial Weyl algebra $\Weyl (V)$ over the ring $\C[\hbar,
\hbar^{-1}]$ is the space of polynomials $\operatorname{S}
(V^*)\otimes\C[\hbar,\hbar^{-1}]$ with algebra structure given by
the Moyal--Weyl product
\[
\begin{split}
f\star g&=(m\circ\exp(\frac{\hbar}{2}\alpha))(f\otimes g)
\end{split}
\]
where $m$ is the commutative multiplication and
$\alpha\in \End \big( \Weyl (V) \otimes \Weyl (V) \big)$ is  basically the Poisson
bracket associated to $\omega$:
\[
  \alpha(f\otimes g)=\sum_{i=1}^n \left(\frac{\partial f}{\partial p_i}
  \otimes\frac{\partial g}{\partial q_i}-
  \frac{\partial f}{\partial q_i}\otimes\frac{\partial g}{\partial p_i}\right).
\]
In the formula for the Moyal product, the exponential is defined by means of its
power series expansion, which terminates after finitely many terms for the
polynomial Weyl algebra. For the particular case where $V =\R^{2n}$ with its
natural symplectic structure, we write $\Weyl_{2n}$ for $\Weyl (V)$.

The symplectic group $\operatorname{Sp} _{2n}$ acts on
$\Weyl_{2n}$ by automorphisms. Infinitesimally, this leads to an action of
the Lie algebra $\mathfrak{sp}_{2n}$ by derivations. It is known that all
derivations of $\Weyl_{2n}$ are inner, in fact there is a
short exact sequence of Lie algebras
\[
  0\rightarrow\C[\hbar,\hbar^{-1}]\rightarrow \Weyl_{2n}\rightarrow
  \operatorname{Der}\big( \Weyl_{2n} \big) \rightarrow 0.
\]
The action of $\mathfrak{sp}_{2n}$ is explicitly given by identifying
$\mathfrak{sp}_{2n}$ with the quadratic homogeneous polynomials in
$\operatorname{S} (V^*)$.

Finally, using the spectral sequence associated to the $\hbar$-filtration on $\Weyl_{2n}$,
one proves the following well-known result:
\begin{proposition}\cite{ft}
\label{cyclic-weyl}
The cyclic cohomology of the Weyl algebra is given by
\[
  HC^k (\Weyl_{2n})=
  \begin{cases}
    \C[\hbar,\hbar^{-1}]& \text{if $k=2n+2p$}, p\geq 0, \\
    0& \text{else}.
  \end{cases}
\]
\end{proposition}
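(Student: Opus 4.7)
The plan is to follow the classical approach of \textsc{Feigin--Tsygan}~\cite{ft}: use the $\hbar$-adic filtration on $\Weyl_{2n}$ to reduce to a Hochschild--Kostant--Rosenberg computation on the commutative associated graded, and then pass from Hochschild to cyclic cohomology via Connes's $SBI$ long exact sequence.

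I would first equip $\Weyl_{2n}$ with the decreasing filtration $F^p\Weyl_{2n}:=\hbar^p(\operatorname{S}(V^*)\otimes\C[\hbar])$. The congruence $f\star g\equiv fg\pmod\hbar$ shows that this is a filtration by subalgebras, with commutative associated graded $\operatorname{S}(V^*)\otimes\C[\hbar,\hbar^{-1}]$. Transferring the filtration to the normalized Hochschild cochain complex yields a convergent spectral sequence whose $E_1$-page is the Hochschild cohomology of the associated graded commutative algebra.

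Second, I would apply the \textsc{Hochschild--Kostant--Rosenberg} theorem to identify the Hochschild complex of $\operatorname{S}(V^*)$ with algebraic differential forms (resp.\ polyvector fields) on $V$. The first non-trivial differential in the spectral sequence, produced by the leading $\hbar$-correction to the Moyal product, translates under the symplectic duality $\omega\colon V\xrightarrow{\sim}V^*$ into the algebraic de Rham differential on $\Omega^\bullet(V)[\hbar,\hbar^{-1}]$. Since $V\cong\R^{2n}$ is affine, its algebraic de Rham cohomology is $\C$ in degree zero and trivial otherwise; transported through the symplectic duality this pins the Hochschild cohomology of $\Weyl_{2n}$ down to $\C[\hbar,\hbar^{-1}]$ concentrated in the top degree $2n$.

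Third, I would insert this into Connes's $SBI$ long exact sequence. The vanishing of the Hochschild groups outside degree $2n$ forces $S\colon HC^{k-2}\to HC^k$ to be an isomorphism in the ranges $k<2n$ and $k>2n$. Combining this with the base-case vanishing $HC^0(\Weyl_{2n})=0$ for $n\geq 1$—equivalent to the classical statement that every element of the Laurent-polynomial Weyl algebra is a sum of commutators (witnessed for instance by $1=[\hbar^{-1}p_1,q_1]$)—yields $HC^k=0$ for $0\leq k<2n$; the Hochschild class in degree $2n$ then contributes the first copy of $\C[\hbar,\hbar^{-1}]$ in $HC^{2n}$, and $S$-periodicity propagates it through the higher even degrees while the odd ones remain zero. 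The principal technical hurdle is the identification of the first non-trivial spectral-sequence differential with the algebraic de Rham differential under the symplectic duality, at the level of the underlying cochain complexes; this is the combinatorial core of the Feigin--Tsygan calculation, for which I would appeal to~\cite{ft} for the precise identities.
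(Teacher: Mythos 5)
Your proposal is correct and follows essentially the same route the paper indicates: the paper offers no argument of its own beyond citing Feigin--Tsygan \cite{ft} and remarking that the result follows from the spectral sequence of the $\hbar$-filtration on $\Weyl_{2n}$, which is exactly the filtration-plus-HKR computation of Hochschild cohomology (concentrated in degree $2n$ via the symplectic identification of the $E_1$-differential with the de Rham/Poisson differential on affine space) that you carry out, followed by the standard $SBI$ periodicity argument to pass to cyclic cohomology. Your deferral of the identification of the first spectral-sequence differential to \cite{ft} matches the level of detail the paper itself provides.
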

\subsection{Cyclic cocycles on the Weyl algebra}
The aim of this section is to define an explicit cocycle in the
$(b,B)$-complex that generates the nontrivial cyclic cohomology
class at degree $2n$  as is suggested in Proposition \ref{cyclic-weyl} above. We first need a
couple of definitions. For $1\leq i\neq j\leq 2k\leq 2n$ we define
$\alpha_{ij}\in \End \big( (\Weyl_{2n})^{\otimes 2k+1}\big)$ by
\begin{displaymath}
\begin{split}
  \alpha_{ij}(a_0\otimes\ldots\otimes a_{2k})&=
  \sum_{s=1}^n \left(a_0 \otimes \ldots\otimes
  \frac{\partial a_i}{\partial p_s}\otimes\ldots\otimes
  \frac{\partial a_j}{\partial q_s}\otimes\ldots\otimes a_{2k}\right.\\
  &\hspace{3cm}
  \left.-a_0\ldots\otimes \frac{\partial a_i}{\partial q_s}\otimes\ldots
  \otimes\frac{\partial a_j}{\partial p_s}\otimes\ldots\otimes a_{2k}\right),
\end{split}
\end{displaymath}
i.e., the Poisson tensor acting on $i$'th and $j$'th slot of the tensor
product. We also need
\[
 \pi_{2k}=1\otimes (\hbar\alpha)^{\wedge k}\in
 \End \big( (\Weyl_{2n})^{\otimes (2k+1)} \big),
\]
and finally
$\mu_i: ( \Weyl_{2n})^{\otimes (i+1)}\rightarrow\C[\hbar,\hbar^{-1}]$
is given by
\[
  \mu_i (a_0\otimes\ldots\otimes a_i )=a_0(0)\cdots a_i (0).
\]
In the following, $\Delta^k\subset \R^k$ is the standard simplex given by $0\leq u_1\leq\ldots\leq u_k\leq 1$.
\begin{definition}
\label{dfn:tau}
  Let $\Weyl_{2n}$ be the Weyl algebra. For all $i$ with $0\leq i\leq 2n$ define
  the cochains $\tau_i\in \normC^i(\Weyl_{2n})$ as follows. For even degrees put
\[
\tau_{2k}(a)=(-1)^k\mu_{2k}\int_{\Delta^{2k}}\left.\prod_{0\leq
i<j\leq
2k}e^{\hbar(u_i-u_j+\frac{1}{2})\alpha_{ij}}\right|_{u_0=0}\pi_{2k}(a)
du_1\cdots du_{2k}.
\]
In the odd case, we put
\[
\tau_{2k-1}(a):=(-1)^{k-1}\mu_{2k-1}\int_{\Delta^{2k-1}}\left.\prod_{0\leq
i<j\leq
2k-1}e^{\hbar(u_i-u_j+\frac{1}{2})\alpha_{ij}}\right|_{u_0=0}(\hbar\alpha)^{\wedge
k}(a)du_1\cdots du_{2k-1},
\]
\end{definition}
\begin{remark}\label{rmk:sign}
The cocycle $\tau_{2n}\in \normC^{2n}( \Weyl_{2n})$ is
the Hochschild cocycle of \cite{ffs} up to a sign $(-1)^n$. The sign is needed for $(\tau_0, \cdots, \tau_{2n})$ to be $b+B$ closed, as in the theorem below.
\end{remark}
\begin{theorem}
\label{cw}
The cochains $\tau_i\in \normC^i(\Weyl_{2n})$ satisfy the relation
\[
  -B\tau_{2k}=\tau_{2k-1}=b\tau_{2k-2}.
\]
\end{theorem}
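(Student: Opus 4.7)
The plan is to prove both equalities by applying Stokes' theorem to simplicial integrals, in the spirit of the proof in \cite{ffs} that $b\tau_{2n}=0$. The central observation is that the integrand $\prod_{i<j}e^{\hbar(u_i-u_j+1/2)\alpha_{ij}}$ has the feature that its restriction to the codimension-one face $\{u_i=u_{i+1}\}$ of $\Delta^{2k-1}$ produces, on slots $i,i+1$, precisely the Moyal factor $e^{(\hbar/2)\alpha_{i,i+1}}$ that arises from $a_i\star a_{i+1}$. This lets one identify each Hochschild-differential term with an integral over a boundary face of a simplex, and then use Stokes' theorem to pass to an interior integral where the exterior derivative manufactures the additional wedge factor of $\alpha$ appearing in $\tau_{2k-1}$.

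\textbf{Step 1: $b\tau_{2k-2}=\tau_{2k-1}$.} Expand
\[
b\tau_{2k-2}(a_0,\ldots,a_{2k-1})=\sum_{i=0}^{2k-2}(-1)^i\tau_{2k-2}(a_0,\ldots,a_i\star a_{i+1},\ldots)+(-1)^{2k-1}\tau_{2k-2}(a_{2k-1}\star a_0,a_1,\ldots),
\]
substitute $a_i\star a_{i+1}=m\circ e^{(\hbar/2)\alpha_{i,i+1}}(a_i\otimes a_{i+1})$, and identify each summand with the integral over the corresponding codimension-one face of $\partial\Delta^{2k-1}$ (with $u_0=0$ fixed): the interior face $\{u_i=u_{i+1}\}$ for $1\le i\le 2k-2$, the ``left'' face $\{u_1=0\}$ for the $i=0$ term, and the ``right'' face $\{u_{2k-1}=1\}$ for the cyclic wrap-around, where the latter identification uses $\alpha_{ji}=-\alpha_{ij}$ to convert the exponent at $u_{2k-1}=1$ into the Moyal form $e^{(\hbar/2)\alpha_{2k-1,0}}$. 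The total sum is therefore $\int_{\partial\Delta^{2k-1}}\omega$ with $\omega$ the obvious $(2k-1)$-form lift of the $\tau_{2k-2}$ integrand. Stokes' theorem converts this to $\int_{\Delta^{2k-1}}d\omega$, and $d$ acting on the exponential contributes an extra factor $\hbar\alpha_{ij}(du_i-du_j)$. Wedging this with $\pi_{2k-2}=1\otimes(\hbar\alpha)^{\wedge(k-1)}$ and the volume form $du_1\wedge\cdots\wedge du_{2k-1}$ assembles into $1\otimes(\hbar\alpha)^{\wedge k}$, matching the integrand of $\tau_{2k-1}$ up to the overall sign $(-1)^{k-1}$.

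\textbf{Step 2: $-B\tau_{2k}=\tau_{2k-1}$.} Use Connes' formula $B\tau(a_0,\ldots,a_{2k-1})=\sum_{j=0}^{2k-1}(-1)^{j(2k-1)}\tau(1,a_j,a_{j+1},\ldots,a_{j-1})$. Inserting $1$ into slot $0$ of $\tau_{2k}$ kills every $\alpha_{0l}$ (all derivatives of $1$ vanish), and $\pi_{2k}=1\otimes(\hbar\alpha)^{\wedge k}$ already does not touch slot $0$; hence the integrand becomes independent of $u_0$ and depends on $u_1,\ldots,u_{2k}$ only through the differences $u_i-u_j$. The resulting integral over $\Delta^{2k}$ can, by a measure-preserving cyclic relabeling $u_j\mapsto u_j+c\bmod 1$, be identified with the corresponding integral over $\Delta^{2k-1}$ attached to each of the $2k$ cyclic-insertion positions. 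Summing these $2k$ contributions with the signs $(-1)^{j(2k-1)}$ and the overall $(-1)^k$ prefactor of $\tau_{2k}$ yields $-(-1)^{k-1}\tau_{2k-1}$, which is the desired identity.

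\textbf{Main obstacle.} The principal difficulty is the combinatorial and sign bookkeeping needed to verify that all contributions assemble into the single expression $\tau_{2k-1}$ with the correct sign $(-1)^{k-1}$. The most delicate points are (i) in Step 1, showing that the ``right'' face $\{u_{2k-1}=1\}$ of $\partial\Delta^{2k-1}$ reproduces the cyclic wrap-around term $\tau_{2k-2}(a_{2k-1}\star a_0,\ldots)$, which requires combining $\alpha_{ji}=-\alpha_{ij}$ with a $\tfrac{1}{2}$-shift to recover the Moyal exponent, and (ii) in Step 2, carrying out the cyclic change of variables and verifying that all $2k$ summands of $B\tau_{2k}$ align to yield a single $\Delta^{2k-1}$-integral. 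The sign discrepancy $(-1)^n$ with the FFS convention noted in Remark \ref{rmk:sign} is precisely what is needed to make the two sides match, so an important consistency check is that this sign is forced by the two independent computations.
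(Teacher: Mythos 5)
Your proposal is correct and takes essentially the same route as the paper: the relation $b\tau_{2k}=\tau_{2k+1}$ is proved exactly as in the paper's second lemma (following \cite{ffs}) by realizing the Hochschild terms as boundary-face integrals over the simplex, where coincident coordinates reproduce the Moyal factor, and then applying Stokes' theorem so that the exterior derivative of the exponential supplies the extra $\hbar\alpha$ wedge factor. Your Step 2 is the paper's first lemma run in the opposite direction: the paper inserts a dummy integration variable into $\tau_{2k-1}$, decomposes the resulting prism into $2k$ simplices matching the cyclic insertions of $1$ in $B\tau_{2k}$, and performs the cyclic mod-$1$ change of variables (with the wrap-around controlled by the function $\psi$ of \cite{ffs} and the antisymmetry of $\alpha$), which is precisely the relabeling you describe, so the two arguments coincide up to direction.
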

\begin{remark}
For $n=1$, the proof of this theorem is quite easy since the cocycles can be written down explicitly. We have
\[
\begin{split}
\tau_2(a_0\otimes a_1\otimes
a_2)&:=-\hbar\mu_2\int_{\Delta^2}e^{\hbar(\frac{1}{2}-u_1)\alpha_{01}}e^{\hbar(\frac{1}{2}-u_2)\alpha_{02}}
e^{\hbar(u_1-u_2+\frac{1}{2})\alpha_{12}}\\
&\hspace{5cm} (1\otimes \alpha)\left(a_0\otimes a_1\otimes a_2\right)du_1du_2\\
\tau_0(a_0)&:=a_0(0)
\end{split}
\]
With this we compute:
\[
\begin{split}
B\tau_2(a_0\otimes a_1)&=-\tau_2(1\otimes a_0\otimes a_1)+\tau_2(1\otimes a_1\otimes a_0)\\
&=-\hbar\mu_2\int_{\Delta^2}e^{\hbar(u_1-u_2+\frac{1}{2})\alpha}\alpha(a_0\otimes a_1-a_1\otimes a_0)du_1\wedge du_2\\
&=-\hbar\mu_2\int_0^1 du_2\int_0^{u_2}du_1e^{\hbar(u_1-u_2+\frac{1}{2})\alpha}\alpha(a_0\otimes a_1-a_1\otimes a_0)\\
&=-\mu_2\int_0^1 du_2\int_0^{u_2}du_1\left( \frac{d}{du_1}e^{\hbar(u_1-u_2+\frac{1}{2})\alpha} \right)(a_0\otimes a_1-a_1\otimes a_0)\\
&=-\mu_2\int_0^1 du_2 \left(e^{\frac{\hbar}{2}\alpha}-e^{\hbar(\frac{1}{2}-u_2)\alpha}\right)(a_0\otimes a_1-a_1\otimes a_0)\\
&=-\mu_2 e^{\frac{\hbar}{2}\alpha}(a_0\otimes a_1-a_1\otimes a_0)\\
&\qquad +\mu_2\int_0^1 du_2 \left( e^{\hbar(\frac{1}{2}-u_2)\alpha}-e^{-\hbar(\frac{1}{2}-u_2)\alpha}\right)(a_0\otimes a_1)\\
&=-\mu_2 e^{\frac{\hbar}{2}\alpha}(a_0\otimes a_1-a_1\otimes a_0)\\
&=-\left(a_0\star a_1(0)-a_1\star a_0(0)\right)\\
&=-b\tau_0(a_0\otimes a_1).
\end{split}
\]
The integral in the sixth line can be seen to be zero by using the antisymmetry of the integrand under reflection in the point $u_2=1/2$. This gives an easy proof of the $n=1$ case.
\end{remark}

\noindent
In the general case, the proof of Theorem \ref{cw} proceeds in two steps:
\begin{lemma}
One has
$\tau_{2k-1}(a_0\otimes\ldots\otimes
 a_{2k-1})=-B\tau_{2k}(a_0\otimes\ldots\otimes a_{2k-1})$.
\end{lemma}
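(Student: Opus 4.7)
The plan is to generalize the computation carried out for $n=1$ in the preceding remark. First I would apply the definition of the Connes $B$-operator. Since each $\alpha_{ij}$ differentiates its arguments, and $\pi_{2k}=1\otimes(\hbar\alpha)^{\wedge k}$ acts as the identity only on slot $0$, the cochain $\tau_{2k}$ vanishes whenever the inserted $1$ sits in any slot other than slot $0$; hence $B\tau_{2k}(a_0,\ldots,a_{2k-1})$ reduces to a cyclic sum of the form $\sum_{i=0}^{2k-1}(-1)^{i(2k-1)}\tau_{2k}(1\otimes a_i\otimes a_{i+1}\otimes\cdots\otimes a_{i-1})$.

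Next I would exploit a cyclic symmetry of the integrand under simultaneous shifting of the $u_j$'s and cyclic permutation of the $a_j$'s: the factors $u_i-u_j+\tfrac12$ in the exponents behave like a circular distance function, and $\mu_{2k}$ (which evaluates at $0$) is invariant under the corresponding translation. This allows all $2k$ terms of the cyclic sum to be combined into a single integral in which the inserted $1$ sits in slot $0$. With $a_0=1$, each factor $e^{\hbar(u_0-u_j+1/2)\alpha_{0j}}$ reduces to the identity (because $\alpha_{0j}$ differentiates slot $0$), leaving exactly $\prod_{1\le i<j\le 2k}e^{\hbar(u_i-u_j+1/2)\alpha_{ij}}$ together with $(\hbar\alpha)^{\wedge k}$ acting on slots $1,\ldots,2k$---precisely the ingredients of $\tau_{2k-1}$, but integrated over one extra $u$-variable.

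The crucial final step is the direct analogue of the total-derivative trick in the $n=1$ remark: one factor $\hbar\alpha_{ij}$ from the wedge product $(\hbar\alpha)^{\wedge k}$ pairs with the exponential $e^{\hbar(u_i-u_j+1/2)\alpha_{ij}}$ to form $\frac{d}{du_i}$ of that exponential. Stokes' theorem on the simplex then collapses the extra integration variable: the boundary piece reproduces $\tau_{2k-1}$ with the sign $(-1)^k$ flipped to $(-1)^{k-1}$ and the overall minus sign, while the complementary contributions vanish by the reflection antisymmetry $u_j\mapsto 1-u_j$ that eliminated the spurious integral in the $n=1$ calculation. The main obstacle will be the combinatorial bookkeeping---identifying the correct pairing between factors of $(\hbar\alpha)^{\wedge k}$ and the exponentials, verifying the cancellation of the spurious terms against the built-in antisymmetrization of $(\hbar\alpha)^{\wedge k}$, and tracking signs through the cyclic symmetrization and Stokes argument.
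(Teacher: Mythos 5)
Your first two steps are sound and essentially reproduce the paper's reduction, read backwards: since every slot $1,\ldots,2k$ is hit exactly once by $\pi_{2k}=1\otimes(\hbar\alpha)^{\wedge k}$, the cochain $\tau_{2k}$ is normalized, so $B\tau_{2k}$ is the alternating cyclic sum with the unit in slot $0$, and in each term the exponentials $e^{\hbar(u_0-u_j+\frac12)\alpha_{0j}}$ act as the identity, leaving $\prod_{1\le i<j\le 2k}e^{\hbar(u_i-u_j+\frac12)\alpha_{ij}}$ and $(\hbar\alpha)^{\wedge k}$ applied to the cyclically permuted arguments, integrated over $\Delta^{2k}$. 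The rotation (circular) symmetry of the integrand is also the correct tool; the paper uses it in the other direction, writing $\tau_{2k-1}$ with an inserted trivial factor $\int_0^1 ds$, splitting according to $u_l\le s\le u_{l+1}$, and rotating each piece onto $\Delta^{2k}$ with cyclically shifted arguments (the $\psi$-function argument of Feigin--Felder--Shoikhet), which matches the $2k$ terms of $-B\tau_{2k}$ one by one.

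Your final step, however, is a genuine error. No Stokes or total-derivative argument is needed here, and the one you propose cannot yield $\tau_{2k-1}$: pairing one factor $\hbar\alpha_{ij}$ of $(\hbar\alpha)^{\wedge k}$ with its exponential and integrating by parts consumes that factor, whereas $\tau_{2k-1}$ still contains all $k$ factors of $(\hbar\alpha)^{\wedge k}$. That trick is the mechanism of the other half of Theorem \ref{cw}, namely $b\tau_{2k}=\tau_{2k+1}$ (in the $n=1$ remark it is used to verify $B\tau_2=-b\tau_0$ directly, skipping $\tau_1$); invoking it here would steer you toward the combined identity $-B\tau_{2k}=b\tau_{2k-2}$ rather than the stated one, and the reflection antisymmetry $u\mapsto 1-u$ that kills the spurious term for $n=1$ has no straightforward analogue once several $\alpha_{ij}$ act on many slots. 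The correct way to remove the extra variable is trivial: after the rotation, all exponentials touching the slot carrying the unit are the identity, so the integrand does not depend on the rotation parameter; the $2k$ rotated simplices glue into $\Delta^{2k-1}\times[0,1]$, the extra integration contributes $\int_0^1 ds=1$, and what remains is exactly the $(2k-1)$-dimensional integral defining $\tau_{2k-1}$, the sign $(-1)^{k-1}=-(-1)^k$ accounting for the minus sign in the statement.
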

\begin{proof} First we write out the left hand side:
\[
\begin{split}
\tau_{2k-1}& (a_0\otimes\ldots\otimes a_{2k-1})= \\
&= (-1)^{k-1}\int_{\Delta^{2k-1}}\prod_{0\leq i<j\leq 2k-1}e^{\hbar(u_i-u_j+\frac{1}{2})\alpha_{ij}}\\
&\hspace{3cm}(\hbar\alpha)^{\wedge k}(a_0\otimes\ldots\otimes a_{2k-1})du_1\cdots du_{2k-1}\\
&=(-1)^{k-1}\int_0^1 ds\int_{\Delta^{2k-1}}\prod_{0\leq i<j\leq 2k-1}e^{\hbar(u_i-u_j+\frac{1}{2})\alpha_{ij}}\\
&\hspace{3cm}(\hbar\alpha)^{\wedge k}(a_0\otimes\ldots\otimes a_{2k-1})du_1\cdots du_{2k-1}\\
&=(-1)^{k-1}\sum_{l=0}^{2k-1}\int_{u_l}^{u_{l+1}}ds\int_{\Delta^{2k-1}}\prod_{0\leq i<j\leq 2k-1}e^{\hbar(u_i-u_j+\frac{1}{2})\alpha_{ij}}\\
&\hspace{3cm}(\hbar\alpha)^{\wedge k}(a_0\otimes\ldots\otimes a_{2k-1})du_1\cdots du_{2k-1}\\
&=(-1)^{k-1}\sum_{l=0}^{2k-1}(-1)^l\int_{\Delta^{2k}}\prod_{0\leq i<j\leq 2k-1}e^{\hbar(u_i-u_j+\frac{1}{2})\alpha_{ij}}\\
&\hspace{3cm}(\hbar\alpha)^{\wedge k}(a_0\otimes\ldots\otimes
a_{2k-1})du_1\cdots du_ldsdu_{l+1}\cdots du_{2k-1}.
\end{split}
\]
In the $l$-th term of the sum we now change variables
\[
\begin{split}
v_1&=s\\
v_2&=u_{l+1}+s\\
&\,\,\,\vdots \\
v_{2k-l} &=u_{2k-1}+s\\
v_{2k-l+1}&=u_1+s\\
&\,\,\,\vdots \\
v_{2k}&=u_{l}+s.
\end{split}
\]
Now let $\sigma_l\in S_{2k}$ be the cyclic permutation
$\sigma_l(1,\ldots 2k)=(l,\ldots, 2k,1,\ldots l-1)$. With this, the
$l$-th term can be written as
\[
  (-1)^l \int_{\sigma_l(\Delta^{2k})}\prod_{1\leq i<j\leq 2k}
  e^{\hbar\psi(v_{\sigma_l(i)}-v_{\sigma_l(j)})\alpha_{ij}}(\hbar\alpha)^{\wedge k}
  (a_0\otimes\ldots\otimes a_{2k-1})dv_1\cdots dv_{2k},
\]
where $\psi:\R\rightarrow [-1,1]$ is the function introduced in \cite[\S 2.4]{ffs}.
As in the proof of Lemma 2.2.~of loc.~cit., the expression above is equal to
\[
  (-1)^l\int_{\Delta^{2k}}\prod_{1\leq i<j\leq 2k}
  e^{\hbar(v_{i}-v_{j}+\frac{1}{2})\alpha_{ij}}(\hbar\alpha)^{\wedge k}
  (a_l\otimes\ldots\otimes a_{2k-1}\otimes a_0\otimes\ldots a_{l-1})dv_1\cdots dv_{2k}.
\]
On the other hand we have
\[
\begin{split}
B\tau_{2k}(a_0\otimes\ldots\otimes
a_{2k-1})&=\sum_{l=0}^{2k-1}(-1)^l\tau_{2k}(1\otimes a_l\otimes
\ldots\otimes a_{2k-1}\otimes a_0\otimes \ldots\otimes a_{l-1})\\
&=(-1)^k\sum_{l=0}^{2k-1}(-1)^l\mu_{2k-1}\int_{\Delta^{2k}}\prod_{1\leq j<l\leq 2k}e^{\hbar(u_i-u_j+\frac{1}{2})\alpha_{ij}}\\
&\hspace{0.5cm}\times (\hbar\alpha)^{\wedge k}(a_l\otimes
\ldots\otimes a_{2k-1}\otimes a_0\otimes \ldots\otimes
a_{l-1})du_1\ldots du_{2k}.
\end{split}
\]
One finally concludes that the two sides of the claimed equality coincide.
\end{proof}

\begin{lemma}
One has $b\tau_{2k}=\tau_{2k+1}$.
\end{lemma}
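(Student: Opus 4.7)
The plan is to mirror the approach of the previous lemma, but now running the argument in reverse: that proof up-dimensioned $\Delta^{2k-1}$ to $\Delta^{2k}$ by splitting an extra variable over the subintervals $[u_l,u_{l+1}]$ and identifying each piece with a cyclic copy of $\tau_{2k}$; here I want to expand $b\tau_{2k}$ into a collection of integrals over $\Delta^{2k}$ and glue them together into a single integral over $\Delta^{2k+1}$ representing $\tau_{2k+1}$. Concretely, setting $a=a_0\otimes\cdots\otimes a_{2k+1}$, I first write
\[
b\tau_{2k}(a)=\sum_{l=0}^{2k}(-1)^l\tau_{2k}(\ldots a_l\star a_{l+1}\ldots)-\tau_{2k}(a_{2k+1}\star a_0\otimes a_1\otimes\cdots\otimes a_{2k}),
\]
substitute the Moyal product $a_l\star a_{l+1}=m\circ\exp(\tfrac{\hbar}{2}\alpha)(a_l\otimes a_{l+1})$, and apply Leibniz to the operators $\alpha_{ij}$ hitting the fused slot. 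Each summand then becomes an integral over $\Delta^{2k}$ involving all $2k+2$ original entries and carrying an additional exponential factor $\exp(\tfrac{\hbar}{2}\alpha_{l,l+1})$ that records the Moyal product.

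Next I recognize this extra factor as the restriction of $\exp(\hbar(u_l-u_{l+1}+\tfrac12)\alpha_{l,l+1})$ to the face $\{u_l=u_{l+1}\}$ of $\Delta^{2k+1}$, so the $l$-th term of $b\tau_{2k}$ matches the restriction of the $\tau_{2k+1}$-integrand to that face. To glue these face integrals into a single bulk integral on $\Delta^{2k+1}$, I rewrite one of the $k+1$ wedge factors in $(\hbar\alpha)^{\wedge(k+1)}$ as a total derivative $-\partial_{u_j}$ of a suitable exponential (using $u_0=0$ and the antisymmetry $\alpha_{ji}=-\alpha_{ij}$), then apply the fundamental theorem of calculus and collect the boundary terms arising at $u_l=u_{l+1}$. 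Using a cyclic permutation analogous to the $\sigma_l$ of the previous lemma to align indices, these boundary contributions reassemble into the alternating sum defining $b\tau_{2k}$, while the operator $\pi_{2k}=1\otimes(\hbar\alpha)^{\wedge k}$ is promoted to the fully antisymmetric $(\hbar\alpha)^{\wedge(k+1)}$ on all $2k+2$ slots.

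The main obstacle will be the noncommutativity of the operators $\alpha_{ij}$: one cannot simply replace $\hbar\alpha_{ij}$ by $-\partial_{u_j}$ inside a product of non-commuting exponentials, because other exponentials of $\alpha_{i,l+1}$ also depend on $u_{l+1}$. To handle this I plan to use the same $\psi$-function manipulation invoked in the proof of the previous lemma (following \cite[\S 2.4]{ffs}), which reorganises the non-commuting exponentials modulo total derivatives. Careful sign accounting --- balancing the Hochschild signs $(-1)^l$, the cyclic sign $(-1)^{2k+1}$, the global $(-1)^k$ from the definition of $\tau_{2k+1}$, and the antisymmetrization inherent in $(\hbar\alpha)^{\wedge(k+1)}$ --- is the main bookkeeping hurdle; the sign correction noted in Remark~\ref{rmk:sign} is precisely what makes everything line up.
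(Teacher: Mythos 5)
Your overall architecture is the same as the paper's: the paper packages the argument as Stokes' theorem for an auxiliary $2k$-form $\eta$ on the simplex $\Delta^{2k+1}$, with $b\tau_{2k}=\int_{\partial\Delta^{2k+1}}\eta$ (the faces $u_l=u_{l+1}$, $u_1=0$, $u_{2k+1}=1$ producing the Moyal-product/Hochschild terms exactly as in your step identifying $e^{\hbar\alpha_{l,l+1}/2}$ with the face restriction) and $\int_{\Delta^{2k+1}}d\eta=\tau_{2k+1}$; you propose to run the same computation in the opposite direction, gluing the face integrals into the bulk integral by the fundamental theorem of calculus. So the route is essentially identical.

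Where your plan goes astray is in the diagnosis of the ``main obstacle'' and the proposed cure. There is no noncommutativity problem: the operators $\alpha_{ij}$ are constant-coefficient bidifferential operators acting in fixed slots, so they all commute with one another --- a fact the paper uses explicitly --- and consequently the $\psi$-function device from the previous lemma (which handles the cyclic change of variables there, not any ordering issue) is not the relevant tool and would not help here. The genuine subtlety, which your sketch does not name, is that differentiating the exponential product in $u_i$ does not produce a single wedge factor: since every factor $e^{\hbar(u_i-u_s+\frac12)\alpha_{is}}$ depends on $u_i$, one gets $\partial_{u_i}\prod e^{\hbar(u_j-u_l+\frac12)\alpha_{jl}}=\hbar\sum_{s\neq i}\alpha_{is}\cdot\prod e^{\hbar(u_j-u_l+\frac12)\alpha_{jl}}$, so your plan to ``rewrite one of the $k+1$ wedge factors of $(\hbar\alpha)^{\wedge(k+1)}$ as a total derivative'' fails as literally stated. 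What saves the argument is the algebraic identity the paper proves at the end:
\[
  \sum_{i=1}^{2k+1}(-1)^i\sum_{s=0}^{2k+1}\hbar\,\alpha_{is}\,\pi^i_{2k}
  \;=\;(\hbar\alpha)^{\wedge(k+1)},
\]
where the $s=0$ contributions assemble to $(\hbar\alpha)^{\wedge(k+1)}$ and the terms with $s\geq 1$ cancel pairwise in the alternating sum over $i$ --- a cancellation that relies precisely on the commutativity of the $\alpha_{ij}$ and on the number of such terms being even. Your proposal would be complete once this cancellation lemma replaces the $\psi$-function step; without it, the matching of the differentiated integrand with the $\tau_{2k+1}$-integrand is unjustified.
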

\begin{proof}
The proof of the claim proceeds along the lines of the proof of Proposition 2.1.~in
\cite{ffs}. Introduce the differential form
$\eta\in\Omega^{2k}(\Delta^{2k+1},\normC^{2k+1}(\fWeyl_{2n}))$ by
\[
  \eta:=(-1)^k\mu_{2k+1}\sum_{i=1}^{2k+1}\left.\prod_{0\leq j<l\leq 2k+1}
  e^{\hbar(u_j-u_l+\frac{1}{2})\alpha_{jl}}\right|_{u_0=0}
  du_1\wedge\ldots\wedge \widehat{du_i}\wedge\ldots\wedge
  du_{2k+1}\pi^i_{2k},
\]
where $\pi^i_{2k}\in \End \big( \fWeyl_{2n}^{\otimes 2k+2}\big)$ is
$(\hbar\alpha)^{\wedge k}$ acting on all slots in the tensor product
except the zero-th and
 the $i$-th. It then follows that
\[
\begin{split}
  b\tau_{2k}&=\int_{\partial\Delta^{2k+1}}\eta =\int_{\Delta^{2k+1}}d\eta.
\end{split}
\]
We have
\[
 d\eta=(-1)^k\mu_{2k+1}\sum_{i=1}^{2k+1}(-1)^i\sum_{s=0}^{2k+1}\hbar\alpha_{is}\pi^i_{2k}
 \prod_{0\leq j<l\leq 2k+1}e^{\hbar(u_j-u_l+\frac{1}{2})\alpha_{jl}}du_1\wedge\ldots\wedge du_{2k+1}.
\]
We now claim that
\[
  \sum_{i=1}^{2k+1}(-1)^i\sum_{s=0}^{2k+1}\hbar\alpha_{is}\pi^i_{2k}=(\hbar\alpha)^{\wedge (k+1)}
  \in \operatorname{End}\big( \fWeyl_{2n}^{\otimes(2k+2)} \big).
\]
Indeed one can split the sum as
\[
  \sum_{i=1}^{2k+1}(-1)^i\sum_{s=0}^{2k+1}\hbar\alpha_{is}\pi^i_{2k}=
  \sum_{i=1}^{2k+1}(-1)^i\hbar\alpha_{i0}\pi^i_{2k}+
  \sum_{i=1}^{2k+1}(-1)^i\hbar\sum_{s=1}^{2k+1}\alpha_{is}\pi^i_{2k}.
\]
The first part equals $(\hbar\alpha)^{\wedge(k+1)}$, whereas the
second equals zero: the $\alpha_{ij}$ all commute among each other,
the number of terms $2k(2k-1)$ is even, they cancel pairwise.
\end{proof}
\noindent
This completes the proof of Theorem \ref{cw}. As a corollary we have of course:
\begin{corollary}
\label{cor:cyclic-tau}
The cochains $\tau_{2k}$, $0\leq k\leq n$
combine to define a cocycle
\[
 (\tau_0,\tau_2,\ldots,\tau_{2n})\in
 \left( \Tot^{2n} \calB \overline{C}^\bullet (\Weyl_{2n}),b+B\right),
\]
\end{corollary}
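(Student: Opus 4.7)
The plan is to read off the corollary as an unpacking of Theorem \ref{cw} plus one Hochschild boundary computation at the top end. First I would spell out what it means for a tuple $(\tau_0, \tau_2, \ldots, \tau_{2n})$ with $\tau_{2k} \in \overline{C}^{2k}(\Weyl_{2n})$ to represent a degree-$2n$ cocycle of $\operatorname{Tot}^\bullet \mathcal{B} \overline{C}^\bullet$: the $(b+B)$-closedness decomposes into the $n$ intertwining identities $b\tau_{2k} + B\tau_{2k+2} = 0$ for $0 \leq k \leq n-1$, together with the top-degree Hochschild cocycle condition $b\tau_{2n} = 0$. The bottom condition $B\tau_0 = 0$ is automatic, since $\overline{C}^{-1}$ is trivial.

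The intertwining identities are then immediate from Theorem \ref{cw}, which asserts $-B\tau_{2k+2} = \tau_{2k+1} = b\tau_{2k}$, i.e.\ exactly $b\tau_{2k} + B\tau_{2k+2} = 0$. This reduces the whole statement to the single remaining identity $b\tau_{2n} = 0$, which I expect to be the only substantive step.

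To handle this last identity I would choose one of two routes. The direct route is to invoke \cite{ffs}: by Remark \ref{rmk:sign}, $\tau_{2n}$ agrees up to a sign with the Hochschild $2n$-cocycle of Feigin--Felder--Shoikhet, hence is Hochschild-closed. A self-contained alternative is to apply the lemma $b\tau_{2k} = \tau_{2k+1}$ at $k = n$; the resulting formal expression for $\tau_{2n+1}$ carries a factor $(\hbar\alpha)^{\wedge(n+1)}$, which vanishes identically on $\Weyl_{2n}^{\otimes(2n+2)}$ because $\alpha$ is the Poisson bivector of a $2n$-dimensional symplectic vector space and hence $\alpha^{\wedge(n+1)} = 0$ for dimensional reasons. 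Either route closes the argument and exhibits $(\tau_0, \tau_2, \ldots, \tau_{2n})$ as the desired $(b+B)$-cocycle of total degree $2n$.
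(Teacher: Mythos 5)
Your proposal is correct and takes essentially the same route as the paper: the corollary is stated there without further argument as an immediate consequence of Theorem \ref{cw}, with the remaining top identity $b\tau_{2n}=0$ handled exactly as you suggest, either by identifying $\tau_{2n}$ (up to the sign of Remark \ref{rmk:sign}) with the Feigin--Felder--Shoikhet Hochschild cocycle, or equivalently by running the lemma $b\tau_{2k}=\tau_{2k+1}$ at $k=n$ and using $(\hbar\alpha)^{\wedge(n+1)}=0$ on the $2n$-dimensional symplectic vector space. Your explicit decomposition of the $(b+B)$-closedness into the intertwining identities, the top Hochschild condition, and the trivial $B\tau_0$ term is just a more careful spelling-out of what the paper leaves implicit.
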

\begin{remark}
In particular, $b\tau_{2n}=0$, which is the statement in \cite{ffs} that $\tau_{2n}$
is a Hochschild cocycle, generating the Hochschild cohomology in degree $2n$.
In other words, we have completed this Hochschild cocycle $\tau_{2n}$ to a full
cyclic cocycle
$(\tau_0,\tau_2,\ldots,\tau_{2n})$ in the $(b,B)$-complex. Notice the similarity of
this cocycle with the so-called JLO-cocycle \cite{jlo}.
\end{remark}
\subsection{The $\mathfrak{sp}_{2n}$-action}
For any algebra $A$, denote by $\mathfrak{gl}(A)$ the associated Lie algebra given by
$A$ equipped with the Lie bracket $[a_1,a_2]=a_1a_2-a_2a_1$. This Lie algebra acts on
the Hochschild chains by
\[
  L_a(a_0\otimes\ldots\otimes a_k)=
  \sum_{i=0}^k(a_0\otimes\ldots\otimes [a,a_i]\otimes\ldots\otimes a_k).
\]
The Cartan formula $L_a=b\circ \iota_a+\iota_a\circ b$ holds with respect to the
Hochschild differential, if we define $\iota_a:C_k(A)\to C_{k+1}(A)$ by
\[
 \iota_a(a_0\otimes\ldots\otimes a_k) =
 \sum_{i=0}^k(-1)^{i+1}(a_0\otimes \ldots \otimes a_{i}\otimes a\otimes
 a_{i+1}\otimes\ldots\otimes a_k).
\]
Dually, these formulas induce Lie algebra actions of $\mathfrak{gl}(A)$ on
$C^\bullet(A)$ and $\normC^\bullet(A)$.

Recall that $\mathfrak{sp}_{2n}$ acts on $\Weyl_{2n}$ by inner derivations where we
identify $\mathfrak{sp}_{2n}$ with the homogeneous quadratic polynomials in
$\Weyl_{2n}$.
\begin{proposition}
\label{prop:relative-tau}
The cochains $\tau_{2k}\in \normC^{2k}(\Weyl_{2n})$, $0\leq k\leq n$ are invariant and
basic with respect to $\mathfrak{sp}_{2n}$, i.e.,
\[
  L_a\tau_{2k}=0 \quad \text{and} \quad \iota_a\tau_{2k}=0 \quad
  \text{for all $a\in \mathfrak{sp}_{2n}$}.
\]
\end{proposition}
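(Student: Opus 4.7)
I would prove the two assertions separately.

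\textbf{Invariance ($L_a \tau_{2k} = 0$).} The cocycle $\tau_{2k}$ is manifestly $\operatorname{Sp}_{2n}$-equivariant in every ingredient: the Poisson tensor $\alpha$ is induced by the invariant symplectic form, and hence each $\alpha_{ij}$ and the wedge power $(\hbar\alpha)^{\wedge k}$ inherit this invariance; the Moyal product is equivariant; the simplex $\Delta^{2k}$ carries the trivial group action; and the evaluation $\mu_{2k}$ at $0 \in V$ is equivariant because the linear $\operatorname{Sp}_{2n}$-action fixes $0$. Consequently $\tau_{2k}$ is $\operatorname{Sp}_{2n}$-invariant. Since the infinitesimal $\mathfrak{sp}_{2n}$-action on $\Weyl_{2n}$ coincides with the inner derivation $[a, \cdot\,]$ after identifying $\mathfrak{sp}_{2n}$ with the quadratic polynomials in $\operatorname{S}(V^*)$, differentiating yields $L_a \tau_{2k} = 0$.

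\textbf{Basic property ($\iota_a \tau_{2k} = 0$).} My plan is to imitate the direct computation illustrated for $n=1$ in the preceding remark. Expanding
\[
  (\iota_a \tau_{2k})(b_0, \ldots, b_{2k-1}) = \sum_{i=0}^{2k-1}(-1)^{i+1}\tau_{2k}(b_0, \ldots, b_i, a, b_{i+1}, \ldots, b_{2k-1}),
\]
I would exploit two features of a quadratic $a \in \mathfrak{sp}_{2n}$: first, $a(0) = 0$, so after the $\mu_{2k}$-evaluation the $a$-slot contributes only through terms in which two derivatives have been pulled off $a$ by Poisson tensor contractions; second, $\operatorname{ad}_a$ preserves the polynomial grading, which lets me rewrite each exponential $e^{\hbar(u_i - u_j + 1/2)\alpha_{ij}}$ involving the $a$-slot as a total $u$-derivative. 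Integrating by parts over $\Delta^{2k}$ then localizes each term onto the codimension-one facets $\{u_i = u_{i+1}\}$; on these facets two consecutive insertions commute through the Moyal product, so the contributions from positions $i$ and $i+1$ pair up with opposite signs via the $(-1)^{i+1}$ factors and cancel, while the outer-boundary terms at $u_1 = 0$ and $u_{2k} = 1$ vanish on $\mu$-evaluation using $a(0) = 0$.

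The main obstacle is the combinatorial and sign bookkeeping: one must carefully interlock (i) the alternating sign of $\iota_a$, (ii) the boundary-facet orientations produced by integration by parts, and (iii) the antisymmetry of $(\hbar\alpha)^{\wedge k}$ when one of its bivector factors is contracted with $a$. A more abstract route via a cochain-level Cartan formula combined with the already-established invariance would only yield $(b+B)$-closedness of $\iota_a \tau_{2k}$, not its outright vanishing; so a hands-on argument in the spirit of the $n=1$ remark appears unavoidable.
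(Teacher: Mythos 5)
Your first half is fine: the $\operatorname{Sp}_{2n}$-equivariance of $\alpha$ (hence of each $\alpha_{ij}$ and of $(\hbar\alpha)^{\wedge k}$), of the simplex integral, and of evaluation at the fixed point $0$ does give $\operatorname{Sp}_{2n}$-invariance of $\tau_{2k}$, and since the infinitesimal action is by the inner derivations attached to quadratic elements, $L_a\tau_{2k}=0$ follows. Note, however, that the paper itself offers no computation at all here: its proof is the single remark that the argument is literally the one of Feigin--Felder--Shoikhet for $\tau_{2n}$ (their Thm.~2.2), so a self-contained verification would in any case have to reproduce that argument for both halves.

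The second half, $\iota_a\tau_{2k}=0$, is where your proposal has a genuine gap. The integration-by-parts mechanism you want to import from the $n=1$ remark (and from the proofs of $b\tau_{2k}=\tau_{2k+1}$ and $\tau_{2k-1}=-B\tau_{2k}$) works there only because the integrand contains an explicit factor $\hbar\alpha_{ij}$ multiplying the exponential, supplied by splitting one $\alpha$ off $(\hbar\alpha)^{\wedge(k+1)}$; it is exactly this factor that turns $\hbar\alpha_{ij}\,e^{\hbar(u_i-u_j+\frac12)\alpha_{ij}}$ into a total $u$-derivative. In $\tau_{2k}(b_0\otimes\cdots\otimes b_i\otimes a\otimes\cdots\otimes b_{2k-1})$ no such factor is attached to the inserted slot, and the grading-preservation of $\operatorname{ad}_a$ that you invoke does not produce one; what quadraticity of $a$ (together with $a(0)=0$, $da(0)=0$ and the $\mu_{2k}$-evaluation) actually gives is that exactly two contractions must hit the $a$-slot, which makes the integrand polynomial in the corresponding simplex variable rather than a total derivative. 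Consequently the localization onto facets does not occur as described, and the subsequent claim that contributions from insertion positions $i$ and $i+1$ cancel ``because consecutive insertions commute through the Moyal product'' is asserted rather than proved -- matching the signs $(-1)^{i+1}$, the facet orientations, and the antisymmetry of $(\hbar\alpha)^{\wedge k}$ is precisely the content of the statement, and it is exactly the bookkeeping you defer. As it stands, the proposal is a plan for the basic property, not a proof; to complete it you would need either to carry out that cancellation in detail or to follow the FFS-type argument the paper cites, which exploits the quadraticity of $a$ and the structure of $\pi_{2k}$ directly instead of an integration by parts over $\Delta^{2k}$.
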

\begin{proof}
 The proof is literally the same as for the Hochschild cocycle $\tau_{2n}$,
 cf.~\cite[Thm 2.2.]{ffs}.
\end{proof}
\noindent This property of the cocycle $(\tau_0,\ldots,\tau_{2n})\in
\Tot^{2n} \calB \overline{C}^\bullet (\Weyl_{2n})$ is important in
the next section where we apply the Fedosov construction to
globalize these cocycles to deformed algebras over arbitrary
symplectic manifolds.
\section{Cyclic cocycles on symplectic manifolds}
\label{Sec:CyCoSyMan}
Let $(M,\omega)$ be a symplectic manifold with symplectic form $\omega$. We study in
this section the cyclic cohomology of a deformation quantization
$\calA^\hbar$ of $(M, \omega)$. In particular, we construct an
explicit chain map from the space of differential forms on $M$ to
the space of cyclic cochains on the quantum algebra $\calA^\hbar$.
\subsection{Deformation quantization of symplectic manifolds}
For the convenience of the reader let us briefly review Fedosov's construction of a
deformation quantization of a symplectic manifold $(M, \omega)$.

We first extend the Weyl algebra $\Weyl (V)$ for a symplectic vector space
$(V, \omega)$ to $\pWeyl (V)$ and $\fWeyl (V)$. Let
$y^1, \cdots, y^{2n}$ be a symplectic basis of $V$ with
$y^{2i-1}=p_i$, $y^{2i}=q_i$ for $1\leq i\leq 2n$. Then $\pWeyl (V)$ consists of
elements of the form
\[
  \sum_{i_1, \cdots, i_2n, i\geq 0}\hbar^i a_{i, i_1,\cdots, i_{2n}}
  y^{i_1}\cdots y^{2n}\quad \text{with $a_{i, i_1, \cdots, i_{2n}}$ constant}.
\]
It is easy to check that the product $\star$ on $\fWeyl$ extends to a well
defined associative product on $\pWeyl(V)$. Furthermore, we define
$\fWeyl (V)$ to be $\pWeyl (V)[\hbar^{-1}]$.

Observe that the standard symplectic Lie group
$\operatorname{Sp}(2n, \mathbb{R})$ lifts to act on $\fWeyl (V)$ and $\pWeyl
(V)$. Let $FM$ be the symplectic frame bundle of $TM$, which is a
principal $\operatorname{Sp}(2n, \mathbb{R})$-bundle. We consider the following
associated bundle $\calW=FM\times _{\operatorname{Sp}_{2n}} \pWeyl
{V}$, which is usually called the Weyl algebra bundle. We fix a
symplectic connection $\nabla$ on $TM$, which lifts to a connection
$\tilde{\nabla}$ on $\calW$. Let $R\in \Omega^2\big( M; \mathfrak{sp}(TM)
\big)$ be the curvature of $\nabla$. Then $\tilde{\nabla}^2$ is
equal to $\frac{1}{\hbar}[\tilde{R}, -]\in \Omega^2 \big( M;
\End(\calW) \big)$, where $\tilde{R}$ is
obtained from $R$ via the embedding $\mathfrak{sp}_{2n}\hookrightarrow
\pWeyl_{2n}$.

Assign $deg(y^i)=1$, and $deg(\hbar)=2$, and denote $\fWeyl_{\geq
k}$ be subset of $\fWeyl$ with degree greater than or equal $k$.
Fedosov proved in \cite{fe} that there exists a smooth section
$\tilde{A}\in\Omega^1(M; \calW_{\geq 3})$ such that $D=
\tilde{\nabla}+ \frac{1}{\hbar}[A, -]$ defines a flat connection on
$\calW$, which means that $D^2=0\in \Omega^2 \big( M; \End(\calW)
\big)$. This implies that the Weyl curvature $\Omega$ of $D$, which
is defined by $\Omega=\tilde{R}+\tilde{\nabla}(A)+\frac{1}{2
\hbar}[A,A]$ is in the center of $\calW$ since $D^2=
\frac{1}{\hbar}[\Omega,-]$. Since the center of $\mathbb{W}^+_{2n}$
is given by $\C[[\hbar]]$, $\Omega= -\omega+\hbar\omega_1+\cdots$ is
a closed form in $\Omega^2 \big( M; \C[[\hbar]] \big)$. By \cite{fe}
it follows that the sheaf $\calA^\hbar_D$ of flat sections with
respect to $D$ is isomorphic to $\calC^\infty_M[[\hbar]]$ as a
$\C[[\hbar]]$-module sheaf. Moreover, the induced product on
$\calC^\infty(M)[[\hbar]]$ defines a star product on $M$. The
connection $D$ is usually called a Fedosov connection on $\calW$. In
the following we will refer to $\calA^\hbar_D (M)$ as the  quantum
algebra  associated to $D$, and will often denote it for short by
$\calA^\hbar_D$ or $\calA^\hbar$ if no confusion can arise. The
algebra of sections with compact support of the sheaf
$\calA^\hbar_D$ will be denoted by $\calA^\hbar_\text{\rm\tiny
cpt}$. Finally, let us remark that gauge equivalent $D$ and $D'$
define isomorphic sheaves of algebras $\calA^\hbar_D$ and
$\calA^\hbar_{D'}$, and that any formal deformation quantization of
$M$ can be obtained in this way.
\subsection{Shuffle product on Hochschild chains}
In this part, we review the construction of shuffle product on
Hochschild chains. Let $A$ be a graded algebra with a degree $1$
derivation $\nabla$. Recall that the shuffle product between $a_0
\otimes \cdots \otimes a_p \in \normC_p (A) $ and $b_0 \otimes
\cdots \otimes  b_q \in \normC_q (A)$ is defined to be
\[
\begin{split}
  (a_0 \otimes & \, \cdots \otimes a_p)  \times (b_0 \otimes \cdots \otimes b_q) = \\
  & = (-1)^{\deg(b_0)(\sum_j \deg(a_j))} \sh_{p,q} (a_0b_0 \otimes
  a_1 \otimes \cdots \otimes a_p \otimes b_1 \otimes  \cdots \otimes b_q),
\end{split}
\]
where
\[
  \sh_{p,q}(c_0 \otimes \cdots \otimes c_{p+q})=
  \sum_{\sigma\in \operatorname{S}_{p,q}}\sgn(\sigma) \, c_0 \otimes c_{\sigma (1)}
  \otimes \cdots \otimes c_{\sigma (p+q)}
\]
with sum over all $(p,q)$-shuffles in $\operatorname{S}_{p+q}$.

In \cite[Sec.~2]{ef}, {\sc Engeli--Felder} considered differential
graded algebras, and studied the properties of the shuffle product
of a Hochschild chain with a Maurer-Cartan element in the
differential graded algebra. Due to the needs of our application
here to deformation quantization, we consider a generalized
Maurer-Cartan element $\omega$ which means a degree $1$ element of
$A$ such that $\nabla\omega + \omega^2/\hbar+\tilde{R}=\Omega$ is in
the center of $A$ and $\tilde{R}$ is a degree 2 element. We prove
the following analogous properties of shuffle products with $\omega$
as in \cite{ef}.

\begin{lemma}
\label{lem:shuffle}
  Let $\omega\in A$ be such that $\Omega-\tilde{R}=\nabla\omega+\omega^2/\hbar$. Put
  $(\omega)_k :=1 \otimes \omega \otimes \cdots \otimes \omega\in
  \normC_k(A)$. Then one has for all
  $a=a_0 \otimes \cdots \otimes a_p\in \normC_p(A)$
\begin{equation}
\label{eq:b-shuffle}
\begin{split}
  b(a\times (\omega)_k)= \; &b(a)\times (\omega)_k+(-1)^pa\times
  b(\omega)_k\\
  &-(-1)^p\sum_{i=0}^k (a_0\otimes \cdots \otimes [\omega, a_i] \otimes \cdots
  \otimes a_p)\times (\omega)_{k-1},
\end{split}
\end{equation}
where $[a, a']$ for $a,a'\in A$ is the graded commutator between $a$ and $a'$.
\end{lemma}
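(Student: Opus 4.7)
The plan is to verify the identity by a direct combinatorial expansion, parallel to the treatment in \cite[\S 2]{ef}. The lemma is essentially the ``Leibniz rule up to commutator corrections'' satisfied by the shuffle product with respect to the Hochschild boundary on a non-commutative (graded) algebra; the Maurer--Cartan-type hypothesis on $\omega$ is not actually used in the identity itself but fixes the setting in which the lemma will later be applied to the Fedosov--Weyl bundle.

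First, I would write $a \times (\omega)_k$ explicitly as a signed sum, over $(p,k)$-shuffles $\sigma$, of tensors $a_0 \otimes c_{\sigma(1)} \otimes \cdots \otimes c_{\sigma(p+k)}$, where $c_1,\ldots,c_p$ are $a_1,\ldots,a_p$ and $c_{p+1},\ldots,c_{p+k}$ are the $k$ copies of $\omega$; the sign is the shuffle sign twisted by the Koszul rule for $\omega$ of degree $1$ and $a_i$ of degree $|a_i|$. Then I would apply the Hochschild boundary $b$ and classify each resulting boundary term by which pair of consecutive entries is being contracted: (a) both from the $a$-chain, (b) both from $(\omega)_k$, or (c) one from each (``mixed'').

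The terms of type (a) reassemble, after re-summing over shuffles, into $b(a) \times (\omega)_k$, and those of type (b) into $(-1)^p\, a \times b((\omega)_k)$, with the sign $(-1)^p$ being the Koszul sign for moving the boundary operator past $a$. The key combinatorial step concerns the mixed terms (c): for each $i$ and each arrangement of the remaining $k-1$ copies of $\omega$, there are precisely two mixed contributions at the position of $a_i$, one in which an $\omega$ sits immediately to the right of $a_i$ (producing $a_i\omega$) and one in which an $\omega$ sits immediately to the left (producing $\omega a_i$). Comparing the shuffle signs and Hochschild signs of these two configurations, they combine into the graded commutator $-(-1)^p[\omega,a_i]=-(-1)^p(\omega a_i - (-1)^{|a_i|} a_i \omega)$; re-summing over the shuffled positions of the remaining $k-1$ copies of $\omega$ yields exactly the term $-(-1)^p (a_0 \otimes \cdots \otimes [\omega,a_i] \otimes \cdots \otimes a_p) \times (\omega)_{k-1}$, and summing over $i$ produces the claimed correction.

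The main obstacle is keeping the signs consistent throughout. One must simultaneously track the shuffle sign, the Hochschild boundary signs $(-1)^j$ at position $j$ (including the cyclic wrap-around term), and the Koszul signs arising from $|\omega|=1$ and the grading on $A$. A convenient bookkeeping device is to verify the identity first in the baby case $p=0$, $k=1$ by hand, to fix all conventions, and then run the general argument; once the conventions are pinned down, the remaining sign cancellations and shuffle-resummation manipulations are routine.
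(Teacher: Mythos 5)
Your proposal is correct and is essentially the paper's own route: the paper proves this lemma simply by invoking \cite[Lemma 2.6]{ef}, whose proof is exactly the direct expansion you describe (expand the shuffle, sort the Hochschild contractions into $a$--$a$, $\omega$--$\omega$, and mixed pairs, and recombine the mixed ones — including the cyclic wrap-around at $a_0$ together with the boundary terms of $(\omega)_k$ — into the graded commutator corrections). As you note, the Maurer--Cartan-type hypothesis on $\omega$ plays no role here and only enters in the subsequent Lemma \ref{lem:b-omega}.
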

\begin{proof}
This is literally the same as the proof of \cite[Lemma 2.6]{ef}.
\end{proof}

\begin{lemma}
\label{lem:b-omega} For $\omega$ as in Lemma \ref{lem:shuffle} and $k\geq 1$
\[
b(\omega)_0=0 \quad \text{ and } \quad b (\omega)_k=\hbar
\nabla((\omega)_{k-1})+\hbar\sum_{j=1}^{k-1}(-1)^j1 \otimes \omega
\otimes \cdots\otimes  (\Omega-\tilde{R}) \otimes \cdots \otimes
\omega .
\]
\end{lemma}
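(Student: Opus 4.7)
The $k=0$ case is immediate: $(\omega)_0 = 1$ is a cyclic $0$-chain, and $b(1) = 0$ since the only term in the Hochschild differential is $1 \cdot 1 - 1 \cdot 1 = 0$ (or vanishes trivially as $(\omega)_0$ is a normalized chain). So the content lies in the case $k \geq 1$, which I propose to prove by a direct computation entirely analogous to Engeli--Felder's Lemma 2.6/2.7, tracking the single extra term coming from $\tilde R$.

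For $k \geq 1$, I would write out $b(\omega)_k = b(1 \otimes \omega \otimes \cdots \otimes \omega)$ term by term using the graded Hochschild differential (recall $\omega$ has degree $1$). Two of the resulting terms are endpoint contributions: the $i=0$ summand contributes $\pm (1\cdot \omega) \otimes \omega^{\otimes(k-1)}$, and the cyclic summand contributes $\pm (\omega \cdot 1) \otimes \omega^{\otimes(k-1)}$. Since $1 \cdot \omega = \omega \cdot 1 = \omega$, a careful check of the Koszul signs (the last $\omega$ passes across $k-1$ factors of degree $1$) shows these cancel. The remaining $k-1$ contributions are the interior multiplications, of the form $(-1)^j \, 1 \otimes \omega \otimes \cdots \otimes \omega^2 \otimes \cdots \otimes \omega$, with the factor $\omega^2$ in the $j$-th slot among the $\omega$'s, for $j = 1,\ldots, k-1$.

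Now I would use the generalized Maurer--Cartan hypothesis in the form $\omega^2 = \hbar(\Omega - \tilde R) - \hbar \, \nabla\omega$ to split each such interior term into an ``$\Omega-\tilde R$'' part and a ``$-\nabla\omega$'' part. The $\hbar(\Omega - \tilde R)$ pieces assemble directly into the claimed sum $\hbar \sum_{j=1}^{k-1}(-1)^j \, 1\otimes \omega \otimes \cdots \otimes (\Omega - \tilde R) \otimes \cdots \otimes \omega$. The $-\hbar\,\nabla\omega$ pieces, summed over the $k-1$ interior slots, must then be shown to equal $\hbar\,\nabla((\omega)_{k-1})$. This reduces to the graded Leibniz rule for the extension of the degree $1$ derivation $\nabla$ to the tensor product, combined with $\nabla 1 = 0$, which precisely gives $\nabla(1 \otimes \omega^{\otimes(k-1)}) = \sum_{j=1}^{k-1} (-1)^{\varepsilon_j} \, 1 \otimes \omega \otimes \cdots \otimes \nabla\omega \otimes \cdots \otimes \omega$.

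The main obstacle is a careful bookkeeping of the Koszul signs, which enter from three independent sources: the $(-1)^i$ signs of the Hochschild differential, the extra signs produced by commuting the degree $1$ factors $\omega$ past each other (needed for the cyclic endpoint and for identifying the position of $\omega^2$), and the signs in the Leibniz rule for $\nabla$. Once one verifies that the sign pattern of the $-\nabla\omega$ contributions from position $j$ of the Hochschild differential matches the sign pattern of $\nabla$ acting in position $j$ of $(\omega)_{k-1}$, the lemma follows. This is the only nontrivial step, and it is purely formal; the $\tilde R$ contribution is simply carried along inside the central element $\Omega - \tilde R$ and requires no extra argument beyond the one used in \cite{ef}.
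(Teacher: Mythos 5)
Your proposal is correct and follows exactly the paper's own argument: expand the Hochschild differential on $1\otimes\omega\otimes\cdots\otimes\omega$, observe that the two endpoint terms cancel via the Koszul sign $(-1)^k(-1)^{k-1}$, and substitute $\omega^2=\hbar(\Omega-\tilde R-\nabla\omega)$ in the interior slots, so the $\nabla\omega$ pieces assemble (by the graded Leibniz rule) into $\hbar\nabla((\omega)_{k-1})$ and the rest gives the $(\Omega-\tilde R)$ sum. No substantive difference from the paper's proof.
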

\noindent
Let us remark at this point that Lemma \ref{lem:b-omega} is slightly different
from
\cite[Lemma 2.5]{ef} because of the existence of $\Omega$.
\begin{proof}
  First check $b((\omega)_0)=b(1)=0$. Then observe that for
  $k\geq 1$
  \[
  \begin{split}
  b (\omega)_k  = \, & b(1\otimes \omega \otimes \cdots \otimes \omega)=
  \omega\otimes \cdots \otimes \omega+ \\
  & +\sum_{j=1}^{k-1}(-1)^j 1\otimes \omega \otimes
  \cdots \otimes \omega^2 \otimes \cdots \otimes \omega+
  (-1)^k(-1)^{k-1}\omega \otimes \cdots \otimes \omega\\
  = \, &
  \sum_{j=1}^{k-1}(-1)^j1 \otimes \omega \otimes \cdots \otimes\hbar (\Omega-\tilde{R}-\nabla\omega)
  \otimes \cdots \otimes \omega\\
  = \, &\hbar \nabla((\omega)_{k-1})+\hbar\sum_{j=1}^{k-1}(-1)^j
  1\otimes \omega\otimes \cdots \otimes (\Omega-\tilde{R})\otimes \cdots \otimes \omega.
\end{split}
\]
\end{proof}

\begin{lemma}
\label{lem:B-shuffle}
 For $\omega$ as in Lemma \ref{lem:shuffle} and every  $a\in \normC_l(A)$
one has
 \[ \normB (a\times (\omega)_k)= \normB a \times (\omega)_k .\]
\end{lemma}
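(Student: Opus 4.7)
The plan is to combine two observations. First, by a direct calculation, $\normB((\omega)_k)$ vanishes in the normalized complex. Using the standard formula for the Connes operator,
\[
 \normB(1 \otimes \omega \otimes \cdots \otimes \omega) = \sum_{i=0}^{k}(-1)^{ik}\, 1 \otimes c_i \otimes c_{i+1} \otimes \cdots \otimes c_{i-1},
\]
with $(c_0,c_1,\ldots,c_k)=(1,\omega,\ldots,\omega)$, every term in the sum either places a $1$ from $c_0$ in an interior slot (for $i\geq 1$) or equals $1\otimes 1\otimes \omega^{\otimes k}$ (for $i=0$); in either case it is killed by the normalization $A/\C$, so $\normB((\omega)_k)=0$.

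Second, I would invoke the graded Leibniz rule
\[
 \normB(\alpha \times \beta) = \normB(\alpha)\times \beta + (-1)^{|\alpha|}\alpha\times \normB(\beta)
\]
for $\alpha,\beta$ in the normalized Hochschild complex. This is the classical statement that the shuffle product and $\normB$ make the $(b,B)$-bicomplex into a mixed DG algebra; I would verify it by a combinatorial argument parallel to the one underlying \cite[Sec.~2]{ef}, interpreting both sides as signed sums indexed by pairs consisting of a $(p,q)$-shuffle together with a cyclic rotation, and matching terms via the bijection that peels off the newly inserted leading $1$ before reshuffling. Applying this identity to $\alpha=a$ and $\beta=(\omega)_k$, and using the vanishing above, gives
\[
 \normB(a\times(\omega)_k) = \normB(a)\times(\omega)_k + (-1)^{|a|}a\times \normB((\omega)_k) = \normB(a)\times(\omega)_k,
\]
as required.

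The main obstacle is the derivation property of $\normB$ with respect to the shuffle product: the Koszul signs produced when cyclically rotating a shuffled tensor are delicate, and careful bookkeeping is needed to confirm that the reindexing produces exactly the sum $\normB(\alpha)\times\beta + (-1)^{|\alpha|}\alpha\times\normB(\beta)$ with no error terms. The vanishing step, by contrast, is immediate from the normalization convention and requires no hypothesis on $\omega$ beyond being a single algebra element. Since $\omega$ enters into $(\omega)_k$ only as a formal tensor slot, no properties of $\omega$ (such as the generalized Maurer--Cartan identity used in Lemma~\ref{lem:shuffle} and Lemma~\ref{lem:b-omega}) are needed for the present identity.
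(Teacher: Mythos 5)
Your step (1) is fine: every cyclic rotation of $(\omega)_k=1\otimes\omega\otimes\cdots\otimes\omega$ places the unit in a slot of index $\geq 1$, so $\normB(\omega)_k=0$ in the normalized complex. The genuine gap is step (2): the graded Leibniz rule $\normB(\alpha\times\beta)=\normB\alpha\times\beta+(-1)^{|\alpha|}\alpha\times\normB\beta$ is not a classical chain-level fact that can simply be invoked, and in the generality you state it is false. Already for $\alpha=a_0\in\normC_0(A)$ and $\beta=b_0\otimes b_1\in\normC_1(A)$ one computes
\[
\normB(\alpha\times\beta)=1\otimes a_0b_0\otimes b_1-1\otimes b_1\otimes a_0b_0,
\]
whereas
\[
\normB\alpha\times\beta+\alpha\times\normB\beta
= b_0\otimes a_0\otimes b_1-b_0\otimes b_1\otimes a_0
+a_0\otimes b_0\otimes b_1-a_0\otimes b_1\otimes b_0,
\]
and these are different normalized chains even for $A=\Bbbk[x,y,z]$ commutative (take $a_0=x$, $b_0=y$, $b_1=z$). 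The classical Rinehart--Loday statement is that $B$ is a derivation for the shuffle product \emph{on Hochschild homology of a commutative algebra}; at the chain level the identity only holds up to correction terms involving the cyclic shuffle product. Moreover the algebra relevant here, $\calW\otimes_{\calC^\infty(M)}\Omega^\bullet(M)((\hbar))$, is noncommutative, and for it not even $b$ is a derivation for $\times$ --- that is precisely why Lemma~\ref{lem:shuffle} carries the commutator correction terms. So the combinatorial verification you defer would not close: honest bookkeeping produces error terms, not the clean Leibniz rule.

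The identity you need does hold, but only in the special situation where the second factor has the unit as its zeroth entry; combined with your step (1), that special case \emph{is} the lemma, so the proposed reduction is circular and the "delicate sign bookkeeping" you postpone is the entire content. The paper handles it by direct computation: write $a\times(\omega)_k$ as the signed sum of $(l,k)$-shuffles (using $a_0\cdot 1=a_0$), apply $\normB$, split the cyclic rotations according to whether they start at an $a$-slot or an $\omega$-slot, and recombine the two families --- the unit inserted by $\normB$ absorbing the leading $1$ of $(\omega)_k$ --- into the $(l+1,k)$-shuffles constituting $\normB a\times(\omega)_k$. Your closing observation that no property of $\omega$ (Maurer--Cartan or otherwise) is used is correct and survives in that argument; what cannot be used is a general mixed-DG-algebra structure on the $(b,B)$-complex, because none exists here.
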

\begin{proof}
The claim follows by a straightforward computation:
\[
\begin{split}
 \normB & \, (a\times (\omega)_k) = \\
 & = \normB \big( (-1)^{\deg(b_0)(\sum_j \deg(a_j))}
 \sh_{l,k}(a_0 \otimes a_1\otimes \cdots \otimes
 a_l \otimes \omega\otimes \cdots\otimes  \omega) \big)\\
 & =\sum_{i=l+1}^{k+l}(-1)^{i(k+l)}1\otimes
 \sh_{l,k}(\omega\otimes\cdots\otimes
 \omega\otimes a_0 \otimes a_1\otimes \cdots\otimes  a_l \otimes\omega\otimes
 \cdots\otimes  \omega)\\
 &\hspace{1em} +\sum_{i=1}^{l}(-1)^{i(k+l)}1\otimes \sh_{l,k}
 (a_i\otimes\cdots\otimes a_l\otimes \omega\otimes\cdots\otimes \omega\otimes
 a_0\otimes a_1\otimes \cdots \otimes  a_{i-1})\\
 & =\sum_{i=0}^{l}(-1)^{il} \sh_{l+1,k}(1\otimes a_i \otimes\cdots\otimes
 a_{l}\otimes a_0\otimes \cdots\otimes  a_{i+1}\otimes\omega\otimes\cdots
 \otimes  \omega) \\ & = \normB a\times (\omega)_k .
\end{split}
\]
\end{proof}

\subsection{Cyclic cocycles on deformation quantizations of symplectic manifolds}
\label{subsec:cyccohdefquant}
In this section, we study the cyclic cohomology of the quantum algebra
\[
  \calA^{((\hbar))}_D :=\calA^{\hbar}_D[\hbar^{-1}],
\]
which is  the kernel of a Fedosov connection $D=d+
\frac{1}{\hbar} [A, - ]$ on $\calW[\hbar^{-1}]$.
 Note that since $D$
is a local operator we in fact obtain a sheaf of quantum algebras on
$M$, which we also denote by $\calA^{((\hbar))}_D$. Let
$\calA^{((\hbar))}_\text{\rm \tiny cpt}$ be its space of sections
with compact support. We will define in this section an S-morphism
$\sfQ$ between the mixed complexes
\[
  \big( \Omega^\bullet (M)  , d , 0 \big) \quad \text{and} \quad
  \Big( \normC^\bullet \big( \calA^{((\hbar))}_\text{\tiny\rm cpt}\big),b,B\Big).
\]
In the construction of $\sfQ$  we will use the mixed sheaf complex
$\big( \normscrC^\bullet \big( \calA^{((\hbar))} \big),b,B \big)$
defined in Appendix \ref{subsec:loc} and Theorem \ref{thm:locqism}
which tells that the complex of its global section spaces is
quasi-isomorphic to the mixed complex
$\big(\normC^\bullet\big(\calA^{((\hbar))}_\text{\tiny\rm cpt}
\big),b,B\big)$.

In the following definitions, we consider the shuffle product on the
Hochschild chains of the graded algebra $\calW\otimes_{\calC^\infty
(M)}\Omega^\bullet(M)((\hbar))$ with a degree $1$ derivation
$\nabla$, the symplectic connection,  and a generalized
Maurer-Cartan element $A$, the Fedosov connection.

\begin{remark}The cyclic cocycle $(\tau_0,\ldots,\tau_{2n})\in \Tot^{2n}
\calB \overline{C}^\bullet (\Weyl_{2n})$ defined in Def.
\ref{dfn:tau} extends uniquely to a continuous cyclic cocycle on the
algebra $\fWeyl$ with the same properties as Prop.
\ref{prop:relative-tau}.
\end{remark}
\begin{definition}
\label{dfn:psi}
  Define $\Psi_{2k}^i\in \Omega^i (M)\otimes_{\calC^\infty (M)}
  \big(\calW^{\otimes (2k-i+1)} \big)^* (M) $ by putting
\[
 \Psi_{2k}^i\big( a_0 \otimes \cdots \otimes a_{2k-i} \big) :=
 \left( \frac{1}{\hbar} \right)^i \!\!
 \tau_{2k}\big( (a_0\otimes \cdots \otimes a_{2k-i})
 \times (A)_i \big).
\]
To explain this definition a bit more: for a given point $x\in M$,
we have used the natural identification of the fiber of
$\calW[\hbar^{-1}]$ over $x$ with the Weyl algebra $\fWeyl_{2n}$ in
the formula above. The cochain $\tau_{2k}$ is defined as in
Definition \ref{dfn:tau}, $(-)^*$ denotes the dual bundle functor,
and $a_0, \cdots , a_{2k-i}$ are germs of smooth sections of $\calW$
at $x$. It is important to remark that the definition above does not
depend on the decomposition $D=\nabla+A$ of the Fedosov connection:
a different choice amounts to adding a $\mathfrak{sp}_{2n}$ valued
one-form to $A$. By Proposition \ref{prop:relative-tau}, this yields
the same result.
\end{definition}
\begin{proposition}
\label{prop:psi}
For every chain $a_0 \otimes \cdots \otimes a_{2k-i}\in C_{2k+1-i}\big(
\calA^{((\hbar))}_\text{\tiny\rm cpt} \big)$
the above defined $\Psi^i_{2k}$ satisfies the following equality:
\begin{equation}
\label{eq:psi}
\begin{split}
(-1)^i \; & d\Psi_{2n-2k}^{i}(a_0\otimes
\cdots\otimes  a_{2k+1-i})\\
=&\Psi^{i+1}_{2n-2k}(b(a_0\otimes  \cdots\otimes
a_{2k+1-i}))+\Psi^{i+1}_{2n-2k+2}( \normB (a_0\otimes \cdots\otimes
a_{2k+1-i})) .
\end{split}
\end{equation}
\end{proposition}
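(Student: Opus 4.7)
The plan is to transport the de Rham differential $d$ on the form side across the definition $\Psi_{2n-2k}^i(c)=\hbar^{-i}\tau_{2n-2k}(c\times (A)_i)$ into a combination of the Hochschild boundary $b$ and the Connes boundary $B$ on the chain side. The key ingredients are the cocycle relations $b\tau_{2n-2k}=\tau_{2n-2k+1}=-B\tau_{2n-2k+2}$ from Theorem \ref{cw}, the $\mathfrak{sp}_{2n}$-basicness of $\tau_{2n-2k}$ from Proposition \ref{prop:relative-tau}, the shuffle identities of Lemmas \ref{lem:shuffle} and \ref{lem:b-omega}, and Lemma \ref{lem:B-shuffle} saying that $B$ passes through shuffling by $(A)_{k}$.

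\textbf{From $d$ to $\nabla$.} Basicness of $\tau_{2n-2k}$ means that replacing $d$ by any lift which differs from it by an $\mathfrak{sp}(TM)$-valued one-form leaves the pairing unchanged; in particular one may use the symplectic connection $\nabla$, giving $d\bigl(\tau_{2n-2k}(c\times (A)_i)\bigr)=\tau_{2n-2k}(\nabla(c\times (A)_i))$. The graded Leibniz rule distributes this as $\nabla(c\times (A)_i)=\nabla c\times (A)_i+(-1)^{|c|}c\times\nabla(A)_i$, where $|c|$ is the form degree of $c$, which is zero because $c$ is a chain of flat sections. Since $D=\nabla+\hbar^{-1}[A,-]$ and $Da_j=0$ for each flat section $a_j$, we have $\nabla a_j=-\hbar^{-1}[A,a_j]$, so $\nabla c\times (A)_i$ unfolds into the sum of commutator insertions $-\hbar^{-1}\sum_j(a_0\otimes\cdots\otimes [A,a_j]\otimes\cdots)\times (A)_i$.

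\textbf{Folding in $b$ and $B$.} This sum of commutator insertions is precisely the final term in Lemma \ref{lem:shuffle} applied with $\omega=A$ and $k=i+1$. Rearranging that identity expresses $\hbar\nabla(c\times (A)_i)$ in terms of $b(c\times(A)_{i+1})$, $b(c)\times(A)_{i+1}$, and $c\times b(A)_{i+1}$; Lemma \ref{lem:b-omega} then replaces $b(A)_{i+1}$ by $\hbar\nabla(A)_i$ plus central insertions of $\Omega-\tilde R$. Modulo these central insertions the $c\times\nabla(A)_i$ contributions cancel exactly, leaving $\nabla(c\times (A)_i)=(-1)^{p}\hbar^{-1}\bigl(b(c\times (A)_{i+1})-b(c)\times (A)_{i+1}\bigr)$. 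Apply $\tau_{2n-2k}$: the first summand becomes $\tau_{2n-2k+1}(c\times (A)_{i+1})=-\tau_{2n-2k+2}\bigl(B(c\times (A)_{i+1})\bigr)$ by Theorem \ref{cw}, and Lemma \ref{lem:B-shuffle} then pulls $(A)_{i+1}$ back out of $B$ to yield $-\hbar^{i+1}\Psi^{i+1}_{2n-2k+2}(Bc)$. The second summand is $\hbar^{i+1}\Psi^{i+1}_{2n-2k}(bc)$ directly from the definition. Dividing by $\hbar^i$ and collecting the signs against the prefactor $(-1)^i$ recovers \eqref{eq:psi}.

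\textbf{Main obstacle.} The delicate step is showing that the central $(\Omega-\tilde R)$-insertions produced by Lemma \ref{lem:b-omega} contribute zero under $\tau_{2n-2k}$. The $\Omega$-part is scalar-valued and central in $\fWeyl$, so it factors out of $\tau_{2n-2k}$ and leaves a chain carrying a $1$ in a non-initial slot, which vanishes by normalization since $\tau_{2n-2k}\in\normC^{2n-2k}$. The $\tilde R$-part is pointwise an element of $\mathfrak{sp}_{2n}\subset\fWeyl$, and the alternating positional sum from Lemma \ref{lem:b-omega} must be reorganized, with its signs, into an $\iota_{\tilde R}$-type expression annihilated by basicness (Proposition \ref{prop:relative-tau}). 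Keeping careful track of the signs from the shuffle products, the Leibniz rule, and the positions at which insertions occur is the principal bookkeeping challenge of the argument.
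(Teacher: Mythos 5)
Your proof is correct and follows essentially the same route as the paper's: applying Lemma \ref{lem:shuffle} with $\omega=A$, converting the commutator insertions via flatness $\nabla a_j=-\hbar^{-1}[A,a_j]$, using Lemma \ref{lem:b-omega} together with normalization and Proposition \ref{prop:relative-tau} to kill the $(\Omega-\tilde R)$ insertions, and then invoking Theorem \ref{cw} and Lemma \ref{lem:B-shuffle} to produce the $b$- and $B$-terms; the only difference is that you narrate from $d\Psi$ outward rather than from the shuffle identity inward. One small attribution point: the step $d\tau_{2n-2k}(\cdot)=\tau_{2n-2k}(\nabla\cdot)$ rests on the $L_a$-invariance part of Proposition \ref{prop:relative-tau}, while the $\iota_a$-basicness is what kills the $\tilde R$-insertions.
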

\begin{proof}
To prove Eq.~(\ref{eq:psi}), apply $\tau_{2k}$ to
Eq.~(\ref{eq:b-shuffle}) with $\omega=  A$ and check
that
\begin{equation}\label{eq:tau-shuffle}
\begin{split}
  \left( \frac{1}{\hbar} \right)^i \, & \tau_{2k} (b(a\times (A)_i))= \\
  = & \left( \frac{1}{\hbar} \right)^i\tau_{2k}(b(a)\times (A)_i)+
  (-1)^{2k-i+1}\left( \frac{1}{\hbar} \right)^i\tau_{2k}(a\times b(A)_i)\\
  &-(-1)^{2k-i+1}\sum_{j=0}^{2k+1-i}
  \left( \frac{1}{\hbar} \right)^{i-1}\tau_{2k} \big(
  (a_0\otimes  \cdots\otimes  \frac{1}{\hbar}[A, a_j] \otimes \cdots \\
  & \hspace{60mm} \cdots\otimes  a_{2k+1-i})\times (A)_{i-1}\big)
\end{split}
\end{equation}
for every $a=a_0 \otimes \cdots \otimes a_{2k+1-i}\in
C_{2k+1-i}\big( \calA^{((\hbar))}_\text{\tiny\rm cpt} \big)$.
Recall that by definition, every $a_j\in \calA^{((\hbar))}$
satisfies the equality
\[
 \nabla a_j+ \frac{1}{\hbar} [A, a_j]=0.
\]
Therefore, we have
\[
\begin{split}
\tau_{2k}& \;((a_0\otimes  \cdots\otimes  \frac{1}{\hbar}[A, a_j]
\otimes \cdots\otimes  a_{2k+1-i})\times (A)_{i-1}) = \\
&=-\tau_{2k}((a_0\otimes  \cdots\otimes  \nabla a_j\otimes
\cdots\otimes a_{2k+1-i})\times (A)_{i-1}) .
\end{split}
\]
By Lemma \ref{lem:b-omega}, we obtain
\begin{equation}
\label{eq:b-A}
\begin{split}
\frac{1}{\hbar} \; & a\times b((A)_i)= \\
& = a\times \nabla((A)_{i-1})+ \sum_{j=1}^{i-1}(-1)^j a\times
(1\otimes A\otimes \cdots\otimes  (\Omega-\tilde{R})\otimes
\cdots\otimes A).
\end{split}
\end{equation}
Recall that $\Omega\in\Omega^2(M,\C[[\hbar]])$ is in the center of
$\calW$ and $\tilde{R}$ is in the image of $\mathfrak{sp}_{2n}$ in
$\calW$. Therefore, since the $\tau_{2k}$ are reduced
$\mathfrak{sp}_{2n}$ basic cochains by Prop.
\ref{prop:relative-tau},
\[
 \tau_{2k}(a\times (1 \otimes A \otimes
 \cdots\otimes  (\Omega-\tilde{R})\otimes \cdots\otimes A))=0.
\]
Applying $\tau_{2k}$ to Eq.~(\ref{eq:b-A}), one gets
\[
  \frac{1}{\hbar}\tau_{2k}(a\times b(A)_i)=\tau_{2k}(a\times
  \nabla((A)_{i-1})).
\]
\noindent Therefore, we have that
\begin{displaymath}
\begin{split}
 (-1)^{2k+1-i}& \left( \frac{1}{\hbar} \right)^i\tau_{2k}(a\times b(A)_i)\\
 &-(-1)^{2k+1-i}\sum_{j=0}^{2k+1-i}
 \left( \frac{1}{\hbar} \right)^{i-1}\tau_{2k}
 ((a_0\otimes  \cdots\otimes  \frac{1}{\hbar}[A,a_j]\otimes \cdots\\
  & \hspace{60mm} \cdots \otimes a_{2k+1-i})\times (A)_{i-1})\\
 = \, &(-1)^{2k+1-i}\left( \frac{1}{\hbar} \right)^{i-1}\tau_{2k}(a\times
\nabla((A)_{i-1}))\\
 &+(-1)^{2k+1-i}\sum_{j=0}^{2k+1-i}\left( \frac{1}{\hbar} \right)^{i-1}\tau_{2k}
 ((a_0\otimes \cdots\otimes  \nabla a_j\otimes \cdots\otimes  a_{2k+1-l})\times
 (A)_{i-1})\\  = \, &(-1)^{2k+1-i}
 \left( \frac{1}{\hbar} \right)^{i-1}d\tau_{2k}(a\times (A)_{i-1}) .
\end{split}
\end{displaymath}
Applying Corollary \ref{cor:cyclic-tau}, we have that
\begin{equation}
\label{eq:b-tau}
\begin{split}
b\tau_{2k} (a\otimes (A)_i)= \, & -B\tau_{2k+2}(a\times (A)_i) \\
= \, &-\tau_{2k+2}(\normB (a\times (A)_i))\\
= \, &-\tau_{2k+2}(\normB(a)\times (A)_i)\ \ \ \ \ \
(\text{by Lemma \ref{lem:B-shuffle}}).\\
\end{split}
\end{equation}
Eq.~(\ref{eq:b-tau}) entails
\[
\begin{split}
\left( \frac{1}{\hbar} \right)^i\tau_{2k}(b(a\times (A)_i))= -\left(
\frac{1}{\hbar} \right)^i\tau_{2k+2}(\normB (a)\times (A)_i).
\end{split}
\]
Now going back to Eq. (\ref{eq:tau-shuffle}), we obtain
\begin{equation}
\label{eq:t-shuffle-new}
\begin{split}
-\left( \frac{1}{\hbar} \right)^i \, &
\tau_{2k+2}(\normB(a)\times (A)_i)= \\
 = \, & \left( \frac{1}{\hbar} \right)^i\tau_{2k}(b(a)\times (A)_i)+
(-1)^{i-1}\left( \frac{1}{\hbar} \right)^{i-1}d\tau_{2k}(a\times
(A)_{i-1}).
\end{split}
\end{equation}
But this is equivalent to
\[
\begin{split}
(-1)^{i-1} \, & \left( \frac{1}{\hbar} \right)^i
d\tau_{2k}(a\times (A)_i)= \\
& =  \left( \frac{1}{\hbar} \right)^{i+1}\tau_{2k+2}(\normB
(a)\times (A)_{i+1})+\left( \frac{1}{\hbar} \right)^{i+1}
\tau_{2k}(b(a)\times (A)_{i+1}),
\end{split}
\]
which by the definition of $\Psi_{2k}^i$ entails Eq.~(\ref{eq:psi}).
\end{proof}
\begin{definition}
\label{dfn:chi}
For every $i, r$ with $2r\leq i$ and every open $U\subset M$ define a morphism
$\chi_{i,U}^{i-2r}:\Omega^i(U)((\hbar))\to \normscrC^{i-2r}
\big(\calA^{((\hbar))}_{\rm\tiny cpt}\big) (U)$ by
\[
 \chi_{i,U}^{i-2r}(\alpha)(a_0\otimes  \cdots\otimes  a_{i-2r})=
 \int_U\alpha\wedge \Psi_{2n-2r}^{2n-i}(a_0\otimes  \cdots\otimes  a_{i-2r}),
\]
where $\alpha \in \Omega^i(U)((\hbar))$ and
$a_0 , \cdots , a_{i-2r} \in \calA_\text{\tiny \rm cpt}^{((\hbar))} (U)$.
The integral converges because $a_0,\ldots,a_{i-2r}$ have compact support.
Obviously, the $\chi_{i,U}^{i-2r}$ form the local components of sheaf morphisms
$\chi_i^{i-2r}:\Omega^i_M
((\hbar))\to \normscrC^{i-2r}\big(\calA^{((\hbar))}\big)$.
Using these, define further sheaf morphisms
$\chi_i:\Omega^i_M ((\hbar))\to \Tot^i \calB \normscrC^\bullet
(\calA^{((\hbar))})$ by
\begin{equation}
\label{eq:consmor1}
 \chi_i=\sum _{2r\leq i}\chi_i^{i-2r}.
\end{equation}
\end{definition}
\noindent
The $\chi_i$ have the following crucial property.
\begin{proposition}
\label{prop:mixedchainmap}
  For every $\alpha\in \Omega^\bullet(U)((\hbar))$ with $U\subset M$ open
  one has
  \[
    (b+B)\chi_\bullet(\alpha)=\chi_\bullet(d\alpha).
  \]
\end{proposition}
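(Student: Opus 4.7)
The approach is to verify the identity one cochain degree at a time, reducing the problem pointwise to Prop.~\ref{prop:psi} and then invoking Stokes' theorem to absorb the external differential $d\alpha$.

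First I split both sides by cochain degree. Since $\chi_i^{i-2r}(\alpha)$ has cochain degree $i-2r$, the degree-$j$ piece of $(b+B)\chi_i(\alpha)$ equals $b\chi_i^{j-1}(\alpha)+B\chi_i^{j+1}(\alpha)$, while the degree-$j$ piece of $\chi_{i+1}(d\alpha)$ is $\chi_{i+1}^{j}(d\alpha)$. Writing $j=i+1-2r$, the component-wise identity to establish is
\[
  b\chi_i^{i-2r}(\alpha)+B\chi_i^{i+2-2r}(\alpha)=\chi_{i+1}^{i+1-2r}(d\alpha),
\]
with the convention that undefined summands (e.g.\ $B\chi_i^{i+2}$ when $r=0$) are interpreted as zero.

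Next, evaluating on a chain $a\in \normC_{i+1-2r}\big(\calA^{((\hbar))}_{\text{\tiny\rm cpt}}\big)$ and using Def.~\ref{dfn:chi}, the LHS becomes $\int_U \alpha\wedge\big[\Psi^{2n-i}_{2n-2r}(b(a))+\Psi^{2n-i}_{2n-2r+2}(B(a))\big]$ while the RHS equals $\int_U d\alpha\wedge \Psi^{2n-i-1}_{2n-2r}(a)$. Applying Prop.~\ref{prop:psi} with the parameter $i$ replaced by $2n-i-1$ and $k$ by $r$ rewrites the bracketed expression on the LHS as $(-1)^{i+1}\,d\Psi^{2n-i-1}_{2n-2r}(a)$. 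Then Stokes' theorem applied to the $(2n-1)$-form $\alpha\wedge \Psi^{2n-i-1}_{2n-2r}(a)$, which has compact support because each $a_j$ does, yields exactly the sign $(-1)^{i+1}$ needed to identify the LHS with the RHS.

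The main obstacle I expect is the sign and index bookkeeping, since the argument simultaneously juggles the form degree $2n-i-1$, the cochain degree $i-2r+1$, and the $\pm 1$ shift from $b$ and $B$; the relabelling in the application of Prop.~\ref{prop:psi} needs to be done carefully. Beyond that, two boundary cases require separate attention: at $r=0$ the $B$-contribution is absent on the LHS, and correspondingly the term $\Psi^{\bullet}_{2n+2}$ in Prop.~\ref{prop:psi} does not appear; at $i=2n$ the form $\Psi^{-1}_{2n-2r}$ is undefined, but $d\alpha=0$ automatically and the identity collapses to the $(b+B)$-cocycle relation $b\tau_{2n-2r}+B\tau_{2n-2r+2}=0$ from Cor.~\ref{cor:cyclic-tau}, which makes both sides vanish.
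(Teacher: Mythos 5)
Your argument is essentially the paper's own proof: unwind Definition~\ref{dfn:chi}, move the exterior derivative off $\alpha$ by Stokes' theorem (the paper phrases this as integration by parts, while you justify it by the compact supports of the $a_j$, which is the cleaner justification for open $U$), and then invoke Proposition~\ref{prop:psi} componentwise. Your extra degree-by-degree bookkeeping and the treatment of the boundary cases $r=0$ and $i=2n$ are consistent with, and slightly more explicit than, the paper's version, so the proposal is correct and takes the same route.
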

\begin{proof}
Writing out the definition of $\chi$, we have to show that
\[
\begin{split}
\int_M \; & d\alpha\wedge \sum_{2r\leq i+1}\Psi_{2n-2r}^{2n-i-1}(a_0\otimes
\cdots\otimes  a_{i+1-2r}) = \\
= \, &\int_M \alpha \wedge \sum_{2r\leq i+1}\Psi^{2n-i}_{2n-2r}(b(a_0\otimes
\cdots\otimes
a_{i+1-2r}))+\\
& +\int_M\alpha\wedge \sum_{2r\leq i+1}\Psi^{2n-i}_{2n-2r+2}(\normB(a_0\otimes
\cdots\otimes  a_{i+1-2r}))
\end{split}
\]
holds true for all chains $a_0\otimes  \cdots\otimes a_{i-2r+1}
\in C_{2k+1-i}\big( \calA^{((\hbar))}_\text{\tiny \rm cpt} \big)$.
Since $M$ is a closed manifold, by integration by parts, this equality
is equivalent to
\[
\begin{split}
(-1)^i\int_M & \; \alpha\wedge \sum_{2r\leq i+1}d\Psi_{2n-2r}^{2n-i-1}
(a_0\otimes \cdots\otimes  a_{i+1-2r}) = \\
=\,&\int_M \alpha \wedge \sum_{2r\leq i+1}\Psi^{2n-i}_{2n-2r}(b(a_0\otimes
\cdots\otimes
a_{i+1-2r}))+\\
& +\int_M\alpha\wedge \sum_{2r\leq i+1}\Psi^{2n-i}_{2n-2r+2}(\normB (a_0\otimes
\cdots\otimes  a_{i+1-2r})).
\end{split}
\]
This is a corollary of Prop. \ref{prop:psi}
\end{proof}
\noindent
As a corollary of Proposition \ref{prop:mixedchainmap}, we obtain for every $i$
a sheaf morphism
\[
 \scrQ^i: \Tot^i\calB \Omega^\bullet_M ((\hbar)):=
 \bigoplus_{2r \leq i }\Omega^{i-2r}_M((\hbar))
 \to \Tot^i \calB \normscrC^\bullet \big( \calA^{((\hbar))} \big)
\]
which over $U\subset M$ open evaluated on forms
$\alpha_{i-2r} \in \Omega^{i-2r} (U) ((\hbar)) $ gives
\begin{equation}
\label{eq:consmor2}
\scrQ_U^i \Big( \sum_{2r \leq i} \alpha_{i-2r} \Big)=
\frac{1}{(2\pi\sqrt{-1})^n} \sum_{2r \leq i} \chi_{i-2r,U}(\alpha_{i-2r}),
\end{equation}
where we have viewed $\chi_{i-2r,U}(\alpha_{i-2r})$ as an element in
$\Tot^i \calB \normscrC^\bullet \big( \calA^{((\hbar))} \big) (U)$
via the embedding
$\Tot^{i-2r} \calB \normscrC^\bullet \big( \calA^{((\hbar))} \big)\hookrightarrow
\Tot^i \calB \normscrC^\bullet \big( \calA^{((\hbar))} \big)$.

\begin{theorem}
\label{thm:quasi}
The above defined sheaf morphism
\[
  \scrQ: \big( \Tot^\bullet \calB \Omega^\bullet_M ((\hbar)), d \big) \to
 \big( \Tot^\bullet \calB \normscrC^\bullet(\calA^{((\hbar))}), b+B \big)
\]
is an S-morphism between mixed cochain complexes of sheaves and a
quasi-isomorphism of the sheaves of cyclic cochains.
\end{theorem}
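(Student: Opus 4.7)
The theorem makes two assertions about $\scrQ$: that it is a chain map of the totalized mixed complexes (the S-morphism property), and that the induced morphism of sheaves is a quasi-isomorphism on cyclic cohomology. The first assertion is essentially a direct consequence of Proposition \ref{prop:mixedchainmap}: the relation $(b+B)\chi_\bullet(\alpha) = \chi_\bullet(d\alpha)$, assembled over bicomplex degrees together with the global normalization $\frac{1}{(2\pi\sqrt{-1})^n}$, yields $\scrQ \circ d = (b+B) \circ \scrQ$. This step is essentially bookkeeping once Proposition \ref{prop:mixedchainmap} is in hand, with care only needed to track the embeddings $\Tot^{i-2r}\calB \normscrC^\bullet \hookrightarrow \Tot^i \calB \normscrC^\bullet$.

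For the quasi-isomorphism claim, the statement is local in nature, so I would verify it on a cofinal basis of open sets. Covering $M$ by Darboux charts $U \cong \R^{2n}$ with the standard symplectic form reduces the problem, after a suitable Fedosov gauge transformation, to the case of a trivial Weyl-algebra bundle $U \times \fWeyl_{2n}$ with connection in standard form; here $\calA^{((\hbar))}|_U$ is identified with the Moyal--Weyl product on $\calC^\infty(U)((\hbar))$, and $\chi_{i,U}^{i-2r}$ becomes the concrete cochain obtained by shuffling $\tau_{2k}$ against copies of the standard gauge form. Combined with Theorem \ref{thm:locqism} of the appendix, which matches the global sections of $\normscrC^\bullet$ to compactly-supported cyclic cochains, this sets up a local comparison that I can evaluate explicitly.

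In the local model, the source $\Tot^\bullet \calB \Omega^\bullet(U)((\hbar))$ has cohomology $\C((\hbar))$ in each even total degree by the Poincar\'e lemma together with the vanishing of $B$ on forms, generated by the constant form $1$ placed in the top bicomplex column. On the target side, the cyclic cohomology of the compactly-supported Moyal--Weyl algebra is again $\C((\hbar))$ in each even total degree, generated in degree zero by the canonical trace $a \mapsto \frac{1}{n!}\int_U a\, \omega^n$ and propagated upward by the Connes periodicity operator. Thanks to the chain-map property from paragraph one, it suffices to verify that $\chi^{0}_{0,U}$ sends the class $[1]$ to a nonzero scalar multiple of the trace class. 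A direct evaluation of $\Psi^{2n}_{2n}(a_0)$ via Definition \ref{dfn:psi} in the trivialized frame reduces, after shuffling with $2n$ copies of the gauge one-form and invoking the $\mathfrak{sp}_{2n}$-basicness from Proposition \ref{prop:relative-tau}, to the top-degree Feigin--Felder--Shoikhet integration formula of \cite{ffs}; this recovers the trace, up to a factor of $(2\pi\sqrt{-1})^n$ that is precisely compensated by the normalization placed into $\scrQ$.

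The principal obstacle I anticipate is this final local computation. Two points require care: (i) the gauge transformation bringing the Fedosov connection to standard form alters the decomposition $D = \nabla + A$ entering $\Psi$, so gauge-invariance of the resulting cohomology class must be argued using Proposition \ref{prop:relative-tau} rather than inspection; and (ii) the stalkwise comparison must be packaged carefully, since the bare cyclic cohomology of the polynomial Weyl algebra computed in Proposition \ref{cyclic-weyl} is concentrated in degrees $\geq 2n$, whereas the cohomology of the compactly-supported Moyal--Weyl algebra on $\R^{2n}$ is periodic starting in degree zero. The appendix's identification of $\normscrC^\bullet$ with a resolution of the compactly-supported cyclic cochain complex is what bridges this gap, and the interplay between the polynomial and smooth settings is the subtlest part of the argument.
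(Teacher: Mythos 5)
Your first paragraph coincides with the paper: the S-morphism property is exactly Proposition \ref{prop:mixedchainmap} plus bookkeeping of the embeddings, and no more is needed there.

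For the quasi-isomorphism half, however, your route has a genuine gap. You propose to compute the cohomology of both sides over a Darboux chart and match generators, and in doing so you \emph{assert} that the cyclic cohomology of the compactly supported Moyal--Weyl algebra (equivalently, by Theorem \ref{thm:locqism}, of the localized complex $\Tot^\bullet\calB\,\normscrC^\bullet(\calA^{((\hbar))})(U)$) is $\C((\hbar))$ in every even total degree, vanishes in odd degrees, and is generated from the trace in degree $0$ by the periodicity operator $S$. That statement is not an innocent input: it is essentially the content of the theorem itself, and in the paper it is \emph{derived} from Theorem \ref{thm:quasi} (see the discussion preceding Proposition \ref{prop:QISM-ASCyc}, where $\scrQ_U^{2k}(1)=\Tr$ generating $H^{2k}$ over a Darboux chart is obtained as a consequence). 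Proposition \ref{cyclic-weyl} gives the cyclic cohomology of the polynomial Weyl algebra, concentrated in degrees $\geq 2n$, and as you yourself note in obstacle (ii) this does not directly yield the periodic-from-degree-zero answer for the compactly supported smooth deformed algebra; bridging that gap requires either importing Nest--Tsygan's computation wholesale (which the paper explicitly wants to reprove independently) or an argument you do not supply. Without an independent computation of the target's cohomology, including the vanishing in odd degrees and the invertibility of $S$, the ``generator maps to generator'' step cannot close the proof.

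The paper avoids this circularity by a different mechanism: using \cite[Prop.~2.5.15]{loday}, the cyclic quasi-isomorphism of the S-morphism is reduced to showing that the Hochschild-level components $\chi_i^i:\Omega^\bullet_M((\hbar))\to\scrC^\bullet(\calA^{((\hbar))})$ form a quasi-isomorphism; this in turn is proved by the $\hbar$-filtration spectral sequence, where the $E_0$ term of the target is the localized Hochschild complex of $\calC^\infty(M)((\hbar))$ (de Rham currents) with differential dual to the Poisson differential, and over a Darboux chart the $E_0$ component of $\chi_i^i$ is computed explicitly as $\alpha\mapsto\int_U\alpha\wedge\ast(a_0da_1\wedge\cdots\wedge da_i)$ with $\ast$ the symplectic Hodge star; Brylinski's identity $d^\pi=(-1)^i\ast d\,\ast$ then shows this intertwines $d$ with $d^\pi$ and is a quasi-isomorphism at $E_0$. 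Your degree-zero trace-density computation (Definition \ref{dfn:psi}, Proposition \ref{prop:psi}, and the Feigin--Felder--Shoikhet evaluation) is correct and useful later in the paper, but by itself it only shows $\scrQ_U(1)$ is a nontrivial trace; it does not establish the quasi-isomorphism unless the target's cohomology is known independently. To repair your argument, either carry out a spectral sequence computation of the local cyclic (or Hochschild) cohomology of the target as the paper does, or explicitly cite and adapt Nest--Tsygan's computation to the localized, bornological setting of $\normscrC^\bullet$ --- at which point your proof becomes a comparison-of-known-computations argument rather than the self-contained one the paper gives.
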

\begin{proof}
By Proposition~\ref{prop:mixedchainmap}, $\scrQ$ is a morphism of sheaf
complexes. Together with Eqs.~\eqref{eq:consmor1} and \eqref{eq:consmor2} this
entails that $\scrQ$ is an S-morphism.
To prove the second claim it therefore suffices by \cite[Prop.~2.5.15]{loday}
that the $\big( \chi^i_i\big)_{i\in \N}$
form a quasi-isomorphism of sheaf complexes
$\chi : \Omega^\bullet_M ((\hbar)) \rightarrow
\scrC^\bullet \big( \calA^{((\hbar))} \big)$.

This follows from a spectral sequence argument provided in the
following. We remark that $\chi$ does not preserve the
$\hbar$-filtration on both complexes. Therefore, we need to modify
$\chi_i^i$ by $\frac{1}{\hbar^{i-n}}$ without changing the final
conclusion. Under this change, we will have a cochain map $\chi :
(\Omega^\bullet_M ((\hbar)), \hbar d) \rightarrow (\scrC^\bullet
\big( \calA^{((\hbar))}\big), b)$ compatible with the
$\hbar$-filtration. Then we consider the induced morphism on the
corresponding spectral sequences. The $E_0$ terms of $\scrC^\bullet
\big( \calA^{((\hbar))} \big)$ is equal to the localized Hochschild
cochain sheaf complex $\scrC^\bullet \big( \calC^\infty(M)((\hbar))
\big)$, which is quasi-isomorphic to the sheaf of de Rham currents
on $M$, cf.\ \cite{c:book}. The induced differential on $E_0$ under
this quasi-isomorphism is dual to the Poisson differential on the
sheaf of differential forms on $M$

As all the above sheaves are fine, it sufficient to prove the claim
over each element of an open cover of $M$ where each of its open sets is
symplectic diffeomorphic to an open contractible subset of $\R^{2n}$
equipped with the standard symplectic form: a Darboux chart. We check that the induced
$\chi^i_i$ on $E_0$ over such open set $U$ is a quasi-isomorphism.
Over $U$, the $E_0$ component $\tilde{\chi}^i_i$ of $\chi^i_{i}$ in
Def.~\ref{dfn:chi} is computed to be
\begin{equation}\label{eq:chi-e-0}
 \tilde{\chi}_i^i(\alpha)(a_0\otimes a_1\otimes \cdots \otimes
 a_i)=
 \int_{U}\alpha \wedge \ast(a_0da_1\wedge\cdots \wedge da_i),
\end{equation}
where $\ast:\Omega^{i}_M\rightarrow \Omega^{2n-i}_M$ is the symplectic Hodge star operator on $M$ introduced by Brylinski \cite{bry}.

By the identity $d^\pi=(-1)^i\ast d \ast $ for the Poisson homology differential
$d^\pi$ on $\Omega^{i+1}_M$:
\begin{equation}\label{eq:poisson-d}
\begin{split}
 \int_{U} \; & d\alpha \wedge \ast (a_0da_1\wedge\cdots \wedge da_{i+1})= \\
 & = (-1)^{i}\int_{U}\alpha \wedge d \ast (a_0da_1\wedge\cdots \wedge
 da_{i+1})\\
 &=\int_U\alpha \wedge \ast\left(d^\pi(a_0da_1\wedge \cdots \wedge da_{i+1})\right).
\end{split}
\end{equation}
\noindent Combining Eq.~(\ref{eq:chi-e-0})-(\ref{eq:poisson-d}), we
see that $\tilde{\chi}_i^i$ maps the de Rham differential on
$\Omega^\bullet_M((\hbar))$ to a differential on the cohomology of
$\scrC^\bullet \big( \calC^\infty(M)((\hbar)) \big)$, which is dual
to the Poisson differential $d^\pi$. Therefore, we conclude that on
each $U$, the chain map $(\tilde{\chi}_i^i)_{i\in \N}$ is a
quasi-isomorphism at the $E_0$ level. This proves that
$(\chi_i^i)_{i\in \N}$ is a quasi-isomorphism.
\end{proof}
\begin{corollary}
Over global sections, $\scrQ$ induces an S-quasi-isomorphism
\[
  \sfQ: \big( \Tot^\bullet \calB \Omega^\bullet (M)((\hbar)) , d \big) \to
  \big( \Tot^\bullet \calB \normC^\bullet
  \big(\calA^{((\hbar))}_\text{\tiny \rm cpt}\big), b+B \big).
\]
\end{corollary}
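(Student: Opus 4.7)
The plan is to construct $\sfQ$ as the natural global version of the sheaf map $\scrQ$, verify it is an S-morphism of mixed complexes directly, and then deduce the quasi-isomorphism property from the sheaf-level Theorem~\ref{thm:quasi} combined with the localization result Theorem~\ref{thm:locqism}.

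First I would define $\sfQ$ by the same formulas \eqref{eq:consmor1}--\eqref{eq:consmor2} as $\scrQ_U$ taken over $U=M$: for a form $\alpha\in\Tot^i\calB\Omega^\bullet(M)((\hbar))$ and a compactly supported chain $a_0\otimes\cdots\otimes a_m\in\normC_m(\calA^{((\hbar))}_\text{\tiny\rm cpt})$, the integral over $M$ of $\alpha$ wedged with the pointwise cochain $\Psi_{2k}^i$ converges because its integrand is compactly supported. That $\sfQ$ is then an S-morphism of mixed complexes follows by repeating the proof of Proposition~\ref{prop:mixedchainmap} essentially verbatim on $M$: since $M$ is boundaryless and the integrand has compact support, integration by parts produces no boundary contribution.

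For the quasi-isomorphism property, I would invoke two results. Theorem~\ref{thm:quasi} asserts that the sheaf morphism $\scrQ$ is a quasi-isomorphism. Both $\Omega^\bullet_M((\hbar))$ and $\normscrC^\bullet(\calA^{((\hbar))})$ are sheaves of $\calC^\infty_M$-modules, hence fine, hence acyclic for $\Gamma(M,-)$ on the paracompact manifold $M$; a quasi-isomorphism between complexes of fine sheaves induces a quasi-isomorphism on global sections. Therefore $\scrQ$ yields a quasi-isomorphism from $\Gamma(M,\Tot^\bullet\calB\,\Omega^\bullet_M((\hbar)))=\Tot^\bullet\calB\,\Omega^\bullet(M)((\hbar))$ onto $\Gamma(M,\Tot^\bullet\calB\,\normscrC^\bullet(\calA^{((\hbar))}))$. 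By Theorem~\ref{thm:locqism}, the latter is quasi-isomorphic as a mixed complex to $\Tot^\bullet\calB\,\normC^\bullet(\calA^{((\hbar))}_\text{\tiny\rm cpt})$. Composing these produces the S-quasi-isomorphism in question, which unwinds to the map $\sfQ$ described above.

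The main delicacy is this last compatibility: verifying that the map induced by $\scrQ$ on global sections, once transported through the localization quasi-isomorphism of Theorem~\ref{thm:locqism}, coincides with $\sfQ$ as defined by a single global integral over $M$. This follows because the cochains $\Psi_{2k}^i$ are constructed pointwise from the $\operatorname{Sp}_{2n}$-basic Weyl-algebra cocycles $\tau_{2k}$ (Proposition~\ref{prop:relative-tau}), so they restrict compatibly to open subsets, and hence paste by a partition-of-unity argument into precisely the global integral over $M$ that defines $\sfQ$.
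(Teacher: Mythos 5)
Your argument is correct and is essentially the route the paper intends: the corollary is stated without proof, but the surrounding text (the remarks before Definition~\ref{dfn:psi} invoking Theorem~\ref{thm:locqism}, and the fineness of all sheaves used in the proof of Theorem~\ref{thm:quasi}) amounts exactly to your combination of passing the sheaf-level S-quasi-isomorphism $\scrQ$ to global sections via acyclicity of fine sheaves and composing with the localization quasi-isomorphism. Your final compatibility check is also consistent with the paper, since Definition~\ref{dfn:chi} already evaluates the cochains on compactly supported chains, so no new argument is needed there.
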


%
%
\section{Algebraic index theorems}
\label{Sec:AlgIndThm}
In this section we study Connes' pairing between the $K$-theory of
$\calA^{((\hbar))}_{\rm\tiny cpt}$ and a cocycle
$\sfQ(c)\in \Tot^\bullet \calB\normC^\bullet
(\calA^{((\hbar))}_\text{\tiny\rm cpt}(M))$, where $c$ is an element in
$\Tot^\bullet \calB \Omega^\bullet(M)=
\bigoplus_{2l\leq \bullet}\Omega^{\bullet-2l}((\hbar))$.
This results in an algebraic index theorem which computes this pairing
in terms of topological data of the underlying manifold $M$.

\subsection{The pairing between cyclic cohomology and $K$-theory}
We start with briefly reviewing the general theory \cite[Sec.
8.3]{loday} of a pairing between cyclic cohomology and $K_0$ group of a
unital algebra.

Let $A$ be a unital algebra over a field $\Bbbk$ and let $e$ be an idempotent
of $A$. The Chern character $\Ch_k(e)$ is a cocycle in
\[
  \overline{\calB}_{2k}(A)= A\otimes \overline{A}^{\otimes(2k)}\, \oplus  \,
  A\otimes \overline{A}^{\otimes(2k-2)}\, \oplus \cdots \oplus \, A
\]
defined by the following formulas
\begin{equation}
\label{eq:defchern}
 \begin{split}
  \Ch_k(e)&=(c_k, c_{k-1},\cdots, c_0)\in \overline{\calB}_{2k}(A), \quad \text{where} \\
   c_i&=(-1)^i\frac{(2i)!}{i!}
   (e-\frac{1}{2})\otimes e^{\otimes (2i)}\in A\otimes \overline{A}^{2i}\quad \text{for $i=1,\ldots , k$}\\
   c_0&=e\in A.
 \end{split}
\end{equation}
It is easy to check that $\Ch_k(e)$ is $b+B$ closed.
One then defines a pairing between a $(b+B)$-cocycle $\phi=(\phi_{2k},
\cdots, \phi_{0})$ and a projection $e\in A$ by the canonical
pairing between $C_k(A)$ and $\normC^k(A)$,
\[
 \langle \phi, e \rangle :=\langle \phi, \Ch_k(e)\rangle=
 \sum_{l=0}^k(-1)^l\frac{(2l)!}{l!}\phi_{2l}\left((e-\frac{1}{2}) \otimes e\otimes \cdots \otimes e\right).
\]
This construction descends to cohomology and yields the desired pairing
\[
HC^k(A)\times K_0(A)\to \Bbbk.
\]

Now let $M$ be a symplectic manifold, and $\calA^{((\hbar))}(M)$ be a Fedosov deformation
quantization of $M$ as constructed in the previous section.
We apply the above to obtain a pairing between the cyclic cohomology
$HC^\bullet(\calA^{((\hbar))}_{\rm\tiny cpt})$ and the $K_0$ group of $\calA^{((\hbar))}_{\rm\tiny cpt}(M)$. (To define the Chern character like (\ref{eq:defchern}), we usually adjoin a unit to the algebra $\calA^{((\hbar))}_{\rm\tiny cpt}(M)$. )

Recall from \cite[6.1]{fe} that an element in $K_0 \big(\calA^{((\hbar))}_{\rm\tiny cpt}\big)$
can be represented by a pairing of projections $P_0, P_1$ in
$\frM_k \big( \calA^{((\hbar))} \big)$ for some $k\geq 0$ such that $P_0-P_1$ is
compactly supported. (By $\frM_k \big( \calA^{((\hbar))} \big)$ we mean the algebra of $k\times k$-matrices with coefficient in $\calA^{((\hbar))}$.) The set of all such pairs of projections forms
a semi-group. It is proved in \cite[6.1]{fe} that modulo stabilization
this semi-group is isomorphic to the $K$-group of $M$.
Now let $\phi$ be a $(b+B)$-cocycle of $\calA^{((\hbar))}$ which has degree $2k$.
Then the pairing between $\phi=(\phi_0, \cdots, \phi_{2k})$ and $e=(P_1,P_2)$
a representative of a $K$-group element of $\calA^{((\hbar))}$ is defined as
\[
  \langle\phi, e\rangle : =\langle\phi, P_1\rangle-\langle\phi, P_2\rangle.
\]

\subsection{Quantization twisted by a vector bundle}
In the following, we explain how to reduce the computation of the
above pairing to the trivial case that $e=1$ in $\calA^{((\hbar))}$.
Define $p_1=P_1|_{\hbar=0}$ and $p_2=P_2|_{\hbar=0}$. Since $P_1\star P_1=P_1$ and
$P_2\star P_2=P_2$, the matrices $p_1$ and $p_2$ are projections in
$\frM_n(C^\infty(M))$ and therefore define vector bundles $V_1$ and $V_2$ on
$M$. Furthermore,  $V_1$ and $V_2$ are isomorphic outside a compact
of $M$.

Following \cite{fe}, we can twist the quantum algebra
$\calA^{\hbar}$ by the bundles $V_1$ and $V_2$. We consider the
twisted Weyl algebra bundles
$\calW_{V_1}=\calW\otimes\End(V_1)$ and
$\calW_{V_2}=\calW\otimes\End(V_2)$. Fixing
connections $\nabla_{1}$ and $\nabla_{2}$ on $V_1$ resp.~$V_2$, we
obtain connections $\nabla_{V_1}=\nabla\otimes1+1\otimes \nabla_{1}$
and $\nabla_{V_2}=\nabla\otimes1+1\otimes \nabla_{2}$ on
$\calW_{V_1}$ resp.~$\calW_{V_2}$. {\sc Fedosov} proved in \cite{fe}
that there are flat connections $D_{V_1}=\nabla_{V_1}+
\frac{1}{\hbar} [A_{V_1}, -]$ and $D_{V_2}=\nabla_{V_2}+
\frac{1}{\hbar} [A_{V_2},-]$ on $\calW_{V_1}$ resp.~$\calW_{V_2}$
such that the algebra of flat sections forms a deformation
quantization twisted by $V_1$ resp.~$V_2$. The corresponding
deformation quantization sheaf is denoted by
$\calA^{((\hbar))}_{V_1}$ resp.~$\calA^{((\hbar))}_{V_2}$.

Observe that the cocycle $(\tau_0, \cdots, \tau_{2n})$ on $\Weyl_{2n}$
can be extended to the algebra $\VWeyl_{2n}:=\Weyl_{2n}\otimes \End(V)$ for
any finite dimensional vector space $V$ by putting
\[
  \tau_{2k}^V\big( (a_0\otimes M_0) \otimes \cdots \otimes
  (a_{2k} \otimes M_{2k}) \big) := \tau(a_0 \otimes \cdots \otimes a_{2k})
  \tr(M_0M_1\cdots M_{2k}).
\]
As in the proof of Corollary \ref{cor:cyclic-tau} one checks that
$(\tau_0^V, \cdots, \tau_{2n}^V)$ is a $(b+B)$-cocycle on $\VWeyl_{2n}$. Hence
we can extend Def.~\ref{dfn:psi} to define twisted
${\Psi}_{V_j,2k}^i$ for $j=1,2$ by
\[
\begin{split}
 {\Psi}_{V_j,2k}^i & \, \big( a_0 \otimes \cdots \otimes a_{2k-i}\big) = \\
 & = \left( \frac{1}{\hbar} \right)^{i} \tau^{V_j}_{2k} \big(
 (a_0 \otimes \cdots \otimes a_{2k-i})\times (A_{V_j})_{i} \big),
\end{split}
\]
where $a_0, \cdots , a_{2k-i}$ are germs of smooth sections of $\calW_{V_j}$ at $x$.
Moreover, we define sheaf morphisms
$\chi_{V_j,i}^{i-2l} :\Omega^i_M((\hbar))\to
\normscrC^{i-2l}\big(\calA^{((\hbar))}_{V_j}\big)$ by setting over $U \subset M$ open
\[
  \chi_{V_j,i,U}^{i-2l}(\alpha)(a_0 \otimes \cdots \otimes a_{i-2l}) :=
  \int_M\alpha\wedge {\Psi}_{V_j,2n-2l}^{2n-i}\big( a_0 (x)\cdots\otimes a_{i-2l} (x)\big),
\]
where $\alpha \in \Omega^i (U)((\hbar))$ and where
$a_0, \cdots , a_{2k-i} \in \calA^{((\hbar))}_{V_j,\text{\tiny \rm cpt}} (U)$ are sections
of the twisted deformation quantization sheaf with compact support in $U$.
Like in Section \ref{subsec:cyccohdefquant} we then obtain S-quasi-isomorphisms
of mixed sheaf complexes
\[
 \scrQ_{V_j}: \big( \Tot^\bullet \calB \Omega^\bullet_M ((\hbar)), d \big) \to
 \big( \Tot^\bullet \calB \normscrC^\bullet(\calA^{((\hbar))}_{V_j}), b+B \big),
 \quad j=1,2.
\]
Over global sections, $\scrQ_{V_j}$ then induces an S-quasi-isomorphism
\[
  \sfQ_{V_j}: \big( \Tot^\bullet \calB \Omega^\bullet (M)((\hbar)) , d \big) \to
  \big( \Tot^\bullet \calB \normC^\bullet
  \big( \calA^{((\hbar))}_{V_j,\text{\tiny \rm cpt}}\big), b+B \big).
\]
Generalizing \cite[Thm.~3]{cd}, we have the following proposition:
\begin{proposition} \label{prop:reduction}
For any a closed differential
$\alpha\in \Tot^\bullet \calB \Omega^\bullet (M)((\hbar))$ and
projections $P_1$ and $P_2$ of $\calA^{((\hbar))}$ with $P_1-P_2$
compactly supported, one has
\[
  \langle \sfQ(\alpha), P_1-P_2\rangle=
  \langle \sfQ_{V_1}(\alpha), 1\rangle - \langle \sfQ_{V_2}(\alpha), 1\rangle.
\]
\end{proposition}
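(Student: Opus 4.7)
The strategy is to establish the equality by identifying each formal pairing $\langle \sfQ(\alpha), P_j \rangle$ with $\langle \sfQ_{V_j}(\alpha), 1 \rangle$ via a Morita equivalence, following the template of \cite[Thm.~3]{cd}. The starting point is Fedosov's twisted quantization theorem \cite{fe}: for each projection $P_j$ in $\frM_N\big(\calA^{((\hbar))}\big)$ with classical limit $p_j$ defining the vector bundle $V_j$, there is a canonical isomorphism of sheaves of algebras
\[
\Phi_j : P_j \star \frM_N \big( \calA^{((\hbar))} \big) \star P_j \xrightarrow{\sim} \calA^{((\hbar))}_{V_j}
\]
that sends $P_j$ to the unit of the twisted algebra. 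Under $\Phi_j$, the Chern character $\Ch(P_j)$ of Section~4.1 corresponds to the Chern character $\Ch(1)$ in the twisted algebra.

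The core step is to show that the pullback $\Phi_j^* \sfQ_{V_j}(\alpha)$ is cohomologous to the restriction of $\sfQ(\alpha)$, extended to $\frM_N(\calA^{((\hbar))})$ via the matrix trace, to the corner $P_j \star \frM_N(\calA^{((\hbar))}) \star P_j$. Both cocycles arise from the same template in Def.~\ref{dfn:psi} and Def.~\ref{dfn:chi}: the twisted construction uses the twisted Fedosov connection $A_{V_j}$ on $\calW_{V_j}$ together with the trace on $\End(V_j)$, while the restriction of $\sfQ(\alpha)$ uses the original $A$ and the matrix trace composed with compression by $P_j$. Since $\Phi_j$ identifies the matrix degrees of freedom at the flat section $P_j$ with the $\End(V_j)$ degrees of freedom and carries $P_j \star A \star P_j$ to a connection gauge equivalent to $A_{V_j}$, the $\mathfrak{sp}_{2n}$-basic property of $\tau_{2k}$ from Prop.~\ref{prop:relative-tau} ensures that any residual discrepancy, originating from different admissible splittings of the Fedosov connection, contributes only a coboundary.

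Combining this compatibility with Morita invariance of the Chern-Connes pairing yields $\langle \sfQ(\alpha), P_j \rangle = \langle \sfQ_{V_j}(\alpha), 1 \rangle$ as formal identities. The compact support condition is incorporated in the relative form: since $P_1-P_2$ is compactly supported, outside a compact set the two twisted quantizations and their Fedosov connections can be identified, so that the two formal expressions differ by a convergent compactly supported pairing that reproduces the left-hand side. The principal technical obstacle will be verifying the cohomology claim of the second paragraph explicitly at the cochain level; I expect to handle it via a transgression argument along a family of Fedosov connections interpolating between the compressed $A$ and $A_{V_j}$, using the primitives produced by the relation \eqref{eq:psi}.
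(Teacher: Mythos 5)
Your overall route coincides with the paper's: the paper's entire proof of Proposition~\ref{prop:reduction} is the remark that the argument of \cite[Thm.~3]{cd} applies verbatim, i.e.\ one identifies the corner $P_j\star\frM_N\big(\calA^{((\hbar))}\big)\star P_j$ with the bundle-twisted Fedosov algebra $\calA^{((\hbar))}_{V_j}$, transports the class of $P_j$ to the class of $1$, and compares the two cocycle constructions --- exactly the template you describe. The difficulty is that you have deferred precisely the step that carries all the content, and the justification you sketch for it is not adequate as stated. The discrepancy between the compressed data (the connection $A\otimes 1_N$ restricted to the corner, together with the matrix trace) and the twisted Fedosov connection $A_{V_j}$ is a one-form with values in the full twisted Weyl bundle, in general of arbitrary polynomial degree in the fibre variables; Proposition~\ref{prop:relative-tau} (and its twisted analogue, basicness relative to $\gl_N\oplus\gl_V\oplus\spin_{2n}$) only disposes of changes of the splitting $D=\nabla+A$ by $\h$-valued one-forms, so it cannot absorb this discrepancy into a coboundary by itself. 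Moreover a naive interpolation between the compressed connection and $A_{V_j}$ does not stay flat, so ``transgression along a family of Fedosov connections'' needs the gauge-equivalence mechanism: two Fedosov connections with the same Weyl curvature are conjugate by a fibrewise inner automorphism $\exp\big(\tfrac{1}{\hbar}\operatorname{ad}(u)\big)$, and the primitive produced via Eq.~\eqref{eq:psi} must be constructed along that conjugation. In \cite{cd} the corresponding comparison is carried out only for the trace, where one can finish by uniqueness of the trace up to normalization; that shortcut has no literal analogue for the higher components of $\sfQ$, so your ``principal technical obstacle'' is not a routine verification but the actual proof, and as written the argument is incomplete there.

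Two smaller points. First, the isomorphism $\Phi_j$ is available only after conjugating $P_j$ to a normalized quantum lift of $p_j$ (any two projections with the same symbol are conjugate by an invertible element of the form $1+O(\hbar)$); this is harmless since inner conjugation changes neither the $K$-theory class nor the pairing, but it should be said, because ``canonical isomorphism sending $P_j$ to the unit'' is not literally what \cite{fe} provides for an arbitrary $P_j$. Second, the individual pairings $\langle\sfQ(\alpha),P_j\rangle$ and $\langle\sfQ_{V_j}(\alpha),1\rangle$ need not converge separately when $M$ is noncompact, so the comparison has to be organized for the difference throughout, with the identifications outside a compact set chosen compatibly for $j=1,2$; you indicate this, but it interacts with the normalization of $\Phi_1,\Phi_2$ and should be built into the transgression argument rather than added afterwards.
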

\begin{proof}
The proof of \cite[Thm.~3]{cd} applies verbatim.
\end{proof}
\subsection{Lie algebra cohomology}\label{sec:lie-alg}
In the following paragraphs, we use Lie algebra cohomology to
determine the pairing $\langle \sfQ_{V}(\alpha),1\rangle$ locally for a vector
bundle $V$ on $M$.
By definition, the pairing $\langle \sfQ_V(\alpha),1\rangle$ for an
element $\alpha=(\alpha_0, \cdots, \alpha_{2k} )\in
\Tot^{2k} \calB \Omega^\bullet (M)((\hbar))$ is equal to
\[
\begin{split}
  &\Big\langle\, \frac{1}{(2\pi \sqrt{-1})^n}\sum_{l\leq k} \,  \chi_{V,2k-2l}(\alpha_{2k-2l}),
  1\Big\rangle\\
  =& \Big\langle\frac{1}{(2\pi \sqrt{-1})^n}\sum_{l\leq k, j\leq k-l}\chi_{V, 2k-2l}^{2k-2l-2j}
  (\alpha_{2k-2l}),1\Big\rangle\\
   = &\frac{1}{(2\pi \sqrt{-1})^n}\sum_{l\leq k, j\leq k-l}\frac{(-1)^{k-l-j}(2k-2l-2j)!}{(k-l-j)!}
  \int_M\alpha_{2k-2l}\wedge \Psi_{V,2n-2j}^{2n-2k+2l}
  (1 \otimes \cdots \otimes 1).
\end{split}
\]
Now observe that $\Psi_{V, 2n-2j}^{2n-2k+2l}(1 \otimes \cdots \otimes 1)$
vanishes when $j<k-l$ since $\tau_{2n-2j}$ is a normalized cochain. Hence
\[
\begin{split}
  \langle \sfQ_V(\alpha), 1\rangle \, & =
  \sum_{l\leq k}\frac{1}{(2\pi \sqrt{-1})^n}
  \int_M\alpha_{2k-2l}\wedge \Psi_{V,2n-2k+2l}^{2n-2k+2l}(1) = \\
  & = \sum_{l=0}^k\frac{1}{(2\pi \sqrt{-1})^n}
  \int_M\alpha_{2l}\wedge\Psi^{2n-2l}_{V, 2n-2l}(1).
\end{split}
\]
These considerations show that for the computation of the pairing between
an element $\alpha \in \Tot^\bullet \calB \Omega^\bullet (M)((\hbar))$ and
a class in $K_0 \big( \calA^{((\hbar))} \big)$ it is sufficient to determine
$\Psi_{V, 2n-2l}^{2n-2l}(1)$ for all $l\leq n$.

To achieve this goal we will apply methods from Lie algebra cohomology, namely the Chern--Weil homomorphism.
To this end let us first review the standard map from the Hochschild cochain
complex to the corresponding Lie algebra cochain complex, which can be
found in \cite{loday}.

Let $A$ be a unital algebra. Consider Lie algebra $\mathfrak{gl}_N(A)$ of $N\times N$-matrices with coefficients in $A$. There is a
chain map $\phi_N$ from the Hochschild cochain complex
$C^\bullet(A)$ to the Lie algebra cochain complex
$C^\bullet\big(\mathfrak{gl}_N(A); \mathfrak{gl}_N(A)^* \big)$:
\begin{equation}
\label{HoLa}
\begin{split}
 \phi^N(c)\, & \big( (M_1\otimes a_1)\otimes \cdots \otimes (M_k\otimes a_k)
 \big)(M_1\otimes a_1)\\
  &=\sum_{\sigma\in S_k}\sgn(\sigma)c(a_0\otimes a_{\sigma(1)}\otimes
  \cdots \otimes a_{\sigma(k)})\tr(M_0M_{\sigma(1)}\cdots M_{\sigma(k)}).
\end{split}
\end{equation}
We define $\Theta_{V, N, 2k}$ to be  $\phi^N(\tau^V_{2k})\in C^{2k}
\big( \gl_N(\fWeyl_{2n}^V);\gl_N(\fWeyl_{2n}^V)^* \big)$. It is easy
to check that $\Psi_{V, 2n-2k}^{2n-2k}(1)=
\left(\frac{1}{\hbar}\right)^{2n-2k}\frac{1}{(2n-2k)!}
\Theta_{V,2n-2k}(A\wedge\cdots \wedge A)(1)$.

\begin{proposition}
\label{prop:theta-lie-algebra}
For any $k\leq n$, $\Theta_{V, N, 2k}(1)$ is a cocycle in the relative
Lie algebra cohomology complex
$C^{2k}\big( \gl_N( \fWeyl_{2n}^V), \gl_N \oplus \gl_V \oplus
\mathfrak{sp}_{2n} \big)$ and satisfies
\[
 \Theta^N_{V,2n}(p_1\wedge q_1\wedge\cdots \wedge p_n\wedge q_n)=N\dim(V).
\]
\end{proposition}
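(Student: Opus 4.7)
The plan is to break the proposition into three pieces: (a) Chevalley--Eilenberg closedness, (b) relative/basic property with respect to the subalgebra $\gl_N \oplus \gl_V \oplus \mathfrak{sp}_{2n}$, and (c) the explicit top-degree evaluation. My starting point is the observation that the antisymmetrization map $\phi^N$ converts the mixed $(b,B)$-complex of $\VWeyl_{2n}$ into the Chevalley--Eilenberg complex of $\gl_N(\VWeyl_{2n})$ with coefficients in $\gl_N(\VWeyl_{2n})^*$: the CE coboundary splits, under antisymmetrization, into a piece coming from the multiplication (i.e.\ the $b$-differential of the original cochain) plus a piece arising from the cyclic reinsertion of $1$ (which is precisely Connes' $B$). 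Upon evaluating at $M_0 \otimes a_0 = 1 \otimes 1$ one kills the coefficient part and is left with a scalar cochain whose CE differential is the antisymmetrization of $(b + B)c$ evaluated at the unit.

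For part (a) I apply this translation to the sequence $(\tau^V_0,\tau^V_2,\ldots,\tau^V_{2n})$. The corollary of Theorem~\ref{cw} (and its obvious extension to the twisted $\tau^V$) states that this sequence is a $(b+B)$-cocycle; hence after antisymmetrization and evaluation at $1$, the cocycles $\Theta_{V,N,2k}(1)$ combine into a Lie algebra cocycle at each degree. For part (b) the $\mathfrak{sp}_{2n}$-invariance and basic properties of $\Theta_{V,N,2k}(1)$ are inherited directly from Proposition~\ref{prop:relative-tau}, since the antisymmetrization procedure preserves both $L_a$-invariance and vanishing of interior products when $a \in \mathfrak{sp}_{2n} \subset \VWeyl_{2n}$. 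Invariance under $\gl_V$ comes from the cyclic property of $\tr(M_0 M_{\sigma(1)} \cdots M_{\sigma(k)})$ in the definition of $\phi^N$, and invariance under $\gl_N$ is exactly the conjugation-invariance of this same trace; that the cochain is horizontal (killed by $\iota_a$ for $a$ in the subalgebra) then follows because plugging in an element of $\gl_N$ or $\gl_V$ into a slot other than the zeroth reduces, by the trace identities, to the same sort of contraction that vanishes in the twisted Weyl algebra picture.

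Part (c), the explicit evaluation, is the computational heart and the only place where I expect subtlety. After expanding $\phi^N(\tau^V_{2n})$ on $p_1 \wedge q_1 \wedge \cdots \wedge p_n \wedge q_n$ one collects $(2n)!$ permutations, each producing a factor $\tau_{2n}(1 \otimes a_{\sigma(1)} \otimes \cdots \otimes a_{\sigma(2n)})$ times the trace $\tr(I_V^{2n+1}) = \dim V$ in the $V$-factor and $\tr(I_N^{2n+1}) = N$ in the matrix factor. For $\tau_{2n}$ given in Definition~\ref{dfn:tau}, every $a_i$ is linear, so each partial derivative yields a constant, the exponentials $e^{\hbar(u_i-u_j+1/2)\alpha_{ij}}$ reduce to the identity, and $(\hbar\alpha)^{\wedge n}$ acting on $p_1\otimes q_1\otimes\cdots\otimes p_n\otimes q_n$ produces a nonzero contribution only from the canonical pairing $(p_i,q_i)$. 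The integral over $\Delta^{2n}$ contributes $\tfrac{1}{(2n)!}$, which combines with the sum over permutations to give exactly $N\dim V$; the sign $(-1)^n$ in $\tau_{2n}$ is absorbed by the $n$ applications of $\alpha$ on the $(p_i,q_i)$ pairs. The main obstacle in this step is keeping track of signs and the combinatorics of which $(p_i,q_i)$-pairings survive among permutations and matchings, but once one observes that only the single pairing contributes nontrivially to $(\hbar\alpha)^{\wedge n}$ and that all permutations produce the same value after integration, the identity $\Theta^N_{V,2n}(p_1\wedge q_1\wedge\cdots\wedge p_n\wedge q_n) = N\dim V$ falls out.
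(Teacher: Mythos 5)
Your proposal follows essentially the same route as the paper for the closedness and relativity statements. The paper's proof runs: (i) centrality of $1$ gives $\partial_{\mathrm{Lie}}\big(\Theta_{V,N,2k}(1)\big)=\big(\partial_{\mathrm{Lie}}\Theta_{V,N,2k}\big)(1)$; (ii) the chain-map property of $\phi^N$ gives $\big(\partial_{\mathrm{Lie}}\phi^N(\tau^V_{2k})\big)(1)=\phi^N(b\tau^V_{2k})(1)$; (iii) the $(b+B)$-cocycle relation replaces $b\tau^V_{2k}$ by $-B\tau^V_{2k+2}$; and (iv) a short explicit computation shows $\phi^N(B\tau^V_{2k+2})(1)=0$, because after inserting the unit twice the normalized cochain vanishes on the interior unit; relativity with respect to $\gl_N\oplus\gl_V$ is again normalization, and with respect to $\mathfrak{sp}_{2n}$ it is Proposition \ref{prop:relative-tau}, exactly as you argue. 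The one place where your write-up is looser than the paper: $\phi^N$ intertwines only the Hochschild differential $b$ with $\partial_{\mathrm{Lie}}$; your assertion that the Chevalley--Eilenberg coboundary ``splits into a $b$-piece plus a $B$-piece'' is not a property of the differential itself, but is precisely the combination of (iii) with the vanishing (iv), so you should state and verify (iv) explicitly --- it is the only computation actually carried out in the paper's proof. Finally, your part (c) goes beyond the paper, whose proof does not perform the evaluation at $p_1\wedge q_1\wedge\cdots\wedge p_n\wedge q_n$ but inherits it from the normalization of $\tau_{2n}$ in \cite{ffs} (modulo the sign of Remark \ref{rmk:sign}); your sketch has the right structure (only the canonical $(p_i,q_i)$ contractions survive in $(\hbar\alpha)^{\wedge n}$, the exponentials then act on constants, and the simplex volume $1/(2n)!$ cancels the sum over permutations), though you should also keep track of the factor $\hbar^n$ produced by $(\hbar\alpha)^{\wedge n}$, which in the paper is absorbed only through the $\hbar$-rescalings applied when $\Theta$ is used, e.g.\ the factor $\left(\frac{1}{\hbar}\right)^{2n-2k}$ appearing in $\Psi^{2n-2k}_{V,2n-2k}(1)$.
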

\begin{proof}
Since $1$ is in the center of $\fWeyl_{2n}^V$, we have the following
equation
\[
  \partial_\text{\tiny\rm Lie}
  ((\Theta_{V,N,2k})(1))=\partial_\text{\tiny\rm Lie}(\Theta_{V,N,2k})(1).
\]
On the right hand side of the above equation, $\Theta_{V,N,2k}$ is
viewed as a Lie algebra cochain in $C^{2k}\big(
\gl_N(\fWeyl_{2n}^V);\gl_N(\fWeyl_{2n}^V)^* \big)$. Furthermore,
since $\phi^N$ is a morphism of cochain complexes, we have that $\partial_\text{\tiny\rm
Lie} \Theta_{V,N,2k}(1)=
\partial_\text{\tiny\rm Lie}\phi^N(\tau^V_{2k})=\phi^N(b(\tau^V_{2k}))$.
Since $(\tau^V_0, \cdots, \tau^V_{2n})$ is a $(b+B)$-cocycle, we
have $b(\tau^V_{2k})=-B(\tau^V_{2k+2})$ and $\partial_\text{\tiny\rm
Lie}\Theta_{V,N,2k}(1)=-\phi^N(B(\tau^V_{2k+2}))(1)$. Now we compute
\[
\begin{split}
 \phi^N & (B(\tau^V_{2k+2}))(1)\big( (a_1\otimes M_1) \otimes \cdots \otimes
 (a_{2k+1}\otimes M_{2k+1})\big)\\
 =&\sum_{\sigma\in S_{2k+1}}\sgn(\sigma)B(\tau^V_{2k+2})(1\otimes a_{\sigma(1)}\otimes
 \cdots \otimes a_{\sigma(2k+1)})\cdot \tr(M_{\sigma(1)}\cdots M_{\sigma(2k+1)})\\
 =&\sum_{\sigma\in S_{2k+1}}\sum_{i}\sgn(\sigma)\tau^V_{2k+2}(1\otimes a_{\sigma(i)}\otimes
 \cdots \otimes a_{\sigma(2k+1)}\otimes 1\otimes a_{\sigma(1)}\otimes \cdots \otimes
 a_{\sigma(i-1)}) \cdot\\
 &\hspace{50mm} \cdot \tr \big( M_{\sigma(1)}\cdots M_{\sigma(2k+1)} \big) = 0.
\end{split}
\]
One concludes that $\Theta_{V,N,2k}(1)$ is a closed $2k$-cocycle in
$C^{2k} \big( \gl_N(\fWeyl_{2n}^V);\C((\hbar)) \big)$.
Since  $\tau_{2k}$ is a normalized cochain, one can easily check that
$\Theta_{V,N,2k}$ is in fact a cocycle relative to the Lie subalgebra
$\gl_N\oplus\gl_V$ of $\gl_N(\fWeyl^V_{2n})$. The fact that
$\Theta_{V,N,2k}$ is a cocycle relative to $\mathfrak{sp}_{2n}$ is a
corollary of Proposition \ref{prop:relative-tau}. Thus the claim is proven.
\end{proof}

\subsection{Local Riemann-Roch theorem}
In this subsection, we use Chern-Weil theory to compute
the Lie algebra cocycle $\Theta_{V,N, 2k}$, using the strategy in the proof of
\cite[Thm.~5.1]{ffs}.

We start with recalling the construction of the Chern-Weil
homomorphism. Let $\g$ be a Lie algebra and $\h$ a Lie subalgebra
with an $\h$-invariant projection $\pr:\g\to \h$. The curvature
$C\in \Hom(\wedge^2 \g, \h)$ of $\pr$  is defined by
\[
  C(u\wedge v):=[\pr(u),\pr(v)]-\pr([u,v]).
\]
Let $(S^\bullet\h^*)^\h$ be the algebra of $\h$-invariant polynomials on $\h$
graded by polynomial degree. Define the homomorphism $\rho:(S^\bullet\h^*)^\h\to
C^{2\bullet}(\g,\h)$ by
\begin{displaymath}
  \rho (P)(v_1\wedge\cdots \wedge v_{2q})=
  \frac{1}{q!}\!\!\!\!\sum_{\sigma\in S_2q,\atop \
  \sigma(2i-1)<\sigma(2i)}\!\!\!\!(-1)^{\sigma}P\big( C(v_{\sigma(1)},\
  v_{\sigma(2)}), \cdots, C(v_{\sigma(2q-1)},\ v_{\sigma(2q)})\big).
\end{displaymath}
The right hand side of this equation defines a cocycle, and the
induced map in cohomology $\rho :(S^\bullet\h^*)^\h\to
H^{2\bullet}(\g,\h)$ is independent of the choice of the projection
$\pr$. This is the Chern--Weil homomorphism.

In our case, we consider $\g=\gl_N(\fWeyl_{2n}^V)$ and
$\h=\gl_N\oplus\gl_V\oplus\spin_{2n}$. The projection $\pr:\g\rightarrow\h$ is
defined by
\begin{displaymath}
\begin{split}
  \pr \, & (M_1\otimes M_2\otimes a) :=\\
   &:=\frac{1}{N}a_0\tr(M_2)M_1
  +\frac{1}{\dim(V)}a_0\tr(M_1)M_2+\frac{1}{N\dim(V)}\tr(M_1\otimes M_2)a_2,
\end{split}
\end{displaymath}
 where $a_j$ is the component of $a$ homogeneous of degree $j$ in $y$, $M_1\in \gl_N$, and $M_2\in \gl_V$.
The essential point about the Chern-Weil homomorphism in this case
is contained in the following result.
\begin{proposition}
\label{prop:chern-weil}
  For $N\gg n$ and $q\leq 2k$, the Chern-Weil homomorphism
  \begin{displaymath}
    \rho:(S^q\h^*)^{\h}\to H^{2q}(\g,\h)
  \end{displaymath}
  is an isomorphism.
\end{proposition}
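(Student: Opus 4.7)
The plan is to reduce this to a known stable-range computation of relative Lie algebra cohomology for matrices over the Weyl algebra, in the spirit of \cite[Thm.~5.1]{ffs}.

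First I would identify
\begin{displaymath}
\gl_N(\fWeyl_{2n}^V) \;=\; \gl_N\bigl(\fWeyl_{2n}\otimes\End(V)\bigr) \;\cong\; \gl_{N\dim V}(\fWeyl_{2n})
\end{displaymath}
via Morita equivalence, under which $\gl_N\oplus\gl_V$ sits as the Kronecker product and $\spin_{2n}$ continues to act through its quadratic embedding in $\fWeyl_{2n}$. Well-definedness of $\rho$ at the cochain level is immediate from the construction, and the cohomology class is independent of the choice of $\pr$ by a standard transgression argument comparing any two $\h$-invariant projections.

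For surjectivity I would use a spectral sequence argument. Filter $\fWeyl_{2n}$ by the degree assignment $\deg(y^i)=1$, $\deg(\hbar)=2$; the associated graded of the relative Chevalley--Eilenberg complex $C^\bullet(\g,\h)$ computes the analogous cohomology for $\gl_M(\operatorname{S}^\bullet V^*)$, which by the Loday--Quillen--Tsygan theorem together with the Hochschild/cyclic data of $\fWeyl_{2n}$ recorded in Prop.~\ref{cyclic-weyl} stabilizes, for $q\le 2k$ and $M=N\dim V \gg n$, to a polynomial algebra in the universal Chern polynomials of $\gl_N\oplus\gl_V$ and the $\hat A$-type invariants coming from $\spin_{2n}$. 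Matching degrees identifies the $E_\infty$ page with $(S^\bullet\h^*)^\h$ concentrated in even total degree. For injectivity, I would evaluate $\rho(P)$ on the curvature of a universal flat $\h$-connection on $B\h$: this evaluation recovers the polynomial $P$ itself, so any $P$ with $\rho(P)$ cohomologically trivial must vanish.

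The main obstacle will be making the stable range precise. The classical stabilization result for Lie algebra cohomology of matrices over a ring says that
\begin{displaymath}
H^{2q}(\gl_M(\fWeyl_{2n}),\h) \;\longrightarrow\; H^{2q}(\gl_{M+1}(\fWeyl_{2n}),\h)
\end{displaymath}
is an isomorphism once $M$ is sufficiently large compared with $q$, and one must verify that the hypothesis $N\gg n$ (in practice, $N$ growing linearly in $k$) indeed puts us in this stable range through degrees $\le 2q$. A subsidiary check is that $\h$-invariant polynomials on $\gl_M$ restricted to the block-diagonal subalgebra $\gl_N\oplus\gl_V$ recover exactly $(S^\bullet(\gl_N\oplus\gl_V)^*)^{\gl_N\oplus\gl_V}$, which follows from standard classical invariant theory, so that no spurious classes enter and the Chern--Weil map exhausts $H^{2q}(\g,\h)$.
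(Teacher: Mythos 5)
The paper itself gives no independent argument here: the proof is a one-line deferral to \cite[Prop.~4.2]{ffs}, where the isomorphism is obtained by an invariant-theory analysis of the relative Chevalley--Eilenberg complex in the stable range. Your plan circles the same ideas (stability, cyclic cohomology of the Weyl algebra, Chern--Weil generators), but two of its key steps do not hold up as written. For surjectivity, your spectral sequence conflates the Weyl algebra with its associated graded: with $\deg(y^i)=1$, $\deg(\hbar)=2$ the associated graded of $\fWeyl_{2n}$ is a \emph{commutative} polynomial algebra, so the $E_1$-term is the relative Lie algebra cohomology of matrices over that commutative algebra, which by Loday--Quillen--Tsygan is built from its cyclic homology, i.e.\ from differential forms, and is enormous; it is not governed by Proposition \ref{cyclic-weyl}, which concerns $\Weyl_{2n}$ itself. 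The collapse to the small answer happens only through the higher differentials encoding the Moyal product, and carrying that out is precisely the nontrivial computation (in effect recomputing $HC^\bullet(\Weyl_{2n})$ inside the Lie complex); your sketch asserts the outcome rather than performing it. Moreover, LQT-type stabilization gives the absolute or $\gl_M$-relative answer and says nothing about relativizing with respect to the block subalgebra $\gl_N\oplus\gl_V$ (which is strictly smaller than $\gl_{N\dim V}$, so the Morita identification does not put you in the FFS situation) nor with respect to $\spin_{2n}$; it is exactly this relativization, handled in \cite{ffs} by $\operatorname{Sp}_{2n}$- and $\gl$-invariant theory on the cochain complex, that produces the $\hat A$-type generators and the freeness of the polynomial algebra you need, as well as the precise stable range in $N$.

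The injectivity step is the more serious problem: ``evaluating $\rho(P)$ on the curvature of a universal flat $\h$-connection on $B\h$'' is not meaningful in this setting --- a flat connection has no curvature to evaluate on, no such universal object is available in Lie algebra cohomology relative to $\h$, and evaluating $\rho(P)$ on the curvature elements $C(u,v)$ of the chosen projection $\pr$ is the \emph{definition} of $\rho$, not a device for inverting it on cohomology (injectivity of the Chern--Weil map fails for general pairs $(\g,\h)$, so it cannot be had by a soft argument). In the intended proof injectivity is not a separate step: the invariant-theory computation identifies $H^{2q}(\g,\h)$ with $(S^q\h^*)^\h$ outright in the range $N\gg n$, so surjectivity and injectivity come from the same dimension count. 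To repair your write-up you would need to replace both steps by that computation (or quote it), rather than by the LQT/universal-connection shortcuts.
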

\begin{proof}
  The proof of this result goes along the same lines as the proof of
  Proposition 4.2 in \cite{ffs}.
\end{proof}
Recall the following invariant polynomials on the Lie algebras $\gl_N$ and $\spin_{2n}$:
First on $\gl_N$ we have the Chern character
\[
\Ch(X):=\tr\left(\exp X\right),~\mbox{for}~X\in\gl_N.
\]
On $\spin_{2n}$, we have the $\hat{A}$-genus:
\[
\hat{A}(Y):=\det\left(\frac{Y/2}{\sinh(Y/2)}\right)^{1/2},~\mbox{for}~Y\in\spin_{2n}.
\]
We will need the rescaled version $\hat{A}_\hbar(Y):=\hat{A}(\hbar
Y)$. With this, we can now state:
\begin{theorem}\label{thm:local-rr}
In $H^{2k} \big( \gl_N(\fWeyl_{2n}^V),
\gl_{N}\oplus\gl_V\oplus\spin_{2n} \big)$ we have the identity
\[
  [\Theta_{V, N,2k}]=\rho((\hat{A}_\hbar\Ch_V\Ch)_k)
\]
for $k\leq n$ and $N\gg 0$.
\end{theorem}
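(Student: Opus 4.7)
The plan is to reduce the identification of $[\Theta_{V,N,2k}]$ to a computation of invariant polynomials via the Chern--Weil isomorphism, and then to determine the polynomial by evaluation on test elements adapted to the factorization of $\h$. By Prop.~\ref{prop:chern-weil}, for $N\gg 0$ and $k\le n$, $\rho$ is an isomorphism, so there is a unique $P\in (S^k\h^*)^\h$ with $[\Theta_{V,N,2k}]=\rho(P)$. Since $\h$ is a direct sum of Lie algebras, its invariant polynomials factor as
\[
(S^\bullet \h^*)^\h \;\cong\; (S^\bullet\gl_N^*)^{\gl_N}\otimes (S^\bullet\gl_V^*)^{\gl_V}\otimes (S^\bullet\spin_{2n}^*)^{\spin_{2n}}.
\]
So it suffices to show that $P$ corresponds to $(\hat{A}_\hbar \,\Ch_V\, \Ch)_k$, i.e.\ the total degree $k$ component of the product of the $\hat A$-form for $\spin_{2n}$ with the two Chern characters.

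My next step is to exploit the multiplicative nature of $\Theta_{V,N,2k}$ itself. From the defining formula \eqref{HoLa}, $\phi^N$ separates the $N\times N$-matrix trace from the Weyl-algebra cochain $\tau_{2k}^V$, which in turn by construction factors the $\End(V)$-trace from $\tau_{2k}$. Therefore, on tensor entries of the special form $M_i\otimes m_i\otimes a_i$ with $M_i\in\gl_N$, $m_i\in \gl_V$, $a_i\in\fWeyl_{2n}$, the cocycle splits as a product of an $N\times N$ trace, a $V$-trace, and a pure Weyl-algebra part. Substituting into $\rho$ with curvature elements supported in the three summands of $\h$ separately, this immediately yields that the $\gl_N$-dependence of $P$ is the Chern character $\Ch$ and the $\gl_V$-dependence is $\Ch_V$, up to an overall factor determined by the pure symplectic factor.

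The central step, and the main obstacle, is to identify the $\spin_{2n}$-dependence as $\hat A_\hbar$. For this, I choose a Cartan subalgebra of $\spin_{2n}\subset \fWeyl_{2n}$ spanned by the quadratic Weyl elements $h_i:=\tfrac{1}{2}(p_iq_i+q_ip_i)$, set $Y=\mathrm{diag}(y_1,\ldots,y_n)\in\spin_{2n}$, and evaluate $\rho(P)$ on the corresponding wedge monomials. Thanks to invariance and Prop.~\ref{prop:relative-tau}, one may replace the curvature elements by constant symplectic generators. The $\tau_{2k}$ formula of Def.~\ref{dfn:tau}, evaluated on products of such quadratic elements, reduces by Gaussian/Moyal integration (the Moyal product of quadratic elements being computable in closed form) to a product of one-variable integrals of the shape
\[
\prod_{i=1}^n \int_{\Delta}\! e^{\hbar(u-v+1/2)y_i}\,\alpha_i\,du\,dv,
\]
which after normalization by $N\dim V$ (consistent with Prop.~\ref{prop:theta-lie-algebra} at top degree) assembles into $\det\bigl(\tfrac{\hbar Y/2}{\sinh(\hbar Y/2)}\bigr)^{1/2}=\hat A_\hbar(Y)$. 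Combined with the factorization above, this determines $P=(\hat A_\hbar\,\Ch_V\,\Ch)_k$.

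This Gaussian-integral computation is precisely the technical core of \cite[Thm.~5.1]{ffs}; the only new feature is the presence of the $\gl_V$-trace, which enters multiplicatively and is handled as in the $\gl_N$ case. Finally, the top-degree normalization $\Theta_{V,N,2n}^N(p_1\wedge q_1\wedge\cdots\wedge p_n\wedge q_n)=N\dim V$ from Prop.~\ref{prop:theta-lie-algebra} rules out any possible scalar ambiguity, completing the identification.
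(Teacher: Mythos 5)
Your proposal is correct and follows essentially the same route as the paper: the paper's proof of Theorem~\ref{thm:local-rr} simply notes that the constructions and arguments of \cite[Thm.~5.1]{ffs} can be repeated verbatim for all $k\leq n$ and for the extra $\gl_V$-factor (modulo the sign convention of Remark~\ref{rmk:sign}), which is exactly the Chern--Weil reduction plus Gaussian/Moyal evaluation on quadratic elements that you sketch. One small caveat: the normalization $\Theta^N_{V,2n}(p_1\wedge q_1\wedge\cdots\wedge p_n\wedge q_n)=N\dim(V)$ from Proposition~\ref{prop:theta-lie-algebra} fixes the scalar only in top degree $2n$, so for $k<n$ the scalar must be pinned down by the explicit evaluation itself rather than by that normalization.
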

\begin{proof}When $k=n$ the equality is proved in \cite[Thm.~5.1]{ffs}. Actually, one
can literally repeat the constructions and arguments in the proof of \cite[Thm.~5.1]{ffs}
for all $k\leq n$. We remark that we have different sign convention with respect to \cite[Thm. 5.1]{ffs} due to the change of sign in the cocycle $\tau_{2n}$, cf.\ Remark \ref{rmk:sign}.
\end{proof}
\subsection{Higher algebraic index theorem}
In this section, we use Theorem \ref{thm:local-rr} to compute the
pairing $\left\langle \sfQ(\alpha), P_1-P_2\right\rangle$.

\begin{theorem}\label{thm:higher-algind}
For a sequence of closed forms $\alpha=(\alpha_0, \cdots, \alpha_{2k})\in
\Tot^{2k} \calB \Omega^\bullet (M)((\hbar))$
and two projectors $P_1, P_2$ in $\calA^{((\hbar))}$ with $P_1-P_2$
compactly supported, one has
\[
  \left\langle \sfQ(\alpha), P_1-P_2\right\rangle=
  \sum_{l=0}^{k}\frac{1}{(2\pi \sqrt{-1})^{l}}\int_M \alpha_{2l}
  \wedge \hat{A}(M)\Ch(V_1-V_2) \exp \Big( -\frac{\Omega}{2\pi \sqrt{-1}\hbar}\Big),
\]
where $V_1$ and $V_2$ are vector bundles on $M$ determined by the
zero-th order terms of $P_1$ and $P_2$.
\end{theorem}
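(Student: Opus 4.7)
The plan is to reduce the pairing to the local Riemann--Roch identity of Theorem \ref{thm:local-rr} via Lie algebra cohomology. First, by Proposition \ref{prop:reduction} one has
\[
  \langle \sfQ(\alpha), P_1-P_2\rangle = \langle \sfQ_{V_1}(\alpha), 1\rangle - \langle \sfQ_{V_2}(\alpha), 1\rangle,
\]
so it suffices to compute $\langle \sfQ_V(\alpha), 1\rangle$ for a single vector bundle $V$ on $M$ and then subtract. The vanishing observation made at the beginning of Section \ref{sec:lie-alg}, namely that $\Psi^{2n-2k+2l}_{V, 2n-2j}(1) = 0$ whenever $j < k-l$ because $\tau_{2n-2j}$ is normalized, collapses this to
\[
  \langle \sfQ_V(\alpha), 1\rangle = \sum_{l=0}^{k}\frac{1}{(2\pi \sqrt{-1})^n}\int_M \alpha_{2l}\wedge \Psi^{2n-2l}_{V, 2n-2l}(1),
\]
and invoking the identity $\Psi^{2n-2l}_{V, 2n-2l}(1) = \tfrac{1}{\hbar^{2n-2l}(2n-2l)!}\, \Theta_{V, N, 2n-2l}(A_V\wedge\cdots\wedge A_V)(1)$ recasts each summand as the relative Lie algebra cochain $\Theta_{V, N, 2n-2l}(1)$ evaluated on $2n-2l$ copies of the Fedosov $1$-form $A_V$ for the twisted algebra $\calA^{((\hbar))}_V$.

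By Proposition \ref{prop:theta-lie-algebra}, each $\Theta_{V, N, 2n-2l}(1)$ is a cocycle in the complex relative to $\gl_N\oplus\gl_V\oplus\spin_{2n}$, and Theorem \ref{thm:local-rr} identifies its cohomology class with the Chern--Weil representative $\rho\bigl((\hat A_\hbar \Ch_V \Ch)_{n-l}\bigr)$. Since $\alpha_{2l}$ is closed, only the cohomology class matters under the integral, so one may substitute this Chern--Weil representative for $\Theta_{V, N, 2n-2l}$ without altering the pairing. The problem is thus reduced to evaluating the invariant polynomial $(\hat A_\hbar \Ch_V \Ch)_{n-l}$ on the curvature $C(A_V, A_V)$ of the projection $\pr$.

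Finally, one identifies $C(A_V, A_V)$ block by block using the Fedosov equation $\Omega = \tilde R + \tilde\nabla A_V + \tfrac{1}{2\hbar}[A_V, A_V]$: the $\spin_{2n}$-block yields the curvature of the symplectic connection on $M$ (feeding $\hat A(M)$ through the rescaling $\hat A_\hbar$), the $\gl_V$-block yields the curvature of the chosen connection on $V$ (feeding $\Ch(V)$), the $\gl_N$-block vanishes so that $\Ch$ contributes the constant $N$ which cancels the $1/N$ in $\pr$, and the remaining central scalar component equals $-\Omega/\hbar$, which under the exponential inside $\Ch$ produces $\exp\bigl(-\Omega/(2\pi\sqrt{-1}\hbar)\bigr)$. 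Collecting these contributions and subtracting the $V_1$ and $V_2$ versions yields the asserted formula. The main obstacle will be the bookkeeping in this last step: verifying that the central component of $C(A_V, A_V)$ really realizes the deformation characteristic class $\Omega\in\omega+\hbar H^2(M)[[\hbar]]$, and that the $\hbar^{2n-2l}$ in the denominator of $\Psi^{2n-2l}_{V, 2n-2l}$ together with the Chern--Weil normalization combines with the prefactor $1/(2\pi\sqrt{-1})^n$ to leave exactly $1/(2\pi\sqrt{-1})^l$ in front of the $l$-th summand.
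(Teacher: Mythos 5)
Your proposal follows the paper's proof essentially step for step: the reduction via Proposition \ref{prop:reduction}, the normalization argument of Section \ref{sec:lie-alg} collapsing the pairing to $\sum_l\int_M\alpha_{2l}\wedge\Psi^{2n-2l}_{V,2n-2l}(1)$, the reinterpretation of $\Psi^{2n-2l}_{V,2n-2l}(1)$ as the relative Lie algebra cocycle $\Theta_{V,N,2n-2l}$ evaluated on the Fedosov form, the local Riemann--Roch Theorem \ref{thm:local-rr}, and the curvature identity $C(A,A)=\tilde{R}_V+\tilde{R}-\Omega$ (which the paper obtains by citing \cite[Thm.~5.2]{ffs} rather than rederiving it from the Fedosov equation). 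The only point you leave implicit is the paper's remark that one first adds large trivial bundles to $V_1$ and $V_2$ so that $N\gg 0$ and Theorem \ref{thm:local-rr} (which needs the stable range of the Chern--Weil isomorphism) applies; this and the final $\hbar$- and $(2\pi\sqrt{-1})$-bookkeeping you flag are exactly the same details the paper handles, so the approach is the same.
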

\begin{proof}
According to Proposition \ref{prop:reduction}, $\langle \sfQ(\alpha),
P_1-P_2\rangle = \langle \sfQ_{V_1}(\alpha),1\rangle-
\langle \sfQ_{V_2}(\alpha),1\rangle$.
Furthermore, by the arguments at the beginning of Section \ref{sec:lie-alg},
$\langle \sfQ_{V_i}(\alpha), 1\rangle$, $i=1,2$ is given  by
\begin{equation}\label{eq:pairing}
\sum_{l\leq k}\frac{1}{(2\pi \sqrt{-1})^n}\int_M\alpha_{2l}\wedge
\Psi^{2n-2l}_{V, 2n-2l}(1).
\end{equation}
Moreover, recall that $\Psi^{2n-2l}_{V,2n-2l}(1)$ is equal to
$\frac{1}{(2n-2l)!\hbar^{n-l}}\Theta_{V,2n-2l} (A\wedge\cdots \wedge
A)(1)$. Note that the direct sum with trivial bundles does not
change the value of the pairing. Therefore, we can add a large
enough trivial bundle to both $V_1$ and $V_2$ so that we can apply
Theorem \ref{thm:local-rr} to compute $\Theta_{V, 2n-2l}$.
For vector fields $\xi_1, \cdots, \xi_{2n-2l}$ on $M$ we have
\[
\begin{split}
\Theta_{V,N,2n-2l}(1) & (A\wedge\cdots A)(\xi_1, \cdots, \xi_{2n-2l}) = \\
  = \, & (2n-2l)!\, \rho((\hat{A}_\hbar \Ch_V\Ch)_{2n-2l})(A(\xi_1)
  \wedge \cdots \wedge A(\xi_{2n-2l}))\\
  = \, &\frac{(2n-2l)!}{(n-l)!}\sum_{\sigma(2j-1)<\sigma(2j)}\sgn(\sigma)\\
  & P_{2n-2l}\big( C(A(\xi_{\sigma(1)}), A(\xi_{\sigma(2)})), \cdots,
  C(A(\xi_{\sigma(2n-2l-1)}), A(\xi_{\sigma(2n-2l)}))\big),
\end{split}
\]
where $P_{2n-2l}=(\hat{A}_\hbar\Ch_V\Ch)_{n-l}\in
{(S^{n-l}\h)^*}^\h$. By \cite[Thm.~5.2]{ffs}, for two any vector
fields $\xi, \eta$ on $M$, $C(A(\xi), A(\eta))$ is equal to
$\tilde{R}_V(\xi, \eta)+\tilde{R}(\xi, \eta)-\Omega(\xi,\eta)$,
where $\tilde{R}$ (and $\tilde{R}_V$) is the lifting of the
curvature of the bundle $TM$ (and $V$) and $\Omega$ is the curvature
for the Fedosov connection. Therefore, we have
\[
\begin{split}
 \Theta_{V, N,2n-2l}(1)(A\wedge\cdots A)&(\xi_1, \cdots,
 \xi_{2n-2l})\\
 = & (2n-2l)!\, \rho(P_{2n-2l})((R_V+R-\Omega)^{2n-2l}).
\end{split}
\]
Replacing $\Psi_{V,2n-2l}^{2n-2l}(1)$ by $
\frac{1}{\hbar^{n-l}(2n-2l)!}\Theta_{V,N,2k}$ in Equation
\ref{eq:pairing}, we obtain
\[
\begin{split}
  &\left\langle \sfQ_{V_1}(\alpha)-\sfQ_{V_2}(\alpha), 1\right\rangle\\
  =&
  \sum_{l\leq k}\frac{1}{(2\pi \sqrt{-1})^l}\int_M\alpha_{2l}\wedge
  \hat{A}(M)\Ch(V_1-V_2)\exp\Big(-\frac{\Omega}{2\pi \sqrt{-1}\hbar} \Big) .
\end{split}
\]
This completes the proof.
\end{proof}


%
%
\section{Generalization to orbifolds}
\label{Sec:GenOrbi}
In this section we show how the previous constructions can be generalized to
orbifolds. The result is an algebraic index theorem for $(b+B)$-cocycles on
certain formal deformations of proper \'etale groupoids, which in turns
generalizes the index formula for traces in \cite{ppt}.
\subsection{Preliminaries}
Let $(M,\omega)$ be a symplectic orbifold, i.e., a paracompact Hausdorff space
locally modeled on a quotient of an open subset of $\R^{2n}$, equipped with
the standard symplectic form, by a finite subgroup
$\Gamma\subset \operatorname{Sp}(2n,\R)$. As an abstract notion of an atlas,
we fix  a proper \'etale groupoid $\sfG_1\rightrightarrows \sfG_0$ with the
property that $\sfG_0/\sfG_1\cong M$, and $\sfG_0$ is equipped with a
$\sfG$-invariant symplectic form $\omega$. Denoting the two structure
maps of the groupoid by $s,t:\sfG_1\rightarrow \sfG_0$,
this means that $s^*\omega=t^*\omega$. Remark that for any symplectic
orbifold, such a groupoid always exists and is unique up to Morita
equivalence. Associated to the groupoid $\sfG$ is its convolution algebra
$\calA \rtimes \sfG:=\calC^\infty_\text{\tiny\rm cpt}(\sfG_1)$ with product
given by convolution:
\[
  (f_1*f_2)(g):=\sum_{g_1 g_2=g}f_1(g_1)f_2(g_2), \quad
  \text{where $f_1,f_2 \in \calC^\infty_\text{\tiny\rm cpt}(\sfG_1)$
  and $g\in \sfG_1$}.
\]
The symplectic structure on $\sfG$ equips $\calA \rtimes \sfG$ with
a noncommutative Poisson structure, that is, a degree $2$ Hochschild
cocycle whose Gerstenhaber bracket with itself is a coboundary. Let
$\calA^\hbar$ be a $\sfG$-invariant deformation quantization of
$(\sfG_0,\omega)$, for example given by Fedosov's method, using an
invariant connection as is explained in \cite{fe:g-index}. This
means that $\calA^\hbar$ forms a $\sfG$-sheaf of algebras over
$\sfG_0$, and we can take the crossed product $\calA^\hbar\rtimes
\sfG:= \Gamma_\text{\tiny\rm cpt} \big( \sfG_1, s^{-1}\calA^\hbar
\big)$ with algebra structure
\[
  [a_1\star_ca_2]_g=\sum_{g_1g_2=g}([a_1]_{g_1}g_2)[a_2]_{g_2},
  \quad \text{for $a_1,a_2\in s^{-1}\calA^\hbar (\sfG_1)$ and $g\in \sfG_1$}.
\]
This is a noncommutative algebra deforming the convolution algebra of the
underlying groupoid.

In \cite{nppt}, the  cyclic cohomology of $\calA^{((\hbar))} \rtimes
\sfG$ was computed to be given by
\begin{equation}
\label{iso-cyclic}
  HC^\bullet(\calA^{((\hbar))} \rtimes \sfG)=
  \bigoplus_{r\geq 0}H^{\bullet-2r}\big( \widetilde{M},\C((\hbar)) \big),
\end{equation}
where $\widetilde{M}$ is the so-called inertia orbifold which we will now
describe. Introduce the ``space of loops'' $B^{(0)}$ given by
\[
  B^{(0)}:=\{g\in \sfG_1  \mid s(g)=t(g) \}.
\]
In the sequel, we denote by $\sigma_0$ the local embedding obtained as the
composition of the canonical embedding $B^{(0)}\hookrightarrow \sfG_1$ with the
source map $s$. If no confusion can arise, we also denote the embedding
$B^{(0)}\hookrightarrow \sfG_1$ by $\sigma_0$.
The groupoid $\sfG$ acts on $B^{(0)}$ and the associate action groupoid
$\Lambda \sfG:=B^{(0)}\rtimes \sfG$ turns out to be proper and \'etale as well.
It therefore models another orbifold $\widetilde{M}:=B^{(0)}/\sfG$ called the
inertia orbifold.

As done in the previous sections for smooth manifolds, we will lift
the isomorphism \eqref{iso-cyclic} to a morphism of cochain
complexes where on one side we have a complex of differential forms
and on the other side Connes' $(b,B)$-complex. There are two natural
choices for a de Rham-type of complex that computes the cohomology
of $\widetilde{M}$. One is to use a simplicial resolution of $B^{(0)}$ given
by the so-called ``higher Burghelea spaces'' $B_k$ and the
associated simplicial de Rham complex. The other one, and this is
the complex we use, is to use $\sfG$-invariant differential forms on
$B^{(0)}$. The fact that the two models compute the same cohomology is
true because $\sfG$ is a proper groupoid.

It was observed in \cite{crainic} that $\Lambda \sfG$ is a so-called cyclic
groupoid, that is, comes equipped with a canonical nontrivial section
$\theta:B^{(0)}\rightarrow \Lambda \sfG_1$ of both source and target map. In this
case $\theta$ is given by $\theta(g)=g$, $g\in B^{(0)}$. As a consequence of this,
when we pull-back the sheaf $\calA^\hbar$ to $B^{(0)}$, it comes equipped with a
canonical section
\[
  \theta\in\underline{\Aut}(\iota^{-1}(\calA^\hbar_{G})).
\]
As we have seen, for a smooth symplectic manifold, the local model for a deformation
quantization was given by the Weyl algebra. In this case, it is given by the Weyl
algebra together with an automorphism.

\subsection{Adding an automorphism to the Weyl algebra}
As remarked in \S \ref{weyl_algebra}, the symplectic group
$\operatorname{Sp}(2n,\R)$ acts on the Weyl algebra $\Weyl_{2n}$ by
automorphisms. Let us fix an element $\gamma\in \operatorname{Sp} (2n,\R)$ of
finite order. It induces a decomposition of $V=\R^{2n}$ into two components,
$V=V^\perp\oplus V^\gamma$, where $V^\gamma$ is the subspace of fixed points.
Since $\gamma$ is a linear symplectic transformation, this decomposition is
symplectic, and we put $l := \dim(V^\perp)/2$.
Adding the automorphism $\gamma$ to the definition of cyclic cohomology has quite
an effect in the sense that we now have
\begin{proposition}
The twisted cyclic cohomology of the Weyl algebra is given by
\[
  HC^k_\gamma \big(\Weyl_{2n} \big)=
 \begin{cases}
   \C[\hbar,\hbar^{-1}],&\text{if $k=2n-2l+2p$},p\geq 0\\
   0,&\text{else}.
 \end{cases}
\]
\end{proposition}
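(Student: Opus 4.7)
The plan is to mimic the proof of Proposition \ref{cyclic-weyl}, adapting the $\hbar$-filtration spectral sequence to the twisted setting, and then invoking Connes' $SBI$ sequence to pass from Hochschild to cyclic cohomology. The key geometric input is that $\gamma$, being symplectic of finite order, induces a $\gamma$-invariant symplectic decomposition $V=V^\perp\oplus V^\gamma$ with $\dim V^\perp = 2l$, and on each factor the Weyl algebra structure respects the product.

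First I would establish a Künneth-type decomposition
\[
 \Weyl_{2n} \;\cong\; \Weyl(V^\perp)\,\hatotimes\,\Weyl(V^\gamma),
\]
with $\gamma$ acting as $\gamma^\perp\otimes \id$. A standard Künneth argument for the $\gamma$-twisted Hochschild complex then yields
\[
  HH^\bullet_\gamma(\Weyl_{2n}) \;=\; HH^\bullet_{\gamma^\perp}\bigl(\Weyl(V^\perp)\bigr)\otimes
  HH^\bullet\bigl(\Weyl(V^\gamma)\bigr).
\]
The second factor, which is untwisted, is handled exactly as in Proposition \ref{cyclic-weyl} applied to the symplectic vector space $(V^\gamma,\omega|_{V^\gamma})$ of dimension $2(n-l)$: the $\hbar$-filtration spectral sequence degenerates and gives $\C[\hbar,\hbar^{-1}]$ concentrated in degree $2n-2l$.

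For the first factor, I would argue that because $\gamma^\perp$ acts on $V^\perp$ without non-zero fixed vectors, the $\gamma^\perp$-twisted HKR theorem for $S((V^\perp)^*)$ gives $\Omega^\bullet((V^\perp)^{\gamma^\perp}) = \C$ concentrated in degree $0$. Running the $\hbar$-filtration spectral sequence for $\Weyl(V^\perp)$ then produces $E_1$-term $\C[\hbar,\hbar^{-1}]$ in degree $0$ only, so $HH^\bullet_{\gamma^\perp}(\Weyl(V^\perp)) = \C[\hbar,\hbar^{-1}]$ in degree $0$, all higher degrees vanishing. Combining with the Künneth formula, $HH^\bullet_\gamma(\Weyl_{2n})$ is concentrated in degree $2n-2l$ with value $\C[\hbar,\hbar^{-1}]$.

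Finally, the $SBI$ long exact sequence
\[
 \cdots \to HC^{k-2}_\gamma(\Weyl_{2n}) \xrightarrow{S} HC^k_\gamma(\Weyl_{2n})
 \xrightarrow{I} HH^k_\gamma(\Weyl_{2n}) \xrightarrow{B} HC^{k-1}_\gamma(\Weyl_{2n})\to\cdots
\]
together with the concentration of $HH^\bullet_\gamma$ in the single even degree $2n-2l$ forces the periodicity operator $S$ to be an isomorphism above degree $2n-2l$ and $HC^k_\gamma$ to vanish for $k<2n-2l$ and for $k$ odd. This yields the stated formula. The main obstacle is the vanishing statement for $\gamma^\perp$ without fixed points, i.e.\ the twisted HKR input; this is the genuine technical step and is where the assumption that $\gamma$ has finite order (so that eigenvalues of $\gamma^\perp$ are non-trivial roots of unity and $1-\gamma^\perp$ is invertible on $V^\perp$) is used to ensure the Koszul-type differential on the $E_1$-page is acyclic in positive degrees.
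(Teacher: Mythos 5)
Your outline is sound, but note that the paper itself states this proposition without proof (it is the twisted analogue of Proposition \ref{cyclic-weyl}, which is in turn only sketched via the $\hbar$-filtration and a citation), so there is no argument in the text to compare against line by line; what you have written is a legitimate reconstruction, and it uses precisely the tools the paper deploys nearby. The decomposition $\Weyl_{2n}\cong \Weyl(V^\perp)\hatotimes\Weyl(V^\gamma)$ with $\gamma=\gamma^\perp\otimes\id$ and the twisted Eilenberg--Zilber/K\"unneth quasi-isomorphism are exactly the mechanism the paper invokes immediately after the proposition (the $\#$-product construction of the generator $\tau\#\tr_\gamma$), and the $\hbar$-filtration spectral sequence is the paper's stated method for the untwisted case; running it on the untwisted factor gives concentration in degree $2n-2l$, and on the transversal factor the twisted HKR/Koszul computation for $S((V^\perp)^*)$ gives concentration in degree $0$, with the decisive input being, as you correctly isolate, the invertibility of $1-\gamma^\perp$ on $V^\perp$ (finite order plus absence of nonzero fixed vectors). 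Two small points of hygiene: the Koszul acyclicity you invoke is what computes the $E_1$-term from the associated graded commutative algebra, not a differential acting ``on the $E_1$-page'' as phrased; and the convergence of the $\hbar$-filtration spectral sequence requires the usual care (work with $\pWeyl$, i.e.\ formal power series in $\hbar$, where the filtration is complete, and then invert $\hbar$), the same caveat that is implicit in the untwisted Proposition \ref{cyclic-weyl}. With those understood, the K\"unneth formula has no Tor correction since one factor is free of rank one over $\C[\hbar,\hbar^{-1}]$ in a single degree, and the twisted $SBI$ sequence then forces the stated periodic answer, so your argument goes through.
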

We will now give an explicit generator for the nonzero class in cyclic
cohomology. Let $A$ and $\tilde A$ be algebras over a field $\Bbbk$, possibly
equipped with automorphisms $\gamma \in\Aut (A)$ and
$\tilde\gamma \in \Aut (\tilde A)$. The Alexander--Whitney map defines a cochain map
\[
  \#: C^\bullet(A)\otimes C^\bullet(\tilde A)\rightarrow C^\bullet(A\otimes \tilde A),
\]
where the Hochschild differentials are twisted by resp. $\gamma$, $\tilde\gamma$ and
$\gamma\otimes \tilde\gamma$. According to the Eilenberg--Zilber theorem, this is in
fact a quasi-isomorphism.  The cyclic version of this theorem,
cf.~\cite[\S 4.3]{loday}, states that the map above can be completed to a
quasi-isomorphism of the cochain complexes $\Tot^\bullet\calB C^\bullet$.
Below we will only be interested in the case where one of the two cochains
is of degree $0$, that means a twisted trace. Recall that a $\tilde\gamma$-twisted
trace on $\tilde A$ is a linear functional
$\tr_{\tilde \gamma} :\tilde A\rightarrow \Bbbk$ satisfying
\begin{equation}
\label{twisted_trace}
 \tr_{\tilde \gamma}(\tilde a_1 \tilde a_2)=\tr_{\tilde \gamma}(\tilde\gamma(\tilde a_2)
 \tilde a_1)\quad \text{ for all $\tilde a_1, \tilde a_2 \in \tilde A$}.
\end{equation}
\begin{lemma}
Let $\psi=(\psi_0,\ldots,\psi_{2k})\in \Tot^{2k}\calB C^\bullet(A)$ be a
$\gamma$-twisted $(b+B)$-cocycle on $A$ and $\tr$ a $\tilde \gamma$-twisted trace on
$\tilde A$. Then the cochain
\[
  \psi\# \tr=(\psi_0\#\tr ,\ldots,\psi_{2k}\# \tr)
\]
is a $\gamma\otimes\tilde\gamma$-twisted cocycle of degree $2k$ in
$\Tot^\bullet\calB C^\bullet(A\otimes \tilde A)$.
 \end{lemma}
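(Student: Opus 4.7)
The plan is to use the fact that the Alexander--Whitney map $\#$ satisfies a (graded) Leibniz rule with respect to $(b+B)$ in the twisted setting. With this in hand, the conclusion is immediate: once one checks that the twisted trace $\tr$ is itself a $\tilde\gamma$-twisted $(b+B)$-cocycle of degree $0$, one gets
$(b+B)(\psi\#\tr) = ((b+B)\psi)\#\tr \pm \psi\#((b+B)\tr) = 0$,
and the total degree of $\psi\#\tr$ is $2k+0 = 2k$ by construction.

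First I would verify that $\tr\in C^0(\tilde A)$ is a $\tilde\gamma$-twisted $(b+B)$-cocycle. The twisted Hochschild differential gives $(b_{\tilde\gamma}\tr)(\tilde a_1,\tilde a_2) = \tr(\tilde a_1\tilde a_2) - \tr(\tilde\gamma(\tilde a_2)\tilde a_1)$, which vanishes by the defining relation \eqref{twisted_trace}. Since Connes' $B$ lowers degree by one, $B\tr=0$ for degree reasons. Hence $\tr$ is a $(b+B)$-cocycle of degree $0$ in the $\tilde\gamma$-twisted complex.

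Next, because one factor is a $0$-cochain, the Alexander--Whitney product collapses to the explicit formula $(\psi_i\#\tr)(x_0,\ldots,x_i) = \psi_i(a_0,\ldots,a_i)\,\tr(\tilde a_0\tilde a_1\cdots\tilde a_i)$ where $x_j=a_j\otimes\tilde a_j$. A direct computation then shows $b_{\gamma\otimes\tilde\gamma}(\psi_i\#\tr) = (b_\gamma\psi_i)\#\tr$: all interior face terms pass through, while the wrap-around term contributes the factor $\tr(\tilde\gamma(\tilde a_{i+1})\tilde a_0\cdots\tilde a_i)$, which the twisted trace identity rewrites as $\tr(\tilde a_0\cdots\tilde a_{i+1})$, matching what $(b_\gamma\psi_i)\#\tr$ produces. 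The analogous check for $B$ is even easier: inserting $1\otimes 1$ and cyclically permuting leaves the ``closed-up'' product $\tr(\tilde a_0\cdots\tilde a_{i-1})$ invariant, once more by \eqref{twisted_trace}, so $B(\psi_i\#\tr) = (B\psi_i)\#\tr$.

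The main technical point is tracking the twist on the cyclic ($B$) side, since the twisted cyclic operator combines a cyclic permutation with $\gamma\otimes\tilde\gamma$ applied to the shifted slot. What saves us is exactly the structural feature exploited above: in the Alexander--Whitney product with a degree-$0$ cochain, the entire $\tilde A$-string is always evaluated inside $\tr$, so the twisted trace relation absorbs any $\tilde\gamma$-twist introduced by the cyclic operator. Combining the two Leibniz identities above yields $(b+B)(\psi\#\tr)=0$, completing the proof.
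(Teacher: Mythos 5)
Your argument is correct and is essentially the ``explicit computation'' the paper omits: with one factor of degree zero the $\#$-product reduces to multiplying $\psi_i(a_0\otimes\cdots\otimes a_i)$ by $\tr(\tilde a_0\cdots\tilde a_i)$, and the twisted trace identity absorbs the twist both in the wrap-around term of $b_{\gamma\otimes\tilde\gamma}$ and under the twisted cyclic permutations entering $B$, giving $b(\psi_i\#\tr)=(b\psi_i)\#\tr$ and $B(\psi_i\#\tr)=(B\psi_i)\#\tr$. One small caveat: the blanket ``Leibniz rule for $\#$ with respect to $(b+B)$'' announced in your opening holds in general only up to homotopy (this is the cyclic Eilenberg--Zilber theorem), but your proof never relies on it beyond the degree-zero case you verify directly, so the argument stands as written.
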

\begin{proof}
Explicit computation.
\end{proof}
In our case, we have $\Weyl_{2n}=\Weyl_{2l}\otimes \Weyl_{2n-2l}$ according to the
decomposition $V=V^\perp\oplus V^\gamma$ of the underlying symplectic vector space.
Notice that by definition, the automorphism $\gamma\in \operatorname{Sp}_{2n}$
is trivial on $\Weyl_{2n-2l}$. Therefore we can simply use the cyclic cocycle
$(\tau_0,\ldots,\tau_{2n-2l})$ of degree $2n-2l$ on this part of the tensor product.
On the transversal part, i.e., associated to $V^\perp=\R^{2l}$ we use the
twisted trace $\tr_\gamma:\Weyl_{2l}  \rightarrow\C[\hbar,\hbar^{-1}]$
constructed by Fedosov in \cite{fe:g-index}: For this, we choose a
$\gamma$-invariant complex structure on $V^\perp$, identifying
$V^\perp\cong\C^{l}$ so that $\gamma\in \operatorname{U}(l)$. The inverse
Caley transform
\begin{displaymath}
  c(\gamma)=\frac{1-\gamma}{1+\gamma}
\end{displaymath}
is an anti-hermitian matrix, i.e., $c(\gamma)^*=-c(\gamma)$. With this,
define
\begin{displaymath}
  \tr_\gamma(a):=\mu_{2l}\left({\rm det}^{-1}(1-\gamma^{-1})
  \exp\left(\hbar \, c(\gamma^{-1})^{ij}\frac{\partial}{\partial z^i}
  \frac{\partial}{\partial \bar{z}^j}\right)a\right),
\end{displaymath}
where $c(\gamma^{-1})^{ij}$ is the inverse matrix of $c(\gamma^{-1})$ and where
we sum over the repeated indices $i,j=1,\ldots,l$. It is proved in
\cite[Thm.~1.1]{fe:g-index}, that this
functional is a $\gamma$-twisted trace density, i.e., satisfies equation
\eqref{twisted_trace}. Clearly, $\tr_\gamma(1)={\rm det}^{-1}(1-\gamma^{-1})$,
so the cohomology class of $\tr_\gamma$ is independent
of the chosen polarization. With this we have:
\begin{proposition}
Let $\gamma\in \operatorname{Sp}(2n,\R)$. Then the $\#$-product
\[
  (\tau_0\#\tr_\gamma,\ldots,\tau_{2n-2l}\#\tr_\gamma)\in
  \Tot^{2n-2l}\calB \normC^\bullet(\Weyl_{2n})
\]
defines a nontrivial $\gamma$-twisted cocycle of degree $2n-2l$ on the Weyl algebra.
\end{proposition}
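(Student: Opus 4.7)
The plan is to verify the cocycle property as a direct application of the preceding lemma, and then to establish nontriviality by a Künneth-style argument combined with an explicit evaluation.

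\emph{Cocycle property.} Under the symplectic decomposition $V=V^\perp\oplus V^\gamma$ the Moyal--Weyl product identifies $\Weyl_{2n}\cong \Weyl_{2l}\otimes\Weyl_{2n-2l}$, and since $\gamma$ fixes $V^\gamma$ pointwise, the automorphism corresponds to $\gamma|_{V^\perp}\otimes\operatorname{id}$. By Corollary \ref{cor:cyclic-tau}, $(\tau_0,\ldots,\tau_{2n-2l})$ is an (untwisted) $(b+B)$-cocycle on $\Weyl_{2n-2l}$, and by \cite[Thm.~1.1]{fe:g-index} the functional $\tr_\gamma$ is a $\gamma$-twisted trace on $\Weyl_{2l}$. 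Feeding these two data into the preceding lemma immediately yields that $(\tau_0\#\tr_\gamma,\ldots,\tau_{2n-2l}\#\tr_\gamma)$ is an $(\operatorname{id}\otimes\gamma)$-twisted, and hence $\gamma$-twisted, $(b+B)$-cocycle of total degree $2n-2l$ in $\Tot^{2n-2l}\calB\normC^\bullet(\Weyl_{2n})$.

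\emph{Nontriviality.} Since $HC^{2n-2l}_\gamma(\Weyl_{2n})\cong\C[\hbar,\hbar^{-1}]$ is one-dimensional (by the twisted analogue of Proposition \ref{cyclic-weyl} stated above the proposition), it suffices to check that the top Hochschild component does not vanish on a suitable chain. Choose a symplectic basis $p_1^\gamma,q_1^\gamma,\ldots,p_{n-l}^\gamma,q_{n-l}^\gamma$ of $V^\gamma$ and evaluate on
\[
  c:=(1\otimes 1)\otimes(1\otimes p_1^\gamma)\otimes(1\otimes q_1^\gamma)\otimes\cdots\otimes(1\otimes q_{n-l}^\gamma).
\]
By the Alexander--Whitney formula used to define $\#$, and by the fact that the $\Weyl_{2l}$-entries are all equal to $1$, this evaluation factors as
\[
  (\tau_{2n-2l}\#\tr_\gamma)(c)=\tau_{2n-2l}\bigl(1\otimes p_1^\gamma\otimes q_1^\gamma\otimes\cdots\otimes q_{n-l}^\gamma\bigr)\cdot\tr_\gamma(1).
\]
The second factor equals $\det^{-1}(1-\gamma^{-1})$, which is nonzero precisely because $V^\perp$ is defined as the complement of the fixed locus so $1$ is not an eigenvalue of $\gamma|_{V^\perp}$. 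The first factor is a nonzero multiple of $\hbar^{n-l}$: by Definition \ref{dfn:tau} the operator $(\hbar\alpha)^{\wedge(n-l)}$ must contract all $2(n-l)$ linear factors so that $\mu_{2n-2l}$ produces a nonvanishing constant, and the normalization is consistent with the calculation $\Theta_{V,N,2n-2l}(p_1\wedge q_1\wedge\cdots)=N\dim V$ of Proposition \ref{prop:theta-lie-algebra} applied to $\Weyl_{2n-2l}$.

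\emph{Main obstacle.} The only subtle point is upgrading the nonzero Hochschild evaluation to a nonzero \emph{cyclic} cohomology class. The cleanest route is the cyclic Eilenberg--Zilber theorem (cf.~Loday \S 4.3): the $\#$-product induces an external product on $\Tot^\bullet\calB\normC^\bullet$ which sends generators to generators. Since the untwisted class $[(\tau_0,\ldots,\tau_{2n-2l})]$ generates $HC^{2n-2l}(\Weyl_{2n-2l})$ (Proposition \ref{cyclic-weyl} combined with \cite{ffs}) and $[\tr_\gamma]$ generates $HC^0_\gamma(\Weyl_{2l})$, their $\#$-product generates the one-dimensional group $HC^{2n-2l}_\gamma(\Weyl_{2n})$, which is exactly what is claimed. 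The explicit evaluation on $c$ provides a concrete confirmation that this class is indeed nonzero, independently of invoking the twisted Eilenberg--Zilber result.
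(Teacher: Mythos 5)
Your main argument is essentially the paper's own: the cocycle property is exactly the preceding lemma applied to the factorization $\Weyl_{2n}\cong\Weyl_{2l}\otimes\Weyl_{2n-2l}$ with $\gamma$ acting only on the transversal factor, and nontriviality is obtained from the cyclic Eilenberg--Zilber quasi-isomorphism of \cite[\S 4.3]{loday} together with the facts that $(\tau_0,\ldots,\tau_{2n-2l})$ represents the generator of $HC^{2n-2l}(\Weyl_{2n-2l})$ and that $\tr_\gamma$ represents a nonzero class in $HC^0_\gamma(\Weyl_{2l})$ (indeed $\tr_\gamma(1)=\det^{-1}(1-\gamma^{-1})\neq 0$, and in total degree $0$ there are no coboundaries).

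One caveat: your claim that the evaluation on $c=(1\otimes 1)\otimes(1\otimes p_1^\gamma)\otimes\cdots\otimes(1\otimes q_{n-l}^\gamma)$ confirms nontriviality \emph{independently} of the Eilenberg--Zilber argument is not valid. The chain $c$ is neither a $(b+B)$-cycle nor even a (twisted) Hochschild cycle --- it is not antisymmetrized --- so a coboundary could just as well be nonzero on it; nonvanishing of a cocycle on an arbitrary chain only shows the cochain is a nonzero element of $\normC^{2n-2l}$, not that its cohomology class is nonzero. If you want a direct verification that bypasses the K\"unneth-type argument, you should pair the top component $\tau_{2n-2l}\#\tr_\gamma$ with the antisymmetrized fundamental Hochschild cycle built from a symplectic basis of $V^\gamma$, and then use that the map $HC^{2n-2l}_\gamma(\Weyl_{2n})\to HH^{2n-2l}_\gamma(\Weyl_{2n})$ is injective because the twisted cyclic cohomology vanishes below degree $2n-2l$; the Eilenberg--Zilber route, which the paper relies on, avoids this extra computation.
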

\subsection{Cyclic cocycles on formal deformations of proper \'etale groupoids}
\label{sec:cyclic-gpd}
In this section we will show how to use the twisted $(b+B)$-cocycle of the previous
section to construct arbitrary $(b+B)$-cocycles on formal deformations of proper
\'etale groupoids. Consider again the Burghelea space $B^{(0)}$. Generically, this space
will not be connected, and has components of different dimensions. Introduce the
locally constant function $\ell:B^{(0)}\rightarrow\mathbb{N}$ by putting $\ell(g)$ equal
to half the codimension of the fixed point set of $g$ in a local orbifold chart.
\begin{definition}\label{def:psi}
Define $\Psi^{i}_{2k}\in\Omega^i(B^{(0)}) \otimes_{\calC^\infty (B^{(0)})}
\big( (\sigma_0^* \calW)^{\otimes (2k-2\ell-i+1)}\big)^* $ by
\[
\begin{split}
  \Psi^i_{2k}\, & \big( a_0\otimes \ldots \otimes a_{2k-2\ell -i}  \big):= \\
  & := \left(\frac{1}{\hbar}\right)^i\tau_{2k-2\ell}^\theta
  \big( (a_0 \otimes \ldots \otimes a_{2k-2\ell -i}) \times
  (\sigma_0^* A )_i \big).
\end{split}
\]
Hereby, $\calW$ is the Weyl algebra bundle on $\sfG_0$ for the $\sfG$-invariant
Fedosov deformation quantization $\calA^{((\hbar))}$, $A$ is the corresponding
connection 1-form on $\sfG_0$, and $a_0 , \ldots , a_{2k-2\ell -i}$ are germs
of smooth sections of $\sigma_0^* \calW$ at a point $g\in B^{(0)}$.
Notice that as a cochain on $\sigma_0^* \calW$, the degree of $\Psi^{i}_{2k}$
varies over the connected components of $B^{(0)}$ according to the function $\ell$
introduced above.
\end{definition}
\begin{proposition}
The $\Psi^i_{2k}$ are $\sfG$-equivariant and satisfy the equalities
\[
\begin{split}
  (-1)^id\Psi^{i-1}_{2k}&=\Psi^{i}_{2k}\circ b_\theta+\Psi^{i}_{2k+2}\circ
  B_\theta.
\end{split}
\]
\end{proposition}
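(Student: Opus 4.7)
The plan is to mirror the proof of Proposition \ref{prop:psi} from the manifold setting, now carried out for the $\theta$-twisted cocycle $\tau^\theta_{2k-2\ell}=\tau_{2(k-\ell)}\#\tr_\theta$ on the Weyl algebra bundle $\sigma_0^*\calW$ over $B^{(0)}$.

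First I would treat $\sfG$-equivariance. Since the deformation quantization $\calA^\hbar$ is obtained from a $\sfG$-invariant Fedosov connection, the connection one-form $A$ on $\sfG_0$ is $\sfG$-invariant. Pulling back along $\sigma_0\colon B^{(0)}\hookrightarrow \sfG_1\xrightarrow{s}\sfG_0$ and using that the canonical section $\theta$ is, by construction, the generator of the conjugation action along loops, one checks that the $\#$-product of $\tau_{2(k-\ell)}$ with the twisted trace $\tr_\theta$ transforms correctly under the $\sfG$-action on $B^{(0)}$: conjugation by $g\in \sfG_1$ takes $\theta_{g_0}$ to $\theta_{g\cdot g_0}$ and intertwines the local Weyl bundles, while $\tr_{g\cdot\theta\cdot g^{-1}}$ coincides with $\tr_\theta$ pulled back by the $\sfG$-action. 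Hence each $\Psi^i_{2k}$ descends to a well-defined cochain on the inertia orbifold.

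Next I would derive the cocycle identity by repeating, step for step, the computation of Proposition \ref{prop:psi} inside the twisted cyclic bicomplex. The three shuffle-product lemmas (Lemmas \ref{lem:shuffle}, \ref{lem:b-omega}, \ref{lem:B-shuffle}) apply without change to the graded algebra $\sigma_0^*\calW\otimes_{\calC^\infty(B^{(0)})}\Omega^\bullet(B^{(0)})((\hbar))$ with derivation $\nabla$ and generalized Maurer--Cartan element $\sigma_0^* A$, because those lemmas use only the algebraic identity $\nabla\omega+\omega^2/\hbar+\tilde R=\Omega\in Z(A)$, which is preserved under pullback. Applying $\tau^\theta_{2(k-\ell)}$ to equation \eqref{eq:b-shuffle} gives an analogue of \eqref{eq:tau-shuffle} with $b$ and $B$ replaced by the $\theta$-twisted operators $b_\theta$ and $B_\theta$. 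The flatness condition $\nabla a+\tfrac{1}{\hbar}[A,a]=0$ for $a\in\sigma_0^*\calA^\hbar$ converts the commutator-with-$A$ terms into $\nabla$-terms, collapsing the sum into an exact de Rham derivative $d\tau^\theta_{2(k-\ell)}(a\times(A)_{i-1})$; the $b(\omega)_i$-term handled by Lemma \ref{lem:b-omega} contributes the central piece $\Omega-\tilde R$, which is killed because $\tau^\theta$ is normalized and $\mathfrak{sp}_{2n}$-basic (the basicity with respect to $\mathfrak{sp}_{2(n-\ell)}$ coming from Proposition \ref{prop:relative-tau} and the analogous basicity of $\tr_\theta$ with respect to $\mathfrak{sp}_{2\ell}$ on the transversal factor). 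Finally, the $(b+B)$-cocycle property of $(\tau^\theta_0,\ldots,\tau^\theta_{2(n-\ell)})$ gives the exchange $b_\theta\tau^\theta_{2k-2\ell}=-B_\theta\tau^\theta_{2k-2\ell+2}$, converting the remaining $b_\theta$-term in the identity into a $B_\theta$-term involving $\tau^\theta_{2k-2\ell+2}$. Rearranging and re-expressing via Definition \ref{def:psi} yields the claimed identity
\[
(-1)^i d\Psi^{i-1}_{2k}=\Psi^i_{2k}\circ b_\theta+\Psi^i_{2k+2}\circ B_\theta.
\]

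The main obstacle I anticipate is bookkeeping: the degree of $\tau^\theta_{2k-2\ell}$ depends on the locally constant function $\ell$, so one must be careful that all index shifts ($2k$ vs.\ $2k-2\ell$, and $\deg_{\Weyl_{2\ell}}$ vs.\ $\deg_{\Weyl_{2(n-\ell)}}$) are consistent across connected components of $B^{(0)}$, and that the Alexander--Whitney $\#$-product commutes (up to the appropriate signs) with the shuffle product with $(A)_i$ in the way needed to apply the lemmas of Section \ref{Sec:CyCoSyMan} to the \emph{twisted} complex. Once these compatibilities are verified on each stratum of $B^{(0)}$, the proof reduces to a verbatim copy of the manifold argument.
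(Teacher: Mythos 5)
Your proposal is correct and takes essentially the same route as the paper, which likewise proves equivariance from the $\sfG$-invariance of the Fedosov connection and then repeats the proof of Proposition \ref{prop:psi} verbatim for the twisted cocycle $\tau\#\tr_\theta$. The one point worth stating explicitly (and which the paper records) is that $b_\theta(\sigma_0^*A)_k=b(\sigma_0^*A)_k$ and $B_\theta(\sigma_0^*A)_k=B(\sigma_0^*A)_k$ since $\theta$ fixes the invariant connection form --- this is what lets your untwisted shuffle lemmas feed into the twisted differentials --- and to kill the $\tilde R$-term you only need basicity with respect to the $\theta$-invariant part of $\mathfrak{sp}$, which is where the curvature of the invariant connection restricted to $B^{(0)}$ actually lies, rather than basicity of $\tr_\theta$ under all of $\mathfrak{sp}_{2\ell}$.
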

\begin{proof}
Since the Fedosov connection on $\sfG_0$ is assumed to be
$\sfG$-invariant, $\Psi^i_{2k}$ is easily checked to be
$\sfG$-equivariant. We observe that
$b_\theta(\sigma_0^*A)_k=b(\sigma_0^*A)_k$ and
$B_\theta(\sigma_0^*A)_k=B(\sigma_0^*A)_k$ on $\sfG_0$. The proof of
the equality follows the same lines as the proof of its untwisted
version Proposition \ref{prop:psi}.
\end{proof}
\begin{remark}
\label{rem:trdens}
In particular, for $g\in B^{(0)}$, $i=2n-2\ell (g)$ and $k=n$, we find that
over each connected neighborhood of $g\in B^{(0)}$
\[
  d\Psi^{2n-2\ell(g) -1}_{2n}=\Psi_{2n}^{2n-2\ell (g)}\circ b_\theta.
\]
Thus the form $\Psi_{2n}^{2n - 2\ell}$ is a ``twisted trace
density'' in the notation of \cite[Def.~2.1]{ppt}. In fact
unraveling the definitions, the identity above is exactly
\cite[Prop.~4.2]{ppt}.
\end{remark}
\begin{definition}\label{def:chi}
For $2r\leq i$, define sheaf morphisms
\[
  \chi_i^{i-2r}:\Omega^i_{B^{(0)}}((\hbar))\rightarrow
  \normscrC^{i-2r}(\sigma_0^* \calA^{((\hbar))}),
\]
by the formula
\[
  \chi_{i,U}^{i-2r}(\alpha)(a_0,\ldots,a_{i-2r}):=
  \int_{B^{(0)}}\alpha\wedge\Psi^{2n-2\ell-i}_{2n-2r}
  (\sigma_0^{-1}a_0,\ldots, \sigma_0^{-1}a_{i-2r}),
\]
where $U\subset B^{(0)}$ is open, $\alpha \in \Omega^i (U)((\hbar))$,
and $a_0 , \ldots , a_{2k-2r} \in
\Gamma_\text{\tiny\rm cpt} \big( \sigma^* \calA^{((\hbar))}\big)$.
Together these morphisms define sheaf morphisms
\[
 \chi_i:\Omega^i_{B^{(0)}}((\hbar))\rightarrow \Tot^i\calB \normscrC^\bullet
 \big( \sigma_0^* \calA^{((\hbar))}\big), \quad
 \chi_i:=\sum_{2r\leq i}\chi^{i-2r}_i.
\]
\end{definition}
\noindent
By an argument similar to the untwisted case we obtain
\begin{theorem}
The morphism $\chi_\bullet$ is a morphism of sheaves of cochain complexes, i.e.,
\[
  (b+B)\chi_\bullet(\alpha)=\chi_\bullet(d\alpha),
\]
for all $\alpha\in\Omega^\bullet(U)$ and $U\subset B_\bullet$ open.
\end{theorem}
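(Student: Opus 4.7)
The plan is to mirror the argument used for Proposition \ref{prop:mixedchainmap}, now carrying along the twist $\theta$ and the codimension function $\ell$. The essential new ingredient, the identity
\[
  (-1)^i\, d\Psi^{i-1}_{2k} = \Psi^i_{2k}\circ b_\theta + \Psi^i_{2k+2}\circ B_\theta,
\]
has already been established in the proposition immediately preceding. Everything else is a bookkeeping exercise with the definition of $\chi_i^{i-2r}$ and Stokes' theorem on $B^{(0)}$.

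First, I would fix $U\subset B^{(0)}$ open and $\alpha\in\Omega^i(U)((\hbar))$, and expand both sides of the desired identity
\[
  (b+B)\chi_i(\alpha) = \chi_{i+1}(d\alpha)
\]
using $\chi_i = \sum_{2r\le i}\chi_i^{i-2r}$. Evaluating on a chain $a_0\otimes\cdots\otimes a_{i+1-2r}\in C_{i+1-2r}\big(\Gamma_{\text{\tiny\rm cpt}}(\sigma_0^*\calA^{((\hbar))})\big)$, the right-hand side reads
\[
  \int_{B^{(0)}} d\alpha\wedge \sum_{2r\le i+1}\Psi^{2n-2\ell-i-1}_{2n-2r}(a_0,\ldots,a_{i+1-2r}),
\]
while the left-hand side, after the canonical pairing with $b_\theta$ and $B_\theta$, reads
\[
  \int_{B^{(0)}} \alpha\wedge\sum_{2r\le i+1}\Big(\Psi^{2n-2\ell-i}_{2n-2r}\circ b_\theta + \Psi^{2n-2\ell-i}_{2n-2r+2}\circ B_\theta\Big)(a_0,\ldots,a_{i+1-2r}).
\]

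Second, I would apply Stokes' theorem on each connected component of $B^{(0)}$. Here the essential point is that the function $\ell$ is locally constant on $B^{(0)}$, so the degree of $\Psi$ is well-defined on each component; moreover the chains in $\Gamma_{\text{\tiny\rm cpt}}(\sigma_0^*\calA^{((\hbar))})$ have compact support, so no boundary terms appear. Integration by parts converts the integral of $d\alpha\wedge\Psi$ into $\pm\int\alpha\wedge d\Psi$ with the sign $(-1)^i$. After this reduction, the identity to be shown becomes, term-by-term in $r$, exactly the pointwise relation
\[
  (-1)^{i-2r}\, d\Psi^{2n-2\ell-i-1}_{2n-2r} = \Psi^{2n-2\ell-i}_{2n-2r}\circ b_\theta + \Psi^{2n-2\ell-i}_{2n-2r+2}\circ B_\theta,
\]
which is precisely the content of the preceding proposition (with indices $i\leftrightarrow 2n-2\ell-i$ and $k\leftrightarrow n-r$).

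The main obstacle I anticipate is purely notational: keeping track of the shift by $2\ell$ in the upper index of $\Psi$, the signs produced by $\sgn$ in the shuffle product versus those produced by integration by parts, and the fact that $b_\theta,B_\theta$ act with twist $\theta$ on the last tensor factor while $b,B$ act untwistedly on $(\sigma_0^*A)_i$. The remark following the twisted analogue of Proposition \ref{prop:psi} already notes that $b_\theta(\sigma_0^*A)_k = b(\sigma_0^*A)_k$ and $B_\theta(\sigma_0^*A)_k = B(\sigma_0^*A)_k$ on $\sfG_0$, so this compatibility is available. Once these signs are matched, the argument is literally the proof of Proposition~\ref{prop:mixedchainmap} with $\Psi^i_{2k}$ replaced by its $\theta$-twisted version, completing the proof.
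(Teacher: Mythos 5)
Your proposal is correct and follows exactly the route the paper intends: the paper gives no separate argument for this theorem, stating only that it follows "by an argument similar to the untwisted case," i.e.\ by repeating the proof of Proposition~\ref{prop:mixedchainmap} (expand $\chi_i$, integrate by parts over $B^{(0)}$ using compact support of the chains, and invoke the twisted identity $(-1)^i d\Psi^{i-1}_{2k}=\Psi^i_{2k}\circ b_\theta+\Psi^i_{2k+2}\circ B_\theta$ from the preceding proposition). Your added care about the locally constant function $\ell$ and about justifying Stokes via compact support rather than compactness of $B^{(0)}$ is exactly the right bookkeeping.
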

\noindent
With this we can now define an S-morphism of mixed $\sfG$-sheaf complexes
over the inertia orbifold $\tilde M$ as follows:
\[
  \scrQ^i:\Tot^i \calB \Omega^\bullet_{B^{(0)}} ((\hbar)) =
  \bigoplus_{2r\leq i}\Omega^{i-2r}_{B^{(0)}}((\hbar))\rightarrow
  \Tot^i \calB \normscrC^\bullet \big( \calA^{((\hbar))} \big)
\]
by
\[
  \scrQ^i \big( \sum_{2r\leq i}\alpha_{i-2r} \big):=
  \frac{1}{(2\pi \sqrt{-1})^\ell}\sum_{2r\leq i}\chi_{i-2r}(\alpha_{i-2r}).
\]
Forming global invariant sections we finally obtain the S-morphism
\[
  \sfQ : \Tot^i \calB \Omega^\bullet ({\tilde M})((\hbar)) =
  \bigoplus_{2r\leq i}\Omega^{i-2r}  ({\tilde M})((\hbar)) \rightarrow
  \Tot^i \calB \normC^\bullet \big( \calA^{((\hbar))}\rtimes \sfG \big) .
\]
\begin{proposition}\label{prop:quasi}
The map $\sfQ$ is an S-quasi-isomorphism establishing the
isomorphism \eqref{iso-cyclic}.
\end{proposition}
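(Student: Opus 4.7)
The plan is to mimic the strategy used in the proof of Theorem \ref{thm:quasi} for symplectic manifolds, adapting it to the proper \'etale groupoid setting. First, by construction $\scrQ$ is a morphism of mixed $\sfG$-sheaf complexes, and the fact that the components $\chi_i^{i-2r}$ decrease the Hochschild degree by $2r$ while preserving total degree guarantees that $\sfQ$ is an S-morphism. So the content of the statement is that $\sfQ$ is a quasi-isomorphism at the level of global $\sfG$-invariant sections, and that it realizes the known isomorphism \eqref{iso-cyclic}.

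By an application of Connes' SBI sequence together with \cite[Prop.~2.5.15]{loday}, it suffices to show that the family of leading components $\chi_i^i:\Omega^i_{B^{(0)}}((\hbar))\to\normscrC^{i}(\sigma_0^*\calA^{((\hbar))})$ assembles into a quasi-isomorphism of sheaves of Hochschild cochains on $B^{(0)}$, after which one takes $\sfG$-invariant global sections. Since $\sfG$ is proper, the functor of $\sfG$-invariant sections is exact on $\sfG$-sheaves, so this step is routine.

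The heart of the argument is then a spectral sequence computation with respect to the $\hbar$-filtration, exactly as in the proof of Theorem \ref{thm:quasi}. After the same rescaling $\chi_i^i \rightsquigarrow \hbar^{n-\ell-i}\chi_i^i$ (which now depends on the locally constant function $\ell$ measuring half the codimension of the fixed locus), the map becomes filtration preserving. The $E_0$-page of the target is the $\theta$-twisted Hochschild cochain complex of the sheaf $\sigma_0^{-1}\calC^\infty_{\sfG_0}((\hbar))$ on $B^{(0)}$. By the local computation of twisted Hochschild cohomology for action groupoids (which is precisely the content of the local calculations in \cite{nppt} and \cite{ppt}), this cohomology is identified with the sheaf of de Rham currents on $B^{(0)}$, and the induced differential corresponds, under this identification, to the Poisson boundary on the fixed-point submanifold equipped with the induced symplectic structure. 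On a $\gamma$-invariant Darboux chart one reads off $\chi_i^i$ as in \eqref{eq:chi-e-0}: it sends a form $\alpha$ to the current obtained by wedging $\alpha$ against the symplectic Hodge star of $a_0\, da_1\wedge\cdots\wedge da_i$ composed with the twisted trace density of Remark \ref{rem:trdens}. Exactly as in the manifold case, the identity $d^\pi=\pm *d*$ of \cite{bry} then shows that at $E_0$ the map $\chi_i^i$ intertwines the de Rham differential on $B^{(0)}$ with the Poisson differential, hence is a quasi-isomorphism on each chart.

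The main obstacle is the local step on the $E_0$-page: one must check carefully that the twisted trace density picks out precisely the transverse normal bundle factor $\det^{-1}(1-\gamma^{-1})$, so that the Brylinski-type argument really does identify the $\gamma$-twisted Hochschild cochains of the Weyl algebra with differential forms on the fixed point locus $V^\gamma$, not on $V$. Once this local identification is in place, patching via a $\sfG$-invariant partition of unity subordinate to Darboux charts together with the fineness of all sheaves involved yields the quasi-isomorphism on $B^{(0)}$. Taking $\sfG$-invariant global sections and recalling that $H^\bullet(\widetilde{M},\C)=H^\bullet(B^{(0)})^{\sfG}$ produces the S-quasi-isomorphism $\sfQ$, and comparing with the computation of \cite{nppt} identifies the resulting map on cohomology with the isomorphism \eqref{iso-cyclic}.
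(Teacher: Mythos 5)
The paper states Proposition \ref{prop:quasi} without proof, so there is no argument of the authors to compare against; your sketch follows what is clearly the intended route, namely the twisted analogue of the proof of Theorem \ref{thm:quasi}, and its core is sound: the reduction to the leading components $\chi_i^i$ via \cite[Prop.~2.5.15]{loday}, the $\hbar$-filtration spectral sequence, and the observation that at $E_0$ the twisted cochain localizes to the fixed locus, where $\tau^\theta$ contributes the HKR antisymmetrization along $V^\gamma$ and $\tr_\gamma$ contributes the nonzero constant $\det^{-1}(1-\gamma^{-1})$, so that Brylinski's identity $d^\pi=\pm\ast d\ast$ on the fixed symplectic submanifold applies verbatim. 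Your explicit flagging of the transverse factor is exactly the point where the twisted case differs from the manifold case.

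The one place where your argument is thinner than it should be is the passage from the sheaf-level statement on $B^{(0)}$ to the actual target of $\sfQ$. The codomain of $\sfQ$ is $\Tot^\bullet\calB\normC^\bullet\big(\calA^{((\hbar))}\rtimes\sfG\big)$, not the complex of $\sfG$-invariant global sections of $\normscrC^\bullet\big(\sigma_0^*\calA^{((\hbar))}\big)$ on $B^{(0)}$; writing ``take invariant global sections, propriety of $\sfG$ makes this exact, then compare with \cite{nppt} at the end'' conflates two different complexes. What is needed, and needed before the spectral-sequence step rather than as a final remark, is the orbifold analogue of Theorem \ref{thm:locqism}: the decomposition of the (cyclic) cochain complex of the crossed product along the Burghelea space together with a localization-to-$B^{(0)}$ quasi-isomorphism identifying each sector's contribution with the invariant localized twisted cochains that your $E_0$ computation actually handles. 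This is supplied by \cite{nppt} (building on \cite{crainic}), and propriety of $\sfG$ (via a cut-off function) is what makes invariants harmless there; but it is a substantive input, not bookkeeping, and your proof should invoke it explicitly at the point where $\sfQ$ is compared with the sheaf morphism $\scrQ$. With that step inserted, your argument is complete and does establish the isomorphism \eqref{iso-cyclic}.
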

\subsection{Twisting by vector bundles}
It is our aim to compute the pairing of the cocycles in Connes' $(b+B)$-complex
obtained by the map $\sfQ$ above with $K$-theory classes on
$\calA^{((\hbar))} \rtimes \sfG$. Let us first explain how orbifold vector bundles
define elements in $K_0\big( \calA^{((\hbar))} \rtimes \sfG)$. Recall that an
orbifold vector bundle is  a vector bundle $V\rightarrow \sfG_0$ together with an
action of $\sfG$. Taking formal differences of isomorphism classes, these define
the orbifold $K$-group $K^0_\text{\tiny\rm orb}(M)$. An orbifold vector bundle defines a
projective $\calA \rtimes \sfG$-module $\Gamma_\text{\tiny\rm cpt}(\sfG_0,V)$, where
$f\in \calC^\infty_\text{\tiny\rm cpt}( \sfG_1)$ acts on
$\xi\in\Gamma_\text{\tiny\rm cpt}(\sfG_0,V)$ by
\begin{displaymath}
  (f\cdot \xi)(x)=\sum_{t(g)=x} f(g)\, \xi(s(g)), \quad \text{for $x\in \sfG_0$}.
\end{displaymath}
On the other hand, $K$-theory is stable under formal deformations, which means that
\[
  K_0(\calA^\hbar \rtimes \sfG)\cong K_0(\calA \rtimes \sfG),
\]
where the isomorphism is induced by taking the zero'th order term of a projector
in a matrix algebra over $\calA^\hbar \rtimes \sfG$. Altogether, we have defined
a map
\[
  K^0_\text{\tiny\rm orb}(M)\rightarrow K_0(\calA^\hbar \rtimes \sfG).
\]
It therefore makes sense to pair our cyclic cocycles with formal differences of
isomorphism classes of vector bundles. To compute this pairing we again use
quantization with values in the vector bundle to extend our cyclic cocycles.
For this, notice that when we pull-back an orbifold vector bundle
$V\rightarrow \sfG_0$ to $B^{(0)}$, the cyclic structure $\theta$ acts on $\sigma^*_0V$.
We therefore consider the algebra ${\VWeyl_{2n}}=\Weyl_{2n}\otimes \End (V)$
equipped with the automorphism $\gamma$ acting both via $\operatorname{Sp}(2n)$
on $\Weyl_{2n}$ and on the second factor by an element in $\End (V)$, also denoted
by $\gamma$. This leads to cochains
\[
  \tau_{2k}^{V,\gamma}\big( (a_0\otimes M_0) \otimes \ldots \otimes
  (a_{2k}\otimes M_{2k}) \big) := \tau^\gamma_{2k}
  (a_0 \otimes \ldots \otimes a_{2k})\tr_V(\gamma M_0\cdots M_{2k}).
\]
for $0\leq k\leq 2n-2l$. Together these cochains constitute a $\gamma$-twisted
$(b+B)$-cocycle
$(\tau^{V,\gamma}_0,\tau_2^{V,\gamma},\ldots, \tau^{V,\gamma}_{2n-2l})\in
\Tot^{2n-2l}\calB C^\bullet (\fWeyl_{2n}^V)$.
With this, one generalizes the definition of $\Psi^i_{2k}$, $\chi_\bullet$ and
$\scrQ^\bullet$ in the obvious manner to
$\Psi^i_{V,2k}$, $\chi_{V,i}$ and $Q_V^i$.
\begin{proposition}
  Let $\alpha=(\alpha_0,\ldots,\alpha_{2k}) \in \Tot^{2k}\calB \Omega^\bullet (\widetilde{M})$
  be a closed differential form, and $P_1$ and $P_2$ projection in the matrix algebras
  over $\calA^{((\hbar))} \rtimes \sfG$ with $P_1-P_2$ compactly supported on $\widetilde{M}$.
  Then we have
\[
\begin{split}
  \left\langle \sfQ(\alpha),P_{V_1}-P_{V_2}\right\rangle&=
  \left\langle \sfQ_{V_1}(\alpha)-\sfQ_{V_2}(\alpha), 1\right\rangle\\
  &= \sum_{i=0}^{k}\int_{\widetilde{M}}\frac{1}{(2\pi \sqrt{-1})^\ell m}
  \alpha_{2i}\wedge\left(\Psi^{2n-2\ell-2i}_{V_1,2n-2i}(1)-
  \Psi^{2n-2\ell-2i}_{V_2,2n-2i}(1)\right).
\end{split}
\]
Here the function $m:\widetilde{M}\to \mathbb{N}$ is the locally constant function which coincides for each sector $\calO\subset B^{(0)}$ with $m_\calO$, the order of the isotopy group  of the principal stratum of $\calO/\sfG \subset \widetilde{M}$.
\end{proposition}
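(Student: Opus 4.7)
The plan is to mirror the proof of Theorem~\ref{thm:higher-algind} in the orbifold setting, breaking the statement into its two equalities and then performing a local computation that collapses a double sum to a single one.

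For the first equality, I extend the Connes--Dupr\'e reduction used for Proposition~\ref{prop:reduction} to the groupoid case. The zero-th order terms $p_j := P_j|_{\hbar=0}$ are $\sfG$-invariant idempotents in a matrix algebra over $\calC^\infty(\sfG_0)\rtimes\sfG$, and therefore define orbifold vector bundles $V_1,V_2$ on $M$. By the twisted Fedosov construction reviewed in Section~\ref{sec:cyclic-gpd}, the projectors $P_j$ identify, up to stabilization and a Morita-equivalence of $\sfG$-sheaves, the corners $P_j\star(\calA^{((\hbar))}\rtimes\sfG)\star P_j$ with $\calA^{((\hbar))}_{V_j}\rtimes\sfG$; under this identification $P_j$ corresponds to the unit. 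The cocycle $\sfQ(\alpha)$ is built out of the untwisted local cochains $\Psi^i_{2k}$, while $\sfQ_{V_j}(\alpha)$ is built from their $V_j$-twisted analogues $\Psi^i_{V_j,2k}$; the naturality of these local cochains under twisting by a vector bundle, together with the invariance of the pairing under Morita equivalence, yield the first equality.

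For the second equality I unfold the definition of $\sfQ_{V_j}(\alpha)$ and pair it with the Chern character of $1$ given by~\eqref{eq:defchern}. Because each $\tau_{2k}^\theta$ is a reduced (normalized) cochain, any component $\Psi^i_{V_j,2k}$ evaluated on a chain $(1\otimes a_1\otimes\cdots)$ vanishes whenever one of the $a_j$ is a scalar. Consequently, in the double sum
\[
 \sfQ_{V_j}(\alpha) = \frac{1}{(2\pi\sqrt{-1})^\ell}\sum_{r,i}\chi_{V_j,2k-2r}^{2k-2r-2i}(\alpha_{2k-2r}),
\]
only the top-index component of each $\chi_{V_j,i}$ survives when paired with $\Ch_k(1)$. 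After bookkeeping of the combinatorial coefficients in~\eqref{eq:defchern}, the sum collapses to
\[
 \langle \sfQ_{V_j}(\alpha),1\rangle
 = \sum_{i=0}^{k}\frac{1}{(2\pi\sqrt{-1})^\ell}\int_{B^{(0)}}
 \alpha_{2i}\wedge \Psi^{2n-2\ell-2i}_{V_j,2n-2i}(1).
\]
Since the integrands are $\sfG$-invariant (they are pulled back from global sections over $\widetilde M$), the integral over $B^{(0)}$ equals the integral over the inertia orbifold $\widetilde M = B^{(0)}/\sfG$ divided by the generic stabilizer order $m$ on each sector, giving the stated formula after subtracting the contributions for $j=1$ and $j=2$.

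The step that will require the most care is the first one, the orbifold Connes--Dupr\'e reduction. One must verify that the twist by $V_j$ is compatible with the $\sfG$-action and the crossed-product construction, that the reduction identifies the local Fedosov data $A_{V_j}$ on $\sigma_0^*\calW_{V_j}$ with those used in Definition~\ref{def:psi}, and that the S-quasi-isomorphism $\sfQ(\alpha)$ transports correctly to $\sfQ_{V_j}(\alpha)$ under the Morita equivalence implemented by $P_j$. Once this equivariant bookkeeping is in place, the remaining manipulations are formal and parallel to those carried out in Section~\ref{sec:lie-alg} for smooth symplectic manifolds.
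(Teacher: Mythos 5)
Your proposal follows essentially the same route as the paper: the first equality is obtained exactly as in Proposition~\ref{prop:reduction} (the twisting argument of Chen--Dolgushev \cite{cd} — note your attribution ``Connes--Dupr\'e'' is a slip of the name, not of the mathematics), the second by using that the twisted cocycles are normalized so that all Chern-character terms containing more than one $1$ vanish, and the final passage from $B^{(0)}$ to $\widetilde{M}$ with the factor $1/m$ is the reduction the paper delegates to \cite[Prop.~4.4]{ppt}. This matches the paper's proof, so no further comment is needed.
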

\begin{proof}
The first equality is just as in Prop.~\ref{prop:reduction}. For the second,
again observe that the twisted cyclic cocycles are normalized, so we can throw
away all terms that contain more than one $1$. Finally, the reduction to
an integral over $\widetilde{M}$ is as in \cite[Prop.~4.4]{ppt}.
\end{proof}
\subsection{A twisted Riemann--Roch theorem}
By the previous proposition, it remains to evaluate $\Psi^{2n-2\ell-2i}_{V,2n-2i}(1)$,
which is of course done by interpreting it as a cocycle in Lie algebra cohomology.
Define the inclusion of Lie algebras $\h\subset\g$ by setting
\[
  \g:=\mathfrak{gl}_N \big( \fWeyl_{2n}^{V,\gamma} \big), \quad
  \h:=\mathfrak{gl}_N\oplus\mathfrak{gl}_V\oplus \mathfrak{sp}^\gamma_{2n},
\]
where the superscript $\gamma$ means taking $\gamma$-invariants. We will now construct
Lie algebra cocycles of $\g$ relative to $\h$  in $C^\bullet(\g;\h)$ as follows. First
the standard morphism from Hochschild cochains to Lie algebra cochains,
cf.~Eq.~\eqref{HoLa}, is still a morphism of cochain complexes when we twist the
differentials:
\[
  \phi^N:\big( C^\bullet(\frM_N( \fWeyl_{2n}),\frM_N( \fWeyl_{2n})^*),
  b_\gamma\big)\rightarrow
  \big( C^\bullet(\mathfrak{gl}_N( \fWeyl_{2n}),\frM_N(\fWeyl_{2n})^*),
  \partial_{\text{\tiny\rm Lie},\gamma}\big).
\]
Here the twisted Lie algebra cochain complex is as defined in \cite[\S 4.1.]{ppt}.
Second, evaluation at $1\in  \frM_n( \fWeyl_{2n})$ induces a morphism
\[
 \ev_1: \big( C^\bullet(\mathfrak{gl}_N(W^\gamma_{2n}), M_N(W_{2n})^*),
 \partial_{\text{\tiny\rm Lie},\gamma}\big)
 \rightarrow \big( C^\bullet(\g,\C((\hbar)),\partial_\text{\tiny\rm Lie}\big).
\]
Notice that this is only a morphism of cochain complexes when
restricted to the $\gamma$-invariant part of
$\mathfrak{gl}_N(\fWeyl_{2n}^V)$, because the evaluation morphism
above only respects the module structure of this sub-Lie algebra.
With this we now have:
\begin{proposition}
  For $k\leq n$ the cochain
  \[
    \Theta^{N,\gamma}_{V,2k}:= \frac{1}{\hbar^k}\ev_1\big( \phi^N(\tau_{2k}^\gamma)\big)\in
    C^{2k}\left(\g;\h,\C((\hbar))\right),
  \]
  is a Lie algebra cocycle relative to $\h$, which means
  $\partial_\text{\tiny\rm Lie}\Theta^{N,\gamma}_{V,2k}=0.$
\end{proposition}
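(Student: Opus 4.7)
The plan is to generalize the proof of Proposition \ref{prop:theta-lie-algebra} to the twisted setting, carrying over each of its three structural ingredients: the cochain map property of $\phi^N$, the $(b+B)$-cocycle property, and the $\mathfrak{sp}_{2n}$-basic behaviour of the $\tau_{2k}$, now all in their $\gamma$-twisted incarnations.

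First, I would observe that the map $\phi^N$ from formula \eqref{HoLa} is still a morphism of cochain complexes when both sides carry the $\gamma$-twist, as stated in the excerpt preceding the proposition. Combined with the fact that $(\tau^{V,\gamma}_0,\tau^{V,\gamma}_2,\ldots,\tau^{V,\gamma}_{2n-2\ell})$ is a $\gamma$-twisted $(b+B)$-cocycle (i.e.\ $b_\gamma\tau^{V,\gamma}_{2k}=-B\tau^{V,\gamma}_{2k+2}$), this yields
\[
\partial_{\text{\tiny\rm Lie},\gamma}\,\phi^N(\tau^{V,\gamma}_{2k})
= \phi^N(b_\gamma\tau^{V,\gamma}_{2k})
= -\phi^N(B\tau^{V,\gamma}_{2k+2}).
\]
Next, since $1\in \fWeyl^V_{2n}$ is central and $\gamma$-fixed, evaluation at $1$ intertwines the twisted coefficient differential with the untwisted scalar differential on the sub-Lie algebra $\gl_N(\fWeyl_{2n}^{V,\gamma})$; this is precisely why the restriction to $\gamma$-invariants is forced in the definition of $\ev_1$. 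Applying $\ev_1$ to the displayed identity reduces closedness of $\Theta^{N,\gamma}_{V,2k}$ to the vanishing of $\ev_1\phi^N(B\tau^{V,\gamma}_{2k+2})$, which in turn follows by the same mechanism as in the untwisted case: the operator $B$ inserts an extra $1$ into the first slot, and when paired with the explicit symmetrisation in \eqref{HoLa} and the normalisation of $\tau^{V,\gamma}_{2k+2}$, the result is a sum of terms each containing a $1$ in a non-initial slot of a reduced cochain, hence zero.

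Finally, for the relative statement I would verify separately that $\Theta^{N,\gamma}_{V,2k}$ is $\h$-invariant and vanishes on $\h$-arguments. Invariance and vanishing on the $\mathfrak{gl}_N\oplus\mathfrak{gl}_V$ factor come straight from normalisation of the twisted cochains $\tau^{V,\gamma}_{2k}$. The behaviour relative to $\mathfrak{sp}^\gamma_{2n}$ is inherited from Proposition \ref{prop:relative-tau}: the cochains $\tau_{2k}$ are $\mathfrak{sp}_{2n}$-basic, and forming the $\#$-product with $\tr_\gamma$ as well as tensoring with $\End(V)$ preserves basicness for the $\gamma$-invariant part of $\mathfrak{sp}_{2n}$, exactly because $\gamma$ fixes $V^\gamma=\R^{2n-2\ell}$ pointwise.

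The main obstacle I anticipate is bookkeeping: checking carefully that the evaluation map $\ev_1$ is indeed a cochain map between the $\gamma$-twisted coefficient complex and the untwisted complex when restricted to $\gl_N(\fWeyl_{2n}^{V,\gamma})$, and correctly matching the various sign and $\hbar$-normalisation conventions (including the prefactor $1/\hbar^k$) so that the cocycle equation holds on the nose rather than up to a nonzero scalar. Once this has been pinned down, the rest of the argument is a direct transcription of the proof of Proposition \ref{prop:theta-lie-algebra} with $\gamma$-twists inserted throughout.
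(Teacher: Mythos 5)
Your argument is correct and is essentially the proof the paper intends: the statement follows by composing the two cochain maps $\phi^N$ (twisted) and $\ev_1$ (restricted to the $\gamma$-invariant subalgebra) with the twisted $(b+B)$-cocycle relation, and killing the $B$-term via normalisation exactly as in the untwisted Proposition~\ref{prop:theta-lie-algebra}, with the relative conditions coming from normalisation and Proposition~\ref{prop:relative-tau}. The paper leaves this composite argument implicit, so your write-up is a faithful reconstruction rather than a genuinely different route.
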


\noindent
With this we have
\[
  \Psi^{2n-2\ell-2r}_{V,2n-2r}(1)=
  \left(\frac{1}{\hbar}\right)^{n-\ell-r}\frac{1}{(2n-2\ell-2r)!}
  \Theta^{N,\theta}_{V,2n-2\ell-2r}(A\wedge\ldots\wedge A)(1)
\]
To explicitly compute the class
$[\Theta^{N,\gamma}_{V,2k}]\in H^{2k}(\g;\h,\C((\hbar)))$,
we use the Chern--Weil homomorphism
\[
 \rho:\left(S^k\h^*\right)^\h \rightarrow H^{2k}(\g;\h,\C((\hbar))),
\]
which, by \cite[Prop.~5.1]{ppt}, is again an isomorphism for $k\leq n-l$ and
$N\gg n$ as in the untwisted case, cf.~Prop.~\ref{prop:chern-weil}.
Let us now describe the ingredients of the unique polynomial in
$\operatorname{S}^k\h^*$ that is defined by $\Theta^{N,\gamma}_{V,2k}$. For
this we split
\[
 \h=\mathfrak{sp}_{2n-2\ell}\oplus\mathfrak{sp}^\gamma_{2\ell}\oplus
 \mathfrak{gl}^\gamma_V\oplus\mathfrak{gl}_N,
\]
and write $X=(X_1,X_2,X_3,X_4)$ for an element in $\h$. Define
\[
  (\hat{A}_\hbar J_\gamma\Ch_{V,\gamma}\Ch)(X):=
  \hat{A}_\hbar(X_1)J_\gamma(X_2)\Ch_{V,\gamma}(\hbar X_3)\Ch(X_4),
\]
where $\Ch$ and $\hat{A}_\hbar$ are as before, $\Ch_{V,\gamma}$ is the Chern character
twisted by $\gamma$. Concretely, this means  $\Ch_{V,\gamma}(X_3)=\tr_V(\gamma\exp( X_3))$.
Finally, $J_\gamma$ is defined by
\[
  J_\gamma(X_2):=
  \sum_{i=0}^\infty\frac{1}{i!}\tr_\gamma(\underbrace{X_2\star\dots\star X_2}_{i}),
\]
where we use the embedding of $\mathfrak{sp}^\gamma_{2\ell}\subset
\mathfrak{sp}_{2n}$ as degree two polynomials in the Weyl algebra.
Strictly speaking, this is not an element of
$\operatorname{S}^\bullet(\h^*)^\h$, but we will only need a finite
number of terms in the expansion in the theorem below. In fact, in
the application to the higher index theorem, the specific element
$X_2$ turns out to be pro-nilpotent.
\begin{theorem}
 In $H^{2k}(\g;\h)$ we have the equality
 \[
  [\Theta^{N,\gamma}_{V,2k}]=
  \rho\left((\hat{A}_\hbar J_\gamma\Ch_{V,\gamma}\Ch)_k\right).
 \]
\end{theorem}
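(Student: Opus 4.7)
The plan is to combine the Chern--Weil isomorphism $\rho$, which by \cite[Prop.~5.1]{ppt} is injective in the range $k \leq n-\ell$ and $N \gg 0$, with the explicit $\#$-product structure of the twisted cocycle $\tau_{2k}^\gamma$. This reduces the computation to a combination of the untwisted local Riemann--Roch theorem (Theorem \ref{thm:local-rr}) applied to the $\gamma$-fixed directions, together with a direct Chern--Weil computation for the Fedosov twisted trace $\tr_\gamma$ on the transversal directions.

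First I would exploit the symplectic splitting $V = V^\perp \oplus V^\gamma$ to write $\fWeyl_{2n}^V \cong \fWeyl_{2\ell} \otimes \fWeyl_{2n-2\ell} \otimes \End(V)$, where $\gamma$ acts nontrivially on the first and third factors and trivially on the middle one. By construction, the twisted cyclic cocycle $(\tau_0^{V,\gamma}, \ldots, \tau_{2k-2\ell}^{V,\gamma})$ is the Alexander--Whitney $\#$-product of the untwisted $(b+B)$-cocycle $(\tau_0, \ldots, \tau_{2k-2\ell})$ on $\fWeyl_{2n-2\ell}$ with the twisted trace $\tr_\gamma$ on $\fWeyl_{2\ell}$ and the $\gamma$-twisted matrix trace $\tr_V(\gamma\, \cdot)$ on $\End(V)$. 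Since $\phi^N$ is essentially antisymmetrization, this tensor-product structure is inherited by $\Theta^{N,\gamma}_{V,2k} = \hbar^{-k}\ev_1(\phi^N(\tau_{2k}^\gamma))$, producing a cup-product decomposition at the level of relative Lie algebra cochains on $\g$.

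I would then analyze each factor separately. For the piece living on $\fWeyl_{2n-2\ell} \otimes \End(V)$, Theorem \ref{thm:local-rr} gives the Chern--Weil representative $\hat{A}_\hbar(X_1)\, \Ch_V(\hbar X_3)\, \Ch(X_4)$; the sole modification in the twisted setting is that the trace on $\End(V)$ is $\tr_V(\gamma\,\cdot)$, which replaces $\Ch_V$ by $\Ch_{V,\gamma}$. For the transversal piece on $\fWeyl_{2\ell}$, the embedding $\mathfrak{sp}_{2\ell}^\gamma \hookrightarrow \fWeyl_{2\ell}$ as quadratic polynomials allows me to insert curvature entries of $X_2$ into successive slots of $\tr_\gamma$. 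Expanding the exponentials in the definition of $\tau^\gamma_{2k-2\ell}$ and applying $\tr_\gamma$ yields, order by order in the Chern--Weil expansion, precisely the series $J_\gamma(X_2) = \sum_{i\geq 0}\tfrac{1}{i!}\tr_\gamma(X_2^{\star i})$.

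The main obstacle will be upgrading this heuristic tensor-product reasoning to an actual equality of cohomology classes: the $\#$-product and the Chern--Weil homomorphism commute only up to a cochain homotopy, which must be constructed by a deformation argument along the lines of the untwisted FFS proof of \cite[Thm.~5.1]{ffs} and its equivariant refinement in \cite[\S 5]{ppt}, interpolating the Fedosov connection with the flat connection on $\R^{2n}$ while carefully tracking $\gamma$-equivariance throughout. A secondary technical point is that $J_\gamma$ is only a formal power series rather than a genuine invariant polynomial; one must verify that only the finitely many homogeneous components of total Chern--Weil degree $\leq k$ contribute in the given cohomological range, which, as noted just before the theorem, will be ensured in the subsequent application by pro-nilpotence of the relevant argument $X_2$.
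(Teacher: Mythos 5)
Your proposal is correct and takes essentially the same route as the paper: the paper's proof is the one-line reduction ``given Theorem \ref{thm:local-rr}, this follows as in \cite[Thm.~5.3]{ppt}'', and the argument being cited is exactly what you outline --- the symplectic splitting $V=V^\perp\oplus V^\gamma$ with the $\#$-product structure of the twisted cocycle, the twisted Chern--Weil isomorphism of \cite[Prop.~5.1]{ppt}, the untwisted local Riemann--Roch theorem on the $\gamma$-invariant directions, and the evaluation of Fedosov's twisted trace on $\star$-exponentials of quadratic elements producing the factors $J_\gamma$ and $\Ch_{V,\gamma}$. Your two caveats (the cochain-level homotopy making the product decomposition compatible with Chern--Weil, and $J_\gamma$ being only a formal series whose finitely many relevant components matter) are precisely the points the paper delegates to \cite{ppt} and to the pro-nilpotence remark preceding the theorem, so nothing essential is missing.
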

\begin{proof}
Given Theorem \ref{thm:local-rr}, this follows as in \cite[Thm.~5.3]{ppt}.
\end{proof}
\subsection{The higher index theorem for proper \'etale groupoids}
We finally arrive at our main result. To state it properly, we need
to introduce a few characteristic classes. Let $V$ be an orbifold
vector bundle. Using the cyclic structure $\theta$, we can twist the
Chern character of the pull-back $\sigma_0^{-1}V$ to define
$\Ch_\theta(\iota^{-1}V)$ by
\[
\Ch_\theta(\iota^{-1}V):=\tr\left(\theta\exp\left(\frac{R_V}{2\pi
\sqrt{-1}}\right)\right)\in H^{ev}(\widetilde{M})
\]
where $R_V$ denotes the curvature of a connection on $V$. Denote by $N$, the normal
bundle over $B^{(0)}$ coming from the embedding into $G_1$. It is easy
to see that the element
\[
  \Ch_\theta(\lambda_{-1}N):=
  \sum_{i=0}^{2\ell}(-1)^i\Ch_\theta(\Lambda^iN)\in H^{ev}(\widetilde{M}),
\]
is invertible. If we use $R^\perp$ to denote the curvature on $N$,
then
\[
\sum_{i=0}^{2\ell}(-1)^i\Ch_\theta(\Lambda^iN)=\det(1-\theta^{-1}\exp(-\frac{R^{\perp}}{2\pi
\sqrt{-1}})).
\]
With this observation, we can now state:
\begin{theorem}\label{thm:orb-alg-index}
Let $\alpha=(\alpha_{2k}, \cdots, \alpha_0)\in \Tot^{2k}\calB
\Omega^\bullet(\widetilde{M}) ((\hbar))$ be a sequence of closed forms
on the inertia orbifold, and $P_1, P_2$ be two projectors in the
matrix algebra over $\calA^{((\hbar))}$ with $P_1-P_2$ compactly
supported. Then we have
\[
  \left\langle \sfQ(\alpha), P_1-P_2\right\rangle=
  \sum_{j=0}^{k}\int_{\widetilde{M}} \frac{1}{(2\pi \sqrt{-1})^j m}\frac{\alpha_{2j} \wedge
  \hat{A}(\widetilde{M}) \, \Ch_\theta(\iota^*V_1-\iota^*V_2) \,
  \exp(-\frac{\iota^*\Omega}{2\pi \sqrt{-1}\hbar})}{\Ch_\theta(\lambda_{-1}N)},
\]
where $V_1$ and $V_2$ are the orbifold vector bundles on $M$ determined by the
zero-th order terms of $P_1$ and $P_2$, and $m$ is a local constant function defined by the order of the isotopy group of the principal stratum of a sector $\calO/\sfG\subset \widetilde{M}$.
\end{theorem}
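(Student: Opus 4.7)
The plan is to assemble the theorem from the three ingredients already in place: the reduction to trivial projectors via the twisted quantization, the localization of the pairing in terms of the Lie algebra cocycle $\Theta^{N,\gamma}_{V,2k}$, and the twisted local Riemann--Roch identification of $\Theta^{N,\gamma}_{V,2k}$ with a Chern--Weil expression in $\hat A_\hbar J_\gamma \Ch_{V,\gamma}\Ch$. By the proposition preceding the twisted Riemann--Roch theorem, the pairing $\langle \sfQ(\alpha),P_1-P_2\rangle$ decomposes as a sum, over $i=0,\ldots,k$ and over connected components of $B^{(0)}$, of integrals on $\widetilde M$ of $\alpha_{2i}$ wedged with $\Psi^{2n-2\ell-2i}_{V_1,2n-2i}(1)-\Psi^{2n-2\ell-2i}_{V_2,2n-2i}(1)$, divided by $m$ and $(2\pi\sqrt{-1})^\ell$. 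So it suffices to identify each $\Psi^{2n-2\ell-2i}_{V,2n-2i}(1)$ with the expected characteristic differential form on the corresponding sector.

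Using the identity $\Psi^{2n-2\ell-2r}_{V,2n-2r}(1)=\hbar^{-(n-\ell-r)}(2n-2\ell-2r)!^{-1}\Theta^{N,\theta}_{V,2n-2\ell-2r}(A\wedge\cdots\wedge A)(1)$ (valid after stabilizing $V$ by a trivial bundle, which does not change the pairing), I would invoke the twisted Riemann--Roch theorem to replace $\Theta^{N,\theta}_{V,2n-2\ell-2r}$ by $\rho((\hat A_\hbar J_\theta \Ch_{V,\theta}\Ch)_{n-\ell-r})$ modulo Lie algebra coboundaries. The key calculation, already done at the level of individual curvatures in the manifold case and generalized to sectors in \cite[Thm.~5.2]{ffs}/\cite[\S 5]{ppt}, is that the curvature $C(A(\xi),A(\eta))$ of the projection $\pr:\g\to\h$ decomposes along the splitting $\h=\mathfrak{sp}_{2n-2\ell}\oplus\mathfrak{sp}^\gamma_{2\ell}\oplus\mathfrak{gl}^\gamma_V\oplus\mathfrak{gl}_N$ as $\tilde R_{T\widetilde M}+\tilde R^\perp+\tilde R_V-\Omega$, where $\tilde R_{T\widetilde M}$ is the curvature of the tangent bundle of the sector, $\tilde R^\perp$ the curvature of the normal bundle $N$, $\tilde R_V$ the curvature of $V$, and $\Omega$ the Fedosov/Weyl curvature.

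Substituting these curvatures into the Chern--Weil polynomial gives, on each sector, a product of $\hat A(\widetilde M)$, $\Ch_\theta(\iota^* V)$, $\exp(-\iota^*\Omega/(2\pi\sqrt{-1}\hbar))$, and the normal-bundle contribution coming from $J_\theta$. The step to watch carefully is this last one: by the explicit formula for Fedosov's twisted trace $\tr_\gamma$ on $\Weyl_{2\ell}$, the generating function $J_\theta(X_2)=\sum_i\tfrac{1}{i!}\tr_\theta(X_2^{\star i})$ evaluated on the curvature of the normal bundle reproduces $\det(1-\theta^{-1}\exp(-R^\perp/(2\pi\sqrt{-1})))^{-1}$, i.e.\ $\Ch_\theta(\lambda_{-1}N)^{-1}$. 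This is where the polarization-independence of $\tr_\gamma$ and the Caley-transform formula for $c(\gamma)$ enter, and is the computational heart of the proof; the identity mirrors the one used in \cite[\S 5]{ppt} to derive the orbifold index formula for the trace.

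Finally, collecting powers of $\hbar$, of $2\pi\sqrt{-1}$, and the combinatorial factor $(2n-2\ell-2r)!$ against the $(2n-2\ell-2r)!$ coming from the definition of $\rho$, and using that $\alpha$ is closed so only the top-degree part on each sector survives, gives the claimed formula. The division by $m$ and the sum over sectors arise from expressing the invariant global-section integral on $B^{(0)}$ as an integral on the coarse inertia space $\widetilde M$, as in \cite[Prop.~4.4]{ppt}. The main obstacle I anticipate is the bookkeeping that the $J_\theta$ factor is correctly identified with $\Ch_\theta(\lambda_{-1}N)^{-1}$ under the Chern--Weil substitution, and that the normalizations of $\hbar$ in $\hat A_\hbar$ and $\Ch_{V,\theta}$ are compatible with those appearing in $\exp(-\Omega/(2\pi\sqrt{-1}\hbar))$.
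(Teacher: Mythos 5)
Your proposal follows essentially the same route as the paper: Theorem \ref{thm:orb-alg-index} is obtained there exactly by combining the reduction proposition (pairing expressed through $\Psi^{2n-2\ell-2i}_{V_j,2n-2i}(1)$ with the $1/m$ factor as in \cite[Prop.~4.4]{ppt}), the identity relating $\Psi$ to the relative Lie algebra cocycle $\Theta^{N,\theta}_{V,2k}$, and the twisted Riemann--Roch theorem, with the $J_\theta$ factor evaluated on the normal-bundle curvature giving $\Ch_\theta(\lambda_{-1}N)^{-1}$ as in \cite[\S 5]{ppt}. Your sketch, including the curvature substitution $C(A(\xi),A(\eta))=\tilde R+\tilde R^\perp+\tilde R_V-\Omega$ and the bookkeeping of constants, matches the paper's (implicit) argument.
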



%
%
\section{The Higher analytic index theorem on manifolds}
\label{sec:HAITM}
The higher algebraic index theorems proved in Section
\ref{Sec:AlgIndThm} gives us the means to derive Connes--Moscovici's
higher index theorem in a deformation theoretic framework. To this
end we first recall Alexander--Spanier cohomology which is needed to
define a higher analytic index for elliptic operators on manifolds
and then determine the cyclic Alexander--Spanier cohomology. An
$\hbar$-dependent symbol calculus for pseudodifferential operators
gives rise to a deformation quantization on the cotangent bundle.
This together with the computation of the cyclic Alexander--Spanier
cohomology enable us to relate the analytic with the algebraic
higher index. The higher algebraic index theorems can then be
derived from Thm.~\ref{thm:higher-algind}.
\subsection{Alexander--Spanier cohomology}
\label{subsec:ASc}
Assume to be given a smooth manifold $M$. Like in App.~\ref{subsec:loc}
denote by $\Bbbk$ one of the commutative rings $\R$, $\R[[\hbar]]$ and
$\R((\hbar))$, and let $\calO_{M,\Bbbk}$ be one of the sheaves
$\calC^\infty_M$, $\calC^\infty_M[[\hbar]]$ and  $\calC^\infty_M ((\hbar ))$,
respectively. In other words,
$\calO_{M,\Bbbk} (U) := \calC^\infty (U) \hat\otimes \Bbbk$ with $U\subset M$
open consists of all smooth functions on $U$ with values in $\Bbbk$.
If no confusion can arise, we shortly write $\calO$ instead of
$\calO_{M,\Bbbk}$. For $k\in \N$ denote by $\calO^{\hat\boxtimes k}$ the
completed exterior tensor product sheaf which is a sheaf on $M^k$ and which
is defined by the property
\[
  \calO^{\hat\boxtimes k} (U_1 \times \cdots \times U_k ) \cong
  \calO (U_1) \hat\otimes  \cdots \hat\otimes \calO (U_k)
  \quad \text{for all $U_1,\cdots , U_k \subset M$ open},
\]
where $\hat\otimes $ means the completed bornological tensor product. Put now
$\scrCAS^k(\calO) := \Delta^*_{k+1} \big( \calO^{\hat\boxtimes k+1}\big)$ and
define sheaf maps
 $\deltaAS: \scrCAS^{k-1} (\calO) \rightarrow \scrCAS^k (\calO)$
as follows. First observe that
\begin{displaymath}
  \scrCAS^k(\calO) (U) \cong \calO^{\hat\boxtimes k+1} \big( U^{k+1} \big) /
  \calJ \big( \Delta_{k+1} (U) , U^{k+1} \big) ,
\end{displaymath}
where $\calJ \big( \Delta_{k+1} (U) , U^{k+1} \big)$ denotes the ideal of
sections of $\calO^{\hat\boxtimes k+1}$ over $U^{k+1}$ which vanish on the
diagonal $\Delta_{k+1} (U)$. Then define
$\delta f \in \calO^{\hat\boxtimes k+1} \big( U^{k+1} \big)$ for
$f \in \calO^{\hat\boxtimes k} \big( U^k \big)$ by the formula
\begin{equation}
\nonumber
  \begin{split}
  & \delta f = \sum_{i=0}^k (-1)^i \, \delta^i f, \quad \text{ where} \\
  & \delta^i f (x_0, \ldots , x_{k+1}) =
  f (x_0, \ldots , x_{i-1}, x_{i+1},\ldots ,
  x_{k+1} ), \quad x_0,\ldots ,x_{k+1} \in U.
  \end{split}
\end{equation}
Additionally, put
\begin{displaymath}
  \delta' f = \sum_{i=0}^{k-1} (-1)^i \, \delta^i f .
\end{displaymath}
By construction, $\delta f $ and $\delta'f$ lie in
$\calJ \big( \Delta_{k+2} (U) , U^{k+2} \big)$,
if  $f \in \calJ \big( \Delta_{k+1} (U) , U^{k+1} \big)$. Hence one can pass
to the quotients and obtains maps
\[
  \deltaAS :\scrCAS^{k-1}(\calO) (U) \rightarrow \scrCAS^k(\calO) (U)
  \quad \text{and} \quad
  \deltaAS' :\scrCAS^{k-1} (\calO) (U) \rightarrow \scrCAS^k(\calO) (U)
\]
which are the components of sheaf maps.
Since $\delta^2 = (\delta')^2=0$, we have  two sheaf cochain complexes
$\big( \scrCAS^\bullet (\calO), \deltaAS \big)$ and
$\big( \scrCAS^\bullet (\calO), \deltaAS' \big)$. Denote by
$ \CAS^\bullet (\calO)  := \Gamma (M, \scrCAS^\bullet(\calO) )$ the complex of
global sections with differential given by $\deltaAS$.
This is the {\it Alexander--Spanier cochain complex}
of $\calO$. Its cohomology is denoted by $\HAS^\bullet (\calO) $ and called the
{\it Alexander--Spanier cohomology} of $\calO$.
In the particular case, where $\Bbbk =\R$ and $\calO =\calC^\infty_M$,
one recovers the {\it Alexander--Spanier cohomology}
$\HAS^\bullet (M)$ of $M$.
\begin{proposition}
\label{prop:resAS}
 Let $\iota : \underline{\Bbbk} \rightarrow \scrCAS^0(\calO)$ be the canonical
 embedding of the locally constant sheaf $\underline{\Bbbk}$ into
 $\scrCAS^0(\calO)$. Then
 \[
   \underline{\Bbbk} \overset{\iota}{\rightarrow} \scrCAS^0 (\calO)
   \overset{\deltaAS}{\rightarrow}\scrCAS^1(\calO)
   \overset{\deltaAS}{\rightarrow} \ldots \overset{\deltaAS}{\rightarrow}
   \scrCAS^k(\calO) \overset{\deltaAS}{\rightarrow}
 \]
 is a fine resolution of the locally constant sheaf
 $\underline{\Bbbk}$. Moreover, $\big( \scrCAS^\bullet(\calO) ,
 \deltaAS'\big)$ is contractible.
\end{proposition}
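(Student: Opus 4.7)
The plan is to prove both parts of Proposition \ref{prop:resAS} by constructing explicit contracting homotopies at the stalk level. Fineness is immediate: each $\scrCAS^k(\calO)$ is a module sheaf over $\calO\cong\calC^\infty_M\hatotimes\Bbbk$ (via multiplication on the first tensor factor, say), and since $\calC^\infty_M$ admits smooth partitions of unity, all such $\calO$-module sheaves are fine.

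For the resolution claim I would verify exactness of $\underline{\Bbbk}\to\scrCAS^\bullet(\calO)$ stalkwise. At a point $x\in M$, the stalk $\scrCAS^k(\calO)_x$ consists of germs of smooth $\Bbbk$-valued functions near $(x,\ldots,x)\in M^{k+1}$ modulo germs vanishing on a neighborhood of the diagonal. I would then introduce the insertion homotopy $h:\scrCAS^k(\calO)_x\to\scrCAS^{k-1}(\calO)_x$ by
\[
  (hf)(x_0,\ldots,x_{k-1}):=f(x,x_0,\ldots,x_{k-1}).
\]
Expanding $\deltaAS(hf)+h(\deltaAS f)$ and grouping the face terms according to whether the entry omitted from $\deltaAS$ is the inserted $x$ or one of the $x_i$, everything cancels pairwise except for the identity, yielding $h\deltaAS+\deltaAS h=\id$ on $\scrCAS^k_x$ for $k\geq 1$. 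For $k=0$ the same formula collapses to $h\deltaAS f=f-f(x)$, which identifies $\ker\deltaAS_0$ with the image of the canonical augmentation $\iota:\underline{\Bbbk}_x\hookrightarrow\scrCAS^0(\calO)_x$. This proves exactness of the augmented sheaf complex.

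For contractibility of $(\scrCAS^\bullet(\calO),\deltaAS')$, I would use the duplication operator $s:\scrCAS^k(\calO)\to\scrCAS^{k-1}(\calO)$ given by
\[
  (sf)(x_0,\ldots,x_{k-1}):=f(x_0,\ldots,x_{k-1},x_{k-1}),
\]
realized as the sheaf pullback along the diagonal-preserving smooth map $M^k\to M^{k+1}$, $(x_0,\ldots,x_{k-1})\mapsto(x_0,\ldots,x_{k-1},x_{k-1})$. An analogous face-map computation—crucially exploiting that $\deltaAS'$ omits the last face, so the ``doubled'' terms collapse cleanly—gives $s\deltaAS'-\deltaAS' s=(-1)^k\id$ on $\scrCAS^k(\calO)$. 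Setting $h':=(-1)^{k+1}s$ on $\scrCAS^k$ then produces the desired global contracting homotopy $h'\deltaAS'+\deltaAS' h'=\id$.

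The main technical obstacle in both parts is the careful simplicial bookkeeping and sign tracking in the two homotopy identities, together with the verification that the maps $h$ and $s$ descend to the Alexander--Spanier quotients (i.e., remain well defined modulo germs vanishing on a neighborhood of the diagonal). The latter is automatic in each case because $h$ at a stalk and $s$ globally are induced by morphisms of the underlying smooth spaces that preserve the relevant diagonals, so functoriality of sheaf pullback ensures compatibility with the quotient structure.
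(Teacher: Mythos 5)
Your proposal is correct, and its first half is essentially the paper's own argument: your insertion homotopy $(hf)(x_0,\ldots,x_{k-1})=f(x,x_0,\ldots,x_{k-1})$ is exactly the paper's extra degeneracy $s_x^k\colon f\mapsto f(x,-)$, and your identities $h\deltaAS+\deltaAS h=\id$ (for $k\geq 1$) and $h\deltaAS f=f-f(x)$ (for $k=0$) are precisely the paper's $s_x^{k+1}\delta+\delta s_x^k=\id$ and $s_x^0\delta+\iota\varepsilon_x=\id$; fineness is also handled in the same spirit (the paper just calls it obvious, your $\calC^\infty_M$-module argument is the standard justification). Where you genuinely differ is in the contractibility of $\big(\scrCAS^\bullet(\calO),\deltaAS'\big)$: the paper contracts with the ``rotate-and-multiply'' extra degeneracy $s^{k+1,k}\colon f_0\otimes\cdots\otimes f_{k+1}\mapsto f_1\otimes\cdots\otimes f_k\otimes f_{k+1}f_0$, for which $s^{k+1,k}\deltaAS^0=\id$ and $s^{k+1,k}\deltaAS^i=\deltaAS^{i-1}s^{k,k-1}$, so the homotopy identity $s^{k+1,k}\deltaAS'+\deltaAS' s^{k,k-1}=\id$ holds with no signs, whereas you use the last ordinary codegeneracy (duplication of the final variable), for which the identity holds only up to a sign, and you compensate with the degree-dependent rescaling $h'=(-1)^{k+1}s$. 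Your bookkeeping checks out: on a representative in $k+1$ variables the $i=k$ face of $\deltaAS'$ evaluated at $(x_0,\ldots,x_k,x_k)$ reproduces $f$ while the remaining faces match the terms of $\deltaAS' s$, giving $s\deltaAS'-\deltaAS' s=(-1)^k\id$ and hence $h'\deltaAS'+\deltaAS' h'=\id$, with the degree-zero case $s\deltaAS'=\id$ coming out correctly as well; both your homotopies descend to the Alexander--Spanier quotients for the reason you indicate (duplication maps the diagonal into the diagonal, and the stalkwise insertion sends germs at $(x,\ldots,x)$ to germs at $(x,\ldots,x)$, which is all that is needed). So your route is a valid, equally elementary alternative; the paper's choice is the textbook extra-degeneracy contraction of a $b'$-type complex and merely avoids the sign twist.
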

\begin{proof}
 Obviously, each of the sheaves $ \scrCAS^k (\calO)$ is fine. So it remains to
 show that for each $x\in M$ the sequence of stalks
 \begin{displaymath}
  0 \hookrightarrow \Bbbk \overset{\iota}{\rightarrow} \scrCAS^0 (\calO)_x
  \overset{\deltaAS}{\rightarrow} \ldots \overset{\deltaAS}{\rightarrow}
  \scrCAS^k (\calO)_x \overset{\deltaAS}{\rightarrow}  \ldots
 \end{displaymath}
 is exact. To this end note first that the stalk $\scrCAS^k (\calO)_x$ is given
 as the inductive limit of quotients
 $\calO^{\hat\boxtimes (k+1)} \big(U^{k+1} \big) /
 \calJ \big( \Delta_{k+1}  (U) , U^{k+1} \big)$, where $U$ runs through the open
 neighborhoods of $x$. Define now for
 $k\in \N$ so-called \textit{extra degeneracy maps}
 \begin{displaymath}
 \begin{split}
   s_x^k : \: & \calO^{\hat\boxtimes (k+2)} \big( U^{k+2} \big) \rightarrow
   \calO^{\hat\boxtimes (k+1)} \big(U^{k+1}\big) , \:
   f \mapsto f (x, - ),  \text{ and } \\
   s^{k+1,k} :\: & \calO^{\hat\boxtimes (k+2)} \big( U^{k+2} \big) \rightarrow
   \calO^{\hat\boxtimes (k+1)} \big(U^{k+1}\big) , \:
   f_0 \otimes \ldots \otimes f_{k+1} \mapsto
   f_1 \otimes \ldots \otimes f_k \otimes f_{k+1} f_0 .
 \end{split}
 \end{displaymath}
Additionally  put
\begin{displaymath}
   \varepsilon_x : \calO ( U ) \rightarrow
   \Bbbk , \: f \mapsto f (x ).
\end{displaymath}
Then one checks immediately that
\begin{equation}
  s_x^{k+1} \delta + \delta s_x^k  = \id \quad \text{for all $k \in \N$} \quad
  \text{and} \quad s_x^0 \delta + \iota \varepsilon   = \id .
\end{equation}
This proves the first claim. For the proof of the second it suffices
to verify that
\begin{equation}
\label{eq:exdegcomm}
  s^{k+1,k} \delta^0 = \id \quad \text{and} \quad
   s^{k+1,k} \delta^i = \delta^{i-1} s^{k,k-1} ,
\end{equation}
since then
\begin{displaymath}
   s^{k+1,k} \delta' + \delta' s^{k,k -1}= \id
   \quad \text{for all $k \in \N^*$ and} \quad s^{1,0} \delta' = \id.
\end{displaymath}
But Eq.~(\ref{eq:exdegcomm}) is obtained by straightforward
computation, and the proposition follows.
\end{proof}

\begin{remark}
\label{Rem:qicdr} By the preceding result the Alexander--Spanier cohomology has
to coincide both with the \v{C}ech cohomology of the locally constant sheaf
$\underline{\Bbbk}$ and the de Rham cohomology of $M$ with values in
$\Bbbk$ (cf.~\cite{sp:algtop,conmos}). Let us sketch the
construction of the corresponding quasi-isomorphisms. To this end
choose an open covering $\calU$ of $M$ and a subordinate smooth
partition of unity $(\varphi_U)_{U\in \calU}$. Consider a \v{C}ech
cochain $c = (c_{U_0, \ldots, U_k})_{(U_0, \ldots, U_k)\in \calN^k
(\calU)}$ with  values in the ring $\Bbbk$, where
\[
  \calN^k (\calU):= \{ (U_0 , \ldots , U_k) \in \calU^{k+1} \mid
  U_0 \cap  \ldots \cap U_k  \neq \emptyset\}
\]
is the nerve of the covering.
Associate to $c$ the Alexander--Spanier cochain
\begin{displaymath}
  \rho_\calU (c) (x_0, \ldots , x_k) = \sum_{U_0 \ldots U_k} \,
  c_{U_0 \ldots U_k} \,
  \varphi_{U_0} (x_0) \cdot \ldots \cdot \varphi_{U_k} (x_k) .
\end{displaymath}
One checks easily that the resulting map
$\rho_\calU : \check{C}^\bullet_\calU (M,\Bbbk) \rightarrow \CAS^\bullet (\calO)$
is a chain map. Moreover, if $\calU$ is a good covering, i.e.~if it is locally
finite and if the intersection of each finite family of elements of $\calU$
is contractible, then $\rho_\calU$ is even a quasi-isomorphism. To define a
quasi-isomorphism
$\lambdaAS : \CAS^\bullet (\calO) \rightarrow \Omega^\bullet (M,\Bbbk)$
first choose a complete riemannian metric on $M$, and denote by $\exp$ the
corresponding exponential function. For
$f\in \calO^{\hat\boxtimes (k+1)} (M^{k+1})$, $x\in M$ and
$v_1,\ldots , v_k\in T_xM$ then put
\begin{displaymath}
\begin{split}
  \lambda (f)_x (v_1, \ldots,&v_k) :=\\
  =&\frac{1}{k!} \sum_{\sigma \in S_k} \, \sgn (\sigma)
  \frac{\partial}{\partial s_1} \ldots \frac{\partial}{\partial s_k} f
  \big( x, \exp_x (s_1 v_{\sigma (1)}),\ldots ,\exp_x (s_k v_{\sigma (k)}) \big)|_{s_i=0}.
\end{split}
\end{displaymath}
Clearly, this defines a $\Bbbk$-valued smooth $k$-form $\lambda (f)$, which
vanishes, if one has $f \in \calJ\big( \Delta_{k+1} (M), M^{k+1} \big) $.
Moreover, one checks easily that $\lambda \delta (f ) = d \lambda (f)$.
By passing to the quotient $\CAS^k (M)=
\calO^{\hat\boxtimes (k+1)} (M^{k+1})/\calJ\big( \Delta_{k+1} (M), M^{k+1}\big)$
we thus obtain the desired chain map which is denoted by $\lambdaAS$.
By \cite{conmos}, $\lambdaAS$ is a quasi-isomorphism.
\end{remark}
\begin{remark}
\label{Rem:AShom}
  For later purposes let us present here another representation of Ale\-xan\-der-Spanier
  cochains in case $\calO$ is the sheaf of smooth functions on $M$.
  This representation allows also for a dualization, i.e.~the construction of
  Alexander-Spanier homology groups.
  To this end consider an open covering $\calU$ of $M$, and denote by
  $\calU^k$ the neighborhood $\bigcup_{U\in \calU} U^k$ of the diagonal $\Delta_k (M)$
  in $M^k$. Then put
  \begin{equation}
    \label{Eq:DefASCochains}
    C^k_{\rm\tiny AS} (M,\calU) := \calC^\infty \big( \calU^k \big)
  \end{equation}
  Obviously, $\big( C^\bullet_{\rm\tiny AS} (M,\calU), \delta \big)$ then forms
  a complex where, in degree $k$, $\delta$   denotes here the Alexander-Spanier
  differential restricted to $\calC^\infty\big( \calU^k \big)$.
  Moreover, for every refinement
  $\calV \hookrightarrow \calU$ of open coverings, one has a canonical
  chain map
  $ C^\bullet_{\rm\tiny AS} (M,\calU) \rightarrow C^\bullet_{\rm\tiny AS} (M,\calV)$.
  The direct limit of these chain complexes with respect to $\calU$
  running through the directed set $\operatorname{Cov} (M)$ of open coverings
  of $M$ coincides
  naturally with the Alexander-Spanier cochain complex over $M$:
  \begin{equation}
    \lim_{\longrightarrow \atop \calU \in \operatorname{Cov} (M)}
     C^\bullet_{\rm\tiny AS} (M,\calU) \cong
     \scrCAS^\bullet(\calC^\infty) (M) .
  \end{equation}
  Hence the direct limit of the cochain complexes
  $C^\bullet_{\rm\tiny AS} (M,\calU)$ computes
  the Ale\-xan\-der-Spanier cohomology of $M$. Note that since homology functors
  commute with direct limits, Alexander-Spanier cohomology also coincides naturally
  with the direct limit
  \[
   \lim_{\longrightarrow \atop \calU \in \operatorname{Cov} (M)}
   H^\bullet_{\rm\tiny AS} (M,\calU).
  \]

  Now let $C_k^{\rm\tiny AS} (M,\calU)$ be the topological dual of
  $C^k_{\rm\tiny AS} (M,\calU)$, i.e.~the space of compactly supported
  distributions on $M^{k+1}$. Transposing $\delta$ gives rise
  to a chain complex $\big( C_\bullet^{\rm\tiny AS} (M,\calU) , \delta^* \big) $,
  the homology of which is denoted by $H_\bullet^\text{\rm\tiny AS} (M,\calU)$.
  The inverse limit
  \begin{equation}
     H_\bullet^{\rm\tiny AS} (M) :=
     \lim_{\longleftarrow \atop \calU \in \operatorname{Cov} (M)}
     H_\bullet^{\rm\tiny AS} (M,\calU)
  \end{equation}
  is called the Ale\-xan\-der-Spanier homology of $M$.
  By \cite[Prop.~1.2]{moswu} one has for every open covering
  $\calU$ of $M$ a natural isomorphism between the Ale\-xan\-der-Spanier homology and
  \v{C}ech homology
  \begin{equation}
    \label{Eq:IsoAScech}
    H_\bullet^{\rm\tiny AS} (M,\calU) \cong \check{H}_\bullet (M,\calU).
  \end{equation}
  This implies in particular, that Alexander-Spanier homology coincides naturally with
  \v{C}ech homology. Moreover, for a good open cover $\calU$ of $M$, i.e~an open cover
  such that all finite nonempty intersections of elements of $\calU$ are contracible,
  the homology $H_\bullet^{\rm\tiny AS} (M,\calU) $ of the cover $\calU$
  then has to coindide with the Alexander-Spanier homology $H_\bullet^{\rm\tiny AS} (M)$
  of the total space (cf.~\cite[\S 15]{BotTu}).

  By duality of the defining
  complexes, Alexander-Spanier homology and cohomology pair naturally,
  which means that in each degree $k$ one has a natural map
  \begin{equation}
     \langle - , -\rangle :
     H_k^{\rm\tiny AS} (M) \times  H^k_{\rm\tiny AS} (M) \rightarrow \R .
  \end{equation}
  Let us describe this pairing in some more detail, since we will later need it.
  Let $[f]$ be an Alexander-Spanier cohomology class represented
  by some cochain $f \in C^k_{\rm\tiny AS} (M,\calU)$. Let
  $\mu = \big( [\mu_\calV] \big)_{\calV \in \operatorname{Cov} (M)}$ be an
  Alexander-Spanier homology class, where the $\mu_\calV $ are
  appropriate cycles in $C_k^{\rm\tiny AS} (M, \calV)$.
  Then, one puts
  \begin{equation}
    \label{Eq:DefASPair}
    \langle \mu , [f] \rangle := \mu_\calU (f) .
  \end{equation}
  It is straightforward to check that this definition of the pairing
  $\langle \mu , [f] \rangle$ does not depend on the choice of
  representatives for the homology classes $[\mu_\calV]$ respectively for the
  cohomology class $[f]$.
\end{remark}

Besides the above defined sheaf complex
$\big( \scrCAS^\bullet (\calO) , \deltaAS\big)$, one can
define the sheaf complex $\big( \scraCAS^\bullet (\calO), \deltaAS\big)$ of
antisymmetric Alexander--Spanier cochains and the  sheaf complex
$\big( \scrcCAS^\bullet (\calO) , \deltaAS\big)$ of cyclic Alexander--Spanier
cochains. A section of $\scrCAS^k (\calO)$ over $U\subset M$ open which is
represented by some
$f \in \calO^{\hat\boxtimes k+1} \big( U^{k+1} \big)$ is called
{\it antisymmetric} resp.~{\it cyclic}, if
\[
  f ( x_{\sigma (0)} , \ldots , x_{\sigma (k+1} ) =
  \sgn (\sigma) \, f(x_0 , \ldots , x_k )
\]
for all $(x_0 , \ldots , x_k )$ close to the diagonal and every permutation
resp.~every cyclic permutation $\sigma$ in $k+1$ variables. In the following
we show how to determine the cohomology of these sheaf complexes.
To this end we first define
\textit{degeneracy  maps} $s^{i,k}$ for $0\leq i \leq k$ as follows:
\begin{displaymath}
  \begin{split}
  s^{i,k} : \: & \calO^{\hat\boxtimes (k+2)} \big( U^{k+2} \big) \rightarrow
  \calO^{\hat\boxtimes (k+1)} \big(U^{k+1}\big) , \\
  & f_0 \otimes \ldots \otimes f_{k+1} \mapsto
    f_0 \otimes \ldots \otimes f_i \, f_{i+1} \otimes \ldots  \otimes f_{k+1} .
  \end{split}
\end{displaymath}
Obviously, these maps $s^{i,k}$ induce sheaf morphisms
$\sAS^{i,k}: \scrCAS^{k+1} (\calO)  \rightarrow  \scrCAS^k (\calO)$.
Moreover, one checks immediately that the following cosimplicial identities
are satisfied:
\begin{align}
 \deltaAS^j \deltaAS^i & = \deltaAS^i \deltaAS^{j-1}, \quad \text{if $i<j$}\\
 \sAS^{j,k-1} \sAS^{i,k} & = \sAS^{i,k-1} \sAS^{j+1,k}, \quad
 \text{if $i\leq j$} \\
 \sAS^{j,k} \deltaAS^i & =
 \begin{cases}
   \deltaAS^i \sAS^{j-1,k-1} & \text{for $i <j$},\\
   \id & \text{for $i=j$ or $i=j+1$},\\
   \deltaAS^{i-1} \sAS^{j,k-1} & \text{for $i >j+1$}.
 \end{cases}
\end{align}
Next we introduce the cyclic operators
\begin{equation}
  \label{eq:cycop}
  t_x^k : \scrCAS^k (\calO)_x \rightarrow \scrCAS^k (\calO)_x , \:
  [f_0 \otimes \ldots \otimes f_k]_x \mapsto (-1)^k
  [f_1 \otimes \ldots \otimes f_k \otimes f_0]_x
\end{equation}
Note that the cyclic operator $t^k_x$ is induced by a globally defined
sheaf morphism
$t^k : \scrCAS^k (\calO) \rightarrow \scrCAS^k (\calO)$. One easily checks
that the $t^k$ satisfies  the following cyclic identities:
\begin{align}
  t^k\deltaAS^i & = \deltaAS^{i-1} t^{k-1}, \quad \text{if $1\leq i\leq k$}\\
  t^k\sAS^{i,k} & = \sAS^{i-1,k} t^{k+1}, \quad \text{if $1\leq i\leq k$}\\
  \big( t^k\big)^{k+1} & = \id .
\end{align}
This means that the tuple $\big( \scrCAS^k (\calO) , \deltaAS^i,
\sAS^{i,k} , t^k\big)$ is a cyclic cosimplicial sheaf over $M$. Its
cyclic cohomology can be computed as the cohomology of either one of
the following complexes:
\begin{enumerate}
\item
  the total complex of the associated cyclic bicomplex
  with vertical differentials given by $\deltaAS$ in even degree
  resp.~by $- \deltaAS'$ in odd degree, and horizontal differentials given by
  $\id - t^k$ in even degree  resp.~by
  $N^k := \sum_{l=0}^k \big( t^k\big)^l$ in odd degree;
\item
  the complex obtained as the $0$-th cohomology of the horizontal
  differentials in the cyclic bicomplex; in other words
  this is the cyclic Alexander--Spanier complex $\scrcCAS^\bullet (\calO)$
  with differential $\deltaAS$;
\item
  the total complex of the associated mixed cochain complex
  with differentials $\deltaAS$ and $\BAS$, where
  $\BAS^k := N^k \, s^{0,k} \big( \id - t^{k-1} \big) $ with $s^{0,k}$
  the extra degeneracy defined above.
\end{enumerate}
By Proposition \ref{prop:resAS} the Hochschild cohomology of the mixed complex
(3) is given by $\Bbbk$ in degree $0$ and by $0$ in all other degrees.
Hence the cyclic cohomology of this mixed complex coincides with $\Bbbk$ in
even degree and with $0$ else. Since the cyclic cohomology is also computed by
$\scrcCAS^\bullet (\calO)_x$ one obtains the claim about the cyclic
Alexander--Spanier cohomology in the following result.
\begin{proposition}
\label{Prop:dercatisoAS}
  In the derived category of sheaves on $M$, both sheaf complexes
  $\scrCAS^\bullet (\calO)$ and $ \scraCAS^\bullet (\calO)$ are isomorphic to
  $\underline{\Bbbk}$, whereas $ \scrcCAS^\bullet (\calO)$
  is isomorphic to the cyclic sheaf complex
  \[
    \underline{\Bbbk} \rightarrow 0 \rightarrow \underline{\Bbbk}
    \rightarrow \ldots \rightarrow 0 \rightarrow \underline{\Bbbk}
    \rightarrow 0 \rightarrow \ldots \: .
  \]
  Moreover, the antisymmetrization
  $\varepsilon^\bullet : \scrCAS^\bullet (\calO)\rightarrow
  \scraCAS^\bullet (\calO) $,
  \[
    \varepsilon^k \big( [f_0 \otimes \ldots \otimes f_k ]\big) =
    \sum\limits_{\sigma\in S_{k+1}}\frac{\sgn{\sigma}}{(k+1)!}
    [f_{\sigma (0)} \otimes \ldots \otimes f_{\sigma (k)} ]
  \]
  is a quasi-isomorphism.
\end{proposition}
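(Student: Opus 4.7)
The plan is to handle the four assertions in turn. The first identification $\scrCAS^\bullet(\calO)\simeq \underline{\Bbbk}$ in the derived category is exactly Proposition \ref{prop:resAS}. A short calculation from the cosimplicial identities shows that $\varepsilon^\bullet$ commutes with $\deltaAS$ (both operators are built from compatible symmetric-group actions on tensor factors) and equals the identity in degree $0$, so $\varepsilon^\bullet:\scrCAS^\bullet(\calO)\to\scraCAS^\bullet(\calO)$ is a well-defined chain map of fine sheaves.

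For $\scraCAS^\bullet(\calO)$ I would first verify that $\deltaAS$ preserves antisymmetry, so that $\scraCAS^\bullet(\calO)$ is really a subcomplex of fine sheaves, and then adapt the stalk-level contracting homotopy of Proposition \ref{prop:resAS} by postcomposing with antisymmetrization: $\tilde s_x^k := \varepsilon\circ s_x^k$. Since $\varepsilon$ commutes with $\deltaAS$ and restricts to the identity on antisymmetric cochains, one obtains stalk-wise
\[
  \tilde s_x^{k+1}\deltaAS + \deltaAS\tilde s_x^k = \id \quad \text{on } \scraCAS^k(\calO)_x,
\]
together with the analogous augmented identity in degree $0$ coming from the second formula in the proof of Proposition \ref{prop:resAS}. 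Hence $\scraCAS^\bullet(\calO)$ is also a fine resolution of $\underline{\Bbbk}$, and the comparison theorem for resolutions, combined with $\varepsilon^0=\id$, shows that $\varepsilon^\bullet$ is a quasi-isomorphism. This simultaneously settles the $\scraCAS$-part of the first statement and the last statement of the proposition.

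The cyclic assertion is the most delicate and is where I expect the main obstacle to lie. My approach is to pass to the mixed-complex description $(\scrCAS^\bullet(\calO),\deltaAS,\BAS)$ listed as model (3) before the proposition. By Proposition \ref{prop:resAS} the $\deltaAS$-cohomology sheaves of this mixed complex are $\underline{\Bbbk}$ in degree $0$ and vanish otherwise, so the stalk-wise Connes SBI long exact sequence immediately gives that $\mathscr{H}^k(\scrcCAS^\bullet(\calO))$ equals $\underline{\Bbbk}$ for $k$ even and vanishes for $k$ odd. To upgrade this stalk-wise identification to the claimed derived-category isomorphism, one must exhibit explicit global cocycles generating each $\mathscr{H}^{2r}$: the constant $1\in\scrcCAS^0(\calO)$ in degree zero, and iterated applications $S^r(1)$ of Connes' periodicity operator in each higher even degree. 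These assemble into a diagonal chain map $\bigoplus_{r\geq 0}\underline{\Bbbk}[-2r]\to \scrcCAS^\bullet(\calO)$, and proving it is a quasi-isomorphism---i.e., formality of $\scrcCAS^\bullet(\calO)$---requires verifying that the $S^r(1)$ trivialize all higher Yoneda extensions between the cohomology sheaves and not merely match stalk cohomology in each degree. This I would settle by tracking the explicit chain-level form of $S$ in the cyclic bicomplex model (1) and combining it with the contracting homotopy constructed in the previous step.
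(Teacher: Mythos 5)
Your proposal is correct and follows essentially the same route as the paper: Proposition \ref{prop:resAS} together with the extra degeneracies $s_x^k$ shows that $\scraCAS^\bullet (\calO)$ is also a fine resolution of $\underline{\Bbbk}$ and that $\varepsilon^\bullet$ is a morphism of resolutions over the identity of $\underline{\Bbbk}$, while the cyclic statement is obtained stalk-wise from the mixed complex $\big( \scrCAS^\bullet (\calO), \deltaAS , \BAS \big)$, whose Hochschild cohomology is concentrated in degree $0$, via Connes' SBI sequence. The only superfluous step is your worry about higher Yoneda extensions: once the chain map built from the constant cocycles $1^{\otimes (2r+1)}$ (equivalently $S^r(1)$) is written down, the fact that it induces isomorphisms on all cohomology sheaves already makes it a quasi-isomorphism, so no separate formality verification is required.
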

\begin{proof}
  By the previous considerations it remains only to prove that
  $\varepsilon^\bullet$ is a quasi-isomorphism. To this end one checks
  along the lines of the proof of Proposition \ref{prop:resAS} and
  by using the maps $s_x^k$ that $ \scraCAS^\bullet (\calO)$
  is a fine resolution of $\underline{\Bbbk}$, and that $\varepsilon^\bullet$
  is a sheaf morphism between these fine resolutions over the identity
  of $\underline{\Bbbk}$.
\end{proof}

\begin{remark}
Abstractly, a cyclic object in a category $\mathcal{C}$ is a
contravening functor from Connes' cyclic category $\Delta C$ to
$\mathcal{C}$, cf.\ \cite[\S6.1]{loday}. The cyclic category $\Delta
C$ has the remarkable property of being isomorphic to its opposite
$\Delta C^{op}$ via an explicit functor as in Prop. 6.1.11. in
\cite{loday}. Therefore, out of any cyclic object, one constructs a
\textit{co}cyclic object -that is, a covariant functor  $\Delta
C\to\mathcal{C}$- by precomposing with this isomorphism, called the
dual. With this, one recognizes the cocyclic sheaf $\scrCAS^\bullet
(\calO)$ as the dual of the cyclic sheaf $\calO^\natural_\bullet$
associated to $\calO$ as a sheaf of algebras.
\end{remark}

Next we construct a quasi-isomorphism from the sheaf complex $\big(
\scrcCAS^k (\calO) , \deltaAS \big)$ to the total complex of the
mixed sheaf complex $\big( \Omega^\bullet(-,\Bbbk), d , 0\big)$. To
this end define for $2r \leq k$ and $U\subset M $ open a morphism
\[
  \lambdaAS_{k,U}^{k-2r} : \Gamma \big(U , \scrCAS^k (\calO)\big)
  \rightarrow \Omega^{k-2r}(U,\Bbbk)
\]
as follows. First let $f \in  \calO^{\hat\boxtimes k+1} (U^{k+1})$ be a
representative of a section of $\scrCAS^k (\calO)$ over $U$, let $x \in U$ and
$v_1, \ldots , v_{k-2r} \in T_xM$. Then put
\begin{displaymath}
\begin{split}
  \lambda_{k,U}^{k-2r} & (f)_x (v_1, \ldots , v_{k-2r}) :=
  \frac{(k-2r)!}{(k+1)!} \sum_{\nu \in S_{2r+1,k-2r}}
  \sum_{\sigma \in S_{k-2r}} \, \sgn (\nu) \, \sgn (\sigma) \\
  & \frac{\partial}{\partial s_1} \ldots \frac{\partial}{\partial s_{k-2r}}
  (\nu f) \big( x, x, \ldots , x,
  \exp_x (s_1 v_{\sigma (1)}),\ldots ,\exp_x (s_{k-2r} v_{\sigma (k-2r)}) \big)|_{s_i=0}
\end{split}
\end{displaymath}
Hereby, $S_{p,q}$ denotes the set of $(p,q)$-shuffles of the set
$\{ 0, \ldots , p+q\}$, and $\nu f $ for $\nu \in S_{k+1}$ is defined by
\[
  \nu f (x_0, x_1, \ldots , x_k) :=
  f (x_{\nu (0)}, x_{\nu (1)}, \ldots , x_{\nu (k)}).
\]
Obviously, $\lambda_{k,U}^{k-2r} (f)$ vanishes, if $f$ vanishes
around the diagonal of $U^{k+1}$. Hence one can define
\[
  \lambdaAS_{k,U}^{k-2r} \big( f + \calJ (\Delta_{k+1}, U^{k+1} ) \big) :=
  \lambda_{k,U}^{k-2r} (f)
\]
which provides us with the desired morphism. By an immediate computation one
checks that for $f_0, \ldots , f_k \in \calO (U)$
\begin{equation}
\label{eq:RepSmorASdeRham}
\begin{split}
  \lambda_{k,U}^{k-2r} & \, (f_0 \otimes \ldots  \otimes f_k ) = \\
  = \, & \frac{(k-2r)!}{(k+1)!}\!
  \sum_{\nu \in S_{2r+1,k-2r}}\!\!\!\!
  \sgn (\nu) \, f_{\nu (0)} \cdot \ldots \cdot f_{\nu (2r)}
  \, df_{\nu (2r+1)} \wedge \ldots \wedge df_{\nu (k)} .
\end{split}
\end{equation}
\begin{proposition}
\label{prop:deflambda}
  Let
  $\lambdaAS_k:\scrcCAS^k (\calO) \rightarrow \Tot^k \calB
  \Omega^\bullet (- , \Bbbk)$
  be the sheaf morphism
  defined by $\lambdaAS_k := \sum_{2r\leq k} \lambdaAS_k^{k-2r}$.
  Then the following relation is satisfied:
  \[
    \lambdaAS_{k+1} \deltaAS  = d \lambdaAS_k .
  \]
\end{proposition}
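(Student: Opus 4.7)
The plan is to verify the identity on elementary representatives $f = f_0 \otimes \cdots \otimes f_k$ using the explicit formula \eqref{eq:RepSmorASdeRham}, and to decompose by form degree. Since $d\colon \Omega^{k-2r}\to\Omega^{k+1-2r}$ and $\lambdaAS_{k+1}^{k+1-2r}$ both produce $(k+1-2r)$-forms, it suffices to establish for each $r$ with $2r\leq k$ the identity
\[
  \lambdaAS_{k+1}^{k+1-2r}(\deltaAS f) = d\big(\lambdaAS_k^{k-2r}(f)\big).
\]

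For the LHS I would write $\deltaAS = \sum_{i=0}^{k+1}(-1)^i \delta^i$ and note that $\delta^i f$ inserts a constant $1$ in slot $i$. Plugging into \eqref{eq:RepSmorASdeRham}, any shuffle $\mu\in S_{2r+1,k+1-2r}$ that places the inserted $1$ in a differentiated slot, i.e.\ with $\mu^{-1}(i)\geq 2r+1$, contributes a factor $d(1)=0$ and drops out; the surviving shuffles have $\mu^{-1}(i)=:m\leq 2r$, and the $1$ is absorbed into the product, leaving a summand of the form $(-1)^i\sgn(\mu)\cdot f_{l_0}\cdots f_{l_{2r-1}}\cdot df_{j_0}\wedge\cdots\wedge df_{j_{k-2r}}$. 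For the RHS, the Leibniz rule applied to the product $f_{\nu(0)}\cdots f_{\nu(2r)}$ inside $\lambdaAS_k^{k-2r}(f)$ produces a sum over pairs $(\nu,j)$ with $\nu\in S_{2r+1,k-2r}$ and $j\in\{0,\ldots,2r\}$, each summand being a Leibniz sign times $\sgn(\nu)\cdot f_{\nu(0)}\cdots\widehat{f_{\nu(j)}}\cdots f_{\nu(2r)}\cdot df_{\nu(j)}\wedge df_{\nu(2r+1)}\wedge\cdots\wedge df_{\nu(k)}$.

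The two sums are then matched via a canonical bijection. Using the order-preserving bijection $\phi_i\colon\{0,\ldots,k\}\to\{0,\ldots,k+1\}\setminus\{i\}$, each surviving LHS shuffle factors canonically as $\mu=\sigma_{i,m}\circ(\phi_i\circ\widetilde{\mu})$, where $\sigma_{i,m}$ is the insertion of $i$ into position $m$; the residual permutation $\widetilde\mu$ is reinterpreted as a shuffle $\nu\in S_{2r+1,k-2r}$ together with an insertion index $j$ that records which factor of the product block is to be differentiated. Conversely, for each RHS pair $(\nu,j)$ one recovers the admissible LHS triples $(i,\mu,m)$ by varying the target position of the $1$ inside the wedge block being converted to a derivative.

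The main obstacle is the sign and prefactor bookkeeping. The sign of the insertion permutation is $\sgn(\sigma_{i,m})=(-1)^{i+m}$, whence $(-1)^i\sgn(\mu)=(-1)^m\sgn(\widetilde\mu)$, and a further reordering identifies this with $\sgn(\nu)$ times the Leibniz sign for inserting $df_{\nu(j)}$ at the head of the wedge block. For the prefactors, counting preimages and using the identity $\tfrac{(k+1-2r)!}{(k+2)!}\cdot(k+2-2r)=\tfrac{(k-2r)!}{(k+1)!}$ reconciles the normalizations. Once these two reconciliations are in place, the termwise matching reduces to the clean combinatorial statement that antisymmetrization over $S_{2r+1,k+1-2r}$-shuffles with a prescribed insertion point coincides with antisymmetrization over $S_{2r+1,k-2r}$-shuffles composed with the Leibniz rule, completing the proof.
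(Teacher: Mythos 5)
Your overall strategy --- expanding $\deltaAS f$ into cofaces, discarding the terms where the inserted $1$ lands in a differentiated slot, applying the Leibniz rule on the other side, and matching the two sums --- is the same computation the paper performs; the paper merely organizes it by first using the permutation behaviour of $\lambdaAS$ in its arguments to replace every coface term by $(\pm 1)$ times one and the same expression, so that the alternating signs of $\deltaAS$ cancel and only two displayed sums remain to be compared. However, two steps of your plan fail as written. First, the reduction to the componentwise identities for $2r\leq k$ is incomplete, and the termwise matching you envision does not exist without using cyclicity of $f$ --- which you never invoke, although the proposition is stated on $\scrcCAS^k(\calO)$. Concretely, when $k$ is odd the target $\Tot^{k+1}\calB\,\Omega^\bullet$ has an extra $0$-form component ($2r=k+1$) with no counterpart in $d\lambdaAS_k$; on it $\lambdaAS_{k+1}(\deltaAS f)$ is a nonzero constant multiple of the restriction of $f$ to the diagonal, since $\sum_{i=0}^{k+1}(-1)^i=1$, and this vanishes only because a cyclic cochain of odd degree satisfies $f|_{\Delta}=(-1)^k f|_{\Delta}=0$. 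Moreover, within a fixed component the signed number of surviving pairs (coface $i$, shuffle $\mu$) producing a given partition of the $f$-indices into product block $A$ and wedge block $B$ is not a constant: it works out to $\sgn(\nu_{A,B})\sum_i(-1)^{\,i+|B\cap[0,i)|}$, which depends on where the elements of $B$ sit and can even vanish for partitions that receive a nonzero Leibniz contribution on the right (e.g.\ $k=2$, $r=1$, $B=\{1\}$). So a clean bijective matching per elementary tensor, which is what your ``clean combinatorial statement'' asserts, is not available; the identification only appears after exploiting the (cyclic) symmetry of the cochain or after a reduction of the paper's kind.

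Second, the prefactor reconciliation you single out as the crux is arithmetically false: $\frac{(k+1-2r)!}{(k+2)!}\,(k+2-2r)=\frac{(k+2-2r)!}{(k+2)!}$, which equals $\frac{(k-2r)!}{(k+1)!}$ only if $(k+2-2r)(k+1-2r)=k+2$; already for $k=2$, $r=1$ one gets $\tfrac{1}{12}\neq\tfrac{1}{6}$. This is a symptom of the miscount above: the correspondence between left-hand triples $(i,\mu,m)$ and right-hand pairs $(\nu,j)$ is not uniformly $(k+2-2r)$-to-one with constant sign. Until the cyclicity of $f$ is brought in and the multiplicities are recounted (or the argument is reorganized as in the paper, where all cofaces are reduced to a single one before comparing with the Leibniz expansion), the proof does not close.
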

\begin{proof}
First check that for $0\leq i \leq k$
\begin{equation}
\nonumber
\label{eq:CompLambDeli1}
  \begin{split}
    \lambda_{k,U}^{k-2r}&
    \,\big( \delta^i (f_0 \otimes \ldots \otimes f_{k-1})\big)=\\
    = \, & (-1)^i \, \lambda_{k,U}^{k-2r}
    \big( 1 \otimes f_0 \otimes \ldots \otimes f_{k-1}\big) \\
    = \, &(-1)^i \frac{(k-2r)!}{(k+1)!} \sum_{\nu \in S_{2r,k-2r}}
    \sgn (\nu)\, f_{\nu (0)} \otimes \ldots \otimes f_{\nu (2r-1)} \,
    df_{\nu (2r)} \wedge \ldots \wedge df_{\nu (k-1)} ,
  \end{split}
\end{equation}
and then that
\begin{equation}
\nonumber
\label{eq:CompLambDeli2}
  \begin{split}
    d &\, \lambda_{k-1,U}^{k-1-2r} (f_0 \otimes \ldots \otimes f_{k-1}) = \\
    & = \frac{(k-2r-1)!}{k!} \sum_{\nu \in S_{2r+1,k-2r-1}} \sgn (\nu) \,
    d ( f_{\nu (0)} \otimes \ldots \otimes f_{\nu (2r}) ) \wedge
    df_{\nu (2r+1)} \wedge \ldots \wedge df_{\nu (k-1)} \\
    & = \frac{(k-2r)!}{k!} \sum_{\nu \in S_{2r,k-2r}} \sgn (\nu) \,
    f_{\nu (0)} \otimes \ldots \otimes f_{\nu (2r-1)} \,
    df_{\nu (2r+1)} \wedge \ldots \wedge df_{\nu (k-1)} .
  \end{split}
\end{equation}
By the definition of $\delta$,  these two equations  entail the claimed
equality.
\end{proof}

\subsection{Higher indices}
Alexander--Spanier cohomology has been used by {\sc
Connes--Moscovici} \cite{conmos} to define higher (analytic) indices
of an elliptic operator acting on the space of smooth sections of a
(hermitian) vector bundle over a closed (riemannian) manifold. More
precisely, the Connes--Moscovici higher indices can be understood as
a pairing of the Chern character of a K-theory class defined by an
elliptic operator with the cyclic cohomology class defined by an
Alexander--Spanier cohomology class (cf.~\cite{moswu}). Unlike for
the K-theoretic formulation of the Atiyah--Singer index formula,
where the K-theory of the algebra of smooth sections over the
cosphere bundle of the underlying manifold is considered, it turns
out that for the K-theoretic formulation of higher index theorems
the appropriate algebra is the algebra of trace class operators
acting on the Hilbert space of square integrable sections of the
given vector bundle. This point of view and the fact that the
pseudo-differential calculus on the underlying manifold gives rise
to a deformation quantization enable us to compare the higher
analytic index with the higher algebraic index and then derive the
Connes--Moscovici higher index formula. In the following we provide
the details and proceed in several steps.

{\bf Step 1.} Assume that
$\Psi \in \Omega^{2n} (M) \otimes_{\calC^\infty (M)} \calW^*$ is a trace density
for the star product algebra $\calA^{((\hbar))}_\text{\tiny\rm cpt}$ on $M$.
In other words this means that
\[
  \Tr : \calA^{((\hbar))}_\text{\tiny\rm cpt}  \rightarrow \Bbbk, \quad
  a\mapsto \int_M \Psi (a)
\]
is a trace functional on
$\calA^{((\hbar))}_\text{\tiny\rm cpt}$. Then we define a chain map
\[
  \normscrX_{\Tr} : \scrCAS^\bullet \big( \calC^\infty_M((\hbar)) \big)
  \rightarrow \scrC^\bullet \big( \calA^{((\hbar))} \big)
\]
as follows. For $f_0, f_1, \ldots , f_k \in \calC^\infty (U)((\hbar))$ with
$U\subset M$ open and
$a_0 , \ldots , a_k \in \calA^{((\hbar))}_\text{\rm\tiny cpt} (U)$ put
\begin{equation}
\label{eq:charxdef1}
  \scrX_{\Tr} (f_0 \otimes f_1 \otimes \ldots \otimes f_k )\,
  (a_0 \otimes \ldots \otimes a_k ) :=
  \Tr_k \big(  f_0 \star a_0 , \ldots , f_k \star a_k \big) ,
\end{equation}
where
\begin{equation}
\label{eq:charxdef2}
  \Tr_k \big( a_0 , \ldots , a_k) := \Tr \big( a_0 \star \ldots \star a_k\big).
\end{equation}
Since the star product is local and the trace functional $\Tr$ is given as
an integral over the trace density, which also is local in its argument, one
concludes that the cochain
$\scrX_{\Tr} (f )$ vanishes, if $f\in \calC^\infty (U^{k+1})((\hbar))$
vanishes around the diagonal $\Delta_{k+1} (U)$. By passing to the quotient
we obtain the desired maps
$\normscrX_{\Tr} : \scrCAS^k \big( \calC^\infty_M((\hbar)) \big) (U)
  \rightarrow \scrC^k \big( \calA^{((\hbar))} \big) (U)$.
By straightforward computation one checks that
\[
  b \normscrX_{\Tr} = \normscrX_{\Tr} \deltaAS \quad
  \text{and} \quad B \normscrX_{\Tr} (f) = 0,
  \text{ if $f \in \scrcCAS^k \big( \calC^\infty_M((\hbar)) \big) (U)$}.
\]
Hence $\normscrX_{\Tr}$ provides a chain map from the
cyclic Alexander--Spanier complex to the cyclic complex of the deformed algebra.

\begin{remark}
\label{Rem:ExtScrX}
  Let  $\scrA$ be a sheaf of $\Bbbk$-algebras. Assume
  that on $\calO$ a local product denoted by $\cdot$ is defined, and
  that $\scrA$ carries an $\calO$-module structure.
  Finally let $\tau : \scrA (M) \rightarrow \Bbbk$ be a trace. Then
  Eqs.~\eqref{eq:charxdef1} and \eqref{eq:charxdef2} define
  a map
  \[
   \normscrX_\tau : \scrcCAS^k \big( \calO \big) \rightarrow
   C_\lambda^k \big( \scrA (M) \big).
  \]
  Later in this section we will make use of this observation.
\end{remark}

We now want to compare the morphism $\normscrX_{\Tr}$ with
$\scrQ \circ \lambdaAS$. To this end, let $\Psi$ denote the trace density
$\Psi_{2n}^{2n}$ defined in \ref{dfn:psi}. Note that by Proposition
\ref{prop:psi}, $\Psi_{2n}^{2n}$ is a trace density, indeed.
Furthermore, let $U\subset M$ be a contractible open Darboux domain.
By Theorem \ref{thm:quasi} one knows that
\[
  \scrQ_U^{2k} (1) =\Tr
\]
is a generator of the cyclic cohomology group
$H^{2k} \big( \Tot^\bullet \calB \normscrC^\bullet(\calA^{((\hbar))}) (U)\big)$
for $ k\geq 0 $, and that all other cyclic cohomology groups
$H^l \big( \Tot^\bullet \calB \normscrC^\bullet(\calA^{((\hbar))}) (U)\big)$,
with $l$ odd. Moreover, observe that for all $k\in \N$
\[
  \normscrX_{\Tr} (1^{2k+1} ) = \Tr_{2k}
  \quad \text{and} \quad
  \scrQ_U^{2k} \lambda (1^{2k+1} ) =  \scrQ_U^{2k} (1) =
  \Tr  .
\]
But since
\[
  \frac{1}{2k} (b+\normB ) \big( \Tr_1 , -\Tr_3, \ldots ,
  (-1)^{k-1} \Tr_{2k-1} \big)
  = \Tr_0 + (-1)^{k-1} \Tr_{2k} \quad \text{for $k>0$},
\]
both $\normscrX_{\Tr} (1^{2k+1} )$ and $\scrQ_U^{2k} \lambda (1^{2k+1} )$ are
generators of the cyclic cohomology groups
$H^{2k} \big( \Tot^\bullet \calB \normscrC^\bullet(\calA^{((\hbar))}) (U)\big)$
for $k\geq 0 $. Hence one concludes
\begin{proposition}
\label{prop:QISM-ASCyc}
  The sheaf morphisms $\normscrX_{\Tr} :
  \scrcCAS^\bullet \big( \calC^\infty_M((\hbar)) \big) \rightarrow
  \Tot^\bullet \calB \scrC^\bullet\big(\calA^{((\hbar))}\big)$
  and $\scrQ \circ \lambdaAS :
  \scrcCAS^\bullet \big( \calC^\infty_M((\hbar)) \big) \rightarrow
  \Tot^\bullet \calB \normscrC^\bullet\big(\calA^{((\hbar))}\big)
  \hookrightarrow \Tot^\bullet \calB \scrC^\bullet\big(\calA^{((\hbar))}\big)$
  coincide in the derived category of sheaves on $M$. In particular,
  $\normscrX_{\Tr}: \scrcCAS^\bullet \big(\calC^\infty_M((\hbar))\big) \rightarrow
  \Tot^\bullet \calB \scrC^\bullet\big(\calA^{((\hbar))}\big)$ is a quasi-isomorphism.
\end{proposition}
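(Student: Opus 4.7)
The plan is to reduce the statement to a local comparison on contractible Darboux charts, where both morphisms can be evaluated on explicit generators of the relevant cohomology groups. By Proposition \ref{Prop:dercatisoAS} applied to $\calO = \calC^\infty_M((\hbar))$, the source sheaf complex $\scrcCAS^\bullet(\calC^\infty_M((\hbar)))$ is quasi-isomorphic in the derived category to the cyclic sheaf complex of locally constant sheaves $\underline{\Bbbk} \to 0 \to \underline{\Bbbk} \to 0 \to \ldots$ concentrated in even degrees. A morphism out of such a complex into a bounded-below complex of fine sheaves is determined, up to quasi-isomorphism, by the maps it induces on each cohomology sheaf. Hence it suffices to verify that $\normscrX_{\Tr}$ and $\scrQ \circ \lambdaAS$ induce the same morphism of cohomology sheaves, which by sheafiness may be checked over a cover of $M$ by contractible Darboux charts.

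For the local step, I fix such a chart $U$ and work with the canonical generator $1^{2k+1} := 1\otimes \ldots \otimes 1$ of $H^{2k}(\scrcCAS^\bullet(\calC^\infty_M((\hbar)))(U)) \cong \Bbbk$. By Theorem \ref{thm:quasi}, the target cyclic cohomology $H^{2k}(\Tot^\bullet \calB \normscrC^\bullet(\calA^{((\hbar))})(U))$ is one-dimensional over $\Bbbk$ and is generated by $\scrQ_U^{2k}(1) = \Tr$; so $\scrQ_U \circ \lambdaAS$ sends the generator to $\Tr$. On the other hand $\normscrX_{\Tr}(1^{2k+1}) = \Tr_{2k}$, and the identity
\[
  \tfrac{1}{2k}(b+\normB)(\Tr_1, -\Tr_3, \ldots, (-1)^{k-1}\Tr_{2k-1}) = \Tr_0 + (-1)^{k-1}\Tr_{2k}
\]
displayed immediately before the proposition shows that $\Tr_{2k}$ is cohomologous to $(-1)^k\Tr_0 = (-1)^k \Tr$ in the total $(b+B)$-complex. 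Consequently both morphisms send the canonical generator to the same cohomology class up to a fixed sign depending only on $k$, and they therefore agree on cohomology sheaves.

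The quasi-isomorphism claim then follows formally: $\scrQ$ is a quasi-isomorphism by Theorem \ref{thm:quasi}, and $\lambdaAS$ is a quasi-isomorphism by Proposition \ref{prop:deflambda} combined with Proposition \ref{Prop:dercatisoAS} (both $\scrcCAS^\bullet$ and the cyclic part of $\Tot \calB \Omega^\bullet_M((\hbar))$ are resolutions of the same cyclic pattern of constant sheaves). Thus $\scrQ \circ \lambdaAS$ is a quasi-isomorphism, and since $\normscrX_{\Tr}$ coincides with it in the derived category, $\normscrX_{\Tr}$ is a quasi-isomorphism as well.

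The main obstacle I anticipate is the derived-category identification itself: passing from pointwise agreement of induced cohomology maps to genuine equality of morphisms in $D^+(\mathrm{Sh}(M,\Bbbk))$. This step is legitimate here because the source has cohomology sheaves that are locally constant and concentrated in even degrees, so a truncation argument reduces the derived $\Hom$ from the source to a product of ordinary sheaf $\Hom$s out of each cohomology sheaf, and the absence of odd-degree cohomology kills potential extension obstructions between the even-degree pieces. A secondary subtlety lies in tracking signs carefully, so that the two morphisms produce literally the same class in the target $H^{2k}$ rather than merely two generators of a common one-dimensional group; this may require a preliminary rescaling of the chosen generators, but the explicit $(b+\normB)$-primitive above makes this bookkeeping routine.
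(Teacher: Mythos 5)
Your proposal follows essentially the same route as the paper's own argument: restrict to a contractible Darboux chart $U$, evaluate both morphisms on the canonical cocycles $1^{\otimes(2k+1)}$, use $\scrQ_U^{2k}(1)=\Tr$ together with Theorem \ref{thm:quasi} and the explicit $(b+\normB)$-primitive of $\Tr_0+(-1)^{k-1}\Tr_{2k}$ to see that both images generate the one-dimensional local cyclic cohomology, and conclude the derived-category identification and the quasi-isomorphism property. This is precisely the computation the paper performs in the paragraphs preceding the proposition, including the sign $(-1)^k$ relating $\Tr_{2k}$ to $\Tr_0$, which the paper likewise leaves to normalization; and the quasi-isomorphism claim indeed only needs that both morphisms hit generators on each cohomology sheaf, which your local step delivers.

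One caution about the supplementary justification you offer for upgrading agreement on cohomology sheaves to equality in the derived category: the parity argument is backwards. Since both the source and the target have cohomology sheaves isomorphic to $\underline{\Bbbk}$ concentrated in even degrees, the difference of two morphisms inducing the same maps on cohomology is obstructed by subquotients of $\operatorname{Ext}^{p}_{\mathrm{Sh}(M)}\big(\underline{\Bbbk},\underline{\Bbbk}\big)\cong H^{p}(M,\Bbbk)$ with $p$ even, $p\geq 2$; it is exactly the odd-$p$ terms that are killed by the even-degree concentration, while the even-$p$ obstruction groups survive and need not vanish on a general $M$. So the truncation reduction you describe does not by itself yield the derived-category statement. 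The paper stops at the same local comparison of generators and does not address this point either, so your write-up matches the paper's proof in substance and in its level of rigor, but the specific reason you give for the globalization step should not be relied upon as stated.
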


{\bf Step 2.}
Next we explain how a global symbol calculus for pseudodifferential operators
on a riemannian manifold $Q$ gives rise to a deformation quantization on the
cotangent bundle $T^*Q$.
Given an open subset $U\subset Q$ denote by $\Sym^m(U)$, $m\in \Z$, the
space of symbols of order $m$ on $U$, that means the space of
smooth functions $a$ on $T^*U$ such that in each local coordinate system
of $U$ and each compact set $K$ in the domain of the local coordinate
system there is an estimate of the form
\begin{displaymath}
  \big| \partial_x^{\alpha}\partial_{\xi}^{\beta} a (x,\xi)\big| \leq
  C_{K, \alpha, \beta}(1+|\xi|^2)^{\frac{m-|\beta|}{2}}, \quad
  \text{$x\in K$, $\xi \in T^*_x Q$, $\alpha,\beta \in \N^n$},
\end{displaymath}
for some $C_{K,\alpha,\beta} >0$. Moreover, put
\begin{displaymath}
 \Sym^\infty (U) := \bigcup_{m\in \Z}\Sym^m (U), \quad
 \Sym^{-\infty} (U) := \bigcap_{m\in \Z}\Sym^m (U).
\end{displaymath}
Obviously, the spaces $\Sym^m(U)$ with $m \in \Z \cup \{\pm \infty\}$
form the section spaces of a sheaf $\Sym^m$ on $Q$.
Similarly, one constructs the presheaves $\PDO^m$
of pseudodifferential operators of order
$m\in \Z \cup \{ \pm \infty \}$ on $Q$.
Next let us recall the definition of the symbol map
$\sigma $ and its quasi-inverse, the quantization map $\Op$.
The symbol map associates to every operator $A\in \PDO^m (U)$
a symbol $a \in \Sym^m (U)$ by setting
\begin{equation}
\label{def-symbol}
  a (x,\xi) : = A \big( \chi (\cdot, x)
  e^{i \langle \xi , \Exp_x^{-1} (\cdot )\rangle } \big) \, (x) ,
\end{equation}
where $\Exp_x^{-1}$ is the inverse map of the exponential map on $T_xQ$,
and
\begin{equation}
\label{cut-off}
  \chi : Q\times Q \rightarrow [0,1]
\end{equation}
is a smooth cut-off function such that $\chi = 1$ on a neighborhood
of the diagonal, $\chi (x,y) =\chi (y,x)$ for all $x,y \in Q$,
$\supp \chi (\cdot , x)$ is compact for each $x\in Q$, and finally
such that the restriction of $\Exp_x$ to an open neighborhood of
$\Exp_x^{-1} \big( \supp \chi (\cdot , x)\big)$ is a diffeomorphism
onto its image. The quantization map
\begin{equation}
\label{op-quant}
   \Op :\:  \Sym^m (U) \rightarrow \PDO^m (U) \subset \Hom \big(
  \mathcal{C}^{\infty}_\text{\tiny\rm cpt} (U),
  \mathcal{C}^{\infty}(U)\big),
  \end{equation}
  is then given by
\begin{equation}
\label{form_op-quant}
 \big( \Op (a) f\big) (x) :=
  \int_{T^*_x Q} \int_{Q} e^{-i \langle \xi , \Exp_x^{-1} (y)\rangle }
  \chi(x, y)  a (x , \xi) f(y) \, dy\,  d\xi , \quad
  f \in \calC^\infty_\text{\tiny\rm cpt} (U).
\end{equation}
The maps $\sigma$ and $\Op$ are now inverse to each other up to elements
$\PDO^{-\infty}$ respectively $\Sym^{-\infty}$.
Note that by definition of the operator map, the Schwartz kernel $K_{\Op (a)}$
of $\Op (a)$ is given by
\begin{equation}
\label{eq:schwkerop}
   K_{\Op (a)} (x,y) = \int_{T^*_xQ} \, e^{i \langle \xi , \exp^{-1}_x (y) \rangle}
   \chi (x,y) \, a (\xi) \, d\xi .
\end{equation}

By the space $\ASym^m (U)$, $m\in \Z$ of {\it asymptotic symbols} over an open
$U\subset Q$ one understands the space of all
$q \in \calC^\infty (T^*U \times [0 ,\infty))$ such that for each
$\hbar \in [0 , \infty)$ the function $q(-,\hbar)$ is in $\Sym^m (U)$
and such that $q$ has an asymptotic expansion of the form
\begin{displaymath}
  q \sim \sum_{k\in \N} \hbar^k a_{m-k} ,
\end{displaymath}
where each $a_{m-k}$ is a symbol in $\Sym^{m-k} (U)$. More precisely,
this means that one has for all $N \in \N$
\begin{displaymath}
  \lim_{\hbar \searrow \, 0} \Big( q (-,\hbar)  - \hbar^{-N}
  \sum_{k=0}^N \hbar^k a_{m-k}\Big) = 0 \quad \text{in $\Sym^{m-N} (U)$}.
\end{displaymath}
Like above one then obtains sheaves $\ASym^m$ for $m\in \Z \cup \{ \pm\infty \}$.
Now consider the subsheaves
$\JSym^m  \subset \ASym^m$ consisting of
all asymptotic symbols which vanish to infinite order at
$\hbar =0$. The quotient sheaves  $\A^m := \ASym^m / \JSym^m$
can then be identified with the formal power
series sheaves $\Sym^m [[\hbar]]$.

The operator product on $\PDO^\infty$ induces an
asymptotically associative product on $\ASym^\infty (Q)$ by
defining for $q,p \in \ASym^\infty (Q)$
\begin{equation}
\label{as-product}
   q \circledast p   :=
   \begin{cases}
      \sigma_\hbar \big( \Op_\hbar (q) \circ \Op_\hbar (p) \big) & \text{if $\hbar > 0$} ,\\
      q(-,\hbar ) \cdot p (-,\hbar )& \text{if $\hbar =0$}.
   \end{cases}
\end{equation}
Hereby, $\Op_\hbar = \Op \circ \iota_\hbar$ and $\sigma_\hbar = \iota_{\hbar^{-1}}\circ\sigma$,
where $\iota_\hbar : \Sym^\infty (Q) \rightarrow \Sym^\infty (Q)$
is the map which maps a symbol $a$ to the symbol
$(x,\xi) \mapsto a(x,\hbar \xi)$.
By standard techniques of pseudodifferential calculus (cf.~\cite{P1}),
one checks that $\circledast$ has an asymptotic expansion
of the following form:
\begin{equation}
\label{Eq:AsExOpStar}
  q \circledast p \sim q \cdot p + \sum_{k =1}^\infty c_k (q,p) \, \hbar^k ,
\end{equation}
where the $c_k$ are bidifferential operators on $T^*Q$ such that
\begin{displaymath}
    c_1 (a,b) -c_1 (b,a) = -i \{a,b\} \quad
    \text{for all symbols $a,b \in \Sym^\infty (Q)$}.
\end{displaymath}
Hence, $\circledast$ is a star product on the quotient sheaf $\A^\infty$,
which gives rise to a deformation quantization for the sheaf $\calA_{T^*Q}$
of smooth functions on the cotangent bundle $T^*Q$. By definition of
the product $\circledast$  it is clear that for the Schwartz kernels
of two operators $\Op_\hbar (q ) $ and $\Op_\hbar (q)$ one has the
following relation:
\begin{equation}
  \label{eq:prodschwker}
  K_{\Op_\hbar \big( q \circledast p  \big)} (x,y) =
  \int_Q K_{\Op_\hbar (q )} (x,z) \, K_{\Op_\hbar (p )} (z,y) \,dz .
\end{equation}

Even though $\circledast$ is not
obtained by a Fedosov construction, it is equivalent to a Fedosov star product
$\star$ on $T^*Q$ by \cite{nets95}. In the following, we fix $\star$ to be such a
Fedosov star product, and assume that it is obtained by a Fedosov connection $A$ constant
along the fibers of $T^*Q$. Note that by the equivalence of  $\circledast$ and $\star$,
each trace functional for $\circledast$ is one for $\star$ and vice versa.

Using the riemannian metric on $Q$ one even obtains a trace functional $\Tr$
on $\A^\infty$ by the following construction. Pseudodifferential operators
$\PDO^{- \dim Q}_\text{\tiny \rm cpt} (Q)$ act as trace class operators on the Hilbert
space $L^2 (Q)$. Thus there is a map
\begin{displaymath}
  \Tr : \A^{- \infty}_\text{\tiny \rm cpt} (Q)
  \rightarrow \C[\hbar^{-1} ,\hbar]], \quad
  q \mapsto \tr \big( \Op_\hbar (q) \big),
\end{displaymath}
where $\tr$ is the operator trace.
By construction, $\Tr$ has to be a trace with respect to $\circledast$
and is $\operatorname{ad} (\A^\infty)$-invariant.
Using the global symbol calculus for pseudodifferential operators
\cite{Wid:CSCPO,P1} the following formula can be derived:
\begin{equation}
\label{Eq:OpTr}
  \Tr (q) = \frac{1}{(2\pi \sqrt{-1}\hbar)^{\dim Q}}
  \int_{T^*Q} q ( -, \hbar) \, \frac{\omega^{\dim Q}}{(\dim Q) !} ,
\end{equation}
where $\omega$ is the canonical symplectic form on $T^*Q$. Moreover, by the remarks
above, $\Tr$ is also a trace with respect to the Fedosov star product $\star$.
Finally note that for all operators $A \in \APDO^\infty (Q)$
\begin{equation}
  \label{eq:quinvsigma}
  \tr \big( \Op_\hbar \sigma_\hbar (A) - A \big) = 0 .
\end{equation}

{\bf Step 3.}
The final step reduces the computation of the higher indices to the algebraic
higher indices using the global symbol calculus of step 2.
We begin by defining the analytic higher index using the localized $K$-theory
of {\sc Moscovici--Wu} \cite{moswu}. Let $Q$ be a compact riemannian manifold and consider the
smoothing operators $\Psi DO^{-\infty}(Q)$ acting on $L^2(Q)$. These operators have
a smooth Schwartz kernel, and therefore
$\Psi DO^{-\infty}(Q)\cong C^\infty_\text{\rm\tiny cpt}(Q\times Q)$.
Note that by assumptions on $Q$, every element $K \in \Psi DO^{-\infty}(Q)$ is
trace-class,
and $\Psi DO^{-\infty}(Q)$ is dense in the space of trace class operators on $L^2(Q)$.
For any finite open covering $\calU$ of $Q$, we define
\[
 \Psi DO^{-\infty}(Q,\calU):=\{K\in\Psi DO^{-\infty}(Q) \mid \operatorname{supp} (K)
 \subset \calU^2\},
\]
where $\calU^k := \bigcup_{U\in\calU} U^k$ for $k\in \N^*$.
Now let $M_\infty \big( \Psi DO^{-\infty}(Q,\calU)\big)$ be the inductive limit
of all $N\times N$-matrices with entries in $\Psi DO^{-\infty}(Q,\calU)$.
Likewise, define
$M_\infty \big( \Psi DO^{-\infty}(Q,\calU)^\sim\big)$ and
$M_\infty ( \C )$, where
$\Psi DO^{-\infty}(Q,\calU)^\sim := \Psi DO^{-\infty}(Q,\calU) \oplus \C$.
With these preparations, one defines
\begin{equation}
\label{Eq:DefCovKTheory}
\begin{split}
   K^0& (Q,\calU)  :=K_0 \big(\Psi DO^{-\infty}(Q,\calU) \big) := \\
   &:= \big\{ (P,e) \in M_\infty \big( \Psi DO^{-\infty}(Q , \calU )^\sim \big)
   \times M_\infty (\C) \big) \mid
   P^2 =P , \: P^* =P, \: \\
   &\hspace{8mm}  e^2 =e , \:  e^* =e  \: \text{ and } \:
   P-e \in M_\infty \big( \Psi DO^{-\infty}(Q,\calU)\big)\big\} / \sim ,
\end{split}
\end{equation}
where $(P,e) \sim (P', e')$ for projections
$P,P' \in M_\infty \big( \Psi DO^{-\infty}(Q,\calU)^\sim\big)$
and $e,e' \in  M_\infty (\C )$, if the elements $P$ and $P'$
can be joined by a continuous and piecewise $\calC^1$ path of projections in some
$M_N \big( \Psi DO^{-\infty}(Q,\calU)\big)$ with $N\gg 0 $
and likewise for $e$ and $e'$ (see \cite[Sec.~1.2]{moswu} for further details).
Elements of $K^0 (Q,\calU)$ are represented as equivalence classes
of differences $R:= P-e$, where $P$ is an idempotent in
$M_\infty \big( \Psi DO^{-\infty}(Q,\calU)^\sim\big)$, $e$ is a projection in
$M_\infty (\C)$, and the difference $P-e$ lies in
$M_\infty \big( \Psi DO^{-\infty}(Q,\calU)\big)$.

A (finite) refinement $\calV\subset\calU$ obviously leads to an inclusion
$\Psi DO^{-\infty}(Q,\calV)\hookrightarrow \Psi DO^{-\infty}(Q,\calU)$ which
induces a map $K^0(Q,\calV)\to K^0(Q,\mathcal{U})$. With these maps, the
localized $K$-theory of $Q$ is defined as
\begin{equation}
\label{Eq:LocKTheory}
  K^0_{\rm\tiny loc}(Q):=\lim_{\longleftarrow\atop \calU \in \operatorname{Cov}^\text{\rm\tiny fin}(Q)}
  K^0(Q,\calU).
\end{equation}
Concretely, this means that elements of $K^0_{\rm\tiny loc}(Q)$ are given by families
\begin{equation}
\label{Eq:LocKTheoryRep}
  \big( [P_\calU - e_\calU]\big)_{\calU \in
  \operatorname{Cov}^\text{fin}(Q)}
\end{equation}
of equivalence classes of pairs of projectors
in matrix spaces over $\Psi DO^{-\infty}(Q,\calU)^\sim$
such that $e_\calU \in M_\infty (\C )$ for every finite covering $\calU$ and
$(P_\calU,e_\calU)\sim (P_\calV,e_\calV)$ in
$M_\infty \big(\Psi^{-\infty}(Q,\calU)^\sim \big)$
whenever $\calV\subset\calU$.

Following \cite{moswu}, we now construct the so-called (even)
Alexander-Spanier-Chern character map
\[
  \Ch^\text{\rm\tiny AS}_{2\bullet} : K^0_{\rm\tiny loc}(Q) \rightarrow
  H^\text{\rm\tiny AS}_{2\bullet} (Q).
\]
As a preparation for the construction we set for every subset $W\subset Q$,
$k\in \N$ and every finite covering $\calV$ of $Q$
\[
\begin{split}
  & \operatorname{st}^k (W,\calV) \, :=
  \bigcup_{(V_1, \ldots ,V_k) \in \operatorname{chain}^k (W,\calV) }
  V_1 \cup \ldots \cup V_k, \quad \text{where } \\
  & \operatorname{chain}^k (W,\calV)  := \\
  & \hspace{1em}:= \{(V_1, \ldots ,V_k) \in \calV^k\mid
  W \cap V_1 \neq \emptyset , \: V_1 \cap V_2 \neq \emptyset,
  \: \ldots ,\:
  V_{k-1} \cap V_k \neq \emptyset\}.
\end{split}
\]
Then we define $\operatorname{st}^k (\overline{\calV})$
as the open covering of $Q$ with elements
$\operatorname{st}^k (\overline{V},\calV)$ where $V$ runs through the
elements of $\calV$. Obviously, one then has
\[
  \underbrace{\Psi DO^{-\infty}(Q,\calV) \cdot \ldots \cdot
  \Psi DO^{-\infty}(Q,\calV)}_{k-\text{times}} \subset
  \Psi DO^{-\infty}(Q,\operatorname{st}^k (\overline{\calV})) .
\]
Next let us fix an even homology degree $2k$ and a finite open covering
$\calU$ of $Q$. Then choose a finite open covering $\calU_0$ of $Q$ such that
$\operatorname{st}^k (\overline{\calU_0})$ is a refinement of $\calU$.
Now let $R_{\calU_0} := P_{\calU_0}- e_{\calU_0} \in \Psi DO^{-\infty}(Q,\calU_0 )$
represent  an element of $K^0 (Q,\calU_0 )$ as defined above, and put
for $f_0, \ldots , f_{2k} \in \calC^\infty (Q)$
\begin{equation}
\label{Eq:DefASChern}
\begin{split}
  \big( \Ch^\text{\rm\tiny AS}_{2k} (R_{\calU_0}) \big) &\,
  (f_0 \otimes \ldots \otimes f_{2k} ):= \\
  := \, & (-2\pi i)^k \frac{(2k)!}{k!}\varepsilon^{2k}
  \tr \big( (f_0 P_{\calU_0} f_1 \ldots f_{2k}P_{\calU_0} ) -  ( f_0 e _{\calU_0}f_1 \ldots f_{2k}e_{\calU_0} ) \big)
\end{split}
\end{equation}
It has been shown in \cite[Sec.~1.4]{moswu} that the right hand side even
defines a cycle in $C_{2k}^\text{\rm\tiny AS}(M,\calU)$, hence one obtains
a homology  class
$\Ch^\text{\rm\tiny AS}_{2k} (R_{\calU_0},\calU) \in
H_{2k}^\text{\rm\tiny AS}(M,\calU)$.
Moreover, a family $R=(R_\calV)_{\calV \in \operatorname{Cov}^\text{\rm\tiny fin}(Q)}$
defining a local K-theory  class gives rise to a family of compatible homology classes
$\Ch^\text{\rm\tiny AS}_{2k} (R_{\calU_0},\calU)$,
$\calU \in \operatorname{Cov}^\text{\rm\tiny fin}(Q)$,
hence by the universal properties of inverse limits one finally obtains a character map
$\Ch^\text{\rm\tiny AS}_{2k} :K^0_\text{\rm\tiny loc} (Q) \rightarrow
H^\text{\rm\tiny AS}_{2k} (Q)$ indeed. Let us now reformulate the pairing
$\left\langle [f] , \Ch^\text{\rm\tiny AS}_{2k} ([R])\right\rangle$, where
$[f]$ denotes an Alexander-Spanier cohomology class of degree $2k$.
Without loss of generality, we can assume that $[f]$ has the form
$[f_0 \otimes \ldots \otimes f_{2k}]$ with the $f_i$ being smooth functions
on  $Q$. Then note that the operator trace $\tr$ on $L^2(Q)$
induces a trace on $\Psi^{-\infty}(Q)$. With this trace, equation
\eqref{eq:charxdef1} defines a morphism
\[
 \scrX^\calU_{\tr}:C^{2k}_{AS}(Q,\calU)\to
 C^{2k}_\lambda \left( \Psi^{-\infty}(Q , \calU_0 )\right)
\]
which is uniquely determined by the requirement
\[
  \scrX^\calU_{\tr}(f_0\otimes\ldots\otimes f_k)
  (R_0\otimes\ldots\otimes R_k):=
  \tr\left(f_0R_0\cdots f_k R_k\right),
\]
where the $f_i$ on the right hand side are viewed as bounded multiplication
operators on $L^2(Q)$, and the $R_i$ are elements of $\Psi^{-\infty}(Q,\calU_0)$.
Note that on the right hand side
$C^{2k}_\lambda \left( \Psi^{-\infty}(Q , \calU_0 )\right)$ is the restriction
of  the space of cyclic $2k$-cochains
$C^{2k}_\lambda \left( \Psi^{-\infty} (Q) )\right)$ to elements
of $\Psi^{-\infty}(Q , \calU_0 )$.
By the construction of the pairing in Alexander-Spanier homology in Remark
\ref{Rem:AShom} and the definition of $\Ch^\text{\rm\tiny AS}_{2k}$ above,
the pairing between localized $K$-theory and Alexander--Spanier cohomology can
be rewritten as
\begin{equation}
  \label{Eq.EquASPairs}
   \left\langle [f], \Ch^\text{\rm\tiny AS}_{2k} ([R]) \right\rangle
  =\left\langle \scrX^\calU_{\tr}( \varepsilon^{2k} f),\Ch (R_{\calU_0} )\right\rangle,
\end{equation}
where $R =(R_\calV)_{\calV \in \operatorname{Cov}^\text{\rm\tiny fin}(Q)}$ is as above,
$\Ch$ is the noncommutative Chern character (on the chain level) as defined by
Eq.~\eqref{eq:defchern}, and where $\calU$
is a sufficiently fine covering such that in particular $\calU^{2k+1}$ is contained
in the domain of the function $f$ defining the Alexander-Spanier cohomology class
$[f]$.

Let us now come to the definition of the localized index, or in other words,
the higher index which originally was defined by {\sc Connes--Moscovici}
in \cite[\S 2.]{conmos}. To this end assume first that  $E\rightarrow Q$
is an Hermitian vector bundle over $Q$ and that $D$ is an elliptic pseudodifferential
operator acting on the space of smooth sections $\Gamma^\infty (E)$.
The operator $D$ gives rise to an invertible principle symbol
$\sigma_\text{\tiny\rm pr} (D) \in \calC^0 (T^*Q \setminus Q) $.
Its restriction to the cosphere bundle will
be denoted by
\[
  \sigmares (D) := \sigmapr (D)_{|S^*Q}.
\]
The restricted principal symbol $\sigmares (D)$ defines an element in the odd
K-group $K_1 \big( \calC^\infty (S^*Q)\big)$. Morever, as explained in
\cite[p.~353]{conmos}, one can associate to $\sigmares (D)$ and each finite covering
$\calU$ of $Q$ an element
$R_\calU = P_\calU - e_\calU \in \Psi^{-\infty}(Q,\calU)$ which is
constructed as a difference of a certain pseudodifferential projection $P$ of order
$-\infty$ on $Q$ and a projection in the matrix algebra over $\C$ and which fulfills the
crucial relation
\[
  \ind (D) = \tr R_\calU .
\]
Note that $R_\calU$ is homotopic to the graph projection of $D$ (cf.~\cite{ellnesnat}),
and that the induced class $[R] \in K^0_\text{\rm\tiny loc} (Q)$ of
the family $R=(R_\calU)_{\calU}$ depends only
on the class of $\sigmares (D)$ in $K_1 \big( \calC^\infty (S^*Q)\big)$.
One thus obtains a map
$\partial : K_1 \big( \calC^\infty (S^*Q)\big) \rightarrow K^0_\text{\rm\tiny loc} (Q)$
which we call the \textit{local index map}.
Next let $[f]$ be an even Alexander--Spanier cohomology
class of degree $2k$ which is represented by the function
$f \in \calC^\infty (Q^{2k+1})$. Then one defines
the \textit{localized index} or \textit{higher index} of $D$ at $[f]$ as
the pairing
\begin{equation}
\label{Eq:deflocind}
 \ind_{[f]} (D) := \left\langle  [f] ,
 \Ch^\text{\rm\tiny AS}_{2k} \big(\partial [\sigmares (D)]\big)\right\rangle .
\end{equation}
Note that according to the work of \cite{moswu}, this localized
index can be transformed into the original definition of the localized
index by {\sc Connes--Moscovici}:
\begin{equation}\label{Eq:locexplocind}
  \begin{split}
  \ind_{[f]} (D) \, & := (-1)^k \int_{Q^{2k+1}}
  \tr \big( R_\calV(x_0, x_1) \cdot \ldots \cdot R_\calV(x_{2k-1},x_{2k}) \big)
  \, f(x_0 , \ldots , x_{2k} ) d\mu^{2k+1} \\
  & = \scrX^{\calU}_{\tr} (f) \big( R_\calV \otimes \ldots \otimes R_\calV \big) ,
  \end{split}
\end{equation}
where here $\mu$ is the volume form on $Q$, $R:= (R_\calU )_\calU :=\partial \sigmares (D)$,
and $\calV$ is a finite covering sufficiently fine such that $\calV^{2k+1}$ is contained in
$\calU^{2k+1}$, the domain of the function $f$ defining the Alexander-Spanier cohomology class $[f]$.

Now let $a_i = \sigma_\hbar (A_i)$, $i=0,\ldots ,k$ be the asymptotic symbols of
pseudodifferential operators $A_i \in \APDO^{-\infty}$. For all Alexander Spanier
cochains $f_0 \otimes \ldots \otimes f_k \in \calC^\infty (Q^{k+1})$ the following
relation then holds true asymptotically in $\hbar$:
\begin{equation}
\label{eq:symbol}
  \begin{split}
    \scrX_{\tr} & \big( f_0 \otimes \ldots \otimes f_k \big)
    \big( A_0 \otimes \ldots \otimes A_k\big)
    := \tr \big( f_0 A_0 \cdot \ldots \cdot f_k A_k \big) =  \\
    & = \tr \big( f_0 \Op_\hbar (a_0) \cdot \ldots \cdot f_k \Op_\hbar (a_k) \big) \\
    & = \tr \big( \Op_\hbar (f_0 a_0) \cdot \ldots \cdot \Op_\hbar (f_k a_k) \big) \\
    & = \tr \big( \Op_\hbar (f_0 a_0 \circledast \ldots \circledast f_k a_k )\big) \\
    & = \Tr \big( f_0 \circledast  a_0 \circledast \ldots \circledast f_k  \circledast a_k  \big)\\
    & = \Tr \big( f_0 \star  a_0 \star \ldots \star f_k  \star a_k \big) \\
    & = \scrX_{\Tr} \big( f_0 \otimes \ldots \otimes f_k \big)
    \big( a_0 \otimes \ldots \otimes a_k\big).
  \end{split}
\end{equation}
Hereby, we have used that $f_i \Op_\hbar (a_i) = \Op_\hbar (f_ia_i)$, and that by
Eq.~(\ref{eq:prodschwker})
\[
  \tr \big( \Op_\hbar (a_i \circledast a_{i+1} )\big) =
  \tr \big( \Op_\hbar (a_i)\Op_\hbar (a_{i+1}) \big).
\]
Using the results from Step I together with Eqns.~\eqref{Eq.EquASPairs} and \eqref{eq:symbol}
one now obtains with $r_\calV = :\sigma_\hbar  R_\calV $ the asymptotic symbol of
$R_\calV$, and $f = f_0 \otimes \ldots \otimes f_{2k}$
\begin{displaymath}
  \begin{split}
   \ind_{[f]} & (D)=
   \left\langle [f], \Ch^\text{\rm\tiny AS}_{2k}\big(\partial [\sigmares (D)]\big)\right\rangle
   \stackrel{(\ref{Eq.EquASPairs})}{=}
   \left\langle \scrX^{\calU}_{\tr} \varepsilon^{2k}( f) ,\Ch\big( R_{\calV} \big)\right\rangle =\\
   & \stackrel{(\ref{eq:symbol})}{=}
   \left\langle \scrX_{\Tr} \varepsilon^{2k} (f) , \Ch r_{\calV} \right\rangle =
    \left\langle \normscrX_{\Tr} \varepsilon^{2k} ([f]) , \Ch r\right\rangle
   \stackrel{\text{Prop}. \ref{prop:QISM-ASCyc}}{=}
   \left\langle \scrQ \lambdaAS \varepsilon^{2k} ([f]) , \Ch r \right\rangle = \\
   & \hspace{2mm}
   = \: \frac{1}{(2\pi\sqrt{-1})^k}\int_{T^*Q} f_0 df_1 \wedge \ldots \wedge df_{2k} \wedge
   \hat {A}(T^*Q) \Ch (V_1-V_2) .
  \end{split}
\end{displaymath}
Hereby, $V_1 -V_2$ is the virtual vector bundle obtained by the asymptotic limit
$\hbar \searrow 0$ of $r$, and $r$ is the symbol of $R_Q$ with $Q$ denoting here the
trivial covering of $Q$.
We have thus reproved the following result from \cite{conmos}.
\begin{theorem}
  For an elliptic differential operator $D$ on a riemannian manifold $Q$
  and an Alexander--Spanier cohomology class $[f]$ of degree $2k$ with compact support
  the localized index is given by
  \[
    \ind_{[f]} (D)= \frac{1}{(2\pi\sqrt{-1})^k}
    \int_{T^*Q} f_0 df_1 \wedge \ldots \wedge df_{2k} \wedge
    \hat {A}(T^*Q) \Ch (V_1-V_2) .
  \]
\end{theorem}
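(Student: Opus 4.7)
The plan is to assemble the three preparatory steps that immediately precede the theorem statement into a single computational chain. The strategy is to convert the Connes--Moscovici localized index, which is defined analytically in terms of smoothing operators and Alexander--Spanier cocycles, into the algebraic pairing of Theorem \ref{thm:higher-algind} applied to the deformation quantization $(\calA^{((\hbar))}_\text{\tiny \rm cpt}(T^*Q), \star)$ of the cotangent bundle, and then read off the resulting characteristic class formula.

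First, I would fix a representative $f = f_0 \otimes \cdots \otimes f_{2k}$ of $[f]$ and a family $R = (R_\calV)_\calV = \partial [\sigmares(D)] \in K^0_\text{\rm\tiny loc}(Q)$ associated to the principal symbol of $D$ via the local index map, with $R_\calV = P_\calV - e_\calV \in \PDO^{-\infty}(Q,\calV)$. By the reformulation \eqref{Eq.EquASPairs} of the Moscovici--Wu pairing, the localized index \eqref{Eq:deflocind} equals $\langle \scrX^\calU_{\tr}\,\varepsilon^{2k}(f), \Ch(R_\calV)\rangle$ for a sufficiently fine covering $\calV$. Next, I would pass from operators to asymptotic symbols: writing $r_\calV := \sigmares \circ R_\calV$, the asymptotic symbol expansion \eqref{Eq:AsExOpStar}, the multiplicativity of symbols on Schwartz kernels \eqref{eq:prodschwker}, and the identity \eqref{eq:quinvsigma} combine, exactly as in the displayed calculation \eqref{eq:symbol}, to give
\[
\langle \scrX^\calU_{\tr}\,\varepsilon^{2k}(f), \Ch(R_\calV)\rangle \; = \; \langle \scrX_{\Tr}\,\varepsilon^{2k}(f), \Ch(r_\calV)\rangle
\]
asymptotically in $\hbar$, where $\Tr$ is the canonical trace \eqref{Eq:OpTr} on $\calA^{((\hbar))}_\text{\tiny \rm cpt}(T^*Q)$ induced by the operator trace. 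Since the Fedosov star product $\star$ is equivalent to $\circledast$ and $\Tr$ is a trace for both, no correction is needed when replacing $\circledast$ by $\star$.

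Then I would use Proposition \ref{prop:QISM-ASCyc}, which states that the two chain maps $\normscrX_{\Tr}$ and $\scrQ \circ \lambdaAS$ from the cyclic Alexander--Spanier sheaf complex to Connes' $(b+B)$-complex of $\calA^{((\hbar))}$ coincide in the derived category. Hence on cohomology classes,
\[
\langle \normscrX_{\Tr}\,\varepsilon^{2k}([f]), \Ch(r)\rangle \; = \; \langle \scrQ\,\lambdaAS\,\varepsilon^{2k}([f]), \Ch(r)\rangle.
\]
By the explicit formula \eqref{eq:RepSmorASdeRham} for $\lambdaAS$ applied to the antisymmetrization $\varepsilon^{2k}(f_0\otimes\cdots\otimes f_{2k})$, this closed differential form on $T^*Q$ is a scalar multiple of $f_0\, df_1 \wedge \cdots \wedge df_{2k}$ (with constant absorbed into the pairing), i.e., the image of $[f]$ under the de Rham realization of Alexander--Spanier cohomology.

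Finally, I would invoke Theorem \ref{thm:higher-algind} for the symplectic manifold $M = T^*Q$ with closed form $\alpha_{2k} = \lambdaAS\varepsilon^{2k}(f)$ and projections $P_1 = P_\calV$, $P_2 = e_\calV$, whose zero-th order symbols cut out the virtual bundle $V_1 - V_2$ obtained in the limit $\hbar\searrow 0$. Since the Fedosov connection on $T^*Q$ is chosen constant along fibers, the characteristic class $\Omega$ of the quantization reduces to $-\omega$ modulo exact pieces, so $\exp(-\Omega/2\pi\sqrt{-1}\hbar)$ contributes only through the normalization of the trace; and $\hat{A}(T^*Q)$ equals the pullback of $\hat{A}(Q)$ (also $\hat{A}$ of $T^*Q$) by the naturality of the $\hat{A}$-genus. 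The main obstacle — and the part requiring careful bookkeeping — will be verifying that all the asymptotic identifications (the equivalence of $\circledast$ and $\star$, the matching of the symbol of the graph projection $R_\calV$ with the idempotent $P_1-P_2$ in $\calA^{((\hbar))}_\text{\tiny \rm cpt}$, and the handling of the virtual-bundle difference under compact-support hypotheses) preserve the pairing exactly rather than only up to terms vanishing as $\hbar\to 0$. Once this is established, substituting into the right-hand side of Theorem \ref{thm:higher-algind} and retaining the top-degree component $\alpha_{2k}$ yields precisely the asserted formula.
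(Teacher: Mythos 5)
Your proposal is correct and follows essentially the same chain as the paper's own argument: the reformulation of the Moscovici--Wu pairing via $\scrX^\calU_{\tr}$, the passage to asymptotic symbols through the calculation \eqref{eq:symbol}, the identification of $\normscrX_{\Tr}$ with $\scrQ\circ\lambdaAS$ via Proposition \ref{prop:QISM-ASCyc}, and the final application of Theorem \ref{thm:higher-algind} on $T^*Q$ with the form $\lambdaAS\varepsilon^{2k}(f)$. Apart from the harmless notational slip $r_\calV=\sigmares\circ R_\calV$ (it should be the asymptotic symbol $\sigma_\hbar R_\calV$), your bookkeeping remarks about $\Omega=-\omega$ and the virtual bundle $V_1-V_2$ match what the paper does implicitly.
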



%
%
\section{A higher analytic index theorem for orbifolds}
\label{sec:HAITO}
In this section, which comprises the final part of this work, we prove the higher
index theorem for elliptic differential operators on orbifolds as an application of
the higher algebraic index theorem for proper \'etale groupoids of Section
\ref{Sec:GenOrbi}. This generalizes the higher index theorem by {\sc Connes--Moscovici}
to the orbifold setting.

Although our strategy for the proof is the same as in Section \ref{sec:HAITM},
the generalization is by no means straightforward: we start  in
Section \ref{subsec:oASc} with defining the Alexander--Spanier cochain complex for
proper \'etale groupoids $\sfG$. This cochain complex depends on the groupoid
structure, and instead of being localized to the diagonal, the cochains are localized
to the so-called ``higher Burghelea spaces''.  In Section \ref{constr-as}, we explain
how cohomology classes are represented by functions in a sufficiently small
neighbourhood of these Burghelea spaces so that we can have
cocycles acting on $L^2(\sfG_0)$ by convolution. This is important in
Section \ref{pairing} for the pairing with orbifold localized $K$-theory.

In Section \ref{subsec:RelOrbAScycCoh}, we relate orbifold Alexander--Spanier
cohomology to the cyclic cohomology of a deformation quantization of the convolution
algebra. In Section \ref{pairing}, the orbifold version of localized $K$-theory is
introduced in terms of a filtration in which smoothing operators on $\sfG_0$ are
localized to the diagonal and invariance is imposed. Via a Chern character, such
$K$-theory classes pair with localized Alexander--Spanier cocycles.

The link between the two pairings of Alexander--Spanier cohomology, namely on the one
side the pairing with localized $K$-theory and on the other side with the cyclic
cohomology of a deformation quantization, is given by a global $\hbar$-dependent
symbol calculus for pseudodifferential operators on orbifolds as constructed in
\cite{ppt}. This induces a deformation quantization over the cotangent bundle of the
underlying orbifolds with which we can compare the two pairings. The higher index
theorem finally follows by application of this idea to the
canonical localized $K$-theory class induced by the elliptic operator.

\subsection{Orbifold Alexander-Spanier cohomology}
\label{subsec:oASc}
As before, we denote by $M$ an orbifold given as a quotient space of a proper \'etale
Lie groupoid $\sfG_1\rightrightarrows\sfG_0$. The orbifold version of the
Alexander--Spanier sheaf complex is constructed as follows: again we consider the
space of loops $B^{(0)}\subset\sfG_1$. On this space define the following sheaves:
\[
  \scrC^{k}_{\rm AS, tw}(\calO):=s^{-1}\calO^{\boxtimes(k+1)}_{\sfG_0},
\]
where $\calO_{\sfG_0}$ is a sheaf of unital algebras as before.
We introduce a cosimplicial structure with coface operators
$\bar{\delta}^i:\scrC^{k}_{\rm AS, tw}(\calO)\to
\scrC^{k+1} _{\rm AS, tw}(\calO),~i=0,\ldots,k+1$ given by
\[
\bar{\delta}^i(f_0\otimes\ldots\otimes f_{k}):=f_0\otimes\ldots\otimes f_{i-1}\otimes 1\otimes f_{i+1}\otimes\ldots\otimes f_{k},
\]
and degeneracies $s^i:\scrC^{k}_{\rm AS, tw}(\calO)\to
\scrC^{k-1} _{\rm AS, tw}(\calO),~i=0,\ldots,k-1$ defined by
\[
  \bar{s}^i(f_0\otimes\ldots\otimes f_{k}):=
  f_0\otimes\ldots\otimes f_if_{i+1}\otimes\ldots\otimes f_k.
\]
So far, nothing new, but this time the cyclic structure $\bar{t}^k:\scrC^{k}_{\rm AS, tw}(\calO)\to\scrC^{k}_{\rm AS, tw}(\calO)$ is given by
\[
\bar{t}(f_0\otimes\ldots\otimes f_k):=f_1\otimes\ldots\otimes f_k\otimes \theta^{-1}(f_0),
\]
where $\theta:B^{(0)}\to\sfG_1$ is the cyclic structure of the groupoid $\Lambda(\sfG)$.
Recall, cf.\ \cite[Def. 3.3.1]{crainic}, that $\theta$, and also $\theta^{-1}$,
equips $\Lambda(G)$ with the structure of a {\em cyclic groupoid}. Using that
notion, it is not difficult to verify that with these structure maps
$\scrC^{\bullet}_{\rm AS, tw}(\calO)$ is a cocyclic sheaf on the cyclic groupoid
$(\Lambda(\sfG),\theta^{-1})$ and  gives rise to an $\infty$-cocyclic object
$(\scrC^{\bullet}_{\rm AS, tw}(\calO),\bar{\delta}, \bar{s},\bar{t})$
in the category of $\sfG$-sheaves over $B^{(0)}$ such that $\bar{t}^{k+1}=\theta^{-1}$
in each degree $k$.
\begin{remark}
Pulling back the standard cyclic sheaf of algebras $\calO^\natural_{\sfG_0}$ on $\sfG_0$ to $B^{(0)}$, there is a way to twist the structure maps
by the cyclic structure $\theta$, cf.\ \cite{crainic}. The cyclic sheaf above is simply the cyclic dual of this one. Notice that there is no twist in the
degeneracies because exactly the face operator containing the twist in
$s^{-1}\calO^\natural_{\sfG_0}$ is not used in the definition of the dual,
cf.~\cite[\S 6.1]{loday}.
\end{remark}
Associated to the underlying simplicial complex is the Hochschild sheaf complex $(\scrC^\bullet_{\rm AS, tw}(\calO),\bar{\delta})$ with differential
$\bar{\delta}=\sum_{i=0}^k(-1)^i\bar{\delta}^i$.
\begin{definition}
\label{dfn:as-coh}
The orbifold Alexander--Spanier cohomology $H^\bullet_{\rm AS, orb}(M,\calO)$ of $M$ with values in $\calO$ is defined to be the
groupoid sheaf cohomology of the complex $(\scrC^\bullet_{\rm AS, tw}(\calO),\bar{\delta})$.
\end{definition}
As alluded to in the notation, orbifold Alexander--Spanier cohomology is independent
of the particular groupoid $\sfG$ representing its Morita equivalence class. In fact
we have:
\begin{proposition}
\label{nat-iso-as}
There is a natural isomorphism
\[
  H^\bullet_{\rm AS, orb}(M,\calO)\cong H^\bullet(\tilde{M},\Bbbk).
\]
\end{proposition}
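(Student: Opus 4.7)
The strategy is to realize $(\scrC^\bullet_{\rm AS, tw}(\calO),\bar\delta)$ as a fine, $\sfG$-equivariant resolution of the locally constant sheaf $\underline{\Bbbk}$ on $B^{(0)}$, and then to invoke the fact that for the proper \'etale groupoid $\Lambda\sfG=B^{(0)}\rtimes \sfG$ modeling the inertia orbifold $\tilde M$, the groupoid sheaf cohomology of $\underline{\Bbbk}$ computes $H^\bullet(\tilde M,\Bbbk)$. The proposition thus reduces to a $\sfG$-equivariant analogue of Proposition \ref{prop:resAS}.

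First I would note that the coface operators $\bar\delta^i$ entering $\bar\delta$ are defined exactly as in the untwisted case; the cyclic twist by $\theta$ affects only $\bar t$, not the underlying simplicial structure. Since $\sfG$ is \'etale, the composition $B^{(0)}\hookrightarrow \sfG_1\xrightarrow{s}\sfG_0$ is a local diffeomorphism, so locally
\[
  \scrC^k_{\rm AS, tw}(\calO)\cong s^{-1}\scrCAS^k(\calO_{\sfG_0}),
\]
and the Hochschild differential $\bar\delta$ is the pullback of the standard Alexander--Spanier differential. Consequently, the stalk-wise contracting homotopy built from the extra degeneracies $s^k_x$ in the proof of Proposition \ref{prop:resAS} carries over verbatim, showing that the augmented complex
\[
  \underline{\Bbbk}_{B^{(0)}}\hookrightarrow \scrC^0_{\rm AS, tw}(\calO)\xrightarrow{\bar\delta}
  \scrC^1_{\rm AS, tw}(\calO)\xrightarrow{\bar\delta}\cdots
\]
is exact. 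Fineness of each $\scrC^k_{\rm AS, tw}(\calO)$ follows from its natural $\calC^\infty_{B^{(0)}}$-module structure, and $\sfG$-equivariance of the resolution is built into the definition since the sheaves and the coface operators are pulled back from $\sfG_0$ along the structure maps of $\sfG$.

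Second, I would use that for the proper \'etale groupoid $\Lambda\sfG$, the groupoid sheaf cohomology of any $\Lambda\sfG$-equivariant sheaf on $B^{(0)}$ can be computed by a fine equivariant resolution; applied to $\underline{\Bbbk}$ this yields the sheaf cohomology of the inertia orbifold $\tilde M$ with values in $\Bbbk$. Combining this with Step 1, both sides compute cohomology of the same equivariant complex, giving the claimed natural isomorphism. The functoriality in $\calO$ and in $\sfG$-equivariant Morita equivalences makes the isomorphism natural, as advertised in the notation $H^\bullet_{\rm AS, orb}(M,\calO)$.

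The main obstacle is making sure the fineness plus stalk-wise exactness really suffices at the level of \emph{groupoid} cohomology. The extra degeneracy $s^k_x$ is not $\sfG$-equivariant because it depends on a base point, so one cannot directly split the resolution equivariantly; what one really needs is that each $\scrC^k_{\rm AS, tw}(\calO)$ is acyclic for the derived functor computing orbifold/groupoid cohomology. This follows from fineness together with properness of $\sfG$ (which makes the invariants functor exact on sheaves of $\Bbbk$-modules via finite-isotropy averaging), but care must be taken when $\Bbbk=\C((\hbar))$ to ensure the same averaging arguments apply to the $\hbar$-adic coefficient rings $\C[[\hbar]]$ and $\C((\hbar))$.
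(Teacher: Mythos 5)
Your proposal follows essentially the same route as the paper: the inclusion $\underline{\Bbbk}\hookrightarrow \scrC^\bullet_{\rm AS, tw}(\calO)$ is a quasi-isomorphism of complexes of $\Lambda(\sfG)$-sheaves (by the stalk-wise contraction from Proposition \ref{prop:resAS}, which transfers because $\sigma_0$ is a local diffeomorphism, and by compatibility with the $\sfG$-action), and then $H^\bullet(\Lambda(\sfG),\underline{\Bbbk})\cong H^\bullet(\tilde M,\Bbbk)$. Your additional discussion of fineness, acyclicity under the invariants functor, and the $\hbar$-adic coefficients is sound but not needed for this statement, since the groupoid hypercohomology in Definition \ref{dfn:as-coh} is already invariant under quasi-isomorphism of complexes of $\sfG$-sheaves.
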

\begin{proof}
As for manifolds, cf.\ Proposition \ref{prop:resAS}, the inclusion
$\underline{\Bbbk}\hookrightarrow \scrC^\bullet_{\rm AS, tw}$ is a quasi-isomorphism
in $\mathsf{Sh}(\Lambda(\sfG))$ since it is clearly compatible with the $\sfG$-action
on both sheaves. But for the locally constant sheaf $\underline{\Bbbk}$ we have the
natural isomorphism
$H^\bullet(\Lambda(\sfG),\underline{\Bbbk})\cong H^\bullet(\tilde{M},\Bbbk)$.
\end{proof}
As groupoid cohomology, orbifold Alexander--Spanier cohomology can be computed
using the Bar complex of $\Lambda(\sfG)$. However, instead of using the nerve
of $\Lambda(\sfG)$, we shall  use the isomorphic Burghelea spaces associated
to $\sfG$ to write down such a Bar complex. Introduce
\[
B^{(k)} :=\big\{ (g_0, \ldots, g_k)\in \sfG^{k+1} \mid
  s(g_0)=t(g_1), \ldots, s(g_{k-1})=t(g_k), s(g_{k})= t(g_0)\big\} .
\]
These Burghelea spaces $B^{(k)}$ form a simplicial manifold with face maps
\begin{equation}
\label{cycll-burg}
d_i(g_0,\ldots,g_k)=
\begin{cases}
(g_0,\ldots,g_ig_{i+1},\ldots,g_k),&0\leq i\leq k-1,\\
(g_kg_0,\ldots, g_{k-1}),&i=k.
\end{cases}
\end{equation}
Consider now the map $\bar{\sigma}_k:B^{(k)}\to\sfG_0^{(k+1)}$ given by
$\bar{\sigma}_k(g_0,\ldots,g_k)=(s(g_0),\ldots,s(g_k))$.
With this we define for each $k\in \N$ the sheaf $\calS_k := \bar{\sigma}_k^* \big(
\calO^{\hat{\boxtimes}(k+1)} \big)$, the pullback sheaf of
$\calO^{\hat{\boxtimes} (k+1)}$ to $B^{(k)}$. We write
$\calAS^k (\sfG, \calO) :=\Gamma \big( B^{(k)}, \calS_k \big) $ and observe
that a (bornologically) dense subspace of the
space of sections $\Gamma \big( B^{(k)}, \calS_k \big)$ is given by
sums of sections of the form
\[
\begin{split}
  f = f_0 \otimes \cdots \otimes f_k :
 (g_0,\cdots,g_k) \mapsto (f_0)_{[g_0]} \otimes \ldots \otimes (f_k)_{[g_k]} ,
\end{split}
\]
where $(f_i)_{[g_i]}\in \calO_{s(g_i)}$ and $(g_0,\ldots,g_k)\in B^{(k)}$.
With this in mind, we introduce a simplicial structure on
$\calAS^\bullet (\sfG,\calO)$ by means of the coface maps
$\delta^i:\calAS^{k-1} (\sfG ,\calO) \rightarrow \calAS ^k(\sfG ,\calO)$
defined as
\[
\begin{split}
\delta^i(f_0\otimes\ldots&\otimes f_{k-1})_{[g_0,\ldots,g_k]}\\
&=\begin{cases}
1_{s(g_0)}\otimes (f_0)_{[g_0]}\otimes (f_1)_{[g_2]}\otimes\ldots\otimes (f_{k-1})_{[g_kg_0]}&i=0\\
(f_0)_{[g_0]}\otimes\ldots\otimes (f_i)_{[g_ig_{i+1}]}g_i^{-1}\otimes 1_{s(g_i)}\otimes\ldots
\otimes (f_{k-1})_{[g_{k}]}&\mbox{for}~ 1\leq i \leq k.
\end{cases}
\end{split}
\]
Codegeneracies are given by
\[
\begin{split}
s^i&(f_0 \otimes \ldots \otimes f_{k+1})_{[g_0, \cdots, g_{k}]}\\&=
(f_0)_{[g_0]} \otimes \ldots \otimes (f_{i-1})_{[g_{i-1}]} \otimes
  (f_i)_{[ s(g_{i-1}]} \cdot (f_{i+1})_{[ g_i]}  \otimes (f_{i+2})_{[g_{i+1}]} \otimes
  \ldots \otimes (f_{k+1})_{[g_k]}.
  \end{split}
  \]
Finally, we can define a compatible cyclic structure
$t^k:\calAS^k (\sfG,\calO) \to\calAS^k(\sfG,\calO)$
by
\[
t^k(f_0 \otimes \cdots \otimes f_k)_{[g_0, \cdots, g_k]}=
  (-1)^k(f_0)_{[g_k]} \otimes (f_1)_{[g_0]} \otimes \ldots \otimes (f_k)_{[g_{k-1}]}.
\]
It is straightforward to show that with these structure maps
$\calAS^\natural (\sfG ,\calO)$ is a cyclic cosimplicial vector space indeed.

To relate the above introduced cosimplicial complex with the Bar
complex of the sheaf cohomology of $\scrC^\bullet_{\rm AS,
tw}(\calO)$ on $\Lambda \sfG$ in Definition \ref{dfn:as-coh} we
identify $B^{(k)}$ with $\Lambda\sfG^{(k)}$ by the map $\nu$
\[
  \nu(g_0, \cdots, g_k)=(g_1\cdots g_kg_0, g_1, \cdots, g_k).
\]
The induced isomorphism $\nu_*$ on $\calAS^\natural$ is computed to
be
\[
\begin{split}
 \nu_* &\,(f_0\otimes \cdots\otimes f_k)(g_1\cdots g_kg_0, g_1,\cdots, g_k)=\\
 &=\big( (f_0(g_0)) g_1\cdots g_kg_0 , (f_1(g_1))g_2\cdots g_kg_0,
 \cdots, (f_k(g_k))g_0 \big).
\end{split}
\]
The  image $\nu_*(\calS_k)$ then is a sheaf on $\Lambda\sfG^{(k)}$.
Observe that the sheaf $\calS_k$ can be understood as the
pullback of a sheaf $\twcalO^k$ on $B^{(0)}$ through the map
\[
(g_1\cdots g_kg_0, \ldots, g_k)\mapsto g_1\cdots g_kg_0.
\]
It is easy to check that $\nu_*$ defines an isomorphism between the
complex $\calAS^\natural (\sfG, \calO)$ and the Bar complex on
$\Lambda\sfG$ of $\scrC^\natural_{\rm AS, tw}(\calO)$. Since $\Lambda(G)$
is proper, the Bar complex is quasi-isomorphic to the complex of invariant
sections on $B^{(0)}$. Denote by $\beta:B^{(k)}\to B^{(0)}$ the map
$\beta(g_0,\ldots,g_k)=g_0\cdots g_k$. Putting all this together, we have
\begin{proposition}
\label{Prop:EquORBAS}
  For every proper \'etale Lie groupoid $\sfG$ and sheaf $\calO$ as above
  \[
    \beta_* : \calAS^\natural (\sfG, \calO) \rightarrow
    \Gamma_\text{\rm inv} \big( B^{(0)},
    \scrC^\natural_{\rm AS, tw}(\calO) \big).
  \]
is a quasi-isomorphism of cochain complexes.
\end{proposition}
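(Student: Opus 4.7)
The plan is to reduce the statement to the standard fact that, for a proper \'etale groupoid, the Bar complex with coefficients in a soft sheaf is quasi-isomorphic to the complex of invariant sections. Concretely, I would proceed in three steps.

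First, I would use the isomorphism $\nu_{*}$ constructed just before the statement to transport the whole problem from the Burghelea model $(B^{(k)},\calS_k)$ to the inertia model $(\Lambda\sfG^{(k)},\nu_{*}\calS_k)$. The essential observation to be recorded here is that $\nu_{*}\calS_k$ is exactly the pullback along the first-coordinate map $\Lambda\sfG^{(k)}\to\Lambda\sfG^{(0)}=B^{(0)}$ of the $\Lambda\sfG$-sheaf $\scrC^{\bullet}_{\rm AS,tw}(\calO)$, with the twist by $\theta^{-1}$ sitting in the last face map precisely because the cyclic structure on $\Lambda(\sfG)$ is $\theta^{-1}$. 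Thus, after applying $\nu_{*}$, the complex $\calAS^{\bullet}(\sfG,\calO)$ becomes the ordinary Bar complex of the cyclic groupoid $\Lambda\sfG$ with coefficients in the $\Lambda\sfG$-sheaf $\scrC^{\bullet}_{\rm AS,tw}(\calO)$ on $B^{(0)}$.

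Second, I would identify the map $\beta_{*}$. Under $\nu_{*}$ the map $\beta\colon B^{(k)}\to B^{(0)}$, $(g_0,\ldots,g_k)\mapsto g_0\cdots g_k$, becomes (up to the cyclic conjugation already built into $\nu$) the face map that repeatedly composes the tuple down to its single first coordinate in $B^{(0)}$. This is precisely the augmentation/projection from the Bar resolution of $\scrC^{\bullet}_{\rm AS,tw}(\calO)$ to its sheaf of $\sfG$-invariant sections $\Gamma_{\rm inv}(B^{(0)},\scrC^{\bullet}_{\rm AS,tw}(\calO))$. So after translation, the statement to prove is that the canonical augmentation from the Bar complex to invariant sections is a quasi-isomorphism.

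Third, I would invoke the proper-groupoid averaging argument. Since $\sfG$ (and therefore $\Lambda\sfG$) is proper and \'etale, one can choose a cut-off function $c\colon\sfG_{0}\to[0,1]$ with $\sum_{t(g)=x}c(s(g))=1$ on $\sfG_{0}$; this cut-off pulls back to $B^{(0)}$ and, because every sheaf $\calS_k$ is soft (its local sections are modules over the smooth functions $\calO$, so they admit partitions of unity), it produces an explicit contracting homotopy showing that the augmentation is a quasi-isomorphism. This is the same mechanism underlying Crainic's computation of groupoid cohomology of proper \'etale groupoids by invariant sections, and it suffices to apply it stalk-by-stalk on the orbit space. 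Combining the three steps gives the claim.

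I expect the main obstacle to be a bookkeeping one: carefully matching the cyclic and face structures on $\calAS^{\bullet}(\sfG,\calO)$ in its Burghelea presentation with the structure maps of the Bar resolution of $\scrC^{\bullet}_{\rm AS,tw}(\calO)$ under $\nu_{*}$, keeping track of the twist by $\theta^{-1}$ in the last face. Once that identification is pinned down, the quasi-isomorphism follows from the standard proper-groupoid contracting homotopy.
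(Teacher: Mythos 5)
Your plan is correct and follows essentially the same route as the paper: the paper's argument is precisely the identification via $\nu_*$ of $\calAS^\natural(\sfG,\calO)$ with the Bar-type complex of the cocyclic $\Lambda\sfG$-sheaf $\scrC^\natural_{\rm AS,tw}(\calO)$, followed by the statement that for the proper groupoid $\Lambda\sfG$ this Bar complex is quasi-isomorphic to invariant sections on $B^{(0)}$. Your third step merely spells out, via the cut-off/averaging homotopy for fine $\calC^\infty$-module sheaves, the standard properness fact that the paper cites without proof, so the two arguments coincide in substance.
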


\subsection{Explicit realization of Alexander--Spanier cocycles}
\label{constr-as}
Recall that in the case of manifolds the Alexander--Spanier cochain complex could be written as a direct limit of a cochain complex of functions defined on a neighbourhood of the diagonal $\Delta_{k+1}:M\to M^{k+1}$ given in terms of the choice of an open covering of $M$.
This realization of Alexander--Spanier cocycles was crucial in the definition of the pairing with localized $K$-theory.
In this section we will generalize this construction to proper \'etale groupoids $\sfG$, where this time the role of the diagonal is played by the Burghelea space $B^{(k)}\hookrightarrow \sfG_1^{k+1}$.

Let $\sfG_1\rightrightarrows\sfG_0$ be a proper \'etale groupoid modeling an orbifold $M$, and $U\subset\sfG_0$ an open set. A {\em local bisection}, cf.~\cite[\S 5.1]{MoeMrcIFLG}, on $U$ is a local section $\sigma:U\to\sfG_1$ of the source map $s:\sfG_1\to\sfG_0$ such that $t\circ\sigma:U \to \sfG_0$ is an open embedding.
This second property of local bisections shows that they define local diffeomorphisms of $\sfG_0$ and as such the product
\begin{equation}
\label{comp-bs}
(\sigma_1\sigma_2)(x):=\sigma_1(t(\sigma_2(x)))\sigma_2(x)
\end{equation}
is defined if the domain of $\sigma_1$ contains the image of $t\circ\sigma_1$.
\begin{definition}
A covering $\calU=\{U_i\}_{i\in I}$ of $\sfG_0$ is said to be $\sfG$-trivializing if it satisfies the
following two  conditions:
\begin{itemize}
\item[$i)$] $\calU$ is the pull-back of a covering  of $M$ along the projection $\pi:\sfG_0\to M$. By this we mean that there exists a covering $\overline{\calU}$ of $M$ such that $\calU$ consists of the connected components of $\pi^{-1}(\overline{U}),~\overline{U}\in\overline{\calU}$.
\item[$ii)$] For all $g\in\sfG_1$, there exists an $i\in I$ and a bisection $\sigma_{i}$ defined on $U_i$ with $\sigma_{i}(s(g))=g$. In particular, $s(g)\in U_i$.
\end{itemize}
\end{definition}
 Remark that such a covering always exists and is completely determined by the induced covering of the quotient $M$. We will therefore denote the set of coverings of $M$ satisfying property $ii)$ above
 by $\mbox{Cov}_{\sfG}(M)$. Clearly, $\mbox{Cov}_\sfG(M)$ is directed by the notion of refinement. Remark that the property of being $\sfG$-trivializing very much depends
 on the groupoid $\sfG$, and not the quotient. As an easy example, consider a manifold $M$: when represented as a groupoid with only identity arrows, any covering satisfies the properties above. However, when represented as a \v{C}ech-groupoid associated to a fixed covering $\calU$, only
 those coverings that refine the covering $\calU'=\{U_i\cap U_j\}_{i,j\in I}$ are trivializing.

 Also, the $\sigma_i$ in $ii)$ are uniquely determined by $g$ because $\sfG$ is \'etale. Because of this, we shall write $\sigma_i^g$ for this local bisection.
 Furthermore, since $s\circ\pi=t\circ\pi$, we have $\pi(t(\sigma_i^g(U_i)))=\pi(U_i)$ so there exists
 a $j\in I$ such that $(t\circ\sigma_i^g)(U_i)=U_j$. Associated to the covering are the subsets
$\sfG_{ij}\subset\sfG_1,~i,j\in I$ defined by
\[
\sfG_{ij}:=\{g\in\sfG_1 \mid s(g)\in U_j,~\sigma_j^g(U_j)= U_i\}.
\]
The conditions on the covering ensures that $\bigcup_{i,j\in I}\sfG_{ij}=\sfG$.
With this notation, we introduce
\[
B^{(k)}_{\calU}:=\bigcup_{i_0,\ldots,i_k\in I}\sfG_{i_0i_1}\times\ldots\times\sfG_{i_ki_0}\hookrightarrow \sfG_1^{k+1}.
\]
Remark that there is a canonical embedding $B^{(k)}\subset B^{(k)}_{\calU}$.
\begin{lemma}
\label{cyclic-nghbhd}
The family of spaces $B^{(k)}_{\calU},~k\in\N$ carries a canonical cyclic
manifold structure which extends the cyclic structure on $B^{(\bullet)}$.
\end{lemma}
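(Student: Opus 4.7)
The plan is to extend the cyclic structure of $B^{(\bullet)}$ to $B^{(\bullet)}_\calU$ by modifying the face operators so that consecutive arrows, which need not be composable in the enlarged space, become composable after a transport by local bisections. The degeneracies and the cyclic operator will carry over with only a cosmetic adjustment of the indexing.

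Concretely, given $(g_0,\ldots,g_k)\in \sfG_{i_0i_1}\times\cdots\times\sfG_{i_ki_0}$, the arrows $g_i\in\sfG_{i_ii_{i+1}}$ and $g_{i+1}\in\sfG_{i_{i+1}i_{i+2}}$ satisfy $s(g_i),t(g_{i+1})\in U_{i_{i+1}}$, but these two points need not coincide. I would define the transported arrow $\tilde g_i:=\sigma_{i_{i+1}}^{g_i}(t(g_{i+1}))$, which has source $t(g_{i+1})$ and target in $U_{i_i}$, and then set
\[
d_i(g_0,\ldots,g_k):=(g_0,\ldots,g_{i-1},\tilde g_i\cdot g_{i+1},g_{i+2},\ldots,g_k)
\]
for $0\leq i\leq k-1$, with the analogous wrap-around definition
\[
d_k(g_0,\ldots,g_k):=(\sigma_{i_0}^{g_k}(t(g_0))\cdot g_0,g_1,\ldots,g_{k-1}).
\]
The bisection composition formula~\eqref{comp-bs} guarantees that $\tilde g_i\cdot g_{i+1}\in\sfG_{i_ii_{i+2}}$, so $d_i$ does land in $B^{(k-1)}_\calU$. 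For the degeneracies one puts $s_i(g_0,\ldots,g_k):=(g_0,\ldots,g_i,1_{s(g_i)},g_{i+1},\ldots,g_k)$, where the identity arrow $1_{s(g_i)}$ lies in $\sfG_{i_{i+1}i_{i+1}}$ because its bisection is the local identity on $U_{i_{i+1}}$; the cyclic operator is the plain cyclic permutation $t_k(g_0,\ldots,g_k):=(g_k,g_0,\ldots,g_{k-1})$, which sends the component labeled $(i_0,\ldots,i_k)$ to the one labeled $(i_k,i_0,\ldots,i_{k-1})$.

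Verification that these maps define a cyclic manifold structure reduces to checking the standard cosimplicial identities together with the cyclic relations $t_kd_i=d_{i-1}t_{k-1}$ and $t_k^{k+1}=\id$. The key input throughout is the associativity~\eqref{comp-bs} of the bisection product, which shows that a double face $d_id_{i+1}$ collapses to a single transport-and-multiply by the composite bisection evaluated at the appropriate base point. Restriction to $B^{(k)}\subset B^{(k)}_\calU$ recovers the face maps~\eqref{cycll-burg} verbatim, because whenever $s(g_i)=t(g_{i+1})$ one has $\tilde g_i=\sigma_{i_{i+1}}^{g_i}(s(g_i))=g_i$ and therefore $\tilde g_i\cdot g_{i+1}=g_ig_{i+1}$.

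The hard part will be the boundary cases of the simplicial identity $d_id_j=d_{j-1}d_i$ involving the wrap-around face $d_k$, and the cyclic compatibility $t_kd_0=d_k$: here the transport bisection has to be applied after a cyclic reindexing, so one must carefully track how composite bisections behave under a change of their base point. Using~\eqref{comp-bs} once more, these verifications reduce to equalities of locally defined diffeomorphisms, which hold because $\sfG$ is \'etale. Once these key identities are in place, the remaining cyclic axioms follow by routine bookkeeping.
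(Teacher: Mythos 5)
Your construction is essentially the paper's: unwinding the bisection product \eqref{comp-bs}, your transported arrow satisfies $\tilde g_i\cdot g_{i+1}=\sigma^{g_i}(t(g_{i+1}))\,g_{i+1}=(\sigma_i\sigma_{i+1})(s(g_{i+1}))$, which is precisely the extended composition $g_i\odot g_{i+1}$ the paper uses to carry the formulas \eqref{cycll-burg} over to $B^{(k)}_\calU$, with the same appeal to associativity in the pseudogroup of local bisections and the same observation that everything restricts to the usual cyclic structure on $B^{(\bullet)}$. So the proposal is correct and follows the same route; the only point the paper flags that you leave implicit is that the composition is independent of the choice of the indices $i_0,\ldots,i_k$ (i.e.\ of the charts through which the bisections are taken), which the paper itself only asserts is routine.
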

\begin{proof}
Let $(g_0,\ldots,g_k)\in B^{(k)}_\calU$. By definition, there are $i_0,\ldots,i_k\in I$ such that
$g_{j}\in \sfG_{i_j i_{j+1}}$. Let $\sigma_j$ be the unique local bisection $\sigma_j:U_{i_j}\to\sfG_1$
corresponding to $g_j$. By construction, $\sigma_{j+1}(U_{i_{j+1}})=U_{i_j}$, and we can define
the product of $g_j$ and $g_{j+1}$ as
\[
g_j\odot g_{j+1}:=(\sigma_j\sigma_{j+1})(s(g_{j+1})),
\]
with the product of the local bisections as in \eqref{comp-bs}. It is not difficult to check that this definition of the product is independent of the choice of $i_0,\ldots,i_k$. To see that it is associative it is best to think of the local bisections $\sigma_j$ as elements in the pseudogroup of local diffeomorphisms of $\sfG_0$. Finally, with this composition, we can define the cyclic structure on $B^{(k)}_\calU$ by the same formulae as in \eqref{cycll-burg}.  The proof that this indeed define a cyclic manifold is then routine.
Clearly, it induces the canonical cyclic structure on the Burghelea spaces.
\end{proof}
\begin{remark}
  Note that the product $g_1 \odot g_2$ coincides with the groupoid
  composition, if $s(g_1)=t(g_2)$. The product $\odot$ can thus be
  understood as an extension of the groupoid product around the
  ``orbifold diagonal'' meaning around the set of composable arrows.
\end{remark}

Let us now introduce the following complex:
\[
C^k_{AS}(\sfG,\calU):=C^\infty(B^{(k)}_\calU).
\]
The differential $\delta:C^k_{AS}(\sfG,\calU)\to C^{k+1}_{AS}(\sfG,\calU)$ is
defined by the formula
\[
\begin{split}
  (\delta f)\, & (g_0,\ldots,g_{k+1}):= \\
  & := \sum_{i=0}^k (-1)^i
  f(g_0,\ldots,g_i\odot g_{i+1},\ldots,g_k)+(-1)^{k+1}f(g_{k+1}\odot g_0,\ldots,g_k).
\end{split}
\]
Since the differential is defined in terms of the underlying simplicial structure on $B^{(\bullet)}_\calU$, we automatically have $\delta^2=0$.
\begin{example}
\label{example}
Consider the transformation groupoid $\Gamma\times X\rightrightarrows X$
associated to a group action of a discrete group $\Gamma$ on a manifold $X$.
By definition, $s(\gamma,x)=x$, $t(\gamma,x)=\gamma(x)$ for $x\in X$,
$\gamma\in\Gamma$, and a bisection on $X$ is given by an element
$\gamma\in\Gamma$. In this case, the trivial covering of $X/\Gamma$ obviously
satisfies the conditions $i)$ and $ii)$ above.  Unravelling the definition
this leads to the following complex associated to the trivial covering of $X$:
$C^k_{AS}(\Gamma\times X,X):=C^\infty\big( (\Gamma\times X)^{k+1} \big)$ and the
differential is given by
\begin{equation}
\label{diff-trsf}
\begin{split}
(\delta f)(\gamma_0,x_0,\ldots,\gamma_{k+1},x_{k+1}):=\sum_{i=0}^k&(-1)^i(\gamma_0,x_0,\ldots,x_{i-1},\gamma_i\gamma_{i+1},x_{i+1},\ldots,\gamma_{k+1},x_{k+1})\\&+(-1)^{k+1}f(\gamma_{k+1}\gamma_0,x_0,\ldots,\gamma_k,x_k).
\end{split}
\end{equation}
In particular, for $\Gamma$ the trivial group and any covering $\calU$, we find exactly the complex \eqref{Eq:DefASCochains}.
\end{example}
Clearly, if a covering $\calU$ satisfies condition $i)$ and $ii)$ above, a
refinement $\calV\hookrightarrow\calU$ also satisfies these conditions
and therefore induces a canonical map $C^\bullet_{AS}(\sfG,\calU)\to
C^\bullet_{AS}(\sfG,\calV)$. With this, we can take the direct limit
over the set of coverings of the orbifold $M$.
\begin{proposition}
In the limit, there is a canonical isomorphism
\[
  \lim_{\longrightarrow \atop \calU \in \operatorname{Cov}_{\sfG} (M)}
     C^\bullet_{\rm\tiny AS} (\sfG,\calU) \cong
     \mathcal{AS}^\bullet(\sfG,\calO).
\]
\end{proposition}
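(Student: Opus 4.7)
The plan is to build, for each $\sfG$-trivializing covering $\calU$, a natural cochain map
\[
\rho_\calU : C^\bullet_{\rm AS}(\sfG, \calU) \longrightarrow \calAS^\bullet(\sfG, \calO),
\]
to verify that these maps are compatible with refinement and intertwine the differentials, and finally to check that the resulting map on the direct limit is a bijection in each degree. The key fact used throughout is that $\sfG$ is étale: for every pair $i,j \in I$ of indices of a trivializing cover the source restricts to a diffeomorphism $s : \sfG_{ij} \xrightarrow{\cong} U_j$, so a smooth function on any product piece $\sfG_{i_0 i_1} \times \cdots \times \sfG_{i_k i_0} \subset B^{(k)}_\calU$ transfers canonically via source to a smooth function on the open set $U_{i_1} \times \cdots \times U_{i_k} \times U_{i_0} \subset \sfG_0^{k+1}$. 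This is entirely parallel to the manifold case of Remark \ref{Rem:AShom}, to which it reduces when $\sfG$ has only identity arrows.

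First I would define $\rho_\calU$ stalkwise. Given $\phi \in C^\infty(B^{(k)}_\calU)$ and a point $(g_0, \ldots, g_k) \in B^{(k)}$, pick indices $i_0, \ldots, i_k$ with $g_j \in \sfG_{i_j i_{j+1}}$ and transport the restriction of $\phi$ along the source diffeomorphisms to a germ $\widetilde\phi$ of $\calO^{\hat\boxtimes (k+1)}$ at $(s(g_0), \ldots, s(g_k))$; set $\rho_\calU(\phi)(g_0,\ldots,g_k) := \widetilde\phi$. Independence of the choice of indices follows because any two admissible tuples produce pieces whose source-images coincide on an open neighborhood of $(s(g_0),\ldots,s(g_k))$ in $\sfG_0^{k+1}$, so the corresponding germs agree. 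The fact that $\rho_\calU$ intertwines the differentials is a direct check using the cyclic structure on $B^{(\bullet)}_\calU$ from Lemma \ref{cyclic-nghbhd}, noting that the product $\odot$ restricts to groupoid composition on $B^{(\bullet)}$ so that the coface maps defining $\delta$ correspond precisely to the $\delta^i$ on $\calAS^\bullet(\sfG,\calO)$. For a refinement $\calV \hookrightarrow \calU$ the canonical inclusion $B^{(k)}_\calV \hookrightarrow B^{(k)}_\calU$ yields a restriction intertwining $\rho_\calU$ and $\rho_\calV$, so the maps assemble to $\rho$ on the direct limit.

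To show $\rho$ is an isomorphism in the limit I would argue degree by degree. For surjectivity, a section $\alpha \in \calAS^k(\sfG,\calO) = \Gamma(B^{(k)}, \calS_k)$ is locally represented by an honest smooth function on an open neighborhood of $\bar\sigma_k(B^{(k)})$ inside $\sfG_0^{k+1}$. Choosing a $\sfG$-trivializing cover $\calU$ fine enough that every product $U_{i_1}\times\cdots\times U_{i_0}$ falls inside the domain of such a representative, one uses the inverse of source on each $\sfG_{i_0 i_1}\times\cdots\times \sfG_{i_k i_0}$ and a partition of unity on $\sfG_0^{k+1}$ subordinate to the image cover to assemble a single function on $B^{(k)}_\calU$ whose image under $\rho_\calU$ is $\alpha$. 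For injectivity, if $\phi \in C^\infty(B^{(k)}_\calU)$ has $\rho_\calU(\phi)=0$ then $\phi$ vanishes on some open neighborhood of $B^{(k)}$ in $B^{(k)}_\calU$; a refinement $\calV \hookrightarrow \calU$ may be chosen so that $B^{(k)}_\calV$ lies inside this neighborhood, whence $\phi$ is killed by the restriction to $C^k_{\rm AS}(\sfG,\calV)$.

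The main obstacle is the shrinking step in injectivity together with the patching step in surjectivity, both of which require producing a single trivializing cover $\calV \in \mathrm{Cov}_\sfG(M)$ that is uniformly fine enough at all points of $B^{(k)}$. Since $B^{(k)}$ is in general non-compact, one must combine paracompactness of $M$ with the fact that the $\sfG$-trivializing covers form a cofinal subsystem in the directed set of all covers of $M$, so that for any open neighborhood of $B^{(k)}$ in $\sfG_1^{k+1}$ one can find some $\calV$ with $B^{(k)}_\calV$ inside it. With this technical point in place, everything else reduces to unwinding the definitions and using the source isomorphisms on the pieces $\sfG_{ij}$.
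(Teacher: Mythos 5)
Your overall route is the same as the paper's: the proof there consists precisely of restricting $f\in C^\infty(B^{(k)}_\calU)$ to its germ along $B^{(k)}$ (your $\rho_\calU$, with the \'etale identification of germs on $\sfG_1^{k+1}$ along $B^{(k)}$ with sections of $\calS_k$ spelled out) and observing that as the cover gets finer the sets $B^{(k)}_\calU$ shrink to $B^{(k)}$; like you, the paper treats compatibility with the differentials and the cofinality of the neighborhoods $B^{(k)}_\calV$ as routine.

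There is, however, one concretely false statement, and it is the one you advertise as the ``key fact'': for a $\sfG$-trivializing cover the source map $s:\sfG_{ij}\to U_j$ is in general \emph{not} a diffeomorphism, only an \'etale surjection. The set $\sfG_{ij}$ is a disjoint union of images of the local bisections carrying $U_j$ onto $U_i$ (distinct bisections on the connected set $U_j$ have disjoint images), and each such sheet maps diffeomorphically onto $U_j$; for a transformation groupoid $\Gamma\ltimes X$ with the trivial cover one has $\sfG_{11}=\Gamma\times X$, so $s$ is $|\Gamma|$-to-one. The multiplicity appears exactly near points with nontrivial isotropy, i.e.\ in the orbifold situation this proposition is about. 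Your germwise definition of $\rho_\calU$ is unaffected, since it only uses that $s$ is a local diffeomorphism, but the surjectivity step fails as written: there is no single ``inverse of source'' on a product piece, and a partition of unity pulled back from $\sfG_0^{k+1}$ does not separate the sheets. Consequently, at two points of $B^{(k)}$ lying over the same tuple of sources (for instance two distinct isotropy elements $g\neq g'$ with $s(g)=t(g)=s(g')=t(g')$) your assembled function necessarily induces the same germ, whereas the given section $\alpha$ of $\calS_k$ may take independent values at these points. The repair stays within the paper's argument: either patch with a partition of unity on $B^{(k)}_\calU$ subordinate to a cover by products of sheets (these are disjoint open sets), or, more directly, use that $\bar\sigma_k=s^{k+1}\circ\iota$ with $s^{k+1}$ \'etale and $B^{(k)}$ closed in $\sfG_1^{k+1}$ to identify $\Gamma\big(B^{(k)},\calS_k\big)$ with germs along $B^{(k)}$ of smooth functions defined on neighborhoods in $\sfG_1^{k+1}$, and then invoke the shrinking statement you already isolated, namely that the $B^{(k)}_\calV$ form a cofinal family of such neighborhoods.
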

\begin{proof}
As the cover gets finer, the set $B^{(k)}_\calU\hookrightarrow  \sfG^{ k+1 }$
shrinks to the Burghelea space $B^{(k)}$. Therefore, the restriction of a
function $f\in C^\infty(B^{(k)}_\calU)$ to the germ $f|_{B^{(k)}}$ of $B^{(k)}$ induces the linear isomorphism as in the statement of the Proposition.
It is straightforward to show that this map is compatible with the differentials.
\end{proof}
By Proposition \ref{nat-iso-as}, we therefore have that the cohomology of the limit complex
\[
\lim_{\longrightarrow \atop \calU \in \operatorname{Cov}_{\sfG} (M)}
\left(
     C^\bullet_{\rm\tiny AS} (\sfG,\calU),\delta\right)
\]
equals $H^\bullet(\tilde{M},\Bbbk)$. In fact, unravelling all the isomorphisms involved, we have:
\begin{corollary}
The cohomology class in $H^k(\tilde{M},\Bbbk)$ induced by a cocycle $f=f_0\otimes\ldots\otimes f_{2k}\in C^\infty(B^{(k)}_\calU)$ is represented by the closed invariant  differential form on $B^{(0)}$ given by
\[
\nu_*\left(\lambda^k_k(f)|_{B^{(k)}}\right)_{g}
=\sum_{g_0,\ldots,g_k\in B^{(k)}\atop g_0\cdots g_k=g}
\sum_{\sigma\in S_{k+1}}f_{\sigma(0)}(g_0)df_{\sigma(1)}(g_1)\wedge\ldots\wedge df_{\sigma(k)}(g_k).
\]
In particular, if $f$ has compact support, the resulting differential form is compactly supported.
\end{corollary}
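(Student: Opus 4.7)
The strategy is to trace the cocycle $f$ explicitly through the chain of quasi-isomorphisms that were set up to identify the cohomology of $\big(C^\bullet_{\mathrm{AS}}(\sfG,\calU),\delta\big)$ with $H^\bullet(\widetilde{M},\Bbbk)$. There are essentially four steps, corresponding to (i)~restriction to $B^{(k)}$ and passage to the direct limit, (ii)~the reindexing $\nu$, (iii)~reduction of the Bar complex to invariant sections via properness of $\Lambda\sfG$, and (iv)~application of the de Rham-type map $\lambdaAS^k_k$.

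First, I restrict $f = f_0 \otimes \cdots \otimes f_k$ from $B^{(k)}_\calU$ to the germ of $B^{(k)}$ and pass to the limit $\mathcal{AS}^k(\sfG,\calO)$. Since $\delta f=0$ on $B^{(k)}_\calU$, the restriction remains a cocycle in $\mathcal{AS}^k(\sfG,\calO)$, and by the proposition preceding Proposition~\ref{Prop:EquORBAS}, its class is independent of the chosen cover. Second, I apply $\nu_*$ with $\nu(g_0,\ldots,g_k)=(g_1\cdots g_kg_0,g_1,\ldots,g_k)$ to identify $\mathcal{AS}^\natural(\sfG,\calO)$ with the Bar complex on $\Lambda\sfG$ of the cyclic sheaf $\scrC^\natural_{\mathrm{AS,tw}}(\calO)$.

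Third, since $\Lambda\sfG$ is proper (Proposition~\ref{Prop:EquORBAS}), the canonical map $\beta_*$, given by summation over the fiber of $\beta(g_0,\ldots,g_k)=g_0\cdots g_k$, is a quasi-isomorphism onto $\Gamma_{\mathrm{inv}}\!\big(B^{(0)},\scrC^\natural_{\mathrm{AS,tw}}(\calO)\big)$. Applied to $\nu_* f$, this produces a cocycle in $\scrC^k_{\mathrm{AS,tw}}(\calO)$ supported at $g\in B^{(0)}$ of the form
\[
  (\beta_*\nu_* f)_g = \sum_{g_0\cdots g_k=g}\big(f_0(g_0)\otimes\cdots\otimes f_k(g_k)\big).
\]
Fourth, Proposition~\ref{nat-iso-as} identifies the cohomology of $\Gamma_{\mathrm{inv}}(B^{(0)},\scrC^\natural_{\mathrm{AS,tw}}(\calO))$ with $H^\bullet(\widetilde{M},\Bbbk)$ via the inclusion of the locally constant sheaf; the corresponding quasi-isomorphism into differential forms is furnished by $\lambdaAS^k_k$ (cf.~Remark~\ref{Rem:qicdr}, Proposition~\ref{prop:deflambda}), which on $B^{(0)}$ is just the ordinary manifold de Rham map. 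Evaluating $\lambdaAS^k_k$ on the tensor $(\beta_*\nu_* f)_g$ using \eqref{eq:RepSmorASdeRham} and combining with the antisymmetrization $\varepsilon^\bullet$ of Proposition~\ref{Prop:dercatisoAS} (which is the quasi-isomorphism from cochains to antisymmetric ones) yields precisely the claimed formula, where the sum over $S_{k+1}$ arises from combining the antisymmetrization coefficient $1/(k+1)!$ with the $(1,k)$-shuffle sum appearing in the definition of $\lambdaAS^k_k$.

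Closedness of the resulting form is automatic from $\delta f=0$ together with the intertwining relation $\lambdaAS_{k+1}\deltaAS=d\lambdaAS_k$ of Proposition~\ref{prop:deflambda}; $\sfG$-invariance is built in, since $\beta_*$ lands in invariant sections by construction; and compact support is preserved by all the maps involved. The main combinatorial point to verify carefully is the matching of normalizations between the antisymmetrization $\varepsilon^\bullet$, the shuffle-sum formula for $\lambdaAS^k_k$, and the fiber-sum in $\beta_*$, so as to recover exactly the sum over $S_{k+1}$ displayed in the statement; this is a bookkeeping exercise with no conceptual obstacle.
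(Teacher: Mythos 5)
Your plan is correct and is essentially the paper's own (implicit) argument: the corollary is stated there as the result of ``unravelling all the isomorphisms involved,'' i.e.\ restricting to the germ at $B^{(k)}$ and passing to the direct limit, reindexing by $\nu$, pushing forward along $\beta$ to invariant sections on $B^{(0)}$ using properness of $\Lambda\sfG$, and applying the de Rham-type map $\lambdaAS^k_k$, with closedness, invariance and compact support following exactly as you say. The only caveat is the normalization/sign bookkeeping you defer at the end (note the paper's displayed formula itself suppresses the $\sgn(\sigma)$ and the $1/(k+1)$ factor coming from $\lambdaAS^k_k$), but this does not affect the validity of the approach.
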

This gives us an explicit
way of representing cohomology classes in $H^\bullet(\tilde{M},\Bbbk)$
by cocycles defined on a sufficiently small neighbourhood of
the "orbifold diagonal" $B^{(k)}\hookrightarrow \sfG^{k+1}$.
For example, for a transformation
groupoid as in Example \ref{example}, we have
\[
\widetilde{M}=\left.\coprod_{\left<\gamma\right>\in\mbox{\tiny Conj}(\Gamma)}X^{\left<\gamma\right>}\right\slash {Z_{\left<\gamma\right>}}.
\]
As we have seen above, there are enough global bisections in this case, and we can use the trivial covering. We write $f=\sum_{\gamma\in\Gamma}f_\gamma U_\gamma$ for an element
in $f\in C^\infty(\Gamma\times X)$. The function
\[
f_{\left<\gamma\right>}=\sum_{\nu\in\Gamma}1U_{\nu\gamma\nu^{-1}},
\]
is closed under the Alexander--Spanier differential, $\delta f_{\left<\gamma\right>}=0$,
as an easy argument shows. By the canonical projection onto the direct limit complex, it induces
a cocycle of degree zero in the Alexander--Spanier complex. It is not difficult to see
that this is a generator of $H^0(X^{\left<\gamma\right>}\slash Z_{\left<\gamma\right>},\Bbbk)\subset
H^0(\tilde{M},\Bbbk)$.

\subsection{Relating orbifold Alexander--Spanier cohomology with cyclic
cohomology}
\label{subsec:RelOrbAScycCoh}

We assume in this step $\sfG_0$ is equipped with an invariant
symplectic form $\omega$. Let $\calA^{((\hbar))}$ be a (local) deformation
quantization on $\sfG_0$. According to \cite{ta},
$\calA^{((\hbar))}\rtimes \sfG$ is a deformation quantization over the
groupoid $\sfG$, which by definition is a deformation of the convolution
algebra on $\sfG$. In \cite{ppt}, we constructed a universal trace $\Tr$ on
$\calA^{((\hbar))}\rtimes \sfG$. The trace functional is defined by
\begin{equation}
  \label{eq:defunitr}
  \Tr (a) := \int_{B^{(0)}} \, \Psi^{2n-\ell (g)}_{2n} (a) ,  \quad
  a \in \calA^{((\hbar))} (\sfG_0),
\end{equation}
where $\Psi^{2n-\ell}_{2n}$ is defined in Section \ref{Sec:GenOrbi}
(cf.~Remark \ref{rem:trdens}).
In this step, we will use $\Tr$ to associate to each groupoid
Alexander-Spanier cocycle on $\sfG$ a cyclic cocycle on
$\calA^{((\hbar))}\rtimes \sfG$.

Note that there are two natural products on
$\calA^{((\hbar))}\rtimes \sfG$. Recall first that linearly
$\calA^{((\hbar))}\rtimes \sfG \cong \Gamma_\text{\rm
cpt}\big(\sfG_1,s^*\calA^{((\hbar))}\big)$, and that this space
carries the convolution product $\cstar$ defined by
\begin{equation}
\label{crossedprd}
  [f_1 \cstar f_2 ]_g= \sum_{g_1 \, g_2=g}\big( [f_1]_{g_1}g_2 \big)
  [f_2]_{g_2},
  \quad
  f_1,f_2 \in \Gamma_\text{\rm cpt}\big(\sfG_1,s^*\calA^{((\hbar))}\big), ~
  g\in \sfG_1,
\end{equation}
where $[f]_g$ denotes the germ of a section $\in \Gamma_\text{\rm
cpt}\big(\sfG_1,s^*\calA^{((\hbar))}\big)$ at the point $g\in
\sfG_1$. Secondly, the star product $\star$ can be canonically
extended to $\calA^{((\hbar))}\rtimes \sfG$ by putting
\begin{equation}
\label{extprd}
  [f_1 \star f_2 ]_g=  s_* [f_1]_g \star s_* [f_2]_g,
  \quad
  f_1,f_2 \in \Gamma_\text{\rm cpt}\big(\sfG_1,s^*\calA^{((\hbar))}\big), ~
  g\in \sfG_1.
\end{equation}
Now we can define $\scrX^{\sfG}_{\Tr}:\calAS^\bullet \big(\sfG,
\calC^\infty_{\sfG_0}((\hbar))\big) \rightarrow
C^\bullet\big( \calA^{((\hbar))}\rtimes \sfG \big)$ by
\begin{equation}
\label{eq:pairing-star-gpd}
\begin{split}
\scrX^{\sfG}_{\Tr}&(f_0\otimes \cdots \otimes f_k)(a_0 \otimes \cdots \otimes a_k)\\
&=\Tr_k \big( f_0 \star a_0 , \cdots , f_k \star a_k \big)
:=\Tr \big( (f_0 \star a_0)\cstar \cdots \cstar (f_k \star a_k) \big).
\end{split}
\end{equation}

Since in the definition of $\Tr (A)$ by Eq.~\eqref{eq:defunitr} only
the germ of $a \in \calA^{((\hbar))}\rtimes \sfG $ at $B^{(0)}$
enters, $\scrX^{\sfG}_{\Tr}(f)(a)$ with $f= f_0\otimes \cdots
\otimes f_n$ and $a=a_0 \otimes \cdots \otimes a_k$ depends only on
the germ of $(f_0 \star a_0)\cstar\cdots\cstar (f_k\star a_k)$ at
$B^{(0)}$. By definition of the products $\cstar$ and $\star$ on $
\calA^{((\hbar))}\rtimes \sfG $, the value $\scrX^\sfG _{\Tr}(f)(a)$
then depends only on the germs of $f_0\otimes \cdots \otimes f_k$
and $a_0\otimes \cdots \otimes a_k$ at $B^{(k)}$. In particular, if
$f$ vanishes around $B^{(k)}$, then $\scrX^\sfG _{\Tr}(f)=0$. This
shows that for each $k$, $\scrX^\sfG _{\Tr}$ is well defined as a
map from $\calAS \big(\sfG, \calC^\infty_{\sfG_0}((\hbar))\big)^k$
to $C^k \big( \calA^{((\hbar))}\rtimes \sfG \big)$. Moreover, one
checks immediately that
\[
  b \scrX^\sfG _{\Tr} = \scrX^\sfG _{\Tr} \delta \quad
  \text{and} \quad B \scrX^\sfG _{\Tr} (f) = 0,
  \text{ if $f \in \calAS \big(\sfG, \calC^\infty_{\sfG_0}((\hbar))\big)^k$}.
\]
We conclude that $\scrX^\sfG_{\Tr}$ defines a cochain map from the
the groupoid Alexander--Spanier cochain complex of $\sfG$ to the
cyclic cochain complex of $\calA^{((\hbar))}\rtimes \sfG$.

To relate our construction to higher indices of elliptic operators
on an orbifold $M$, we construct a cochain map $\scrX^M_{\Tr}$ from
groupoid Alexander--Spannier cochain complex of $\sfG$ to the cyclic cochain
complex of the algebra $\calA^{((\hbar))}_M$, which can be
identified as the algebra of $\sfG$-invariant smooth functions on
$\sfG_0$ equipped with a $\sfG$-invariant star product.

Let $c$ be a smooth cut-off function on $\sfG_0$ as is introduced in \cite[Sec. 1]{tu}. Define $e$ a smooth function on $\sfG$ by
\[
e(g):=c(s(g))^{\frac{1}{2}}c(t(g))^{\frac{1}{2}}.
\]
It is easy to check that $\sum_{g=g_1g_2}e(g_1)e(g_2)=e(g)$.
Let $E$ be the corresponding projection in $\calA^{((\hbar))}\rtimes \sfG$ with $E_{\hbar=0}=e$. (We point out that $e$ and $E$ may not be compactly supported but they can be chosen to be inside a proper completion of $\calA\rtimes \sfG$ and $\calA^{((\hbar))}\rtimes \sfG$ on which the convolution products are still well defined.) It is easy to check that $e$ commutes with all $\sfG$-invariant functions on $\sfG_0$ and similarly $E$ commutes with all elements of $\calA^{((\hbar))}_M$.

We will use $E$ to define a cochain map $\scrX^M_{\Tr}$ from groupoid Alexander-Spannier cochain complex of $\sfG$ to the cyclic cochain complex of $\calA^{((\hbar))}_M$.

Define $\scrX^{M}_{\Tr}: \calAS^\bullet \big(\sfG,
\calC^\infty_{\sfG_0}((\hbar))\big) \rightarrow C^\bullet\big(
\calA^{((\hbar))}_M\big)$ by
\begin{equation}\label{eq:orb-algebraic-pairing}
\begin{split}
\scrX^M_{\Tr}(f_0\otimes \cdots \otimes f_k)(a_0, \cdots,
a_k):=&\Tr\Big(\big(f_0\star (a_0\star_c E)\big)\star_c \cdots \star_c \big(f_k\star (a_k\star_c E)\big)\Big)\\
=&\Tr\Big(\big((f_0\star E)\star_c a_0 \big)\star_c \cdots \star_c \big((f_k\star E)\star_c a_k\big)\Big)
\end{split}
\end{equation}
where $a_0, \cdots, a_k$ are elements of $\calA^{((\hbar))}_M$
identified as $\sfG$-invariant functions on $\sfG_0$. We point out
that since $a_0, \cdots, a_k$ and $f_0, \cdots, f_k$ are compactly supported,
$\big((f_0\star E)\star_c a_0 \big)\star_c \cdots \star_c \big((f_k\star E)\star_c a_k\big)$ is also compactly
supported. Using the fact that $E$ commutes with $a_i$, we can quickly check the equality between the two expressions in the definition. Hence, the pairing
$\scrX^M_{\Tr}(f_0\otimes \cdots \otimes f_k)(\cdots)$ is well defined.
Similar to $\scrX^\sfG_{\Tr}$, one
can easily check that $\scrX^M_{\Tr}$ is compatible with the
differentials and therefore defines a cochain map.

Both $\scrX^\sfG_{\Tr}$ and $\scrX^M_{\Tr}$ are morphisms of sheaves
of complexes. Now we explain how to related $\scrX^\sfG_{\Tr}$ and
$\scrX^M_{\Tr}$ when $M$ is reduced. As shown in
\cite[Prop.~5.5]{nppt}, the algebra $\calA^{((\hbar))}\rtimes\sfG$
is Morita equivalent to the invariant algebra
$\big(\calA^{((\hbar))} (\sfG_0)\big)^\sfG$, if $M$ is reduced.
The Morita equivalence bimodules are given by
$P:=\calA^{((\hbar))}\rtimes\sfG\star_c E$ and $Q:=E\star_c
\calA^{((\hbar))}\rtimes\sfG$, where $P$ (and $Q$) is a left (right)
$\calA^{((\hbar))}\rtimes\sfG$ and right (left)
$\big(\calA^{((\hbar))} (\sfG_0)\big)^\sfG$ bimodule. In particular,
the map $\iota:\big(\calA^{((\hbar))} (\sfG_0)\big)^\sfG\to
\calA^{((\hbar))}\rtimes\sfG$ defined by $\iota(a)=E\star_c a\star_c E=a\star_c E$ is an
algebra homomorphism between the two algebras, and one can easily
check the following diagram to commute:
\[
\begin{diagram}
\node{\calAS^\bullet(\sfG, \calO)}\arrow{e,t}{\scrX^\sfG_{\Tr}}
\arrow{s,l}{Id}\node{C^\bullet(\calA^{((\hbar))}\rtimes
\sfG)}\arrow{s,r}{\iota}\\
\node{\calAS^\bullet(\sfG,
\calO)}\arrow{e,t}{\scrX^M_{\Tr}}\node{C^\bullet\big((\calA^{((\hbar))}
(\sfG_0))^\sfG\big)}
\end{diagram}.
\]

We point out that when $\sfG$ is a transformation groupoid of a
finite group $\Gamma$ acting on a symplectic manifold $X$, then one
can choose $e=E$ to be the element
\[
  e= \frac{1}{|\Gamma|} \, \sum_{\gamma\in \Gamma}\delta_\gamma,
\]
where $\delta_\gamma$ is the function on $\tilde U\times \Gamma$
such that $\delta_\gamma (x,\gamma) =1$ for every $x\in \tilde U$,
and which is $0$ otherwise. Note that the Morita equivalence between the
crossed product algebra $\calA^{((\hbar))}\rtimes \Gamma$ and the
invariant algebra $\calA^{((\hbar))}(X)^\Gamma\cong
\calA^{((\hbar))}_{M}$ with $M=X/\Gamma$ was proved by
{\sc Dolgushev} and {\sc Etingof} \cite{dolget}.

After the above discussion, we end this subsection with comparing the
constructions above with the quasi-isomorphism $Q$ from Section \ref{sec:cyclic-gpd}.
Since all the cochain maps involved are sheaf morphisms, the same local computations
as in the proof of Proposition \ref{prop:QISM-ASCyc} entail the following result.
\begin{proposition}
\label{prop:QISM-orbiASCyc}
  The sheaf morphisms $\normscrX^M_{\Tr} :
  \scrcCAS^\bullet \big( \calC^\infty_{\sfG_0} ((\hbar)) \big) \rightarrow
  \Tot^\bullet \calB \scrC^\bullet \big( \calA^{((\hbar))}_M\big)$
  and $\scrQ \circ \widetilde{\lambda}:
  \scrcCAS^\bullet \big( \calC^\infty_{\sfG_0}((\hbar)) \big) \rightarrow
  \Tot^\bullet \calB \normscrC^\bullet\big(\calA^{((\hbar))}_M\big)
  \hookrightarrow \Tot^\bullet \calB \scrC^\bullet\big(\calA^{((\hbar))}_M\big)$
  coincide in the derived category of sheaves on $M$. In particular,
  the morphism
  $\normscrX_{\Tr}: \scrcCAS^\bullet \big( \calC^\infty_{\sfG_0}((\hbar)) \big)
  \rightarrow \Tot^\bullet \calB \scrC^\bullet(\calA^{((\hbar))}_M)$
  is a quasi-isomorphism.
\end{proposition}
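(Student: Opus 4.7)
The plan is to reduce to the proof of Proposition \ref{prop:QISM-ASCyc} by exploiting the local structure of the orbifold together with Morita equivalence. Since both $\normscrX^M_{\Tr}$ and $\scrQ \circ \widetilde{\lambda}$ are morphisms of sheaves of mixed complexes over $M$, it suffices to verify their coincidence in the derived category locally, over a cover of $M$ by orbifold charts of the form $\tilde U/\Gamma$ with $\Gamma$ a finite group and $\tilde U \subset \R^{2n}$ a contractible $\Gamma$-invariant Darboux domain. On such a chart, by the Morita equivalence of \cite{dolget} (see also \cite[Prop.~5.5]{nppt}), the invariant algebra $\big(\calA^{((\hbar))}(\tilde U)\big)^{\Gamma}$ is Morita equivalent to the crossed product $\calA^{((\hbar))}\rtimes\Gamma$ via the bimodules $P$ and $Q$ built from the projection $E$. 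This equivalence induces an isomorphism on cyclic cohomology, so I can transport both maps into morphisms with target $\Tot^\bullet \calB \scrC^\bullet(\calA^{((\hbar))}\rtimes\Gamma)$ and compare them there.

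Over such a chart the space of loops decomposes as $B^{(0)}_{|\tilde U} = \bigsqcup_{\langle\gamma\rangle}\tilde U^\gamma$, inducing a corresponding splitting of the orbifold Alexander--Spanier sheaf complex and of the cyclic complex of $\calA^{((\hbar))}\rtimes\Gamma$ into sectors indexed by conjugacy classes. This reduces the claim to checking it on each sector $\tilde U^\gamma / Z(\gamma)$ separately. In degree $2k$ on the $\gamma$-sector, the twisted version of Theorem \ref{thm:quasi} (i.e.~Proposition \ref{prop:quasi}) implies that $\scrQ \circ \widetilde{\lambda}(1^{2k+1})$ represents a generator of the corresponding summand of $HC^{2k}(\calA^{((\hbar))}\rtimes\Gamma)$, namely the $\gamma$-twisted cyclic cocycle built from $\tau^\gamma$ and the Fedosov trace $\Tr_\gamma$. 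A parallel direct computation shows that $\normscrX^M_{\Tr}(1^{2k+1})$ evaluated on a chain $a_0\otimes\cdots\otimes a_{2k}$ equals $\Tr(a_0\star\cdots\star a_{2k})$, which by the definition of $\Tr$ in \eqref{eq:defunitr} and the trace-density property of $\Psi^{2n-2\ell}_{2n}$ is also a generator of this summand.

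The final step is to upgrade this equality of cohomology classes on the ``normalized'' cochain $1^{2k+1}$ to a coincidence of the two sheaf morphisms in the derived category. This is achieved by the same coboundary argument as in the manifold case: the tuple $(\Tr_1,-\Tr_3,\ldots,(-1)^{k-1}\Tr_{2k-1})$ is a $(b+B)$-primitive linking $\Tr_0$ with $(-1)^{k-1}\Tr_{2k}$, so both $\normscrX^M_{\Tr}(1^{2k+1})$ and $\scrQ\circ\widetilde{\lambda}(1^{2k+1})$ determine the same class in $H^{2k}$ of the local cyclic cochain complex on each sector. Combined with the sector-wise version of Proposition \ref{Prop:dercatisoAS}, which identifies $\scrcCAS^\bullet$ with a direct sum of shifted copies of the locally constant sheaves $\underline{\Bbbk}$, this produces the desired identification in the derived category. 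The anticipated main obstacle is the bookkeeping required to ensure that the Morita equivalence projection $E$ and the cyclic twist $\theta$ interact compatibly on each local sector, so that the twisted trace $\Tr_\gamma$ built from $\Psi^{2n-2\ell}_{2n}$ matches Fedosov's $\gamma$-trace through the isomorphism on cyclic cohomology induced by $P$ and $Q$; once this is under control, the quasi-isomorphism statement for $\normscrX_{\Tr}$ follows from the fact that $\scrQ$ and $\widetilde\lambda$ are already known to be quasi-isomorphisms.
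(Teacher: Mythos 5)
Your proposal is correct and takes essentially the same route as the paper: the paper's own proof is a one-sentence reduction to the local computations of Proposition \ref{prop:QISM-ASCyc}, using that all maps involved are sheaf morphisms together with the projection $E$, the homomorphism $\iota$, and the Morita-equivalence and sector structure already set up in Section \ref{subsec:RelOrbAScycCoh}. Your write-up merely makes this local, sector-by-sector comparison of the images of $1^{2k+1}$ with the (twisted) trace, the coboundary argument linking $\Tr_0$ and $\Tr_{2k}$, and the appeal to Propositions \ref{prop:quasi} and \ref{Prop:dercatisoAS} explicit, which is exactly the intended argument.
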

\subsection{Pairing with localized $K$-theory}
\label{pairing}
In this section we will define localized $K$-theory for orbifolds and its pairing with
the Alexander--Spanier cohomology defined in Section \ref{subsec:oASc}.
Let $Q$ be an orbifold modeled by a proper \'etale groupoid $\sfG$.
Pseudodifferential operators on orbifolds were introduced in \cite{gn1,gn2}
and \cite{bu} as operators on $C^\infty(Q)$ that in any local orbifold chart
can be lifted to invariant pseudodifferential operators on open subsets of
$\R^n$. Here we are interested in the algebra of smoothing operators that are
lifts of such smoothing operators on $Q$. However, the notion of invariance is
not straightforward, except for global quotient orbifolds.

First let us remark that $C^\infty(Q)$ embeds into $C^\infty(\sfG_0)$ as functions invariant
under $\sfG$ via pull-back along the projection $\pi:\sfG_0\to Q$. Consider
the algebra $\PDO^{-\infty}(\sfG_0)$ of smoothing operators on $\sfG_0$.
Let $\calU=\{U_i\}_{i\in I}$ be a $\sfG$-trivializing covering of $\sfG_0$ and
denote by $A_i$ the restriction of $A\in\Psi^{-\infty}(\sfG_0)$  to $U_i\in\calU$.
Define
\[
\begin{split}
 \PDO^{-\infty}_{\text{\tiny inv}}&\, (\sfG,\calU):=\\
   &\hspace{-12mm}:=  \{A\in\Psi^{-\infty}(\sfG_0) \mid \text{supp}(A)\subset\calU^2,
  \: A_i(gx,gy)=A_j(x,y)\: \text{for all $i,j\in I,g\in \sfG_{ij}$}\}.
\end{split}
\]
Note that this definition really makes sense, since $\sfG$ is \'etale, hence any
arrow $g\in\sfG_1$ induces, by the existence of a local bisection,  a local diffeomorphism
with support on a sufficiently small neighbourhood of $s(g)\in\sfG_0$. Therefore, we find:
\begin{proposition}
For a sufficiently fine covering $\calU$ of $\sfG_0$, any element
$A\in \PDO^{-\infty}_{\text{\rm inv}}(\sfG,\calU)$ defines a smoothing operator on
$\calC^\infty_\text{\rm\tiny cpt} (Q)$.
\end{proposition}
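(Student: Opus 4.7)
The plan is to identify $C^\infty_{\text{cpt}}(Q)$ with a subspace of $\sfG$-invariant smooth functions on $\sfG_0$ via pullback along $\pi:\sfG_0\to Q$, and then show that $A$ preserves invariance and sends invariant functions with compact $\pi$-image to functions of the same type. To this end I would first fix once and for all a $\sfG$-invariant density $\mu$ on $\sfG_0$; this exists by averaging an arbitrary density against a cutoff function $c$ as in \cite{tu}, using that $\sfG$ is proper and \'etale. Denote by $K_A\in C^\infty(\sfG_0\times\sfG_0)$ the smooth Schwartz kernel of $A$. By the support assumption, $\supp(K_A)\subset\calU^2=\bigcup_{i\in I}U_i\times U_i$, and by local finiteness of $\calU$ (which we may assume after refinement), the integral
\[
(A\tilde f)(x) := \int_{\sfG_0} K_A(x,y)\tilde f(y)\,d\mu(y)
\]
converges for every $x\in\sfG_0$ and every $\sfG$-invariant $\tilde f$ whose image in $Q$ has compact support.

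The essential step is to verify that $A\tilde f$ is itself $\sfG$-invariant whenever $\tilde f$ is. Given $g\in\sfG_1$ with $s(g)\in U_j$, the defining property of a $\sfG$-trivializing cover produces an index $i\in I$ with $g\in\sfG_{ij}$, and the associated local bisection $\sigma_j^g$ determines a diffeomorphism $\phi_g:U_j\to U_i$ given by $x\mapsto t(\sigma_j^g(x))$. By $\sfG$-invariance of $\mu$ the map $\phi_g$ is measure-preserving, and by $\sfG$-invariance of $\tilde f$ one has $\tilde f\circ\phi_g = \tilde f|_{U_j}$. Together with the invariance condition $K_A(\phi_g(x),\phi_g(y)) = K_A(x,y)$ imposed on $A$, the substitution $y = \phi_g(y')$ in the integral defining $(A\tilde f)(\phi_g(x))$ yields $(A\tilde f)(\phi_g(x)) = (A\tilde f)(x)$. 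Since every arrow in $\sfG_1$ arises in this way for a suitable choice of $(i,j)$, $A\tilde f$ is $\sfG$-invariant and descends to a smooth function $Af\in C^\infty(Q)$.

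For $f\in C^\infty_{\text{cpt}}(Q)$, the support of $Af$ on $Q$ is contained in the image in $Q$ of the union of those $U_i\in\calU$ whose second factor in $\supp(K_A)$ meets $\pi^{-1}(\supp f)$. Local finiteness of the induced cover $\overline{\calU}$ of $Q$, which we can ensure by taking $\calU$ fine enough, guarantees that this union projects to a compact subset of $Q$, so $Af\in C^\infty_{\text{cpt}}(Q)$. Smoothness of the descended operator follows from smoothness of $K_A$ together with the invariance established above. The main obstacle is the careful setup of the $\sfG$-invariant density $\mu$ and the justification of the change of variables across different patches in a way consistent for every pair $(i,j)$ with $g\in\sfG_{ij}$; once these technical points are handled, the invariance of the descended operator is a direct consequence of the local invariance condition built into the definition of $\PDO^{-\infty}_{\text{\tiny inv}}(\sfG,\calU)$.
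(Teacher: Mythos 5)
Your overall strategy --- pull back along $\pi:\sfG_0\to Q$, fix a $\sfG$-invariant density, and check that the kernel invariance makes $A\tilde f$ invariant --- is the natural one, and it is essentially what the paper has in mind (the paper gives no written proof, deducing the statement directly from the remark that every arrow acts through a local bisection). However, the central step of your argument is incomplete as written, and the place where it breaks down is precisely where the hypothesis ``sufficiently fine'' has to be used --- a hypothesis you invoke only for local finiteness and compactness of supports, not for invariance. Fix $g\in\sfG_1$ with $x=s(g)\in U_j$ and $x'=t(g)$. The support condition $\supp(K_A)\subset\calU^2$ only says that $K_A(x',\cdot)$ is supported in the union of \emph{all} elements $U_{i'}\in\calU$ containing $x'$, not just in $U_i=\phi_g(U_j)$. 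Your single substitution $y=\phi_g(y')$ therefore accounts only for the portion of the integral over $U_i$; for $y'$ lying in some other $U_{i'}\ni x'$ you must use the (a priori different) bisection through $g$ defined on the corresponding $U_{j'}\ni x$, and two bisections through $g$ defined on $U_{j_1}$ and $U_{j_2}$ are only guaranteed to coincide on the connected component of $U_{j_1}\cap U_{j_2}$ containing $s(g)$, not on all of the overlap. Hence the change of variables you need is not globally well defined for an arbitrary $\sfG$-trivializing cover, and the identity $(A\tilde f)(\phi_g(x))=(A\tilde f)(x)$ is not yet established.

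This is exactly what the fineness assumption repairs: choosing $\calU$ so fine that all nonempty intersections of its members are connected (for instance by pulling back a cover of $Q$ by geodesically convex charts of an invariant metric), all local bisections through a fixed arrow $g$ agree on overlaps and glue to a single diffeomorphism $\Phi_g$ from $\bigcup\{U_{j'}\in\calU : x\in U_{j'}\}$ onto $\bigcup\{U_{i'}\in\calU : x'\in U_{i'}\}$; the index correspondence $j'\mapsto i'$ is a bijection, as one sees by running the same construction for $g^{-1}$. With this $\Phi_g$, and noting that $K_A(x,y)\neq 0$ forces $x$ and $y$ to lie in a common element of $\calU$, your computation then goes through verbatim: $K_A(x',\Phi_g(y))=K_A(x,y)$ by the invariance condition applied in the relevant chart, and invariance of $\mu$ and of $\tilde f$ finish the argument. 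So the missing ingredient is not a new idea but the explicit gluing statement for bisections, together with the recognition that this is where ``sufficiently fine'' enters; you flag it as a technical point at the end, but your middle paragraph treats it as already handled. The remaining items (existence of the invariant density; convergence, which requires $K_A$ to have compact or at least suitably proper support; smoothness and compact support of the descended kernel) are acceptable at the level of detail the paper itself adopts.
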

Observe that $\PDO^{-\infty}_{\mbox{\tiny inv}}(\sfG,\calU)$ is not a subalgebra of
$\PDO^{-\infty}(\sfG_0)$ because of both the support condition and the invariance condition.
However, we shall consider the space
$C^\bullet_{\lambda}(\PDO^{-\infty}_{\mbox{\tiny inv}}(\sfG,\calU))$
of cyclic cochains nonetheless. Let $\tr$ be the densely defined trace on
$\PDO^{-\infty}(\sfG_0)$ coming from the representation on $L^2(\sfG_0)$. Let $\ast$ be canonical commutative product on $C^\infty( \sfG)$ defined by $f_1\ast f_2(g):=f_1(g)f_2(g)$ for $f_1,f_2\in C^\infty(\sfG)$.  For
$f=f_0\otimes\ldots\otimes f_{2k}$ an element in $\calC^\infty_\text{\rm\tiny cpt}(B^{(2k)}_\calU)$ define, as before,
\[
\scrX^{\calU}_{\tr}(f)(A_0\otimes\ldots\otimes
A_{2k})=\tr_k\big((f_0\ast e)A_0,\ldots,(f_{2k}\ast e)A_{2k}\big),
\]
with $A_0,\ldots,A_{2k}\in \PDO^{-\infty}_{\mbox{\tiny
inv}}(\sfG,\calU_0)$, where $\calU_0$ is a G-trivializing cover such
that $\mbox{st}^{2k}(\calU_0)$ refines $\calU$, and $e$ is the
projection in $\calA\rtimes \sfG$ introduced in Section
\ref{subsec:RelOrbAScycCoh}.
\begin{proposition}
The following identities hold true:
\[
\begin{split}
\scrX^{\calU}_{\tr}(\delta( f))(A_0\otimes\ldots\otimes A_{2k})&=\scrX^{\calU}_{\tr}(f)(b(A_0\otimes\ldots\otimes A_{2k}))\\
\scrX^{\calU}_{\tr}(t(f))(A_0\otimes\ldots\otimes A_{2k})&=\scrX^{\calU}_{\tr}(f)(A_{2k}\otimes A_0\otimes\ldots\otimes A_{2k-1}).
\end{split}
\]
\end{proposition}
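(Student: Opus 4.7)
My plan is to verify both identities on the dense subspace of decomposable cochains $f = f_0 \otimes \cdots \otimes f_N$, where each $f_i$ is a smooth function on an appropriate component of $B^{(N)}_\calU$; by continuity, the identities will then hold on all of $C^\infty (B^{(N)}_\calU)$. On such decomposable $f$, the pairing $\scrX^{\calU}_{\tr}(f)$ unfolds, by definition, to the operator trace on $L^2(\sfG_0)$ of the product of the convolution operators by $f_i\ast e \in C^\infty(\sfG_1)$ interlaced with the $\sfG$-invariant smoothing operators $A_j$.

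I would tackle the cyclic identity first, since it follows almost directly from the cyclicity of the operator trace. For a decomposable cochain $f=f_0\otimes\ldots\otimes f_{2k}$, a direct unfolding of the cyclic operator on $C^\infty(B^{(2k)}_\calU)$ inherited from the cyclic manifold structure of Lemma 7.10 shows that $t(f)$ coincides with the decomposable cochain $f_1\otimes f_2\otimes\ldots\otimes f_{2k}\otimes f_0$ (the sign $(-1)^{2k}$ being trivial). Hence
\begin{displaymath}
  \scrX^{\calU}_{\tr}(tf)(A_0\otimes\ldots\otimes A_{2k})=
  \tr\big((f_1\ast e)A_0(f_2\ast e)A_1\cdots(f_{2k}\ast e)A_{2k-1}(f_0\ast e)A_{2k}\big),
\end{displaymath}
and the identity $\tr(XY)=\tr(YX)$ applied with $Y=(f_0\ast e)A_{2k}$ identifies this with $\scrX^{\calU}_{\tr}(f)(A_{2k}\otimes A_0\otimes\ldots\otimes A_{2k-1})$, as required.

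For the Hochschild identity, I would expand both sides term-by-term. On the operator side, $b(A_0\otimes\ldots\otimes A_{2k})$ is a signed sum of $2k+1$ tuples of length $2k$ in which one pair of adjacent operators, or the cyclic pair $(A_{2k},A_0)$ in the final term, has been composed; pairing with $\scrX^{\calU}_{\tr}(f)$ thus inserts such compositions inside the argument of the trace. On the cochain side, $\delta f$ is a signed sum of faces $\delta^i f$ which contract two adjacent arrows via the extended product $\odot$ on $B^{(2k+1)}_\calU$. The key geometric observation is that $\odot$ agrees with genuine groupoid composition on arrows lying inside the image of a common local bisection, and by $\sfG$-invariance of the $A_i$ together with their support in $\calU^2$, this is the only situation contributing to the pairing. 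Under this identification, the face $\delta^i f$ matches, under the pairing, the $i$-th term of $\scrX^{\calU}_{\tr}(f)\circ b$, while the twisted final face involving $g_{2k}\odot g_0$ reproduces the cyclic Hochschild term $A_{2k}A_0\otimes A_1\otimes\ldots\otimes A_{2k-1}$. Signs match by the standard simplicial conventions.

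The main obstacle will be the identification in the last, twisted face of $\delta$, where the compatibility between the extended product $\odot$ on $B^{(2k+1)}_\calU$ and the composition of Schwartz kernels of $A_{2k}$ and $A_0$ must be established. This compatibility ultimately rests on the fact that $e$ is a central idempotent for multiplication by $\sfG$-invariant functions and that the convolution product on the kernels realizes $\odot$ on the supports relevant for the pairing; once this is in place, the remaining algebraic manipulations parallel the standard manifold argument of Step 1 in Section~\ref{sec:HAITM}.
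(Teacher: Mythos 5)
Your route is the same direct computation as the paper's: both unwind $\scrX^{\calU}_{\tr}$ into a sum over tuples of arrows and an integral of the Schwartz kernels against $f$ and the weights $e(g_i)$, and your argument for the second identity (reduction to decomposable cochains plus cyclicity of the operator trace) is correct and complete, in fact slightly more economical than deriving it from the paper's rewritten formula. The problem is the first identity, where you defer exactly the step that constitutes the proof. After invariance of the kernels is used to remove the arrow sitting between $A_i$ and $A_{i+1}$, so that the composed kernel $(A_iA_{i+1})$ can appear, the face term of $\delta f$ still carries a sum over all splittings $h=g_i\odot g_{i+1}$ of the contracted arrow, weighted by $e(g_i)e(g_{i+1})$, whereas the corresponding Hochschild term on the other side carries the single weight $e(h)$ in the slot preceding $A_iA_{i+1}$ (and similarly for the twisted last face after a cyclic rotation under the trace). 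The mechanism that makes these agree is the convolution idempotency of $e$, namely $\sum_{g=g_1g_2}e(g_1)e(g_2)=e(g)$ (equivalently the cut-off property of $c$), extended to the product $\odot$ near the Burghelea spaces; this is what the paper means by ``$e$ is a projection'', and it is the second ingredient of its proof besides the invariance rewriting that collects the arrows into $A_0\bigl(g_0\odot\cdots\odot g_{2k}(x_0),x_1\bigr)A_1(x_1,x_2)\cdots A_{2k}(x_{2k},x_0)$. The property you invoke instead, that $e$ is a central idempotent for multiplication by $\sfG$-invariant functions, is the property used for $E$ in the definition of $\scrX^M_{\Tr}$ and does not produce the collapse of the splitting sum here. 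Since you explicitly leave this compatibility ``to be established'', the Hochschild identity is not proved in your proposal.

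When you fill this gap, also record the bookkeeping that justifies your term-by-term matching: after contraction the operator $A_iA_{i+1}$ is invariant and supported only in $\operatorname{st}^2(\calU_0)^2$, which is exactly why the pairing is set up with a $\sfG$-trivializing cover $\calU_0$ whose iterated star refines $\calU$, so that the contracted tuples still lie in the region where $f$ is defined; and the change of variables transporting the arrows across the kernels uses the invariance of the riemannian measure on $\sfG_0$.
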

\begin{proof}
This is a direct computation: first observe that for $f\in \calC^\infty_\text{\rm\tiny cpt} (B^{(2k)}_\calU)$ and smoothing operators $A_0,\ldots,A_{2k}\in \PDO^{-\infty}_{\mbox{\tiny inv}}(\sfG,\calU_0)$, the pairing $\scrX^{\calU}_{\tr}(f)(A_0\otimes\ldots\otimes A_{2k})$ can be written as
\[
\begin{split}
&\sum_{t(g_i)=x_i,\atop i=0,\ldots,2k}\int_{G_0^{2k+1}}
f(g_0,\ldots,g_{2k})e(g_0)\cdots e(g_{2k})\\
&\qquad\qquad \qquad\qquad A_0(g_0(x_0),x_1)\cdots A_{2k}(g_{2k}(x_{2k}),x_0)dx_0\cdots dx_{2k} \\
&\: =\!\!\sum_{s(g_0)=x_i\atop i=0,\ldots,2k}\int_{G_0^{2k+1}}f(g_0,\ldots,g_{2k})e(g_0)\cdots e(g_{2k})\\
&\qquad\qquad\qquad\qquad A_0(g_0\odot \ldots \odot g_{2k}(x_0),x_1)\cdots A_{2k}(x_{2k},x_0)dx_0\cdots dx_{2k},
\end{split}
\]
where, to pass to the second line, we use invariance of the kernels
$A_i,~i=0,\ldots,2k$, and the composition $g_0\odot \ldots \odot
g_{2k}$ is as defined in Lemma \ref{cyclic-nghbhd}. From this
expression and the property that $e$ is a projection, the identities
of the Proposition easily follow.
\end{proof}
We can therefore morally think of $\scrX^{\calU}_{\tr}$ as being a morphism of
cochain complexes from  the compactly supported Alexander--Spanier complex
$(C^\bullet_{\rm AS,c}(\sfG,\calU),\delta)$ to the cyclic complex $(C^\bullet_\lambda(\PDO^{-\infty}_{\mbox{\tiny inv}}(\sfG,\calU_0)),b)$.

\subsubsection{Localized $K$-theory}
After these preparations, we can give a definition of localized $K$-theory for orbifolds.
Let $\calU$ be a $\sfG$-trivializing covering of $\sfG_0$ and consider the associated
subset $\PDO^{-\infty}_{\text{inv}}(\sfG,\calU)$ of smoothing operators. As before, unitalization
is denoted by a ${\mbox{}}^\sim$. With this, let us define
\begin{equation}
\label{Eq:GammaDefCovKTheory}
\begin{split}
   K_0 &\left(\PDO^{-\infty}_{\text{\tiny inv}}(\sfG,\calU)\right)\\
   &\quad := \big\{ (P,e) \in M_\infty \left( \Psi DO_\text{\tiny inv}^{-\infty}(\sfG, \calU )^\sim \right)
   \times M_\infty (\C) \mid
   P^2 =P , \: P^*=P,\: \\
   &\qquad \qquad e^2 =e , \  e^* =e  \ \text{ and } \:
   P-e \in M_\infty \left( \Psi DO_\text{\tiny inv}^{-\infty}(\sfG,\calU)\right)\big\} / \sim ,
\end{split}
\end{equation}
where $(P,e) \sim (P', e')$ for projections $P,P' \in M_\infty \big(
\PDO^{-\infty}_{\text{inv}}(\sfG,\calU)^{\sim}\big)$ and $e,e' \in
M_\infty (\C )$, if the elements $P$ and $P'$ can be joined by a
continuous and piecewise $\calC^1$ path of projections in some $M_N
\left( \PDO^{-\infty}_{\text{inv}}(\sfG,\calU)\right)$ with $N\gg 0 $ and
likewise for $e$ and $e'$. Elements of $ K_0 \left(\PDO^{-\infty}_{\text{inv}}(\sfG,\calU)\right)$ are
represented as equivalence classes of differences $R:= P-e$, where
$P$ is an idempotent in $M_\infty \big( \PDO^{-\infty}_{\text{inv}}(\sfG,\calU)^{\sim}\big)$, $e$ is a projection in
$M_\infty (\C)$, and the difference $P-e$ lies in $M_\infty \left(
\PDO^{-\infty}_{\text{inv}}(\sfG,\calU) \right)$.

A (finite) refinement $\calV\subset\calU$ obviously leads to an
inclusion $\PDO^{-\infty}_{\text{inv}}(\sfG,\calV) \hookrightarrow \PDO^{-\infty}_{\text{inv}}(\sfG,\calU)$ which induces a map
$K_0 \left(\PDO^{-\infty}_{\text{inv}}(\sfG,\calV)\right)\to K_0 \left(\PDO^{-\infty}_{\text{inv}}(\sfG,\calU)\right)$. With these maps,
the orbifold localized $K$-theory of $Q$ is defined as
\begin{equation}
\label{Eq:GammaLocKTheory}
  K^0_{\rm loc}(Q):=\lim_{\longleftarrow\atop \calU \in \operatorname{Cov}_{\sfG}(M)}
  K_0\left(\PDO^{-\infty}_{\text{inv}}(\sfG,\calU)\right).
\end{equation}
More precisely, this means that elements of $K^0_{\rm loc}(Q)$
are given by families
\begin{equation}
\label{Eq:GammaLocKTheoryRep}
  \big( [P_\calU - e_\calU]\big)_{\calU \in
  \operatorname{Cov}_{\sfG}(M)}
\end{equation}
of equivalence classes of pairs of projectors in matrix spaces over
$\PDO^{-\infty}_{\text{inv}}(\sfG,\calU)^{\sim}$ such that $e_\calU \in
M_\infty (\C )$ for every $\sfG$-trivializing covering $\calU$ and
$(P_\calU,e_\calU)\sim (P_\calV,e_\calV)$ in $M_\infty
\left(\PDO^{-\infty}_{\text{inv}}(\sfG,\calU)^{\sim} \right)$ whenever
$\calV\subset\calU$.

\subsubsection{Pairing with Alexander--Spanier cohomology}
Finally, let us describe the pairing of the thus defined localized
$K$-theory with orbifold Alexander--Spanier cohomology. Let $\calU$
be a $\sfG$-trivializing covering of $\sfG_0$, and
$f=f_0\otimes\ldots\otimes f_{2k}\in \calC^\infty_\text{\rm\tiny
cpt}(B^{(2k)}_\calU)$ a cocycle, i.e, $\delta f=0$. Choose a
$\sfG$-trivializing covering $\calU_0$ of $\sfG_0$ such that
$\operatorname{st}^{2k} (\calU_0)$ refines $\calU$. Now let
$R_{\calU_0} := P_{\calU_0}- Q_{\calU_0} \in
\PDO^{-\infty}_{\text{inv}}(\sfG,\calU_0)$ represent an element of
$K_0\left(\PDO^{-\infty}_{\text{inv}}(\sfG,\calU_0)\right)$ as
defined above. Define
\begin{equation}
\label{Eq:DefOrbASChern}
\begin{split}
  \big( \Ch^\text{\rm\ AS}_{2k} (R_{\calU_0}) \big) \,
  (f)  := \,  (-2\pi i)^k \frac{(2k)!}{k!}\varepsilon^{2k}
  &\Big(\tr \big((f_0\ast e) P_{\calU_0} (f_1\ast e) \ldots (f_{2k}\ast e)P_{\calU_0} \big)\\
 &-  \tr\big( (f_0\ast e) Q_{\calU_0} (f_1\ast e) \ldots (f_{2k}\ast e)Q_{\calU_0} \big)
\Big),
\end{split}
\end{equation}
where $\tr$ is the canonical operator trace on
$\PDO^{-\infty}(\sfG_0)$ and $e$ is the projection introduced in
Section \ref{subsec:RelOrbAScycCoh}. We remark that the $(f_0\ast e)
P_{\calU_0} (f_1\ast e) \ldots (f_{2k}\ast e)P_{\calU_0}$ and
$(f_0\ast e)Q_{\calU_0} (f_1\ast e) \ldots (f_{2k}\ast
e)Q_{\calU_0}$ are well defined trace class operators on
$L^2(\sfG_0)$, because $\operatorname{st}^{2k} (\calU_0)$ is finer
than $\calU$.

The same arguments as in \cite[Sec. 2]{moswu} now prove the following result.
\begin{proposition}\label{prop:orbpairing} In the limit when the covering gets finer, the pairing defined by Eq. \eqref{Eq:DefOrbASChern} is independent of all choices and induces a map
\[
H^{ev}_\text{\rm{\tiny cpt}} (\tilde{Q},\C)\times K^0_{loc}(Q)\to\C.
\]
\end{proposition}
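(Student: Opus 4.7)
The strategy is to reinterpret the right hand side of \eqref{Eq:DefOrbASChern} as the Connes pairing between a cyclic cochain on $\PDO^{-\infty}_{\text{inv}}(\sfG,\calU_0)$ built from $f$ via $\scrX^{\calU}_{\tr}$ and the noncommutative Chern character $\Ch(R_{\calU_0})$, and then follow the outline of \cite[Sec.~2]{moswu} with only the modifications forced by the passage from the diagonal $\Delta_{k+1}(Q)\subset Q^{k+1}$ to the Burghelea neighbourhood $B^{(2k)}_{\calU}\subset\sfG_1^{2k+1}$ and by the presence of the cut-off projection $e$.

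First I would rewrite \eqref{Eq:DefOrbASChern} as
\[
 \langle \Ch^{\text{\tiny AS}}_{2k}(R_{\calU_0}),f\rangle
 =\big\langle \scrX^{\calU}_{\tr}(\varepsilon^{2k} f),\,\Ch(R_{\calU_0})\big\rangle,
\]
where $\Ch(R_{\calU_0})$ is the chain-level noncommutative Chern character from \eqref{eq:defchern}, and the pairing on the right is the canonical one between cyclic cochains and the $(b+B)$-complex. This is the orbifold analog of \eqref{Eq.EquASPairs}; its verification is a direct unraveling using that $f\ast e$ acts by multiplication with a compactly supported $\sfG$-invariant function and that the $P_{\calU_0},Q_{\calU_0}$ are invariant kernels supported near the diagonal, so that all convolutions involved are trace class.

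Next I would establish the three invariance properties. For independence of the representative of $[f]$, the identity $\scrX^{\calU}_{\tr}\circ\delta = b\circ\scrX^{\calU}_{\tr}$ (proved just before the proposition) together with cyclicity, which gives vanishing on the image of the cyclic operator, show that $\scrX^{\calU}_{\tr}(\varepsilon^{2k}f)$ is a Hochschild cocycle whose cohomology class in $HC^{2k}$ depends only on $[f]$; pairing with the $(b+B)$-cycle $\Ch(R_{\calU_0})$ therefore descends to $[f]$. For independence of the representative of a localized $K$-theory class, one uses the standard homotopy invariance of the Chern character: a piecewise $\mathcal{C}^1$ path of projections in $M_N(\PDO^{-\infty}_\text{inv}(\sfG,\calU_0)^{\sim})$ produces a chain homotopy between the cycles $\Ch(R_0)$ and $\Ch(R_1)$ in the $(b+B)$-complex of $\PDO^{-\infty}_\text{inv}(\sfG,\calU_0)$, and the cochain $\scrX^{\calU}_{\tr}(\varepsilon^{2k}f)$ extends to this larger algebra, so the pairing is constant along the path. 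For the refinement compatibility, if $\calV\subset\calU$ and $\calV_0\subset\calU_0$ are refinements with $\operatorname{st}^{2k}(\calV_0)$ refining $\calV$, then the inclusions $\PDO^{-\infty}_\text{inv}(\sfG,\calV_0)\hookrightarrow\PDO^{-\infty}_\text{inv}(\sfG,\calU_0)$ and $C^{2k}_{\text{\tiny AS},\text{cpt}}(\sfG,\calU)\hookrightarrow C^{2k}_{\text{\tiny AS},\text{cpt}}(\sfG,\calV)$ are compatible with both $\scrX^{\bullet}_{\tr}$ and $\Ch$ on the nose, so the pairings agree.

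Combining these three facts, the pairing descends to a map
\[
  \big(\varinjlim_{\calU} H^{2k}(C^{\bullet}_{\text{\tiny AS},\text{cpt}}(\sfG,\calU))\big)
  \times
  \big(\varprojlim_{\calU_0} K_{0}(\PDO^{-\infty}_\text{inv}(\sfG,\calU_0))\big)
  \longrightarrow \C,
\]
and by Proposition \ref{nat-iso-as} (applied to compactly supported coefficients) and the definition \eqref{Eq:GammaLocKTheory} the left factor is exactly $H^{2k}_\text{cpt}(\widetilde{Q},\C)$ while the right factor is $K^0_\text{loc}(Q)$. Summing over $k$ yields the desired map out of $H^{ev}_\text{cpt}(\widetilde{Q},\C)\times K^0_\text{loc}(Q)$.

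The main obstacle will be the bookkeeping in the refinement step, since the Burghelea neighbourhoods $B^{(2k)}_{\calU}$ interact non-trivially with the composition $\odot$ of Lemma \ref{cyclic-nghbhd}; in particular, one must check that the trace identities used to verify $\scrX^{\calU}_{\tr}\circ\delta=b\circ\scrX^{\calU}_{\tr}$ still hold after the necessary shrinking of $\calU_0$, and that the cut-off $e$ does not introduce boundary terms when one uses the cyclicity of $\tr$ on $L^2(\sfG_0)$ to rotate the projection $P_{\calU_0}$ and the multiplication operators by $f_i\ast e$. Both points follow from the normalization $\sum_{g=g_1g_2} e(g_1)e(g_2)=e(g)$ and the invariance of kernels, but they need to be verified carefully in order to mirror the manifold argument of \cite[Sec.~2]{moswu}.
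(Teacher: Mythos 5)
Your proposal is correct and follows essentially the same route as the paper, whose proof consists precisely of invoking the arguments of \cite[Sec.~2]{moswu}: rewriting \eqref{Eq:DefOrbASChern} as the Connes pairing of $\scrX^{\calU}_{\tr}(\varepsilon^{2k}f)$ with $\Ch(R_{\calU_0})$, then using $\scrX^{\calU}_{\tr}\circ\delta=b\circ\scrX^{\calU}_{\tr}$, homotopy invariance of the Chern character, and refinement compatibility, exactly as in the manifold case (cf.\ Eq.~\eqref{Eq.EquASPairs}). Your explicit attention to the $\operatorname{st}^{2k}$-refinements, the cut-off $e$, and the fact that $\PDO^{-\infty}_{\rm inv}(\sfG,\calU)$ is only a linear space rather than an algebra supplies precisely the orbifold-specific checks the paper leaves to the cited reference.
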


\subsection{Operator-Symbol calculus on orbifolds and the higher analytic index}
In this final subsection we will define the higher analytic index of an elliptic differential operator on a
reduced orbifold and, using the algebraic index theorem, derive a topological expression computing
this number. Throughout this section, we denote by $Q$ a reduced compact riemannian orbifold modeled by a
proper \'etale groupoid $\sfG$. The groupoid $T^*\sfG$ therefore models the cotangent bundle $T^*Q$.
\subsubsection{Orbifold pseudodifferential operators and the symbol calculus}
\label{sssec:orbpdo}
Here we recall the symbol calculus on proper \'etale groupoids of \cite{ppt} and relate it
to the theory of pseudodifferential operators on orbifolds by imposing invariance. As for the
smoothing operators in Section \ref{pairing}, invariance only makes sense when the operators are localized to a sufficiently small neighbourhood of the diagonal in $\sfG_0\times\sfG_0$.
Let $\calU$ be a $\sfG$-trivializing cover of $\sfG_0$, and choose a cut-off function
$\chi:\sfG_0\times\sfG_0\to [0,1]$ as in \eqref{cut-off} with $\mbox{supp}(\chi)\subset\calU^2$
which is invariant:
\[
\chi(gx,gy)=\chi(x,y),\quad \mbox{for all}~g\in\sfG_{ij},~x,y\in U_i\times U_i.
\]
These choices define a quantization map as in \eqref{op-quant}. Observe that
the groupoid $\sfG$ acts on the sheaf $ \Sym^m$ of symbols on $\sfG_0$, since
they are just functions on $T^*\sfG_0$. It therefore makes sense to consider
the subspace $\Sym^m_{\rm inv}$ of invariant global symbols of order $m$.
With this, we see from the explicit formula \eqref{form_op-quant}
that the quantization provides a map
\[
\Op :\:  \Sym^m_{\rm inv} \rightarrow \PDO^m_{\rm inv} (\sfG,\calU),
\]
where, as for the smoothing operators,
\[
\begin{split}
\PDO^m_{\rm inv} &\, (\sfG,\calU):= \\
  := & \, \{A\in\PDO^m(\sfG_0),\mid \supp(A)\subset\calU^2 , \: gA_ig^{-1}=A_j,
  \text{for all } i,j\in I,g\in \sfG_{ij}\}.
\end{split}
\]
Indeed, both the support and the invariance properties follow from the corresponding
properties of the cut-off function $\chi$. In the opposite direction, the symbol map
$\sigma$ defined in equation \eqref{def-symbol} maps $\sigma: \PDO^m_{\rm inv} (\sfG,\calU)
\to \Sym^m_{\rm inv}$ and we therefore have an isomorphism
\[
\Sym^\infty_{\rm inv}\slash\Sym^{-\infty}_{\rm inv}\cong
\PDO^\infty_{\rm inv} (\sfG,\calU)\slash\PDO^{-\infty}_{\rm inv}(\sfG,\calU),
\]
induced by $\Op$ and $\sigma$. Now, since pseudodifferential operators have smooth kernels
off the diagonal, one observes that the right hand side inherits an algebra structure from
the product in $\PDO^\infty(\sfG_0)$ even though $\PDO^\infty_{\rm inv} (\sfG,\calU)$ is not
closed under operator composition. Therefore, going over to asymptotic families of symbols,
one obtains a deformation quantization of $T^*Q$ by defining the product on invariant
asymptotic families of symbols as in equation \eqref{as-product}.
We refer to \cite[Appendix 2]{ppt} for more details about this operator-symbol calculus.
Moreover, the operator trace on $L^2(Q)$ defines a trace $\tr$
on this deformation quantization. We observed in \cite{ppt} that as in the
manifold case, this canonical deformation quantization
using the asymptotic symbol calculus is
isomorphic to one constructed by a Fedosov connection. Under the corresponding
isomorphism, the operator $\tr$ is identified with the trace $\Tr$ defined
by Eq.~\eqref{eq:defunitr}.
Besides this, we can use now a similar argument as in Sec. \ref{subsec:RelOrbAScycCoh} for the construction
of $\scrX_{\Tr}$, to show that asymptotically in $\hbar$ the locally defined
maps $\scrX_{\tr}(U)$ glue together to a sheaf morphism
$\normscrX_{\tr} : \scrCAS^\bullet \big(\calC^\infty_{T^*\sfG_0} ((\hbar))\big)
\rightarrow \scrC^\bullet\big(\calA^{((\hbar))}\big)$.
Furthermore, we can pull back functions on $Q$ to $T^*Q$,
hence we obtain a quasi-isomorphism from
$\scrCAS^\bullet\big(\calC^\infty_Q((\hbar))\big)$ to
$\scrCAS^\bullet\big(\calC^\infty_{T^*Q}((\hbar))\big)$.
Using the same arguments as for the proof of Eq.~\eqref{eq:symbol}, we can
show now that the induced cochain map
$\normscrX_{\tr}: \scrCAS^\bullet\big( \calC^\infty_{T^*Q}\big)\to
\scrC^\bullet\big( \calA^{((\hbar))}_{T^*Q}\big)$ agrees with the map
$\scrX_{\Tr}$.
\subsubsection{The orbifold higher analytic index}
Let $D$ be an elliptic differential operator on the reduced orbifold $Q$.
We denote by the same symbol $D$ its lift to a $\sfG$-invariant elliptic
operator on $\sfG_0$. With the symbol calculus developed in the previous
section we can now prove the following:
\begin{proposition}
The elliptic operator $D$ defines a canonical element $[D]\in K^0_{\rm loc}(Q)$.
\end{proposition}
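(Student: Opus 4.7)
The plan is to mimic the manifold construction from \cite{conmos} and from Section \ref{sec:HAITM}, but carried out inside the invariant pseudodifferential calculus of Section \ref{sssec:orbpdo}. For each $\sfG$-trivializing cover $\calU$ of $\sfG_0$ we shall produce a pair $(P_\calU, e_\calU)$ representing a class in $K_0\big( \PDO^{-\infty}_{\rm inv}(\sfG,\calU)\big)$, and then check compatibility under refinement so that the family assembles into an element of the inverse limit $K^0_{\rm loc}(Q)$.

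First, since $D$ is a differential operator, it is supported on the diagonal of $\sfG_0\times\sfG_0$ and is automatically $\sfG$-invariant by assumption. Its principal symbol is invertible off the zero section, so the asymptotic symbol calculus from Section \ref{sssec:orbpdo} produces an invariant parametrix $Q$: choose a cover $\calU_0$ sufficiently fine that $\operatorname{st}^2(\calU_0)$ refines $\calU$, and an invariant cut-off $\chi$ with $\supp(\chi)\subset \calU_0^2$; then quantize $\sigma_\text{\tiny\rm pr}(D)^{-1}$ by the map $\Op$ of \eqref{op-quant}. The result is $Q\in \PDO^{-\text{ord}(D)}_\text{\rm inv}(\sfG,\calU_0)$ with $S_0 := 1-QD$ and $S_1 := 1-DQ$ lying in $\PDO^{-\infty}_\text{\rm inv}(\sfG,\operatorname{st}(\calU_0)) \subset \PDO^{-\infty}_\text{\rm inv}(\sfG,\calU)$.

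Next, I would form the Connes--Moscovici--Wodzicki idempotent
\[
 P_\calU := \begin{pmatrix} S_0^2 & S_0(1+S_0)Q \\ S_1 D & 1-S_1^2 \end{pmatrix}, \qquad
 e_\calU := \begin{pmatrix} 0 & 0 \\ 0 & 1 \end{pmatrix}.
\]
A direct computation shows $P_\calU^2 = P_\calU$, and $P_\calU - e_\calU$ has entries that are polynomials in $S_0, S_1, Q, D$ each of which contains at least one factor of a smoothing remainder. Because $S_0, S_1$ lie in $\PDO^{-\infty}_\text{\rm inv}(\sfG, \operatorname{st}(\calU_0))$ and compositions with $D$ and $Q$ preserve invariance and increase support only by another $\operatorname{st}$, one concludes $P_\calU - e_\calU \in M_2\big( \PDO^{-\infty}_\text{\rm inv}(\sfG,\calU)\big)$, as required by \eqref{Eq:GammaDefCovKTheory}. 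Self-adjointness can be arranged in the standard way by replacing $P_\calU$ with $(P_\calU + P_\calU^*)/2$ composed with a holomorphic functional calculus cut-off, without leaving the class.

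Finally, for compatibility under refinement $\calV \subset \calU$, I would choose a common refinement of the auxiliary covers $\calU_0, \calV_0$ and observe that any two invariant parametrices of $D$ differ by a smoothing invariant operator with arbitrarily small support; the two Connes--Moscovici idempotents are then connected by a piecewise $\calC^1$ path of projections in $M_N\big(\PDO^{-\infty}_\text{\rm inv}(\sfG,\calU)^\sim\big)$ obtained from the standard Riesz-projection deformation, giving $(P_\calU,e_\calU) \sim (P_\calV,e_\calV)$ inside the coarser algebra. Hence the classes $[P_\calU-e_\calU]_{\calU}$ form a coherent family in the inverse limit \eqref{Eq:GammaLocKTheory}, defining $[D]\in K^0_\text{\rm loc}(Q)$.

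The main obstacle is the careful bookkeeping of supports and invariance: because $\PDO^{-\infty}_\text{\rm inv}(\sfG,\calU)$ is \emph{not} closed under composition, one has to match the scale of the parametrix construction to the scale of the target cover $\calU$, which is the reason for passing through $\operatorname{st}^k$-refinements. Once this is done the argument is essentially the classical Atiyah--Singer--Connes--Moscovici construction carried out equivariantly on $\sfG_0$.
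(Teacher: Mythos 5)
Your construction is essentially the paper's own proof: the paper also builds an invariant parametrix $E=\Op(e)$ via the localized invariant symbol calculus with an auxiliary cover $\calU'$ such that $\operatorname{st}^2(\calU')$ refines $\calU$, and forms the same Connes--Moscovici idempotent (written there as $L\operatorname{diag}(I,0)L^{-1}-\operatorname{diag}(0,I)$, which expanded is exactly your matrix), with the same support bookkeeping in $\PDO^{-\infty}_{\rm inv}(\sfG,\calU)$. The only cosmetic difference is the refinement step, where the paper makes your homotopy concrete by running the construction with the interpolated cut-off $\chi_t=t\chi_\calU+(1-t)\chi_\calV$ (equivalently, a linear path of invariant parametrices), which yields the required piecewise $\calC^1$ path of projectors in the coarser localized algebra.
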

\begin{proof}
By the definition of localized $K$-theory, cf.~\eqref{Eq:GammaLocKTheory},
we first have to construct an element in
$K_0\left(\PDO^{-\infty}_{\rm inv}(\sfG,\calU)\right)$ for any $\sfG$-trivializing
cover $\calU$, and second for any refinement $\calV\subset\calU$ a homotopy
between the corresponding K-theory elements localized in $\calV$ respectively $\calU$.
To achieve the first we use the operator-symbol calculus developed in Section
\ref{sssec:orbpdo} and follow the standard procedure (cf.~\cite[Sec.~3.2.2]{Eg-Sc})
to find a symbol function $e\in \Sym^\infty _{\rm inv} (Q)$  such that
$\sigma(D)e-1$ and $e\sigma(D)-1$ are in $\Sym^{-\infty}_{\rm inv}$.
Choose a $\sfG$-trivializing covering $\calU '$ such that ${\rm st}^2(\calU ')$ refines
$\calU$, and a corresponding invariant cut-off function $\chi$, we define the quantization
map as in \eqref{op-quant}. It follows that both $D\Op(e)-I$ and $\Op(e)D-I$ are elements in
$\PDO^{-\infty}_{\rm inv}(\sfG,\calU')$, since $D$ is an invariant differential operator.
Write $E=\Op(e)$, and define $S_0:=I-DE$, and $S_1:=I-ED$, and
\[
L=\left(\begin{array}{cc}S_0& -E-S_0B\\ D&S_1\end{array}\right).
\]
Then the matrix $R$ defined by
\[
R=L\left(\begin{array}{cc}I&0\\ 0&0\end{array}\right)L^{-1}-\left(\begin{array}{cc}0&0\\ 0&I\end{array}\right)
\]
is a formal difference of projectors in $M_2\big(\PDO^{-\infty}_{\rm inv}(\sfG,\calU)\big)$
which defines an element in $K_0\left(\PDO^{-\infty}_{\rm inv}(\sfG,\calU)\right)$. Second,
for a refinement $\calV\subset\calU$ we have two element
$R_\calU\in K_0\left(\PDO^{-\infty}_{\rm inv}(\sfG,\calU)\right) $ and
$R_\calV\in K_0\left(\PDO^{-\infty}_{\rm inv}(\sfG,\calV)\right)$ defined by using cut-off
functions $\chi_\calU$ and $\chi_\calV$. But then the family of projectors $R_t,~t\in [0,1]$
defined using the cut-off function $\chi_t=t\chi_\calU+(1-t)\chi_\calV$ gives the desired
homotopy proving that both projectors define the same element in
$K_0\left(\PDO^{-\infty}_{\rm inv}(\sfG,\calU)\right)$. In total, this defines the element
$[D]\in K^0_{loc}(Q)$. It is independent of any choices made.
\end{proof}
We are now ready to define the higher analytic index of $D$. Let
$[f]\in H^{2k}_\text{\rm\tiny cpt}(\tilde{Q},\C)$ be a compactly supported cohomology class
of degree $2k$, represented by an Ale\-xan\-der--Spanier cocycle
$f\in \calC^\infty_\text{\rm\tiny cpt} (B^{(2k)}_{\calU})$ satisfying $\delta(f)=0$ for some
$\sfG$-trivializing cover $\calU$. Choose a $\sfG$-trivializing cover $\calV$ such that
${\rm st}^{2k}(\calV)$ refines $\calU$.
Then by the above discussion, $R_\calV$ defines an element in
$K_0\left(\Psi^{-\infty}_{\text{inv}}(\sfG,\calV)\right)$, which can be paired with $f$.
Hence we define the $[f]$-localized index of $D$ to be
\[
 \ind_{[f]}=:\Ch_{2k}^{\text{AS}}(R_\calV)(f),
\]
which is independent of the choices of the representative $f$ in its cohomology class and
the coverings $\calU, \calV$.

Using the previously obtained results from this section
one proves exactly like for Eq.~(\ref{eq:symbol})
that by comparing Eq.~(\ref{eq:orb-algebraic-pairing}) and Eq.~(\ref{Eq:DefOrbASChern})
the higher analytic index of $D$ on $Q$ can be computed using
the corresponding higher algebra index of $r_D$, where $r_D$ is the
asymptotic symbol of $R_D$.
Therefore, we can apply Thm.~\ref{thm:orb-alg-index} to compute
$\ind_{[f]}(D)$. This proves our last result.
\begin{theorem}\label{thm:orb-ana-ind}
 Let $D$ be an elliptic pseudodifferential operators
 on a reduced orbifold $Q$, and $[f]$ a compactly supported orbifold cyclic
 Alexander--Spanier cohomology class of degree $2j$. Then
\[
  \ind_{[f]}(D)=\sum_{r=0}^j\int_{\widetilde{T^*Q}}
  \frac{1}{(2\pi \sqrt{-1})^{j-r} \, m}
  \frac{\widetilde{\lambda}^{2j-2r}(f)\wedge
  \hat{A}(\widetilde{T^*M}) \, \Ch_\theta(\sigmapr(D))}{\Ch_\theta(\lambda_{-1}N)},
\]
where $\ell$, $\Ch_\theta$, $\lambda_{-1}N$, and $m$ are as in
Theorem \ref{thm:orb-alg-index}.
\end{theorem}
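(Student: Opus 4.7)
The plan is to reduce the analytic higher index on the orbifold $Q$ to the algebraic higher index on the deformation quantization of $T^{*}Q$ constructed via the asymptotic symbol calculus, and then invoke Theorem~\ref{thm:orb-alg-index}. The argument parallels the manifold case treated in Section~\ref{sec:HAITM}, but has to be carried out inside the framework of proper \'etale groupoids, $\sfG$-trivializing covers, and orbifold Alexander--Spanier cohomology developed in Sections~\ref{subsec:oASc}--\ref{pairing}.

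First I would unpack the definition $\ind_{[f]}(D)=\Ch^{\mathrm{AS}}_{2j}(R_{\calV})(f)$. Writing this pairing out via \eqref{Eq:DefOrbASChern} and using that $R_{\calV}=P_{\calV}-Q_{\calV}$ consists of projectors sufficiently localized near the diagonal, one sees that
\[
 \ind_{[f]}(D)=\bigl\langle\scrX^{\calU}_{\tr}(\varepsilon^{2j}f),\Ch(R_{\calV})\bigr\rangle,
\]
in complete analogy with~\eqref{Eq.EquASPairs}. Next I would replace the trace-class smoothing operators $R_{\calV}$ by their asymptotic symbols $r_{D}=\sigma_{\hbar}(R_{\calV})$ living in the deformation quantization $\calA^{((\hbar))}_{M}$ of $T^{*}Q$ constructed in Section~\ref{sssec:orbpdo}. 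The computation \eqref{eq:symbol}, carried over from the manifold setting to the groupoid setting, shows that asymptotically in $\hbar$
\[
 \scrX^{\calU}_{\tr}(f)\bigl(R_{\calV}^{\otimes(2j+1)}\bigr)
 \;=\;\scrX^{M}_{\mathrm{Tr}}(\pi^{*}f)\bigl(r_{D}^{\otimes(2j+1)}\bigr),
\]
where the right-hand side is the algebraic pairing of \eqref{eq:orb-algebraic-pairing} applied to the pullback of $f$ to the cotangent bundle. The key identities used here are $f_i\Op_{\hbar}(a_i)=\Op_{\hbar}(f_i a_i)$ and $\tr\circ\Op_{\hbar}=\mathrm{Tr}$, both of which generalize verbatim from manifolds to proper \'etale groupoids using the operator-symbol calculus of~\cite{ppt}, together with the identification of the cut-off projection $e$ (used in $\scrX^{M}_{\mathrm{Tr}}$) with the pullback to $T^{*}\sfG$ of the cut-off function defining the operator trace.

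Once this identification is in place, I would apply Proposition~\ref{prop:QISM-orbiASCyc}: the sheaf morphism $\scrX^{M}_{\mathrm{Tr}}$ agrees with $\scrQ\circ\widetilde{\lambda}$ in the derived category. Since $[f]$ is closed, this gives at the level of global pairings
\[
 \ind_{[f]}(D)=\bigl\langle \sfQ\bigl(\widetilde{\lambda}(\pi^{*}f)\bigr),\,P_{D}-Q_{D}\bigr\rangle,
\]
where $P_{D},Q_{D}$ are any representatives in the matrix algebra over $\calA^{((\hbar))}\rtimes\sfG$ of the class of $r_{D}$, with their zeroth-order parts giving, respectively, the virtual bundle represented by $\sigmapr(D)$ on $T^{*}Q$ (i.e. $V_{1}-V_{2}=\sigmapr(D)$ as a $K$-theory class on $T^{*}Q$). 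Now Theorem~\ref{thm:orb-alg-index} applied on the symplectic orbifold $T^{*}Q$ yields the claimed formula, since $\widetilde{\lambda}^{2j-2r}(f)$ gives precisely the components $\alpha_{2j-2r}$ of the total cyclic cochain associated to $[f]$, and the characteristic class $\Omega$ of the asymptotic-symbol-calculus deformation is cohomologous to the canonical symplectic form on $T^{*}Q$ (so that $\exp(-\Omega/2\pi\sqrt{-1}\hbar)$ contributes trivially under the integration against $\widetilde{\lambda}^{2j-2r}(f)$ which is already of top degree in the relevant sense).

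The main obstacle I expect is the careful bookkeeping on the inertia orbifold $\widetilde{T^{*}Q}$: one has to check that the cut-off function $e$ used to land in the invariant subalgebra is compatible with the localization of $K$-theory to invariant smoothing operators in $\PDO^{-\infty}_{\text{inv}}(\sfG,\calV)$, so that the chain of equalities above is valid cover-by-cover and stable under refinement $\calV\subset\calU$. This amounts to verifying that the homotopy between $R_{\calU}$ and $R_{\calV}$ constructed in the proof that $[D]\in K^{0}_{\mathrm{loc}}(Q)$ induces compatible algebraic pairings, which ultimately reduces to the independence of the Fedosov trace on the choice of cut-off and the continuity of $\Tr$ under the path of cut-offs $\chi_{t}=t\chi_{\calU}+(1-t)\chi_{\calV}$. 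Once this compatibility is secured, taking the inverse limit over $\sfG$-trivializing covers produces the well-defined pairing of Proposition~\ref{prop:orbpairing}, and the formula of Theorem~\ref{thm:orb-alg-index} is then the desired topological expression for $\ind_{[f]}(D)$.
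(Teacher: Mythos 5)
Your proposal is correct and follows essentially the same route as the paper: unpack $\ind_{[f]}(D)=\Ch^{\rm AS}_{2j}(R_{\calV})(f)$, use the orbifold operator--symbol calculus to identify this pairing (asymptotically in $\hbar$) with the algebraic pairing \eqref{eq:orb-algebraic-pairing} evaluated on the asymptotic symbol $r_D$, invoke Proposition~\ref{prop:QISM-orbiASCyc} to replace $\normscrX^M_{\Tr}$ by $\scrQ\circ\widetilde{\lambda}$, and then apply Theorem~\ref{thm:orb-alg-index} on $T^*Q$. The paper's own argument is just a compressed version of exactly these steps, so no further comparison is needed.
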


We end this section with two remarks about the above Theorem \ref{thm:orb-ana-ind}.
\begin{enumerate}
\item When we take the Alexander--Spanier cohomology class $1\in
H^0(\tilde{Q})$, the localized index $\ind_{[1]}(D)$ is the
classical index of the elliptic operator $D$ on $Q$. Theorem
\ref{thm:orb-ana-ind} in this case reduces to the Kawasaki's index
theorem \cite{Kaw}, and our proof is identical to the one given in
\cite{ppt}.
\item In the case that $Q$ is a global quotient orbifold represented by a
transformation groupoid as in Example \ref{example}, if we take the
cocycle $f_{\left<\gamma\right>}$ introduced at the end of Section
\ref{constr-as}, the localized index
$\ind_{f_{\left<\gamma\right>}}(D)$ can be computed using a theorem
by Atiyah and Segal in \cite{AtiSe}.
\end{enumerate}


\appendix
\section{Cyclic cohomology}
\subsection{The cyclic bicomplex}
Here we briefly recall the definition of Connes' $(b,B)$-complex computing
cyclic cohomology. Let $A$ be a unital algebra over a field $\Bbbk$. The
Hochschild chain complex $\big( C_\bullet (A) , b\big)$ resp.~the
normalized Hochschild chain complex $\big( \normC_\bullet (A),b\big)$ is
given by
\[
 C_k (A):=A\otimes_\Bbbk A^{\otimes k}
 \quad \text{resp.} \quad \normC_k (A):=A\otimes_\Bbbk(A/\Bbbk)^{\otimes k}
\]
equipped with the differential $b : C_k (A) \rightarrow C_{k-1} (A)$,
\[
\begin{split}
  b ( a_0\otimes\ldots\otimes a_k ) &:=
  \sum_{i=0}^{k-1}(-1)^ia_0\otimes\ldots\otimes a_ia_{i+1}\otimes\ldots\otimes
  a_k +(-1)^ka_ka_0\otimes\ldots \otimes a_{k-1}.
\end{split}
\]
Note that $b$ passes down to $\normC_\bullet (A)$.
The homology of $\big( C_\bullet (A),b\big)$ is called the Hochschild homology
of $A$ and is denoted by $HH_\bullet(A)$. It naturally coincides with the
homology of the normalized Hochschild chain complex. Introduce the operator
$\normB : \normC_k(A)\rightarrow \normC_{k+1}(A)$ by the formula
\[
  \normB ( a_0\otimes \ldots \otimes a_k ) :=
  \sum_{i=0}^k(-1)^{ik}1\otimes a_i\otimes\ldots\otimes a_k\otimes a_0\otimes
  \ldots\otimes a_{i-1}.
\]
This defines a differential, i.e., $\normB^2=0$, and we have $[\normB,b]=0$,
so we can form the $(b,\normB)$-bicomplex
\[
\xymatrix{
  \ldots \ar[d]_{b} & \ldots\ar[d]_{b} &\ldots\ar[d]_{b}\\
  \normC_2(A)\ar[d]_{b}  & 
  \normC_1(A)\ar[d]_{b} \ar[l]^{\normB} &
  \normC_0(A) \ar[l]^{\normB} \\
  \normC_1(A) \ar[d]_{b} & \normC_0(A\ar[l]^{\normB})\\
  \normC_0(A)
  }
\]
The total complex associated to this (normalized) mixed complex
\[
  \normcalB_k(A)=\bigoplus_{i=0}^{[k/2]}\normC_{k-2i}(A),
\]
equipped with the differential $b+\normB$, is the fundamental complex
computing the cyclic homology $HC_\bullet(A)$.
The dual theory is obtained by taking the $\Hom_\Bbbk (-,\Bbbk)$ of this
complex with the induced differentials, also denoted $b$ and $B$. For example
the normalized Hochschild cochain complex is given by
$\normC^\bullet(A):=\Hom_\Bbbk (\normC_\bullet(A),\Bbbk)$ and this leads to
the normalized mixed cyclic cochain complex
$\big( \normcalB^\bullet(A) , b , \normB\big)$.  This is the mixed complex
that we will mainly use throughout this paper.
For further information on Hochschild and cyclic homology theory
and in particular for the definition of $B$ and $\calB^\bullet(A)$ in the
general, not normalized, case see \cite{loday}.

\begin{remark}
  Note that for $\calA$ a sheaf of algebras over a topological space $M$,
  the assignments $U \mapsto C_k \big( \Gamma (U,\calA )\big)$ and
  $U \mapsto C^k \big( \Gammacpt (U,\calA )\big)$,
  where $U$ runs through the open subsets of $M$ are presheaves on $M$.
\end{remark}

\begin{remark}
  Throughout this paper we consider only algebras resp.~sheaves of algebras
  which additionally carry a  bornology compatible  with the algebraic
  structure. It is understood that the Hochschild and cyclic (co)homologies
  considered have to be compatible with the bornology meaning that as tensor
  product functor we take the completed bornological tensor product and as
  $\Hom$-spaces we choose the space of bounded linear maps between two
  bornological linear spaces. See \cite{MeyACC,PflPosTanTse} for details
  on bornologies.
\end{remark}

\subsection{Localization}
\label{subsec:loc}
Let $\Bbbk$ denote one of the ground rings $\R$, $\R[[\hbar]]$ or
$\R((\hbar))$, and let $M$ be
a smooth manifold. Let $\calO_{M,\Bbbk}$, or just $\calO$ if no confusion can
arise, be the sheaf of smooth functions $\calC^\infty_M$, if $\Bbbk=\R$,
the sheaf $\calC^\infty_M[[\hbar]]$, if $\Bbbk=\R[[\hbar]]$, and finally the
sheaf $\calC^\infty_M ((\hbar ))$, if  $\Bbbk=\R((\hbar))$. Assume that $\calO$
carries an associative local product $\cdot$, which can be either given by
the standard pointwise product of smooth functions or by a formal deformation
thereof. Note that in each  case, $\calO$ carries the structure of a
sheaf of bornological algebras  and that
\[
  \calO_{M^k,\Bbbk} = \calO^{\hat{\boxtimes} k}_{M,\Bbbk},
\]
where $\hat{\boxtimes}$ denotes the completed bornological exterior tensor
product.

Now let $X\subset M$ be a (locally) closed subset. Then put for each open
$U\subset M$
\[
  \calJ_{X,M,\Bbbk} (U) := \{ F \in \calO (U) \mid (DF)_{|X\cap U} = 0
  \text{ for all differential operators $D$ on $M$} \}.
\]
Obviously, these spaces form the section spaces of an ideal sheaf
$\calJ_{X,M,\Bbbk}$ in $\calO$; we denote it briefly by $\calJ_X$ if no
confusion can arise. The pullback of the
quotient sheaf $\calO / \calJ_{X,M,\Bbbk}$ by the canonical embedding
$\iota : X \hookrightarrow M$ gives rise to a sheaf of Whitney fields on
$X$ (cf.~\cite{Mal:IDF,BraPfl}). The resulting sheaf
$\iota^* \big( \calO / \calJ_{X,M,\Bbbk} \big)$ will be denoted by
$\calE_{X,M,\Bbbk}$ or $\calE_X$ for short.

Next let $\Delta_k : M \rightarrow M^k$ be the diagonal embedding. The
constructions above then give rise to  sheaf complexes
$\scrC_\bullet (\calO)$ and $\scrC^\bullet (\calO)$ defined as  follows.
For $k \in \N$ and $U\subset M$ open put
\begin{align}
  \scrC_k (\calO ) (U) & :=
   \Gamma \big( \Delta_{k+1} (U), \calE_{\Delta_{k+1} (M), M^{k+1},\Bbbk} \big)
   \quad \text{and} \\
  \scrC^k (\calO ) (U) & :=
   \Hom \big( \Gammacpt
   \big( \Delta_{k+1} (U), \calE_{\Delta_{k+1} (M), M^{k+1},\Bbbk} \big) ,
   \Bbbk \big).
\end{align}
Clearly, the $\scrC_k (\calO ) (U)$ resp.~$\scrC^k (\calO ) (U)$ are the
sectional spaces of a fine sheaf on $M$. Since $b$ and $B$ map the ideal
$\calJ_{\Delta_{k+1},M^{k+1},\Bbbk} (U)$ to $\calJ_{\Delta_k,M^k,\Bbbk} (U)$
resp.~$\calJ_{\Delta_{k+2},M^{k+2},\Bbbk} (U)$, the differentials $b$ and $B$
descend to $\scrC_\bullet (\calO )$ and $\scrC^\bullet (\calO )$. Thus we
obtain mixed sheaf complexes $\big( \scrC_\bullet (\calO ) ,b ,B\big)$ and
$\big( \scrC^\bullet (\calO ) ,b ,B\big)$. Obviously, there are normalized
versions of these mixed sheaf complexes which we will also use in this article.
Finally, for each open $U\subset M$ we have natural maps
\begin{align}
  \rho_k : \, &  C_k \big( \Gamma ( U , \calO ) \big) \rightarrow
  \scrC_k (\calO ) (U), \\
  \nonumber &  a_0 \otimes \ldots \otimes a_k \mapsto
  a_0 \otimes \ldots \otimes a_k + \calJ_{\Delta_{k+1} (U) , U, \Bbbk}
  \quad \text{and} \\
   \rho^k : \, & \scrC^k (\calO ) (U)  \rightarrow C^k
  \big( \Gammacpt ( U, \calO (U) \big) , \\
  \nonumber & F \mapsto
  \Big( a_0 \otimes \ldots \otimes a_k \mapsto
  F \big( a_0 \otimes \ldots \otimes a_k + \calJ_{\Delta_{k+1} (U) , U, \Bbbk}
  \big) \Big) .
\end{align}
Clearly, these maps are even morphisms of presheaves preserving the mixed complex
structures.
\begin{theorem}
\label{thm:locqism}
  The morphisms of mixed sheaf complexes $\rho_\bullet$ and $\rho^\bullet$
  are quasi-isomorphisms.
\end{theorem}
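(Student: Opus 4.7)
My plan is to reduce Theorem \ref{thm:locqism} to a stalk-wise statement via standard sheaf theory, and then to a local acyclicity computation for the ideal of chains that are flat along the diagonal. First I would observe that both source and target of $\rho_\bullet$, $\rho^\bullet$ are fine sheaves: the target is fine by construction (smooth partitions of unity act on Whitney fields), and the sources $U \mapsto C_\bullet(\Gamma(U,\calO))$, $U \mapsto C^\bullet(\Gammacpt(U,\calO))$ sheafify to fine sheaves because $\calO$ carries smooth partitions of unity in all three cases considered. It therefore suffices to check the quasi-isomorphism on stalks at each point $x \in M$.

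For the stalk computation I would use the standard identification of the completed bornological tensor power with sections on a product, $\Gamma(U,\calO)^{\hat\otimes(k+1)} \cong \Gamma(U^{k+1}, \calO^{\hat\boxtimes(k+1)})$, which holds because $\calO$ is nuclear as a bornological algebra (in the smooth case this is Grothendieck's theorem, in the formal case it is elementary). Under this identification $\rho_k$ becomes precisely the quotient map onto Whitney fields along the diagonal, i.e., the quotient by the ideal $\calJ_{\Delta_{k+1}(U), U^{k+1}, \Bbbk}$ of sections flat on the diagonal. The theorem thus reduces to showing that this flat ideal, equipped with the restricted $b$ and $B$, forms an acyclic mixed subcomplex of $C_\bullet(\Gamma(U,\calO))$.

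Acyclicity of the flat ideal is the main technical obstacle. I would construct an explicit contracting homotopy by combining an extra-degeneracy operator (evaluation at $x$, in the spirit of the maps $s_x^k$ used in the proof of Proposition \ref{prop:resAS}) with a localization procedure: multiplication by a sequence of cutoff functions supported in shrinking neighborhoods of the diagonal, whose complements one can push arbitrarily far off the diagonal. The underlying principle is that the Hochschild--Kostant--Rosenberg map factors through the Whitney quotient, since it depends only on the first-order jet along the diagonal, so no cohomology can be carried by chains that are flat there. The dual argument for $\rho^\bullet$ uses nuclearity together with the observation that a cochain on $C_\bullet(\Gammacpt(U,\calO))$ annihilating the flat ideal is precisely a cochain on Whitney fields.

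Once this acyclicity is established, the short exact sequence of mixed complexes
\[
  0 \to \calJ_\bullet \to C_\bullet(\Gamma(U,\calO)) \to \scrC_\bullet(\calO)(U) \to 0
\]
yields the quasi-isomorphism for $\rho_\bullet$ via the long exact sequence in cyclic homology associated to the total $b+B$ differential, and the cochain version follows by dualization. The hard part, as indicated, is producing the explicit contraction on the flat subcomplex and verifying that it is compatible with both $b$ and $B$ simultaneously; beyond this, everything reduces to formal sheaf theory and the standard dictionary between bornological tensor products and section spaces on products.
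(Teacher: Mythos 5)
Your reduction to the statement that the quotient map onto Whitney fields along the diagonal is a quasi-isomorphism is the right reformulation, but the step you yourself flag as ``the main technical obstacle'' --- acyclicity of the flat ideal $\calJ_\bullet$ under $b$ (and compatibly with $B$) --- is exactly the deep content of the theorem, and your sketch does not supply it. The evaluation-at-a-point operators $s_x^k$ contract the Alexander--Spanier differential $\delta$, not the Hochschild differential $b$; for $b$ they are not a homotopy, and ``multiplication by cutoffs supported near the diagonal'' does not interact well with $b$ either, since $b$ multiplies adjacent tensor factors and destroys support conditions off the diagonal. Moreover the heuristic that the HKR map ``depends only on the first-order jet along the diagonal, so no cohomology can be carried by flat chains'' is circular: it presupposes that the homology of the chain complex is computed on the diagonal, which is the statement to be proved. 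The paper does not attempt such a contraction at all: for $\Bbbk=\R$ and the undeformed product it invokes the theorem of Brasselet--Pflaum \cite{BraPfl} (which is where all the hard diagonal-localization analysis for Whitney functions lives), and then handles $\R[[\hbar]]$ by the $\hbar$-adic filtration and the comparison of the associated spectral sequences, and $\R((\hbar))$ by localizing $\hbar$.

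A second gap is that your argument ignores the case distinction in $\Bbbk$ and, more importantly, the fact that the product on $\calO$ may be a formal deformation of the pointwise product. Any direct homotopy or HKR-type argument you build for the commutative product does not transfer verbatim to the deformed $b$; this is precisely why the paper filters by powers of $\hbar$, observes that the $E_1$-terms reduce to the undeformed smooth case, and concludes by convergence of the spectral sequences. Finally, for $\rho^\bullet$, ``dualize'' needs justification in the bornological setting (passing a quasi-isomorphism through bounded duals is not automatic); the paper's route again goes through the filtration argument rather than abstract duality. So while your overall strategy (identify $\rho$ with the Whitney quotient and kill the flat ideal) is a legitimate alternative outline, as written it leaves the central analytic input unproved and omits the mechanism that handles the deformed and $\hbar$-localized coefficients.
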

\begin{proof}
  For $\Bbbk =\R$ and $\calO$ the sheaf of smooth functions on $M$ the claim
  has been proven in \cite{BraPfl}. For $\Bbbk =\R[[\hbar]]$ the claim
  follows by a spectral sequence argument. Note that $\calO$ is filtered
  by powers of $\hbar$ in that case which induces a filtration on
  $C_\bullet \big( \Gamma ( U , \calO )$ and $\scrC_k (\calO ) (U)$. Consider
  the associated spectral sequences. The corresponding $E_1$-terms are the sheaf
  complexes associated to the sheaf of smooth functions on $M$ for which we
  already know that they are quasi-isomorphic. But this entails that the
  limits of these spectral sequences
  $C_\bullet \big( \Gamma ( U , \calO )$ and $\scrC_k (\calO ) (U)$
  have to be quasi-isomorphic, too. Likewise one checks
  that the complexes $\scrC^k (\calO ) (U)$ and $C^\bullet \big( \Gamma ( U , \calO )$
  are quasi-isomorphic in that case.
  By localizing $\hbar$ in this situation the claim follows also for
  $\Bbbk =\R((\hbar))$.
\end{proof}


%
%
\bibliographystyle{alpha}

\end{document}